\begin{document}
%% Comment out items by inserting a percent % character
\title{LIMIT SHAPE AND FLUCTUATIONS FOR EXACTLY SOLVABLE INHOMOGENEOUS CORNER GROWTH MODELS}
\author{Elnur Emrah}
\oraldate{July 7, 2016}
%% The names and titles below are examples. Change to suit your situation
\profA{David Anderson, Associate Professor, Mathematics}
\profB{Andreas Seeger, Professor, Mathematics}
\profC{Timo Sepp\"{a}l\"{a}inen, Professor, Mathematics}
\profD{Benedek Valk\'{o}, Professor, Mathematics}
\profE{Jun Yin, Assistant Professor, Mathematics}

%\inputpicturetrue  % By Jeff McGough. See uuguide and private thesis.sty
%\inputpicturefalse % To NOT produce pictures, uncomment this line
%% You may want to change these if you are in a different department
%% or getting a different degree or writing a thesis instead of a dissertation
\degree{Doctor of Philosophy}
\dept{Mathematics}
\thesistype{dissertation}
%% Starts page numbering as i, ii, etc.
\beforepreface
\prefacesection{Abstract}
\noindent {\bf Limit shape and fluctuations for exactly solvable inhomogeneous corner growth models}

We study a class of corner growth models in which the weights are either all exponentially or all geometrically distributed. The parameter of the distribution at site $(i, j)$ is $a_i+b_j$ in the exponential case and $a_i b_j$ in the geometric case, where $(a_i)_{i \ge 1}$ and $(b_j)_{j \ge 1}$ are themselves drawn randomly at the outset from ergodic distributions.   
These models are inhomogeneous generalizations of the much studied exactly solvable models in which the parameters are the same for all sites. Our motivation is to understand how inhomogeneity influences the limit shape and the corresponding limit fluctuations. We obtain a simple variational formula for the shape function and prove that it is strictly concave inside a cone (possibly the entire quadrant) but is linear outside. This is in contrast with the situation in the models with i.i.d. weights in which the shape function is expected to be strictly concave under mild assumptions. For the directions inside the cone, we show that the limit fluctuations are governed by the Tracy-Widom GUE distribution and derive bounds for the deviations of the last-passage times above the shape function. To obtain the shape result, we couple the model with an explicit family of stationary versions of it. For the fluctuation and large deviation results, we perform steepest-descent analysis on an available Fredholm determinant formula for the one-point distribution of the last-passage time. We also develop a detailed appendix on the steepest-descent curves of harmonic functions of two real variables and approximate the contour integral of an arbitrary meromorphic function along such curves. This material can facilitate the steepest-descent arguments in the treatments of other related models as well.

%The required abstract and a copy of your title page is sent to UMI for
%publication in Dissertation Abstracts International.  The abstract must be
%in English, double-spaced, must not exceed 350 words, and must be signed by
%your adviser.  (Abstracts exceeding 350 words will be returned by UMI.  The
%title is not included in the word count.) The dissertation title on the
%abstract must be identical to the title on the dissertation title page.  The
%original signature of your adviser must be at the end of the abstract text.
%This abstract is not part of your dissertation.  Follow the form in the
%samples section.  You may also view a sample with Mosaic on the Internet.  

%\noindent {\bf abstract within dissertation (optional)}
%
%Your department may require an abstract to be part of the dissertation.  If
%so, it is in addition to the abstract mentioned above.  Please follow your
%department's style requirements, and number all of these pages as part of
%the preliminary material (use lower case Roman numerals).  This abstract
%must appear in the table of contents.  

\prefacesection{Acknowledgements}
\noindent {\bf Acknowledgement}

I am deeply grateful to my advisor, Timo Sepp\"{a}l\"{a}inen, for pointing me towards an exciting direction of research, for charting a general course to follow but also granting me a comfortable degree of independence in my pursuits,  for making himself available in need, for his patience when the progress was slow, for his readiness to impart his wisdom and knowledge to me, for his encouragement that made me feel appreciated for my research and keep going forward, for his valuable comments to improve the quality of my work, for his guidance, counseling and countless advice through my graduate study, and so many other things I will not be able to adequately acknowledge here. I truly feel privileged to have been his student. 

I am thankful to the Department of Mathematics at University of Wisconsin--Madison for providing a supportive environment for research and offering a rich collection of courses to explore many subjects. To my surprise, the problems treated in the thesis have brought me to revisit and utilize tools I learned over the years in the graduate program from various professors and fellow graduate students. All are gratefully acknowledged here. 

I am very happy to have been a member of the probability research group at UW--Madison. I would like thank all professors and graduate students also involved in this group. Interacting with them through courses, seminars and more informal gatherings has contributed to my understanding of the field of probability and the probabilistic mode of thinking. 

I would like to extend special thanks to Benedek Valk\'{o}, who made the very useful suggestion of controlling the steepest-descent curves through their ODEs at a talk a few years ago when I presented a much earlier version of Chapter \ref{Ch4}. His remark led me to develop Appendix \ref{flucAp1}, which significantly clarified and shortened the treatment in the main text. 

I benefited from a graduate fellowship program between UW--Madison and University of Bonn, which supported my stay in Bonn in January and February, 2015. I am thankful to the Mathematics Department at University of Bonn for their hospitality, and to Patrik Ferrari for kindly agreeing to supervise me and sharing his insights on my research problem. 

Another special thanks go to my coauthor Chris Janjigian. I have enjoyed, learned a lot and received ideas for future work from our discussions. 

I would like thank many friends who helped keep my spirit up through the graduate school and have made living in Madison a very pleasant and enjoyable experience. 

Finally, I would like send many, many, many thanks to my parents, Ho\c{s}kadem and \c{S}ahin, and my sister, Nermin. Their love and support gave the strength to continue on the path that has led me to where I am now.

\listoffigures
\listoftables
% Optional front page, made from source "notation.tex".
% If you don't need it, then don't use it.
%
%\optionalfront{Notation and Symbols}{\input{notation.tex}}

%% Start the thesis itself, pages numbered 1, 2, etc.
% Start normal page numbering. Parts and chapters follow.%
\afterpreface
%%%
%%% This is the beginning of the actual thesis.  If you don't know latex
%%% then start with the LaTeX manual by Lamport and another easy
%%% reference, like the paperback by Jane Hahn, LaTeX for Everyone, PTI,
%%% 1991. See also $TEX/latex/sample.tex and $TEX/doc/story.tex, where
%%% $TEX==/usr/local/lib/tex
%%%
% Start this dissertation....
%
\chapter{Introduction}

\section{The corner growth model}

Stochastic models of planar growth have a long history in probabilistic research, dating at least as far back as the \emph{Eden model} \cite{Eden61} and the more general \emph{Richardson models} \cite{Richardson73}. One can interpret these models as describing the spread of an infection in a tissue of cells over time. Each cell becomes susceptible to the infection under certain deterministic conditions such as the presence of at least one infected neighbor. 
Individual cells vary in resilience against the infection; therefore, the time period between the beginning of the susceptible state and the infected state for each cell is assumed random. 

Despite the randomness in the dynamics, the set of infected cells can look like a deterministic limit shape after a long time. The first rigorous result of this sort is perhaps due to D. Richardson \cite{Richardson73}. It has been of interest to understand when a limit shape exists and identify its features. Proceeding further, one can also inquire about the fluctuations around the limit shape. Satisfactory progress on these problems has been limited to specific rules of growth and the probability distributions governing the randomness.  

The topic of this work is one of the most studied stochastic growth models known as the \emph{corner growth model}, see \cite{Martin06}, \cite{Seppalainen4}, \cite{Seppalainen09}. The model is intimately connected to various other important models including the directed random polymer, directed last-passage percolation, M/M/1 queues in series and the totally asymmetric simple exclusion process (TASEP). Certain special cases of the model are known to be \emph{exactly solvable} in the sense that rather precise analysis can be carried out. For these cases (the classical examples of which will be described below), it is known that the model is a member of the conjectural KPZ (Kardar-Parisi-Zhang) universality class, a large collection of statistical models that are expected to share certain universal behavior. For example, it is predicted that the scaling exponent of the fluctuations of the interesting observables in the KPZ-models is $1/3$ and the limit fluctuations are given by a Tracy-Widom distribution, see the survey \cite{Corwin} and the references therein.

In this introductory section, we define the general version of the corner growth model and allude to its connections to various other models. We also state the well-known shape and fluctuation results that our work extends. A brief guide to the organization of the material in the remainder of the text follows. Section \ref{intS2} defines precisely the class of corner growth models treated here. Section \ref{intS3} includes a discussion of our main results. Section \ref{intS4} is a short survey of related works from the literature. The proof of the limit shape result is given in Chapter \ref{sha}, which is taken from \cite{Emrah16}. Chapter \ref{Ch3} is an exposition of the exact one-point distribution formula for the last-passage and appeared earlier in \cite{Emrah15}. Chapter \ref{Ch4} proves the fluctuation and large deviation results. 

We now define the corner growth model. Represent a set of \emph{sites} with $\bbN^2$, where $\bbN = \{1, 2, 3, \dotsc\}$. At the outset, each site $(i, j) \in \bbN$ is assigned a \emph{weight} $W(i, j)$, a randomly chosen nonnegative real number. In the general version of the model, the joint distribution $\bfP$ of the weights $\{W(i, j): i, j \in \bbN\}$ is an arbitrary (Borel) probability measure on $\bbR_+^{\bbN^2}$. The dynamics begins with all sites initially colored white at time $t = 0$. Each site $(i, j)$ changes color to red permanently after $W(i, j)$ amount of time has elapsed since the first time both of the following conditions hold. 
\begin{itemize}
\item If $i > 1$ then $(i-1, j)$ is red. 
\item If $j > 1$ then $(i, j-1)$ is red. 
\end{itemize} 
To relate to the story of an epidemic, imagine the sites as cells, white cells as healthy, red cells as infected and the weights as the resilience of the cells. A healthy cell is susceptible to the infection if its left and bottom neighbors (if they exist) are already infected. Hence, the epidemic starts out from the corner $(1, 1)$ and spreads to the entire quadrant $\bbN^2$ over time.  

For each $(i, j) \in \bbN^2$, let $G(i, j)$ denote the time when the site $(i, j)$ becomes red. Let $\sS_t$ denote the set of red sites at time $t \ge 0$ i.e. 
\begin{align}
\sS_t = \{(i, j) \in \bbN^2: G(i, j) \le t\} \quad \text{ for } t \ge 0. 
\end{align}
To understand the limit shape of red sites, it is helpful to study the random variables $\{G(i, j): i, j \in \bbN\}$. The growth rule above translates to the recursion 
\begin{align}
\label{shae2}
G(i, j) &= G(i-1, j) \vee G(i, j-1) + W(i, j) \\ 
G(i, 0) &= G(0, j) = 0
\end{align}
for $i, j \in \bbN$. One can also picture the preceding recursion as a sequence of servers labeled with $i \in \bbN$ serving a sequence of customers labeled with $j \in \bbN$ \cite{GlynnWhitt91}. Each server delivers service to the customers one by one in the order the customers arrive. Each customer arrives the system by joining the queue at server $1$. For each $i, j \in \bbN$, the amount of time server $i$ needs for the service of customer $j$ is $W(i, j)$. When this service is completed, customer $j$ immediately departs and joins the queue at the server $i+1$, and the server $i$ becomes available for the next customer $j+1$. We will not take advantage of this viewpoint here and refer the interested reader to \cite{Martin06} and \cite{Seppalainen09}. 

Another equivalent description of the last-passage times is in terms of directed paths. A directed path $\pi$ is a finite sequence $\pi = ((i_k, j_k))_{1 \le k \le l}$ in $\bbZ^2$ such that 
\begin{align*}(i_{k+1}-i_k, j_{k+1}-j_k) \in \{(1, 0), (0, 1)\} \quad \text{ for } 1 \le k < l. \end{align*} 
We say that $\pi$ is from $(u, v) \in \bbZ^2$ to $(u', v') \in \bbZ^2$ if $(i_1, j_1) = (u, v)$ and $(i_l, j_l) = (u', v')$. See Figure \ref{Fi1}. We have 
\begin{equation}\label{shaeq51}G(m, n) = \max_{\pi \in \Pi_{1, 1, m, n}} \sum \limits_{(i, j) \in \pi} W(i, j) \quad \text{ for } m, n \in \bbN, \end{equation}
where $\Pi_{u, v, u', v'}$ denotes the set of all directed paths from $(u, v)$ to $(u', v')$. Hence, $\{G(i, j): i, j \in \bbN\}$ are exactly the \emph{last-passage time} variables of the directed last-passage percolation. This connection seems to have been first observed in \cite{Muth}. 

\begin{figure}[h]
\centering
\begin{tikzpicture}[scale = 1.2]
\draw[gray,thin] (0,0) grid (5,5);
\draw (0, 0)node[below]{$(1, 1)$};
\draw[<->](0, 5.5)node[above]{$j$}--(0, 0)--(5.5, 0)node[right]{$i$};
\draw (5.5, 5)node[above]{$(m, n)$};
\foreach \i in {0, ..., 5}
{
	\foreach \j in {0, ..., 5}
	{ \fill[blue] (\i, \j) circle (0.1cm); }
}
\draw[red, ultra thick] (0, 0) -- (2, 0) -- (2, 1) -- (3, 1) -- (3, 4) -- (4, 4) -- (4, 5) -- (5, 5); 
\end{tikzpicture}
\caption[A directed path]{A directed path (red) from $(1, 1)$ to $(m, n) = (6, 6)$. Blue dots indicate sites. } \label{Fi1}
\end{figure}
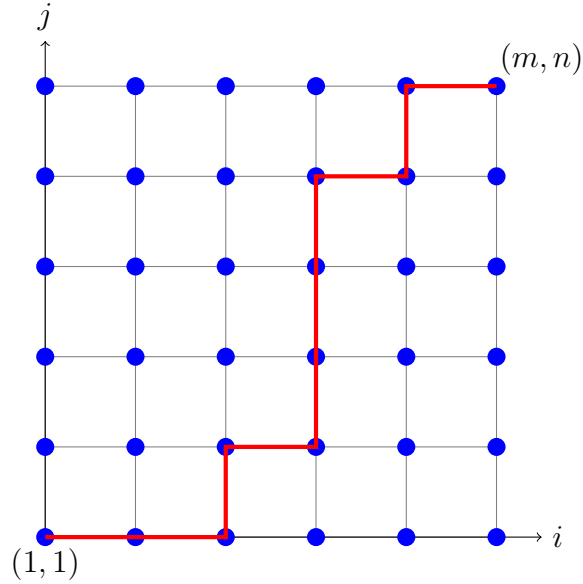

The first limit shape result for the corner growth model appeared in the pioneering work of H. Rost \cite{Rost81}, which 
studied the totally asymmetric simple exclusion process (TASEP). This is a fundamental interacting particle system that can be viewed as a toy model for single-lane traffic. In the version relevant to the present discussion, we consider particles labeled with $i \in \bbN$ residing at the sites of $\bbZ$. Let $\sigma(i, t) \in \bbZ$ denote the position of particle $i \in \bbN$ at time $t \ge 0$. Impose the jam (step) initial condition i.e. $\sigma(i, 0) = -i$ for $i \in \bbN$. At the outset particles are equipped with independent Poisson clocks with rate $\lambda > 0$. The positions of the particles are updated as follows. A particle currently at site $j \in \bbZ$  jumps to $j+1$ when its clock rings provided that there is no particle at $j+1$. This exclusion constraint, which disallows presence of two particles at the same site, is the interaction between otherwise independently moving particles. As explained, for example, in \cite[p~5]{Seppalainen09}, the distribution
of $\{(i, j) \in \bbN^2: \sigma(i, t) \le i + j\}$ is the same as the distribution of $\sS_t$ in the \emph{exponential model}, the corner growth model with i.i.d. weights such that  
\begin{align}
\label{E94}
\bfP(W(i, j) \ge x) = e^{-\lambda x} \text{ for } i, j \in \bbN \text{ and } x \ge 0. 
\end{align}
As a corollary of his results for TASEP, Rost obtained that 
\begin{align}
\label{shaE1}
\lim \limits_{n \rightarrow \infty} \frac{G(\lf ns \rf, \lf nt \rf)}{n} = \frac{(\sqrt{s}+\sqrt{t})^2}{\lambda} \quad \text{ for } s, t > 0 \quad \bfP\text{-a.s.} 
\end{align}
He also showed that the rescaled set of red sites 
\begin{align}
\frac{1}{t} \sS_t = \{(i, j) \in \bbN^2: G(\lf ti \rf, \lf tj \rf) \le t\}
\end{align}
converges $\bfP$-a.s. as $t \rightarrow \infty$ to the parabolic region $\{s,t \in \bbR_+: \sqrt{s}+\sqrt{t} \le \sqrt{\lambda}\}$ in a certain sense. 

%To see this, consider particles labeled with $i \in \bbN$ on $\bbZ$. For each $i \in \bbN$,  define the position $\sigma(i, t)$ of particlEe $i$ at time $t \ge 0$ by
%\begin{align}
%\sigma(i, t) = \max \{j \in \bbZ_+: G(i, j) \le t\}-i+1. 
%\end{align}

There is also a discrete-time version of TASEP in which time variable $t \in \bbZ_+$. Particles now carry independent coins with tails probability $q \in (0, 1)$. For $i \in \bbN$ and $t \in \bbZ_+$, the position of particle $i$ at time $t+1$ is determined from the particle configuration at time $t$ as follows. Particle $i$ flips its coin at time $t$. If heads comes up and the position $\sigma(i, t)+1$ is empty then $\sigma(i, t+1) = \sigma(i, t)+1$; otherwise, $\sigma(i, t+1) = \sigma(i, t)$.  
The discrete-time TASEP with step initial condition is equivalent to the corner growth model in which the weights are i.i.d. and 
\begin{align}
\label{intE2}
P(W(i, j) \ge k) = q^k \quad \text{ for } i, j \in \bbN \text{ and } k \in \bbZ_+. 
\end{align}
For this \emph{geometric model}, the limit shape has also been computed. 
\begin{align}
\label{shaE2}
\lim \limits_{n \rightarrow \infty} \frac{G(\lf ns \rf, \lf nt \rf)}{n} = \frac{q}{1-q}(s+t) + \frac{2\sqrt{q}}{1-q} \sqrt{st} \quad \text{ for } s , t > 0 \quad \bfP\text{-a.s., }
\end{align}
\cite{CohnElkiesPropp}, \cite{JockuschProppShor}, \cite{Seppalainen2}. 

The limit fluctuations corresponding to (\ref{shaE1}) and (\ref{intE2}) were derived in the breakthrough work of K. Johansson \cite[Theorem~1.2]{Johansson00}. For the geometric model, for $r > 0$,  
\begin{equation}
\label{E1}
\lim \limits_{n \rightarrow \infty} \bfP(G(\lf nr \rf, n) \le n \gamma(r) + n^{1/3}\sigma(r) s) = F_{\GUE}(s) \quad \text{ for } s \in \bbR, \end{equation}
where 
\begin{align}
\gamma(r) &= \frac{q(1+r)+2\sqrt{qr}}{1-q} \label{E2}, \\
\sigma(r) &= \frac{1}{1-q} \left(\frac{q}{r}\right)^{1/6}(\sqrt{q} + \sqrt{r})^{2/3}(1+\sqrt{qr})^{2/3} \label{E3} 
\end{align}
and $F_{\GUE}$ denotes the c.d.f. of the Tracy-Widom GUE distribution defined in Section \ref{flucSe2}. 
For the exponential model, (\ref{E1}) is also true with different explicit constants $\gamma(r)$ and $\sigma(r)$, \cite[Theorem~1.6]{Johansson00}. 

Precise description of the limit shape and limit fluctuations such as (\ref{shaE2}) and (\ref{E1}) for the corner growth model is presently only possible for the exponential and geometric models, and their derivatives (an example of which is treated in this work). It is expected that the model exhibit certain features that are universal, which are there irrespective of the underlying weight distribution. For example, at least when $\bfP$ is i.i.d. and satisfies mild conditions, the shape function is expected to be strictly concave and differentiable. The limit fluctuations should be governed by the Tracy-Widom GUE distribution as in (\ref{E1}).  

%KPZ universality suggests that (\ref{E1}) should be true more generally, for instance, if $\bfP$ is i.i.d. and the weights have enough moments {\color{red} cite}. 
%\begin{align*}
%g(s, t) = \lim_{n \rightarrow \infty} \frac{G(\lf ns \rf, \lf nt \rf)}{n} \quad \text{ for } s, t > 0 \quad P\text{-a.s.}
%\end{align*} 

\section{Exponential and geometric models with inhomogeneous parameters}
\label{intS2}

The class of models we study are \emph{inhomogeneous} generalizations of the exponential and geometric models. The parameters $\lambda$ and $q$ themselves will now be randomly chosen for each site at the outset and then kept fixed throughout the dynamics. It turns out that, for certain choices of the parameter distributions, the resulting models are still amenable to precise analysis. Taking advantage of this situation, we find the analogues of H.Rost's limit shape result (\ref{shaE1}) and K.Johansson's fluctuation result (\ref{E1}) for these inhomogeneous models. The next section discusses in detail the contribution of the present work. Here, we provide a precise description of the inhomogeneous exponential and geometric models and allude to their aspects that lead to exact solvability.  

For concreteness, let us construct the sample space of the weights as $\bbR_+^{\bbN^2}$ equipped with the Borel $\sigma$-algebra. Define $W(i, j)$ as the projection map $\bbR_+^{\bbN^2} \rightarrow \bbR_+$ onto coordinate $(i, j)$ for $i, j \in \bbN$. Let $\bfa = (a_n)_{n \in \bbN}$ and $\bfb = (b_n)_{n \in \bbN}$ be stationary random sequences with terms in $(0, \infty)$ for the exponential model and in $(0, 1)$ for the geometric model. For the shape results, we will assume that the pair $(\bfa, \bfb)$ is totally ergodic with respect to the shifts $(a_n, b_n) \mapsto (a_{n+k}, b_{n+l})$ for $k, l \in \bbN$, see Section \ref{shaS1}. As an example, take $\bfa$ and $\bfb$ as independent i.i.d. sequences. For the fluctuation results, the total ergodicity assumption is weakened to ergodicity of $\bfa$ and $\bfb$ (i.e. these sequences are each ergodic and no assumption is made on their joint distribution). Suppose that, given $(\bfa, \bfb)$, the weights $\{W(i, j): i, j \in \bbN\}$ are conditionally independent and the joint conditional distribution $\bfP_{\bfa, \bfb}$ of the weights are given by 
\begin{align}
\bfP_{\bfa, \bfb}(W(i, j) \ge x) = e^{-(a_i+b_j)x} \quad \text{ for } i, j \in \bbN \text{ and } x \ge 0 \label{shaE4}
\end{align} 
for the exponential model and 
\begin{align}
\bfP_{\bfa, \bfb}(W(i, j) \ge k) = a_i^k b_j^k \quad \text{ for } i, j \in \bbN \text{ and } k \in \bbZ_+ \label{shaE5}
\end{align} for the geometric model. 

Let $\bbP$ denote the (unconditional) joint distribution of the weights. Thus, $\bbP(B) = \int \bfP_{\bfa, \bfb}(B) \mu(\dd\bfa, \dd\bfb)$ for any Borel set $B \subset \bbR_+^{\bbN^2}$, where $\mu$ is the joint distribution of $(\bfa, \bfb)$. By stationarity of $\bfa$ and $\bfb$, the weights are identically distributed under $\bbP$. However, there are nontrivial correlations among the weights since $W(i, j)$ and $W(i', j')$ are, in general, not independent under $\bbP$ if $i = i'$ or $j = j'$. 

%TASEP interpretation. particles and wholes have disorders.  

We mention the key features that render these inhomogeneous models well-suited for analysis.  For the limit shape result, we exploit an explicit one-parameter family of product probability measures $\bfP_{\bfa, \bfb}^z$ on the extended sample space $\bbR_+^{\bbZ_+^2}$ that project to $\bfP_{\bfa, \bfb}$ on $\bbR_+^{\bbN^2}$, see (\ref{shaeq78}) and (\ref{shaeq79}) below. These measures satisfy a useful stationarity property (Proposition \ref{shap5}) sometimes referred to as the \emph{Burke property} that enables exact computations. This approach was introduced in \cite{Seppalainen2}.  
For the fluctuation results, our starting point is that the probabilities $\bfP_{\bfa, \bfb}(G(m, n) \le k)$ can be represented as a Fredholm determinant with a tractable kernel \cite{Johansson2}. Derivation of this formula is discussed at length in Chapter \ref{Ch3}. 

\section{Main results}
\label{intS3}

Our first result is the identification of the shape function in terms of a one-dimensional variational problem. Some notation is needed to state it. Let $\alpha$ and $\beta$ denote the distributions of $a_1$ and $b_1$, respectively. (By stationarity of $\bfa$ and $\bfb$, $a_n$ and $b_n$ are distributed as $\alpha$ and $\beta$, respectively, for $n \in \bbN$). For a Borel measure $\eta$ on $\bbR$, let 
$\ubar{\eta}$ and $\bar{\eta}$ denote the left and right endpoints of the support of $\eta$ (the complement of the largest open $\eta$-null set). Extend $\alpha$ and $\beta$ as Borel probability measures to $\bbR$. Then the support of $\alpha$ and $\beta$ are contained in $[0, \infty)$ with $\alpha(\{0\}) = \beta(\{0\}) = 0$ for the exponential model, and the supports are contained in $[0, 1]$ with $\alpha(\{0, 1\}) = \beta(\{0, 1\}) = 0$ for the geometric model. Also, $\ubar{\alpha}$ and $\bar{\alpha}$ equal, respectively, the essential infimum and the essential supremum of $a_1$. Similarly for $\beta$. 

The shape function is given by 
\begin{align}
g(s, t) = \inf \limits_{z \in [-\ubar{\alpha}, \ubar{\beta}]} \left\{s \int_0^\infty \frac{\alpha(\dd a)}{a+z} + t \int_0^\infty \frac{\beta(\dd b)}{b-z}\right\} \quad \text{ for } s, t > 0 \label{shaE6}
\end{align}
for the exponential model. This will be restated as Theorems \ref{shaT1} and \ref{shaT2}. The infimum above can be computed explicitly when $\alpha$ and $\beta$ are uniform measures. 
For example, if $\alpha$ and $\beta$ are uniform on the interval $[1/2, 3/2]$ then 
\begin{align}
g(s, t) = s \log\bigg(\frac{t+7s+\sqrt{(s+t)^2+12st}}{4s}\bigg) + t \log\bigg(\frac{s+7t+\sqrt{(s+t)^2+12st}}{4t}\bigg).
\label{shaE7}
\end{align}
for $s, t > 0$. For this choice of $\alpha$ and $\beta$, Figure \ref{Fi3} below illustrates the limit shape result 
when $\bfa$ and $\bfb$ are i.i.d. sequences. If $\ubar{\alpha}=\ubar{\beta} = 0$ then (\ref{shaE6}) reduces to 
\begin{align}
g(s, t) = s \int_0^\infty \frac{\alpha(\dd a)}{a} + t \int_0^\infty \frac{\beta(\dd b)}{b} \quad \text{ for } s, t > 0.  
\end{align}
In particular, $g$ is a linear function if the above integrals are both finite, and is identically infinite otherwise. For $\ubar{\alpha}+\ubar{\beta} > 0$, the following properties of $g$ can be derived from (\ref{shaE6}), see Corollary \ref{shac3}. There are critical values $0 \le c_1 < c_2 \le \infty$ determined by $\alpha$ and $\beta$ such that, as a function of $(s, t)$, $g$ is strictly concave for $c_1 < s/t < c_2$ but is linear for $s/t \le c_1$ or $s/t \ge c_2$. We have $c_1 = 0$ if and only if $\int_0^\infty (a-\ubar{\alpha})^{-2}\alpha(\dd a) = \infty$, and $c_2 = \infty$ if and only if $\int_0^\infty (b-\ubar{\beta})^{-2}\beta(\dd b) = \infty$. Hence, the linear regions of $g$ can be empty as is the case in (\ref{shaE7}). Also, $g$ is $\sC^1$. Consequently, the graph of $g$ does not have a sharp corner along the critical lines $s/t = c_1$ and $s/t = c_2$. These features are depicted on the level curve $g(s, t) = 1$ for a particular choice of $\alpha$ and $\beta$ in Figure \ref{Fi4} below. 

\begin{figure}[h!]
\centering
\includegraphics[scale=0.8]{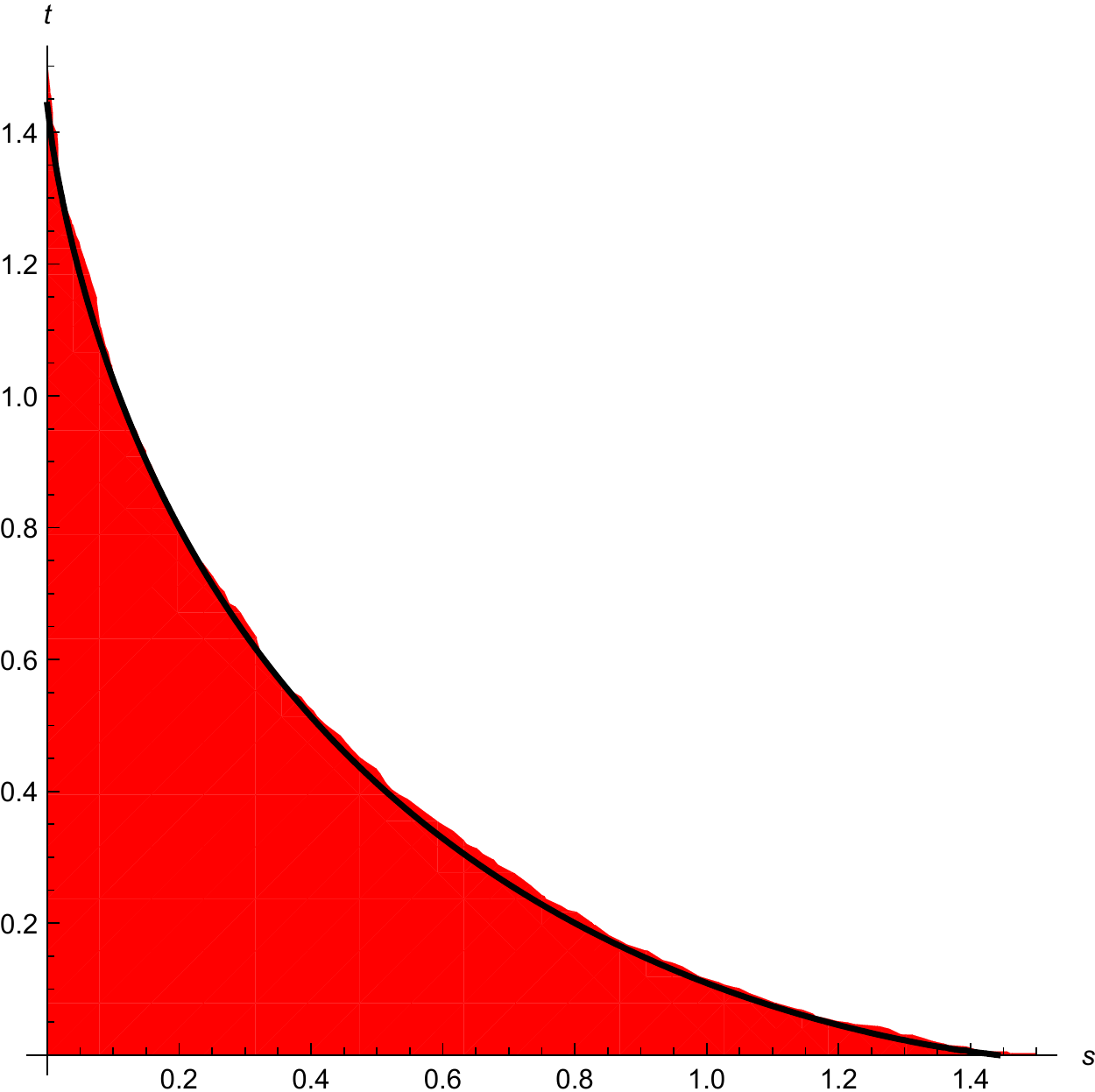}
\caption[A simulation of the inhomogeneous exponential model]{The output of a simulation of the inhomogeneous exponential model up to time $t = 2000$. The sequences $\bfa$ and $\bfb$ are i.i.d, and $\alpha$ and $\beta$ are uniform on $[1/2, 3/2]$. The red region is the rescaled set of red sites $t^{-1}\sS_t$ when $t = 2000$. The black curve is the level set $g(s, t) = 1$ for (\ref{shaE7}).}
\label{Fi3}  
\end{figure}

\begin{figure}[h!]
\centering
\includegraphics[scale=0.8]{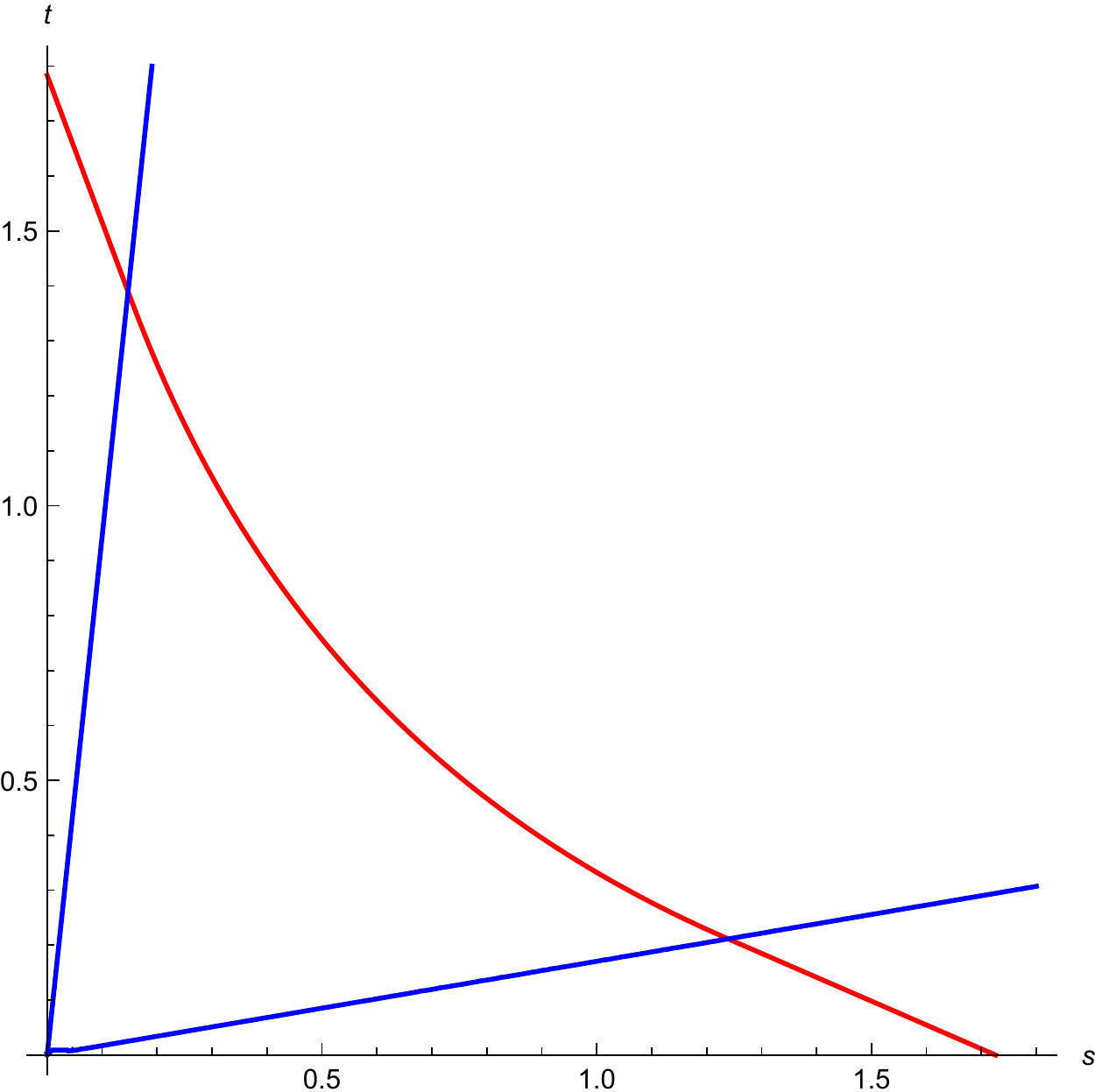}
\caption[A level curve of the shape function]{The level curve $g(s, t) = 1$ and the rays $s/t = c_1 = (-8+12\log2)/3 \approx 0.105922$ and $s/t = c_2 = 4/(9-12\log2) \approx 5.863092$ for (\ref{shaE6}) when $\alpha(\dd a) = \one_{\{0 \le a \le 1\}}3a^2\dd a$ and $\beta(\dd b) = \one_{\{1 \le b \le 2\}} 4(b-1)^3\dd b$.}
\label{Fi4}  
\end{figure}

For the geometric model, we have similar results for the limit shape. The analogue of (\ref{shaE6}) is stated as Theorems \ref{shaT3} and \ref{shaT4}, and a closed form formula similar to (\ref{shaE7}) can be derived when $\alpha$ and $\beta$ have densities proportional to $x \mapsto 1/x$. If $\bar{\alpha} = \bar{\beta} = 1$ then $g$ is linear or infinite. If $\bar{\alpha}\bar{\beta} < 1$ then $g$ is strictly concave inside the region $c_1 < s/t < c_2$ and linear outside for some constants $0 \le c_1 < c_2 \le \infty$ as exemplified in Figure \ref{Fi4}. Now $c_1 = 0$ if and only if $\int_0^1 (\bar{\alpha}-a)^{-2} \alpha(\dd a) = \infty$, and $c_2 = \infty$ if and only if $\int_{0}^1 (\bar{\beta}-b)^{-2} \beta(\dd b) = \infty$. 

The next set of results concerns the fluctuations around the limit shape with respect to $\bfP_{\bfa, \bfb}$ for a.e.\space realization of $\bfa$ and $\bfb$. At present, we only study the fluctuations in the strictly concave region $c_1 < s/t < c_2$ of the geometric model. We do not expect difficulty in repeating the analysis here for the strictly concave region of the exponential model. However, the treatment of the linear regions (in both models) seems to require significant further work. 

Our first result bounds the right tail deviations of the last-passage times i.e. deviations above the shape function. Fix $r \in (c_1, c_2)$ and set $\gamma(r) = g(r, 1)$. Then, for a.e. $(\bfa, \bfb)$, there are deterministic constants $C, c > 0$, $n_0 = n_0^{\bfa, \bfb} \in \bbN$  and a sequence $\gamma_n = \gamma_n^{\bfa, \bfb}$ converging a.s.\ to $\gamma$ such that 
\begin{align}
\label{intE1}
\bfP_{\bfa, \bfb}(G(\lf nr \rf, n) \ge n\gamma_n + n s) &\le  Ce^{-cn(s^{3/2} \wedge s)}
\end{align}
for $s \ge 0$ and $n \ge n_0$. In particular, up to a constant, the decay rate of the exponential bound is $s^{3/2}$ and $s$ for small and large values of $s$, respectively. In (\ref{intE1}), the transition occurs at $s = 1$ although this point can be moved to any positive number by altering the constants $C, c$. We state (\ref{intE1}) in a slightly more general form as Theorem \ref{flucT6}. When $\alpha$ and $\beta$ are delta masses then $\bfa$ and $\bfb$ are constant sequences a.s. In this case, we also have $\gamma_n = \gamma$ for $n \in \bbN$, and (\ref{intE1}) then becomes 
\begin{align}
\label{intE3}
\bfP(G(\lf nr \rf, n) \ge n\gamma + ns) \le Ce^{-cn(s^{3/2} \wedge s)}
\end{align}
for $s \ge 0$ and $n \ge n_0$ for some constants $C, c > 0$ and $n_0 \in \bbN$, where $\bfP$ is the i.i.d. measure given by (\ref{intE2}) with $q = a_1 b_1$. We point out that (\ref{intE3}) is not new. Although not recorded explicitly, it can be deduced from \cite[Theorem 2.2, (2.17), (2.21)-(2.23), Corollary 2.4]{Johansson00}. For the exponential model, (\ref{intE3}) (for small values of $s$) was previously pointed out in \cite[p.~622]{Seppalainen98}. Our proof of (\ref{intE3}) is quite different; it comes from a steepest-descent analysis and does not involve computation of the right tail large deviation rate function. 

We also mention that (\ref{intE3}) improves the following right tail bound that recently appeared in \cite[Lemma 2.2]{CorwinLiuWang}. There exists $M, \delta, c > 0$ such that 
\begin{align}
\label{intE4}
\bfP(G(\lf nr \rf, n) \ge n\gamma + n^{1/3}x) \le e^{-cx}
\end{align}
for $M \le x \le \delta n^{1/3}$. In contrast, setting $s = n^{-2/3}x$ in (\ref{intE3}) yields 
\begin{align}
\bfP(G(\lf nr \rf, n) \ge n\gamma + n^{1/3}x) \le Ce^{-c(x^{3/2} \wedge (n^{1/3}x))}
\end{align}
for $x \ge 0$ and $n \ge n_0$. In the regime $M \le x \le \delta n^{1/3}$ (where $M$ is a large constant), we have $x \le x^{3/2} \le n^{1/3}x$. Thus, 
\begin{align}
\bfP(G(\lf nr \rf, n) \ge n\gamma + n^{1/3}x) \le Ce^{-cx^{3/2}} \le e^{-c'x^{3/2}}
\end{align}
for another constant $c' > 0$, and (\ref{intE4}) follows. 

Another result we wish to highlight here is that the quenched limit fluctuations of the last-passage times in the stricly concave region is governed by the Tracy-Widom GUE distribution denoted $F_{\GUE}$ below (see Section \ref{flucSe2} for its definition). More precisely, 
\begin{align}
\label{intE5}
\lim_{n \rightarrow \infty} \bfP_{\bfa, \bfb}(G(\lf nr \rf, n) \le n\gamma_n + n^{1/3} \sigma_n s) = F_{\GUE}(s) \quad \text{ for } s \in \bbR
\end{align}
for a.e. $(\bfa, \bfb)$, where $\sigma_n$ is a sequence (depending on $\bfa, \bfb$) converging to a deterministic quantity $\sigma > 0$. This result is contained Theorem \ref{fluceq23}. Note that due to continuity of $F_{\GUE}$ and the monotonicity of the left-hand side in $s \in \bbR$, (\ref{intE5}) also holds if $\sigma_n$ is replaced with the limit value $\sigma$. We confirm via (\ref{intE5}) that the inhomogeneous geometric model behaves as a KPZ class model in the strictly concave region. This was predicted in \cite{EmrahJanjigian15} on account of the expansion of the right tail rate functions and by analogy with a similar result of J.Gravner, C.Tracy and H.Widom \cite[Theorem~3]{GravnerTracyWidom02a} for a related model known as Johansson-Sepp\"{a}l\"{a}inen model, which we elaborate on in the next section. 

Since $\lim_{n \rightarrow \infty} \gamma_n = \gamma$, (\ref{intE5}) also implies $n^{-1}G(\lf nr \rf, n) = \gamma$ in $\bfP_{\bfa, \bfb}$-probability a.s under the weaker hypotheses that $\bfa$ and $\bfb$ are each ergodic rather than jointly totally ergodic. We expect that a.s convergence also holds but at present we only have $\limsup_{n \rightarrow \infty} n^{-1}G(\lf nr \rf, n) \le \gamma(r)$ a.s., which follows from the Borel-Cantelli lemma using (\ref{intE1}). An exponential bound for the left tail deviations in the i.i.d. setting is known \cite{BaikDeiftMcLaughlinMillerZhou}. Obtaining the corresponding bound in the inhomogeneous setting is left for future work. 

The main strategy to prove the fluctuation and large deviation bound comes from \cite{GravnerTracyWidom02a}. The distributions of the last-passage times admit a Fredholm determinant representation with an explicit kernel depending on finitely many terms from $\bfa$ and $\bfb$ \cite{Johansson2}. This naturally leads to the consideration of the 
empirical distributions $\alpha_{n}$ and $\beta_{n}$ approximating $\alpha$ and $\beta$, respectively. The sequence $\gamma_n(r)$ is, in fact, precisely the shape function (\ref{shaE6}) computed with $(s, t) = (r, 1)$ and using $\alpha_{\lf nr \rf}$ and $\beta_{n}$ in place of $\alpha$ and $\beta$, respectively. The analysis of the kernel involves use of the steepest-descent curves through the minimizer of the variational formula for $\gamma_n$. Since this minimizer is a saddle point of order $2$, the Airy kernel arises in the asymptotics. Because we work with infinitely many steepest-descent curves, sufficiently strong uniform control of these curves is needed for various steps in the argument. In our case, such a control is afforded by the ergodicity of $\bfa$ and $\bfb$. 

This work also attempts to go beyond the purpose of deriving the Tracy-Widom limit for the inhomogeneous geometric model with various future applications in mind. In Section \ref{flucAp1}, we provide a short and largely self-contained treatment of the steepest-descent curves of harmonic functions on $\bbC$. These lemmas offer quantative bounds and help describe rigorously the local and global nature of the steepest-descent curves. In Section \ref{flucAp2}, we approximate the contour integral of an arbitrary meromorphic function along the steepest-descent curve of its (approximate) logarithm. We present an application of this material in our derivation of the fluctuation results. Another immediate use in a work in progress is to compute the right tail rate function. 
%
%{\color{red}
%As usual, one needs to find suitable deformations of the contours to make analysis possible. The appealing side of the approach taken is that one does not need explicit parametrization of the contours; rather, some useful properties of the contours are observed from general considerations. This enables us to use the same approach to study the inhomogeneous corner growth model described above. }

\section{Literature review}
\label{intS4}

We give a brief overview of the literature in relation to the present work. By (\ref{shaE1}) and (\ref{shaE2}), the shape functions of the exponential and geometric models satisfy 
\begin{align}
\label{shaE3}
g(s, t) = m(s+t)+2 \sqrt{\sigma^2 st} \quad \text{ for } s, t > 0, 
\end{align}   
where $m$ and $\sigma^2$ denote the common mean and the variance of the weights. Such explicit description of the shape function for all directions $(s, t)$ were not previously available for other weight distributions. Our variational characterization of the shape function furnishes new explicit formulas such as (\ref{shaE7}). J.Martin proved that, for i.i.d $\bfP$ and assuming the weights have sufficiently light tail, (\ref{shaE3}) also holds up to an error of order $o(\sqrt{t})$ as $t \downarrow 0$. In particular, $g$ cannot be linear near the axes in contrast with the inhomogeneous setting where this is a real possibility, see Figure \ref{Fi4}. For i.i.d. $\bfP$, the linear regions can appear away from the axes. 
For example, if the weights are bounded and attain their maximum with probability larger than the critical probability for the oriented site percolation then $g(s, t) = s+t$ in a nontrivial cone \cite{AuffingerDamron}, \cite{DurrettLiggett}, \cite{Marchand}. Variational formulas characterizing $g$ have recently been derived for general i.i.d. weights with finite $2+\epsilon$ moments for some $\epsilon > 0$ \cite{geor-rass-sepp-lppbuse}.

For the exponential model with i.i.d. $\bfa$ and constant $\bfb$, \cite{KrugSepp} obtained a variational description of $g$, which (\ref{shaE6}) includes as a special case. Asymptotics of $g$ near the axes are determined for more general $P$ in \cite{Lin}. Their Theorem 2.4 can be deduced from (\ref{shaE6}) as well.  Another direction of generalizing the exponential and geometric models is to choose the parameters at site $(i, j)$ as $\lambda = \Lambda(i/n, j/n)$ and $q = Q(i/n, j/n)$ for some deterministic functions $\Lambda$ and $Q$ that encode inhomogeneity. Then, under suitable conditions, $g$ can be characterized as the unique monotone viscosity solution of a certain Hamilton-Jacobi equation \cite{Calder}.

The fluctuation exponents for the exponential model were identified as KPZ exponents in \cite{Balasz}. 
The limit distribution for the rescaled last-passage times for the geometric and exponential models were proved to be the Tracy-Widom GUE distribution by K. Johansson in \cite{Johansson00}, see (\ref{E1}). \cite{Johansson03} proved that suitably rescaled last-passage time process along the antidiagonal through $(n, n)$ converges to the Airy process, \cite{Johansson03}. The measures $\bfP_{\bfa, \bfb}$ defined at (\ref{shaE4}) and (\ref{shaE5}) appeared earlier in \cite{BorodinPeche} and \cite{Johansson2}, respectively, and is closely related to the Schur measures introduced in \cite{Okounkov}. This connection leads to representations of the last-passage distributions in terms of Fredholm determinants with explicit kernels, see the exposition in Chapter \ref{Ch3} and the references therein. \cite{BorodinPeche} considered $\bfP_{\bfa, \bfb}$ in (\ref{shaE4}) such that $(a_i)_{i > k}$ and $(b_j)_{j > l}$ are constant for fixed $k, l \in \bbZ_+$ and identified the limit (in the sense of finite dimensional distributions) of the rescaled last-passage process (the parameters $a_i, b_j$ for $i \in [k]$ and $j \in [l]$ are also rescaled suitably) along a certain line through $(nr, n)$ as a generalization of the extended Airy process. For similar setting with (\ref{shaE5}), \cite{CorwinLiuWang} also determined the limit distribution of the rescaled last-passage times.  

In \cite{GravnerTracyWidom02a}, J.Gravner, C.Tracy and H.Widom studied a similarly inhomogeneous version of a variant of corner growth model known as oriented digital boiling or Johansson-Sepp\"{a}l\"{a}inen model introduced in \cite{Seppalainen3}. The recursion (\ref{shaE2}) is now replaced with   
\begin{align}
\label{E95}
G(i, j) &= G(i-1, j) \vee (G(i, j-1) + W(i, j))\quad \text{ for } i, j \in \bbN.  
\end{align}
%There is also a path description of the last-passage times akin to (\ref{shaeq51}) but replacing $\Pi_{1, 1, m_n}$ with the set of weak/strict paths from $(1, 1)$ to $(m, n)$ as shown in Figure \ref{Fi2}. 
The weights $\{W(i, j): i, j \in \bbN\}$ are independent and each $W(i, j)$ is Bernoulli-distributed with parameter $p_j$ for some ergodic sequence $(p_j)_{j \in \bbN}$. The shape function in this model has a constant, linear and strictly concave regions. A result from \cite{GravnerTracyWidom02a} is that, conditioned on the parameters $p_j$, suitably rescaled last-passage converge in distribution to the Tracy-Widom GUE distribution. (\ref{intE5}) is the analogue of this result for the inhomogeneous geometric model and is derived through similar techniques. 

%\begin{figure}[h]
%\centering
%\begin{tikzpicture}[scale = 1.2]
%\draw[gray,thin] (0,0) grid (5,5);
%\draw (0, 0)node[below]{$(1, 1)$};
%\draw[<->](0, 5.5)node[above]{$j$}--(0, 0)--(5.5, 0)node[right]{$i$};
%\draw (3, 5)node[above]{$(m, n)$};
%\foreach \i in {0, ..., 5}
%{
%	\foreach \j in {0, ..., 5}
%	{ \fill[blue] (\i, \j) circle (0.1cm); }
%}
%\draw[red, very thick] (0, 0) -- (1, 1) -- (1, 2) -- (2, 4) -- (3, 5); 
%\end{tikzpicture}
%\caption[A weak/strict path]{A weak/strict path from $(1, 1)$ to $(m, n) = (4, 5)$.} \label{Fi2}
%\end{figure}

\section{Notation and conventions} \label{intS5}
Some standard notation that appears in this note are listed below.  
\begin{longtable}{c c}
\caption{A guide to the notation} \label{intTa1}\\
\hline
\textbf{Notation} & \textbf{Definition} \\
\hline
\endfirsthead
\multicolumn{2}{c}%
{\tablename\ \thetable\ -- \textit{Continued from previous page}} \\
\hline
\textbf{Notation} & \textbf{Definition}\\
\hline
\endhead
\hline \multicolumn{2}{c}{\textit{Continued on next page}} \\
\endfoot
\hline
\endlastfoot
$\bbN$& the set of natural numbers $\{1, 2, 3, \dotsc\}$ \\
$\bbZ_+$& the set of nonnegative integers $\{0, 1, 2, \dotsc\}$ \\
$\bbR_+$& the set of nonnegative real numbers \\
$\bbH$& the set of $z \in \bbC$ with $\Im z > 0$.\\
$\ii$& the imaginary unit \\
$[n]$& the set $\{1, \dotsc, n\}$ for $n \in \bbN$ \\
%$\{x\}$& the fractional part $x-\lf x \rf$ of $x \in \bbR$ \\ 
$a \vee b$& the maximum of $a, b \in \bbR$ \\
$a \wedge b$& the minimum of $a, b \in \bbR$ \\
$\hash S$& the number of elements in the set $S$ \\
%$\cl{S}$& the closure of a subset $S \subset \bbC$ \\
$\conj{z}$& the complex conjugate of $z \in \bbC$ \\
$\lf x \rf$& the largest integer less than or equal to $x \in \bbR$ \\
$\lc x \rc$& the least integer greater than or equal to $x \in \bbR$ \\
$|x|$& the Euclidean norm of $x \in \bbR^d$ \\
$\Disc(z, r)$& the (open) disk of radius $r$ centered at $z \in \bbC$ \\
$\Disc'(z, r)$& the punctured disk $\Disc(z, r) \smallsetminus \{z\}$ \\
$\Circle(z, r)$& the circle of radius $r$ centered at $z \in \bbC$ \\
$\Ann(z, r_1, r_2)$& the annulus $\{w \in \bbC: r_1 < |w-z| < r_2\}$ \\
$[z, w]$& the oriented line segment from $z \in \bbC$ to $w \in \bbC$ \\
$\delta_{i, j}$& the Kronecker delta function \\
$\bar{\eta}$& the right endpoint of the support of a Borel measure $\eta$ on $\bbR$ \\
$\ubar{\eta}$ & the left endpoint of the support of a Borel measure $\eta$ on $\bbR$ \\
$x_+$ & the positive part $\max \{x, 0\}$ of $x \in \bbR$.\\
$\sgn(x)$ & the sign of $x \in \bbR$. Equals $0$ if $x = 0$ and $x/|x|$ otherwise. 
\end{longtable}
The \emph{direction} of $z \in \bbC \smallsetminus \{0\}$ is defined as $z/|z|$. Adjectives \emph{increasing} and \emph{decreasing} are used in the strict sense. For convenience, we set $0^0 = 1$, $1/0 = \infty$ and $1/\infty = 0$. 

In several computations, we will benefit from viewing $\bbR^2$ as $\bbC$ via the bijection $(x, y) \mapsto x + \ii y$. Under this identification, we have a \emph{dot product} on $\bbC$ defined by $z \cdot w = \Re \{\conj{z}w\}$ for $z, w \in \bbC$, which corresponds to the usual dot product on $\bbR^2$.

\chapter{Limit Shape} \label{sha}

\section{Introduction} \label{shaS1}

Refer to Section \ref{intS2} for the descriptions of the exponential and geometric model. Let $\tau_k$ denote the shift map $(c_n)_{n \in \bbN} \mapsto (c_{n+k})_{n \in \bbN}$ for $k \in \bbZ_+$. We assume that the joint distribution $\mu$ of $(\bfa, \bfb)$ is totally ergodic with respect to the shifts $\tau_k \times \tau_l$ for $k, l \in \bbN$. This means $\mu$ is separately ergodic under the map $\tau_k \times \tau_l$ for each $k, l \in \bbN$. 

Our results are formally stated in Section \ref{shaS3}. 
We sketch the existence and the basic properties of $g$ in Section \ref{shaS2}. We discuss stationary versions of the exponential and geometric models in Section \ref{shaS4}. We prove (\ref{shaE6}) in Section \ref{shaS5}. 

\section{Results}\label{shaS3}

Let $\E$ denote the expectation under $\mu$ (the distribution of $(\bfa, \bfb)$). Recall that $a = a_1$ and $b = b_1$. It is convenient to break (\ref{shaE6}) into the next two theorems. 

\begin{thm}
\label{shaT1}
Suppose that $\ubar{\alpha} + \ubar{\beta} > 0$ in the exponential model. Then
\begin{equation}\label{shaeq3}
g(s, t) = \inf \limits_{z \in (-\ubar{\alpha}, \ubar{\beta})}\  \left\{s\E\left[\frac{1}{a+z}\right] + t\E\left[\frac{1}{b-z}\right]\right\} \quad \text{ for } s, t > 0. \end{equation}
\end{thm}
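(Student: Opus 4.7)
The plan is to characterize $g$ via the standard ``stationary version plus exit point'' strategy, combining the Burke-type stationarity of Proposition \ref{shap5} with the ergodic theorem for $\bfa, \bfb$. For each $z \in (-\ubar{\alpha}, \ubar{\beta})$, I would first extend the weight configuration from $\bbN^2$ to $\bbZ_+^2$ by setting, independently of the bulk, $W^z(i, 0) \sim \mathrm{Exp}(a_i + z)$ for $i \ge 1$ and $W^z(0, j) \sim \mathrm{Exp}(b_j - z)$ for $j \ge 1$, with $W^z(0, 0) = 0$. Write $G^z$ for the resulting last-passage time, coupled so that the same bulk weights are used. Since the boundary weights are nonnegative, $G(m, n) \le G^z(m, n)$ pointwise. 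On the other hand, Proposition \ref{shap5} gives that the top-edge increments $G^z(i, n) - G^z(i-1, n)$ are, conditionally on $(\bfa, \bfb)$, independent $\mathrm{Exp}(a_i + z)$, and hence
\begin{align*}
G^z(m, n) = \sum_{j=1}^n W^z(0, j) + \sum_{i=1}^m \bigl[G^z(i, n) - G^z(i-1, n)\bigr].
\end{align*}
A conditional strong law of large numbers (to handle the exponential fluctuations for fixed $(\bfa, \bfb)$) combined with the ergodic theorem for $\bfa, \bfb$ then yields
\begin{align*}
\lim_{n \to \infty} \frac{G^z(\lf ns \rf, \lf nt \rf)}{n} = s\, \E\!\left[\frac{1}{a + z}\right] + t\, \E\!\left[\frac{1}{b - z}\right] =: F(z; s, t) \quad \text{a.s.},
\end{align*}
and $G \le G^z$ produces the upper bound $g(s, t) \le \inf_z F(z; s, t)$.

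For the matching lower bound I would use an exit-point decomposition. Let $Z_1(m, n), Z_2(m, n)$ count the south- and west-axis sites traversed by the maximizing path for $G^z(m, n)$ (exactly one of them is positive). Splitting this path at its exit and bounding the bulk-only remainder from $(Z_1, 1)$ or $(1, Z_2)$ to $(m, n)$ by $G(m, n)$ gives
\begin{align*}
G^z(m, n) \le G(m, n) + \sum_{i=1}^{Z_1(m, n)} W^z(i, 0) + \sum_{j=1}^{Z_2(m, n)} W^z(0, j).
\end{align*}
Choose $z^* = z^*(s, t)$ to be the minimizer of the strictly convex map $z \mapsto F(z; s, t)$ on $(-\ubar{\alpha}, \ubar{\beta})$ (assuming the infimum is attained in the interior; if it is only approached at $-\ubar{\alpha}$ or $\ubar{\beta}$, argue by monotone convergence of $F$ at the endpoint). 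The first-order condition $s\, \E[(a + z^*)^{-2}] = t\, \E[(b - z^*)^{-2}]$ is exactly the characteristic-direction balance that should force $n^{-1} Z_k(\lf ns \rf, \lf nt \rf) \to 0$ almost surely for $k = 1, 2$. Dividing the displayed inequality by $n$ and sending $n \to \infty$ then yields $g(s, t) \ge F(z^*; s, t) = \inf_z F(z; s, t)$, matching the upper bound.

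The technical heart, and the main obstacle I foresee, is proving the exit-point estimate $Z_k(\lf ns \rf, \lf nt \rf) = o(n)$ at $z = z^*$. In the homogeneous i.i.d.\ case, this follows from explicit variance identities stemming from the memoryless property of the exponential and from clean coupling arguments. In the inhomogeneous, totally ergodic setting one must superimpose ergodic-theorem-type approximations of $\bfa, \bfb$ on top of such finite-$n$ estimates, with the additional risk that $z^*(s, t)$ is close to (or escapes to) the boundary of $(-\ubar{\alpha}, \ubar{\beta})$, where the variance identities can degenerate. Dealing with this degenerate regime --- which corresponds to $(s, t)$ being in the linear portion of $g$ described in the introduction --- will likely require a separate boundary-approximation argument grafted onto the main stationary-model toolkit.
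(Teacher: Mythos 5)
Your upper bound is exactly the paper's: couple to the stationary model with boundary parameter $z$, use the Burke property (Proposition \ref{shap5}) and the ergodic theorem to compute the stationary shape $g_z(s,t) = s\E[(a+z)^{-1}] + t\E[(b-z)^{-1}]$, and conclude $g \le \inf_z g_z$ from monotonicity. The lower bound is where you diverge, and where the gap lies.

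Your plan for the matching inequality rests on the exit-point estimate $Z_k(\lf ns \rf, \lf nt\rf) = o(n)$ at the critical parameter $z^*$. You flag this yourself as the technical heart and as an obstacle, and indeed it is the entire difficulty: without it your decomposition $G^{z}(m,n) \le G(m,n) + \sum_{i \le Z_1} W^z(i,0) + \sum_{j \le Z_2} W^z(0,j)$ gives nothing, because the boundary sums could be of order $n$. In the i.i.d.\ case this estimate is proved via explicit variance computations for the stationary exponential model, which exploit the exchangeability of the increment processes; reproducing them here requires uniformity in the random inhomogeneous parameters and a separate argument near the endpoints $z^* \in \{-\ubar\alpha, \ubar\beta\}$ (the linear regions of $g$), none of which you supply. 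Your remark that one could ``argue by monotone convergence of $F$ at the endpoint'' does not address whether the exit point stays sublinear as $z^*$ degenerates. As written, the lower bound is a plan rather than a proof.

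The paper avoids the exit-point estimate entirely. It proves the identity $g_z(1,1) = \sup_{0\le t\le 1}\max\{g_z(1-t,0)+g(t,1),\ g_z(0,1-t)+g(1,t)\}$ by coarse-graining the exit position over $L$ bins and letting $n \to \infty$ for each bin and then $L \to \infty$ (equations (\ref{shaeq23})--(\ref{shaeq24})); this yields an upper bound on $g_z(1,1)$ in terms of $g$, while the reverse inequality is immediate from $g\le g_z$ and linearity of $g_z$. It then computes the boundary values $g(1,0)$, $g(0,1)$ (Lemma \ref{shal7}), recasts the identity as $B(z) = \sup_{s\ge 0}\{-sA(z) + g(s,1)\}$ (Corollary \ref{shac2}), and inverts via Fenchel--Moreau duality. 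That route never needs to locate the exit point, only to account for all possible exit fractions. If you want to rescue your approach you would need to prove the exit-point bound in the inhomogeneous totally ergodic setting, which is a substantial additional project; switching to the variational-identity-plus-duality argument sidesteps it.
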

Hence, $g$ depends on $(\bfa, \bfb)$ only through the marginal distributions $\alpha$ and $\beta$. Let us write $g^{\alpha, \beta}$ to indicate this. Replacing $z$ with $-z$ in (\ref{shaeq3}) reveals that $g^{\alpha, \beta}(s, t) = g^{\beta, \alpha}(t, s)$ for $s, t > 0$, which is expected due to the symmetric roles of $\bfa$ and $\bfb$ in the model. In particular, if $\alpha$ and $\beta$ are the same then $g(s, t) = g(t, s)$ for $s, t > 0$. Also, (by dominated convergence) the infimum can be taken over $[-\ubar{\alpha}, \ubar{\beta}]$ in (\ref{shaeq3}). When $\ubar{\alpha} = \ubar{\beta} = 0$, this interval degenerates to $\{0\}$ and we expect that $g(s, t) = s\E[1/a]+t\E[1/b]$ for $s, t > 0$. Indeed, this is true.  
\begin{thm}
\label{shaT2}
Suppose that $\ubar{\alpha} = \ubar{\beta} = 0$ in the exponential model. Then 
\begin{equation*}
g(s, t) = s\E\left[\frac{1}{a}\right] + t\E\left[\frac{1}{b}\right] \text{ for } s, t > 0. 
\end{equation*}
\end{thm}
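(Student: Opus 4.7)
The plan is to sandwich $g(s, t)$ between matching upper and lower bounds, using approximation for the lower bound and Burke's property for the upper. For the lower bound, for $\epsilon > 0$ consider the shifted sequences $\bfa^\epsilon = (a_n + \epsilon)_{n \in \bbN}$ and $\bfb^\epsilon = (b_n + \epsilon)_{n \in \bbN}$; these remain jointly stationary and totally ergodic, and their marginals $\alpha^\epsilon, \beta^\epsilon$ satisfy $\ubar{\alpha^\epsilon} = \ubar{\beta^\epsilon} = \epsilon > 0$. Coupling the original and $\epsilon$-shifted models through common $U(i, j) \sim \mathrm{Uniform}(0, 1)$ variables via $W(i, j) = -\log U(i, j)/(a_i + b_j)$ and $W^\epsilon(i, j) = -\log U(i, j)/(a_i + b_j + 2\epsilon)$ gives $W^\epsilon \le W$ pointwise, hence $G^\epsilon(m, n) \le G(m, n)$ and $g^\epsilon(s, t) \le g(s, t)$.

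Theorem \ref{shaT1} now applies to $(\bfa^\epsilon, \bfb^\epsilon)$ and yields
\begin{equation*}
g^\epsilon(s, t) = \inf_{z \in [-\epsilon, \epsilon]} \left\{s \int_0^\infty \frac{\alpha(\dd a)}{a + \epsilon + z} + t \int_0^\infty \frac{\beta(\dd b)}{b + \epsilon - z}\right\}.
\end{equation*}
For every $z \in [-\epsilon, \epsilon]$ the bounds $\epsilon \pm z \le 2\epsilon$ give $1/(a + \epsilon + z) \ge 1/(a + 2\epsilon)$ and $1/(b + \epsilon - z) \ge 1/(b + 2\epsilon)$, so $g^\epsilon(s, t) \ge s\E[1/(a + 2\epsilon)] + t\E[1/(b + 2\epsilon)]$. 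As $\epsilon \downarrow 0$, the right-hand side increases to $s\E[1/a] + t\E[1/b]$ (possibly $+\infty$) by monotone convergence, and combined with $g \ge g^\epsilon$ this yields the lower bound $g(s, t) \ge s\E[1/a] + t\E[1/b]$.

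For the matching upper bound, I would use the stationary model $\bfP_{\bfa, \bfb}^z$ of Section \ref{shaS4} at $z = 0$. Although $z = 0$ is the degenerate endpoint of the admissible interval $[-\ubar\alpha, \ubar\beta]$ when $\ubar\alpha = \ubar\beta = 0$, the boundary distributions $I_{i, 0} \sim \mathrm{Exp}(a_i)$ and $J_{0, j} \sim \mathrm{Exp}(b_j)$ remain well-defined for $\mu$-a.e.\ $(\bfa, \bfb)$ thanks to the standing atom-free assumptions $\alpha(\{0\}) = \beta(\{0\}) = 0$. The Burke property (Proposition \ref{shap5}) then yields, after averaging over $(\bfa, \bfb)$, that the expectation of the last-passage time $G^0(m, n)$ from $(0, 0)$ in this stationary model equals $m \E[1/a] + n \E[1/b]$. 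Any path from $(1, 1)$ to $(m, n)$ extends to one from $(0, 0)$ by prepending a single boundary step, so $G(m, n) \le G^0(m, n)$ almost surely. Dividing by $n$ at $m = \lfloor ns \rfloor$ and invoking the basic shape convergence from Section \ref{shaS2} produces $g(s, t) \le s\E[1/a] + t\E[1/b]$; the infinite case is trivial on this side.

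The principal technical obstacle is justifying the Burke property at the endpoint $z = 0$ when $\ubar\alpha = \ubar\beta = 0$. This reduces to verifying that the quenched derivation of Proposition \ref{shap5} relies only on the a.s.\ positivity of the boundary parameters $a_i + z$ and $b_j - z$, rather than on the strict inequalities $\ubar\alpha + z > 0$ and $\ubar\beta - z > 0$; the former condition is ensured at $z = 0$ by our atom-free hypotheses on $\alpha$ and $\beta$.
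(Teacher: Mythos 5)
Your lower bound is sound and parallels the paper's, differing only cosmetically: the paper truncates $a_n \mapsto a_n \vee \delta$ while you shift $a_n \mapsto a_n + \epsilon$, and the paper invokes stochastic dominance while you make the monotone coupling explicit via common uniforms. Both produce an approximating model with $\ubar{\alpha} + \ubar{\beta} > 0$ to which Theorem \ref{shaT1} applies and whose shape function lies below $g$. Your uniform estimate on the infimand over $z \in [-\epsilon, \epsilon]$ also neatly replaces the paper's two nested limits ($\delta \downarrow 0$ at fixed $\delta'$, then $\delta' \downarrow 0$). The concern you flag about the Burke property at the degenerate endpoint $z = 0$ is legitimate and correctly resolved: the paper extends definition (\ref{shaeq78}) to $z = 0$ precisely in the case $\ubar{\alpha} = \ubar{\beta} = 0$, and Proposition \ref{shap5} (through Lemma \ref{shal6}) only needs the individual boundary rates $a_i + z$ and $b_j - z$ to be positive, which $\alpha(\{0\}) = \beta(\{0\}) = 0$ guarantees almost surely.

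Your upper bound takes a genuinely different route, and it is an attractive one, but it has a gap as written. The paper's upper bound passes through Lemma \ref{shaL4}, which establishes $\widehat{G}(\lf ns \rf, \lf nt \rf)/n \to g_0(s, t)$ in $\bfP_{\bfa, \bfb}^0$-probability via a characteristic-function argument (under the present hypothesis $\ubar{\alpha} = \ubar{\beta} = 0$ the paper only claims convergence in probability, not a.s.). You instead compute the expectation of $\widehat{G}(m, n)$ directly from the Burke property: quenched, it equals $\sum_{i \le m} 1/a_i + \sum_{j \le n} 1/b_j$, and annealing over $(\bfa, \bfb)$ gives $m\E[1/a] + n\E[1/b]$. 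That sidesteps the proof of Lemma \ref{shaL4} entirely, a real simplification. But the final claim that "dividing by $n$ and invoking the basic shape convergence" yields $g(s, t) \le s\E[1/a] + t\E[1/b]$ does not follow by itself: Lemma \ref{shaL1} is an \emph{almost-sure} statement about $G/n$, whereas what you now hold is a bound on the \emph{expectation} of $\widehat{G}/n$, and these sit on opposite sides of the integral sign. The missing bridge is Fatou's lemma: since $G(\lf ns\rf, \lf nt\rf)/n$ is nonnegative and converges a.s.\ to the deterministic constant $g(s, t)$, Fatou under the annealed measure gives that $g(s, t)$ is at most the liminf of the annealed expectation of $G(\lf ns\rf, \lf nt\rf)/n$, which by $G \le \widehat{G}$ and your expectation computation is at most $s\E[1/a] + t\E[1/b]$. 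With that one sentence added your upper bound is complete, and arguably more elementary than the paper's.
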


We turn to the concavity and differentiability properties of $g$. In the case $\ubar{\alpha}+\ubar{\beta} > 0$, define the critical values
$c_1 = \dfrac{\E[(b+\ubar{\alpha})^{-2}]}{\E[(a-\ubar{\alpha})^{-2}]}$ and $c_2 = \dfrac{\E[(b-\ubar{\beta})^{-2}]}{\E[(a+\ubar{\beta})^{-2}]}$.  
Note that $0 \le c_1 < c_2 \le \infty$. Also, $c_1 = 0$ if and only if $\E[(a-\ubar{\alpha})^{-2}] = \infty$, and $c_2 = \infty$ if and only if $\E[(b-\ubar{\beta})^{-2}] = \infty$.
\begin{cor}
\label{shac3}
Suppose that $\ubar{\alpha} + \ubar{\beta} > 0$ in the exponential model. Then 
\begin{enumerate}[(a)] 
\item $g(s, t) = s\E[(a-\ubar{\alpha})^{-1}] + t\E[(b+\ubar{\alpha})^{-1}]$ for $s/t \le c_1$. 
\item $g(s, t) = s\E[(a+\ubar{\beta})^{-1}] + t\E[(b-\ubar{\beta})^{-1}]$ for $s/t \ge c_2$. 
\item $g(cs_1+(1-c)s_2, ct_1+(1-c)t_2) > cg(s_1, t_1) + (1-c)g(s_2, t_2)$ for $c \in (0, 1)$ and $s_1, s_2, t_1, t_2 > 0$ such that $c_1 < s_1/t_1, s_2/t_2 < c_2$ and $(s_1, t_1) \neq k (s_2, t_2)$ for any $k \in \bbR$. 
\item $g$ is continuously differentiable.
\end{enumerate}
\end{cor}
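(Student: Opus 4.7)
Throughout, write
\begin{align*}
F(z; s, t) = s\,\E\!\left[\tfrac{1}{a+z}\right] + t\,\E\!\left[\tfrac{1}{b-z}\right], \qquad z \in [-\ubar{\alpha}, \ubar{\beta}], \ s, t > 0,
\end{align*}
so that $g(s,t) = \inf_z F(z; s, t)$ by Theorem \ref{shaT1}. The plan is to analyze the minimizer of $F(\cdot; s, t)$ as a function of the ratio $s/t$, read off the three parts (a)--(c) from where this minimizer sits in $[-\ubar{\alpha}, \ubar{\beta}]$, and then deduce (d) via the envelope theorem together with a boundary matching argument.

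First I would record the routine facts: $F(z; s, t)$ is linear in $(s,t)$ for fixed $z$ (so $g$ is automatically concave as an infimum of linear functions), and by differentiating under the integral on the open interval $(-\ubar{\alpha}, \ubar{\beta})$,
\begin{align*}
F_z(z; s, t) &= -s\,\E[(a+z)^{-2}] + t\,\E[(b-z)^{-2}], \\
F_{zz}(z; s, t) &= 2s\,\E[(a+z)^{-3}] + 2t\,\E[(b-z)^{-3}] > 0,
\end{align*}
so $F(\cdot; s, t)$ is strictly convex and has at most one interior critical point. The first-order condition reads $s/t = \phi(z)$ with $\phi(z) := \E[(b-z)^{-2}]/\E[(a+z)^{-2}]$. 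Since $z \mapsto \E[(b-z)^{-2}]$ is strictly increasing and $z \mapsto \E[(a+z)^{-2}]$ is strictly decreasing on $(-\ubar{\alpha}, \ubar{\beta})$ (elementary monotone convergence), $\phi$ is strictly increasing; by monotone convergence at the endpoints (also in the case of infinite limits) $\phi(-\ubar{\alpha}^+) = c_1$ and $\phi(\ubar{\beta}^-) = c_2$. Hence for $s/t \le c_1$ we have $F_z \ge 0$ throughout, so the minimum of $F$ is at $z = -\ubar{\alpha}$, giving (a); likewise (b). Note the case $c_1 = 0$ makes (a) vacuous since $s, t > 0$ forces $s/t > 0$; the same for (b) when $c_2 = \infty$. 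I would also check the small lemma that $c_1 > 0$ forces $\E[(a-\ubar{\alpha})^{-1}] < \infty$ via the bound $(a-\ubar{\alpha})^{-1} \le 1 + (a-\ubar{\alpha})^{-2}$, so the formula in (a) is finite.

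For (c), on the open cone $c_1 < s/t < c_2$ the first-order condition has a unique solution $z^\ast(s, t) \in (-\ubar{\alpha}, \ubar{\beta})$, and $z^\ast$ depends only on $s/t$ via $z^\ast = \phi^{-1}(s/t)$. If $(s_1, t_1)$ and $(s_2, t_2)$ lie in the cone with $s_1/t_1 \neq s_2/t_2$ (the condition $(s_1, t_1) \neq k(s_2, t_2)$ is equivalent to this since both live in the open cone), then the minimizers $z_1^\ast, z_2^\ast$ are distinct by strict monotonicity of $\phi$. Let $z^\ast$ be the minimizer for the convex combination. Since $z^\ast$ cannot equal both $z_i^\ast$, there is $i$ with $z^\ast \neq z_i^\ast$, and strict convexity of $F(\cdot; s_i, t_i)$ gives $F(z^\ast; s_i, t_i) > g(s_i, t_i)$. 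Exploiting linearity of $F(z^\ast; \cdot)$ in $(s, t)$,
\begin{align*}
g\bigl(cs_1 + (1-c)s_2,\, ct_1 + (1-c)t_2\bigr) = cF(z^\ast; s_1, t_1) + (1-c)F(z^\ast; s_2, t_2) > cg(s_1, t_1) + (1-c)g(s_2, t_2),
\end{align*}
which is (c).

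Part (d) is the step I expect to require the most care. In each closed linear region (interior to $\{s/t \le c_1\}$ or $\{s/t \ge c_2\}$, when nonempty), $g$ is linear, hence $\sC^1$ on an open neighborhood of the interior. In the open strict concavity cone, the implicit function theorem applied to $F_z(z; s, t) = 0$ (using $F_{zz} > 0$) shows $z^\ast(s, t)$ is real-analytic, and by the envelope theorem $\partial_s g = \E[(a+z^\ast)^{-1}]$ and $\partial_t g = \E[(b-z^\ast)^{-1}]$. The main obstacle is continuity of these partials across the critical lines $s/t = c_1, c_2$. This follows from $z^\ast(s, t) \to -\ubar{\alpha}$ as $s/t \downarrow c_1$ (continuity of $\phi^{-1}$), together with dominated convergence applied to the integrals defining $\partial_s g$ and $\partial_t g$; the finiteness of the dominating functions is exactly the finiteness of $\E[(a-\ubar{\alpha})^{-1}]$ and $\E[(b+\ubar{\alpha})^{-1}]$ noted above when $c_1 > 0$ (if $c_1 = 0$ the critical line is absent from the interior of $\{s, t > 0\}$). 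The matching limits agree with the partial derivatives of the linear expression in (a), giving $\sC^1$ across $s/t = c_1$; the symmetric argument handles $s/t = c_2$.
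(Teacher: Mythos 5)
Your proposal is correct and follows essentially the same route as the paper: you analyze the first-order condition in $z$ via the strictly increasing function $\phi(z) = -B'(z)/A'(z)$ (the paper's $-B'/A'$, inverted to $\zeta$), read off the three regions from where the minimizer sits, derive strict concavity from the uniqueness and strict monotonicity of the minimizer, and obtain $\sC^1$ regularity via the envelope formula $\nabla g = (A(z^*), B(z^*))$ and a boundary-matching limit. The only cosmetic differences are that you package the bound $\E[(a-\ubar{\alpha})^{-1}] < \infty$ via an elementary AM--GM-style inequality where the paper cites Cauchy--Schwarz, and you invoke dominated rather than monotone convergence at the endpoints; both choices are fine.
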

By Schwarz inequality, if $c_1 > 0$ then $\E[(a-\ubar{\alpha})^{-1}] < \infty$ and if $c_2 < \infty$ then $\E[(b-\ubar{\beta})^{-1}] < \infty$. Hence, $g$ is finite and linear in $(s, t)$ in the regions $s/t \le c_1$ and $s/t \ge c_2$.  
\begin{proof}[Proof of Corollary \ref{shac3}]
Let $A(z) = \E[(a+z)^{-1}]$ for $z > -\ubar{\alpha}$ and $B(z) = \E[(b-z)^{-1}]$ for $z < \ubar{\beta}$. Using dominated convergence, $A$ and $B$ can be differentiated under the expectation. Thus, 
$A'(z) = -\E[(a+z)^{-2}],\ B'(z) = \E[(b-z)^{-2}],\ A''(z) = 2\E[(a+z)^{-3}],\ B''(z) = 2\E[(b-z)^{-3}]$, etc. Also, define $A, B$ and their derivatives at the endpoints by substituting $-\ubar{\alpha}$ and $\ubar{\beta}$ for $z$ in the preceding formulas. Then, by monotone convergence, the values at the endpoints match the appropriate one-sided limits, that is, $A(-\ubar{\alpha}) = \lim_{z \downarrow -\ubar{\alpha}} A(z) = \E[(a-\ubar{\alpha})^{-1}]$, $B(\ubar{\beta}) = \lim_{z \uparrow \ubar{\beta}} B(z) = \E[(b-\ubar{\beta})^{-1}]$, and similarly for the derivatives. 

Since $A'$ and $B'$ are increasing and continuous on $(-\ubar{\alpha}, \ubar{\beta})$, the derivative $z \mapsto sA'(z) + tB'(z)$ is positive if $s/t \le c_1$, is negative if $s/t \ge c_2$ and has a unique zero if $c_1 < s/t < c_2$. Hence, (a) and (b) follow, and if $c_1 < s/t < c_2$ then $g(s, t) = sA(z) + tB(z)$, where 
$z \in (-\ubar{\alpha}, \ubar{\beta})$ is the unique solution of the equation
\begin{equation}\label{shaeq8}-\frac{B'(z)}{A'(z)} = \frac{s}{t}.\end{equation}
Since $ -B'/A'$ is increasing and continuous, it has an increasing inverse $\zeta$ defined on $(c_1, c_2)$. Let $s_1, t_1, s_2, t_2$ be as in (c). 
%Suppose that $s_1, t_1, s_2, t_2 > 0$ such that $c_1 < s_1/t_1, s_2/t_2 < c_2$ and $(s_1, t_1) \neq k(s_2, t_2)$ for any $k \in \bbR$. 
Then $\zeta(s_1/t_1) \neq \zeta(s_2/t_2)$, which implies the strict inequality  
\begin{equation}
\label{shaeq100}
\begin{aligned}
(s_1+s_2)A(z) + (t_1+t_2)B(z) &> s_1A(\zeta(s_1/t_1)) + t_1B(\zeta(s_1/t_1))\\ 
&+s_2A(\zeta(s_2/t_2)) + t_2B(\zeta(s_2/t_2)) \\
&= g(s_1, t_1) + g(s_2, t_2) \\
\end{aligned}
\end{equation}   
for any $z \in (-\ubar{\alpha}, \ubar{\beta})$. Note that $c_1 < (s_1+s_2)/(t_1+t_2) < c_2$. Setting
$z = \zeta((s_1+s_2)/(t_1+t_2))$ in (\ref{shaeq100}) yields  $g(s_1+s_2, t_1+t_2) > g(s_1, t_1) + g(s_2, t_2)$, and (c) comes from this and homogeneity. Since $-B'/A'$ is continuously differentiable with positive derivative
(as $A'', B', B > 0$ and $A' < 0$ on $(-\ubar{\alpha}, \ubar{\beta})$), by the inverse function theorem, $\zeta$ is continuously differentiable as well. Using (\ref{shaeq8}), we compute the gradient of $g$ for $c_1 < s/t < c_2$ as 
$
\nabla g(s, t) =  (A(\zeta(s/t)), B(\zeta(s/t))), 
$
which tends to $(A(-\ubar{\alpha}), B(-\ubar{\alpha}))$ as $s/t \rightarrow c_1$ and to $(A(\ubar{\beta}), B(\ubar{\beta}))$ as $s/t \rightarrow c_2$. Hence, (d). 
\end{proof}

When $\alpha$ and $\beta$ are uniform distributions, we can compute the infimum in (\ref{shaeq3}) explicitly.  
\begin{cor}
\label{shaC1}
Let $\lambda, l, m > 0$. Suppose that $\alpha$ and $\beta$ are uniform distributions on $[\lambda/2, \lambda/2+l]$ and $[\lambda/2, \lambda/2+m]$, respectively. 
Then, for $s, t > 0$,  
\begin{align*}
g(s, t) &= \frac{s}{l}\log\left(1+\frac{l}{\lambda} + \frac{l}{\lambda} \cdot \frac{lt-ms + \sqrt{(lt-ms)^2+4st(\lambda+l)(\lambda+m)}}{2s(\lambda+m)}\right)\\
&+ \frac{t}{m}\log\left(1+\frac{m}{\lambda} + \frac{m}{\lambda} \cdot \frac{ms-lt + \sqrt{(lt-ms)^2+4st(\lambda+l)(\lambda+m)}}{2t(\lambda+l)}\right). 
\end{align*}
\end{cor}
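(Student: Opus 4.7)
Since $\ubar{\alpha} = \ubar{\beta} = \lambda/2 > 0$, Theorem \ref{shaT1} applies. The plan is to (i) evaluate the expectations in closed form, (ii) locate the minimizer $z^\ast \in (-\lambda/2, \lambda/2)$ via a quadratic equation, and (iii) simplify the resulting expression into the stated form.

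First, direct integration of the uniform densities gives
\[
\E\Bigl[\tfrac{1}{a+z}\Bigr] = \tfrac{1}{l}\log\!\Bigl(\tfrac{\lambda/2+l+z}{\lambda/2+z}\Bigr) = \tfrac{1}{l}\log\!\Bigl(1 + \tfrac{l}{\lambda/2+z}\Bigr),
\]
and similarly $\E[(b-z)^{-1}] = \tfrac{1}{m}\log(1+m/(\lambda/2-z))$, both valid for $z \in (-\lambda/2, \lambda/2)$. So Theorem \ref{shaT1} asks us to minimize
\[
F(z) = \tfrac{s}{l}\log\!\Bigl(1 + \tfrac{l}{\lambda/2+z}\Bigr) + \tfrac{t}{m}\log\!\Bigl(1 + \tfrac{m}{\lambda/2-z}\Bigr).
\]

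Next, I would compute $F'(z)$ and set it to zero. A short calculation telescopes the logarithmic derivatives, yielding the critical-point equation
\[
\frac{s}{(\lambda/2+z)(\lambda/2+l+z)} \;=\; \frac{t}{(\lambda/2-z)(\lambda/2+m-z)}.
\]
Cross-multiplying produces a quadratic in $z$. To tame the algebra I would substitute $p = \lambda/2+z$ and $q = \lambda/2-z$, so that $p+q = \lambda$ and the equation becomes $sq(q+m) = tp(p+l)$. Eliminating $q = \lambda - p$ gives a quadratic in $p$ whose discriminant is exactly $(lt-ms)^2 + 4st(\lambda+l)(\lambda+m)$; call its square root $R$. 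The root of interest (the one in $(0,\lambda)$, corresponding to $z^\ast \in (-\lambda/2, \lambda/2)$) can be written as
\[
p^\ast = \lambda \cdot \frac{2s(\lambda+m)}{2s(\lambda+m) + l\bigl[s(2\lambda+m) + lt + R\bigr] / \text{(...)}},
\]
or equivalently, after simplification, $1 + l/p^\ast$ matches the first log argument in the statement; a symmetric computation handles $1 + m/q^\ast$.

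The cleanest route to the final form, which I would execute carefully, is as follows. Starting from the critical-point relation $sq(q+m) = tp(p+l)$ with $p+q=\lambda$, solve for the ratio $l/p^\ast$ directly: write the quadratic for $p$, factor out $\lambda$, and use the identity
\[
1 + \frac{l}{p^\ast} \;=\; 1 + \frac{l}{\lambda}\Bigl(1 + \frac{lt-ms+R}{2s(\lambda+m)}\Bigr),
\]
which is what must be verified. This last identity is proven by clearing denominators and using the defining quadratic, which amounts to checking that $R^2 = (lt-ms)^2 + 4st(\lambda+l)(\lambda+m)$. The analogous identity for $1 + m/q^\ast$ follows from the symmetry $(s,l,p) \leftrightarrow (t,m,q)$ with $z \mapsto -z$. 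Substituting both into $g(s,t) = (s/l)\log(1+l/p^\ast) + (t/m)\log(1+m/q^\ast)$ yields the claimed formula.

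The main obstacle is purely algebraic: organizing the quadratic so that its root presents itself in precisely the asymmetric form $(lt-ms+R)/(2s(\lambda+m))$ with the prefactor $l/\lambda$ appearing naturally. One has to be careful about sign conventions when extracting the correct root of the discriminant --- the right root is determined by the requirement $p^\ast \in (0,\lambda)$, equivalently by continuity in $(s,t)$ reducing to the symmetric case $s=t$, $l=m$ where one can verify $p^\ast = q^\ast = \lambda/2$ directly. With that pinned down the verification is mechanical.
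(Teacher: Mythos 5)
Your proposal is correct and follows essentially the same route as the paper: apply Theorem~\ref{shaT1}, evaluate $A(z)=\E[(a+z)^{-1}]$ and $B(z)=\E[(b-z)^{-1}]$ by direct integration, set the derivative to zero, solve the resulting quadratic, and substitute. The paper works directly in the variable $z$, obtaining the critical point $z = \frac{\lambda}{2}\cdot\frac{s(\lambda+2m)-t(\lambda+2l)}{s(\lambda+m)+t(\lambda+l)+\sqrt{(sm+tl)^2+4st\lambda(\lambda+m+l)}}$, whose discriminant is algebraically identical to your $R^2=(lt-ms)^2+4st(\lambda+l)(\lambda+m)$.

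One small caveat: your intermediate formula for $p^\ast$ contains a dangling ``(...)'' and the quadratic in $p$ has leading coefficient $(s-t)$, which does not factor cleanly and degenerates at $s=t$. The cleanest way to realize your plan --- and to verify the identity $1+l/p^\ast = 1+\tfrac{l}{\lambda}\bigl(1+\tfrac{lt-ms+R}{2s(\lambda+m)}\bigr)$, equivalently $q^\ast/p^\ast = \tfrac{lt-ms+R}{2s(\lambda+m)}$ --- is to write the critical-point equation $sq(q+m)=tp(p+l)$ with $p+q=\lambda$ in terms of the ratio $r=q/p$. Using $p=\lambda/(1+r)$, $q=\lambda r/(1+r)$ gives the quadratic
\[
s(\lambda+m)\,r^2 + (sm-tl)\,r - t(\lambda+l) = 0,
\]
whose discriminant is exactly $R^2$ and whose positive root is $r = \tfrac{lt-ms+R}{2s(\lambda+m)}$, with no degenerate leading coefficient and no sign ambiguity (positivity of $r$ forces the $+$ root). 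This gives your key identity in one line, and the symmetric identity for $1+m/q^\ast$ follows by swapping $(s,l,p)\leftrightarrow(t,m,q)$. With that fix the argument is complete.
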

\begin{proof}
Since $\alpha$ and $\beta$ are uniform distributions,
\begin{align*}
A(z) = \E\left[\frac{1}{a+z}\right] = \frac{1}{l}\log\bigg(1+\frac{l}{z+\lambda/2}\bigg) \quad B(z) = \E\left[\frac{1}{b-z}\right] = \frac{1}{m}\log\bigg(1+\frac{m}{-z+\lambda/2}\bigg)
\end{align*} 
for $z \in (-\lambda/2, \lambda/2)$. We compute the derivatives as 
\begin{align*}
A'(z) = -\frac{1}{(z+\lambda/2)(z+\lambda/2+l)} \quad B'(z) = \frac{1}{(-z+\lambda/2)(-z+\lambda/2+m)}.
\end{align*}
Because $A'(-\lambda/2) = -\infty$ and $B'(\lambda/2) = \infty$, we have $c_1 = 0$ and $c_2 = \infty$. Also, (\ref{shaeq8}) leads to 
\begin{align*}
(s-t)z^2-(s(\lambda+m)+t(\lambda+l))z +s\lambda(\lambda+2m)/4-t\lambda(\lambda+2l)/4 = 0. 
\end{align*}
It follows from the discriminant formula that the solution in the interval $(-\lambda/2, \lambda/2)$ is 
\begin{align*}
z = \frac{\lambda}{2} \frac{s(\lambda+2m)-t(\lambda+2l)}{s(\lambda+m)+t(\lambda+l)+ \sqrt{(sm+tl)^2+4st\lambda(\lambda+m+l})}. 
\end{align*}
Inserting this into $g(s, t) = sA(z)+tB(z)$ and some elementary algebra yield the result.  
\end{proof}
The preceding argument can be repeated when $l = 0$ or $m = 0$. In these cases, $\alpha$ and $\beta$ are understood as point masses at $\lambda/2$. For instance, when $l = 0$ and $m > 0$, we obtain 
\begin{align*}
g(s, t) &= \frac{2s\lambda+ms+\sqrt{(ms)^2+4st\lambda(\lambda+m)}}{2\lambda(\lambda+m)}\\
&+\frac{t}{m}\log\left(1+\frac{m}{\lambda} + \frac{m}{\lambda} \cdot \frac{ms + \sqrt{(ms)^2+4st\lambda(\lambda+m)}}{2t\lambda}\right)
\end{align*}
When $l = 0$ and $m = 0$, we recover (\ref{shaE1}). 

We can also determine $g$ along the diagonal when $\alpha$ and $\beta$ are the same. 
\begin{cor}
Suppose that $\alpha = \beta$. Then $g(s, s) = 2s\E\left[\dfrac{1}{a}\right]$ for $s > 0$. 
\end{cor}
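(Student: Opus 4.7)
The plan is to apply Theorems \ref{shaT1} and \ref{shaT2} together with the symmetry coming from $\alpha = \beta$ in order to identify $z = 0$ as the minimizer of the one-dimensional variational problem defining $g(s,s)$.

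First, handle the degenerate case $\ubar{\alpha} = \ubar{\beta} = 0$. Theorem \ref{shaT2} gives $g(s,s) = s\E[1/a] + s\E[1/b]$, and since $\alpha = \beta$ implies $\E[1/b] = \E[1/a]$, the claim follows at once.

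Now assume $\ubar{\alpha} = \ubar{\beta} > 0$ and write $u := \ubar{\alpha} = \ubar{\beta}$. Let $A(z) = \E[(a+z)^{-1}]$ and $B(z) = \E[(b-z)^{-1}]$ as in the proof of Corollary \ref{shac3}, so that $g(s,s) = s \inf_{z \in [-u, u]} (A(z) + B(z))$ by Theorem \ref{shaT1}. The assumption $\alpha = \beta$ yields the key identity $B(z) = A(-z)$, and by differentiating (which is justified under the expectation by dominated convergence, exactly as in the proof of Corollary \ref{shac3}) one obtains $B'(z) = -A'(-z)$. In particular, $B'(0) = -A'(0)$, so the equation $-B'(z)/A'(z) = 1$ is satisfied at $z = 0$.

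To conclude, I would invoke the uniqueness argument from the proof of Corollary \ref{shac3}: the function $-B'/A'$ is strictly increasing on $(-u, u)$, so $z = 0$ is the unique critical point of $z \mapsto A(z) + B(z)$ in that interval. It remains to verify that $(s,s)$ lies in the strictly concave region $c_1 < s/t < c_2$, i.e.\ that $c_1 < 1 < c_2$. With $\alpha = \beta$ one has $c_2 = \E[(a-u)^{-2}]/\E[(a+u)^{-2}]$ and $c_1 = 1/c_2$, and since $a \ge u$ a.s.\ with $u > 0$, the inequality $(a+u)^{-2} < (a-u)^{-2}$ holds $\alpha$-a.s.\ (with the convention $1/0 = \infty$ covering the atom at $u$), so $c_2 > 1 > c_1$. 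Hence the minimizer in (\ref{shaeq3}) is $z = 0$, and evaluating gives $g(s,s) = s A(0) + s B(0) = 2s\E[1/a]$.

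The only slightly delicate step is the verification that $1 \in (c_1, c_2)$, which prevents the $(s,s)$ ray from falling in a linear region where the minimizer would sit at an endpoint; everything else is a direct symmetry argument on the variational formula.
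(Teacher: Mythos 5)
Your proof is correct, but it takes a longer and more machinery-heavy route than the paper's. The paper observes the elementary pointwise inequality
\begin{align*}
\frac{1}{a+z}+\frac{1}{a-z} = \frac{2a}{a^2-z^2} \ge \frac{2}{a} \quad \text{ for } |z| \le \ubar{\alpha},
\end{align*}
with equality iff $z = 0$, and then simply takes the expectation and infimum over $z$ in the variational formula (\ref{shaeq3}): since $\alpha = \beta$ implies $B(z) = \E[(a-z)^{-1}]$, this at once pins down the minimizer and gives $g(s,s) = 2s\E[1/a]$. No appeal to $c_1$, $c_2$, or the monotonicity of $-B'/A'$ is needed. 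Your route instead reuses the apparatus from the proof of Corollary~\ref{shac3}: you identify $z = 0$ as a critical point from the symmetry $B'(z) = -A'(-z)$, invoke strict monotonicity of $-B'/A'$ for uniqueness, and check $c_1 < 1 < c_2$ to guarantee the minimizer is interior. This is all sound, and it has the small advantage of explicitly treating the degenerate case $\ubar{\alpha} = 0$ via Theorem~\ref{shaT2}, which the paper's one-line proof glosses over. A further simplification available to you: once you see that $A+B$ is strictly convex (as $A'' , B'' > 0$) with $(A+B)'(0) = 0$, the critical-value check $c_1 < 1 < c_2$ becomes redundant, since a strictly convex function with an interior critical point attains its minimum there.
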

\begin{proof}
We have 
$
(a+z)^{-1}+(a-z)^{-1} \ge 2a^{-1}
$
for $|z| \le \ubar{\alpha}$ with equality if only if $z = 0$. Therefore, 
\begin{align*}g(s, s) = s \inf_{z \in (-\ubar{\alpha}, \ubar{\alpha})} \E\left[\frac{1}{a+z}+\frac{1}{a-z}\right] = 2s \E\left[\frac{1}{a}\right]. \qquad \qedhere
\end{align*}
\end{proof}

We only report the analogous results for the geometric model. 
\begin{thm}
\label{shaT3}
Suppose that $\bar{\alpha}\bar{\beta} < 1$ in the geometric model. Then
\begin{equation*}
g(s, t) = \inf \limits_{z \in (\bar{\alpha}, 1/\bar{\beta})}\  \left\{s\E\left[\frac{a/z}{1-a/z}\right] + t\E\left[\frac{bz}{1-bz}\right]\right\} \quad \text{ for } s, t > 0. \end{equation*}
\end{thm}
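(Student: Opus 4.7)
The plan is to mirror the argument for Theorem \ref{shaT1}, substituting the relevant geometric distributions for the exponential ones. For each parameter $z \in (\bar{\alpha}, 1/\bar{\beta})$, I would introduce a stationary extension of the model on $\bbZ_+^2$ by adjoining independent (conditional on $(\bfa,\bfb)$) boundary weights
\[
\bfP^z_{\bfa,\bfb}(W(i,0) \ge k) = (a_i/z)^k, \qquad \bfP^z_{\bfa,\bfb}(W(0,j) \ge k) = (b_j z)^k, \qquad W(0,0)=0,
\]
for $i,j \in \bbN$ and $k \in \bbZ_+$, leaving the bulk weights unchanged. The hypothesis $z \in (\bar{\alpha}, 1/\bar{\beta})$ makes $a_i/z, b_j z \in (0,1)$ a.s., so these are bona fide geometric parameters.

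The first technical step is the Burke property for this extension (the geometric analogue of Proposition \ref{shap5}): along any down-right boundary of an up-right region, the horizontal and vertical increments of $G$ are mutually independent geometrics with parameters inherited from the corresponding axis. This is standard once one identifies the recursion with a Geom/Geom/1 queue with geometric interarrival and service times and invokes the queueing output theorem; the ergodicity of $(\bfa,\bfb)$ plays no role at this stage, since the statement is conditional on $(\bfa,\bfb)$. Taking expectations along the $L$-shaped boundary path through $(m,0)\to(m,n)$ gives the exact identity
\[
\E^z_{\bfa,\bfb}[G(m,n)] = \sum_{i=1}^{m} \frac{a_i/z}{1-a_i/z} + \sum_{j=1}^{n} \frac{b_j z}{1-b_j z}.
\]

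The upper bound in Theorem \ref{shaT3} then follows by coupling: the original (bulk-only) last-passage time is dominated by its stationary counterpart, since adding nonnegative boundary weights can only increase $G(m,n)$. Dividing by $n$, applying the ergodic theorem to $\bfa$ and $\bfb$ (to handle the two Ces\`aro averages above), and taking $(m,n) = (\lfloor ns \rfloor, \lfloor nt \rfloor)$ yields
\[
g(s,t) \le s\,\E\!\left[\frac{a/z}{1-a/z}\right] + t\,\E\!\left[\frac{bz}{1-bz}\right]
\]
for every $z \in (\bar{\alpha}, 1/\bar{\beta})$, and hence for the infimum.

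The main obstacle is the matching lower bound, as in the exponential case. I would follow Sepp\"al\"ainen's competition-interface / variational strategy (used for Theorem \ref{shaT1}): fix $(s,t)$ with $c_1 < s/t < c_2$, let $z^\ast=z^\ast(s,t)$ denote the unique critical point of the variational expression, and consider the $\bfP^{z^\ast}_{\bfa,\bfb}$-geodesic from $(0,0)$ to $(\lfloor ns\rfloor,\lfloor nt\rfloor)$. If one had strict inequality $g(s,t) < sA(z^\ast)+tB(z^\ast)$, then an asymptotically positive fraction of the boundary weight harvested by the exact formula above would be ``wasted'', forcing the geodesic in the stationary model to exit the boundary at a macroscopic distance; analyzing the exit point and using the exact boundary increment statistics (plus ergodicity in place of i.i.d.) rules this out and produces the reverse inequality. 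The boundary cases $s/t \le c_1$ and $s/t \ge c_2$ are handled by monotonicity, dominated convergence as $z \downarrow \bar{\alpha}$ or $z \uparrow 1/\bar{\beta}$, and direct estimates on axis-hugging paths, exactly as in Corollary \ref{shac3}(a)--(b). The delicate part throughout is upgrading the i.i.d. arguments to the merely totally ergodic setting, which requires that the Ces\`aro averages of $\bfa$ and $\bfb$ converge uniformly enough to preserve the equality at $z^\ast$.
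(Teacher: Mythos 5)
Your scaffolding matches the paper's: the stationary extension $\bfP^z_{\bfa,\bfb}$ with geometric boundary weights is exactly (\ref{shaeq79}), the Burke property is Proposition \ref{shap5} via Lemma \ref{shal6}(\romannumeral2), and the domination $G \le \widehat{G}$ gives the upper bound $g \le g_z$. The law of large numbers for the boundary sums is Lemma \ref{shaL4}, which the paper proves via a characteristic-function argument because the sequences are only ergodic, not i.i.d.; your appeal to ``the ergodic theorem'' for this is optimistic but in the right spirit.

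The genuine divergence is in the lower bound, and here your sketch has a real gap. You propose a proof by contradiction for the characteristic $z^\ast$: if $g(s,t) < g_{z^\ast}(s,t)$, the stationary geodesic's exit point must be macroscopic, and you claim ``analyzing the exit point and using the exact boundary increment statistics rules this out.'' You do not say what the contradiction is, and the argument as stated does not close: a macroscopic exit point $c>0$ simply means $g(s,t) < g_{z^\ast}(s,t)$ but $g(s-c,t) = g_{z^\ast}(s-c, t)$, and nothing immediately prevents this. Controlling the exit point for the characteristic $z^\ast$ requires either fluctuation estimates or a separate fixed-point argument and would be a substantial project. The paper avoids this entirely by working with \emph{all} $z$ simultaneously: it establishes the sup-decomposition identity (\ref{shaE8.1}) $g_z(1,1) = \sup_{0 \le t \le 1}\max\{g_z(1-t,0)+g(t,1),\, g_z(0,1-t)+g(1,t)\}$, computes the boundary values $g(1,0)$ and $g(0,1)$ directly via stochastic domination (Lemma \ref{shal7}), massages (\ref{shaE8.1}) into the Legendre-transform relation $B(z) = \sup_{s \ge 0}\{-sA(z) + g(s,1)\}$ (Corollary \ref{shac2}), and inverts it with the Fenchel--Moreau theorem. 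This treats every direction $(s,t)>0$ at once, so there is no need to split into the strictly concave cone and the linear wedges. Your plan to handle the linear wedges ``exactly as in Corollary \ref{shac3}(a)--(b)'' is circular: that corollary is derived \emph{from} the variational formula you are trying to prove, by evaluating the infimum at the endpoints $z = \bar{\alpha}$ and $z = 1/\bar{\beta}$. You would need the direct boundary computation of Lemma \ref{shal7} instead.
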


\begin{thm}
\label{shaT4}
Suppose that $\bar{\alpha} = \bar{\beta} = 1$ in the geometric model. Then 
\begin{equation*}
g(s, t) = s\E\left[\frac{a}{1-a}\right] + t\E\left[\frac{b}{1-b}\right] \quad \text{ for } s, t > 0. 
\end{equation*}
\end{thm}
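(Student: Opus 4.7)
I would prove Theorem \ref{shaT4} as the boundary degeneration $\bar\alpha = \bar\beta = 1$ of Theorem \ref{shaT3}, in parallel with the exponential case Theorem \ref{shaT2}, by sandwiching $g$ between matching bounds.

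\textbf{Lower bound.} The plan is to construct an L-shaped path that passes through a column with $a_{i^*}$ close to $1$ and a row with $b_{j^*}$ close to $1$. For $\epsilon > 0$ small, set $i^*_n = \min\{i \ge 1 : a_i \in J_{\epsilon}\}$ and $j^*_n = \max\{1 \le j \le \lfloor nt \rfloor : b_j \in J_{\epsilon}\}$, where $J_\epsilon = (1-\epsilon, 1-\epsilon')$ for some $\epsilon' \in (0, \epsilon)$ chosen so that both $\alpha(J_\epsilon) > 0$ and $\beta(J_\epsilon) > 0$ (such an $\epsilon'$ exists by countable additivity since $\bar\alpha = \bar\beta = 1$ and $\alpha(\{1\}) = \beta(\{1\}) = 0$). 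Ergodicity of $\bfa, \bfb$ gives $i^*_n/n \to 0$ and $j^*_n/n \to t$ almost surely. Take the path $\pi_n : (1,1) \to (i^*_n, 1) \to (i^*_n, j^*_n) \to (\lfloor ns\rfloor, j^*_n) \to (\lfloor ns\rfloor, \lfloor nt\rfloor)$ and split its weight into four sums $X_1, X_2, X_3, X_4$ of conditionally independent geometric variables. By monotonicity of $x \mapsto xy/(1-xy)$ in $x$ and $a_{i^*_n}, b_{j^*_n} > 1-\epsilon$, the conditional means of the two long middle segments $X_2, X_3$ dominate $\sum_{l \le j^*_n} (1-\epsilon)b_l/(1-(1-\epsilon)b_l)$ and $\sum_{k \le \lfloor ns \rfloor} (1-\epsilon)a_k/(1-(1-\epsilon)a_k)$, whose integrands are bounded by $1/\epsilon$. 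The narrow choice of $J_\epsilon$ keeps $a_{i^*_n}, b_{j^*_n}$ bounded away from $1$, so the conditional variances of $X_2, X_3$ grow only linearly in $n$; Chebyshev plus the LLN for $\bfa, \bfb$ then yield $(X_2 + X_3)/n \to s\,\E[(1-\epsilon)a/(1-(1-\epsilon)a)] + t\,\E[(1-\epsilon)b/(1-(1-\epsilon)b)]$ in probability, while the short segments satisfy $X_1/n, X_4/n \to 0$. Combining with $G(\lfloor ns\rfloor,\lfloor nt\rfloor) \ge \text{weight}(\pi_n)$ and the a.s.\ convergence $G/n \to g$, sending $\epsilon \downarrow 0$, and invoking monotone convergence completes the lower bound.

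\textbf{Upper bound.} For the matching inequality I would couple with the Burke-stationary extension of the geometric model at $z = 1$. Extend the lattice to $\bbZ_+^2$ with boundary weights $W^*(i, 0) \sim \mathrm{Geom}(1-a_i)$ and $W^*(0, j) \sim \mathrm{Geom}(1-b_j)$, independent given $(\bfa, \bfb)$; these are a.s.\ finite because $\alpha(\{1\}) = \beta(\{1\}) = 0$. Monotonicity of the growth recursion gives $G^*(M, N) \ge G(M, N)$ for the augmented last-passage time $G^*$ from $(0, 0)$. Using the Burke-type stationarity of Proposition \ref{shap5}---whose algebraic identities remain valid at $z = 1$ since $1-a_i, 1-b_j \in (0, 1)$---combined with the LLN for the i.i.d.\ boundary sums, one obtains $G^*(\lfloor ns\rfloor, \lfloor nt\rfloor)/n \to s\,\E[a/(1-a)] + t\,\E[b/(1-b)]$ almost surely, with the convention that a divergent right-hand side makes the bound vacuous. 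Hence $g(s, t) \le s\,\E[a/(1-a)] + t\,\E[b/(1-b)]$.

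\textbf{Main obstacle.} The hard part will be justifying that Proposition \ref{shap5} and the attendant LLN for $G^*$ extend to the boundary value $z = 1$, which sits outside the open admissible interval $(\bar\alpha, 1/\bar\beta)$ of Theorem \ref{shaT3} (which degenerates to a point here); the boundary means $a_i/(1-a_i), b_j/(1-b_j)$ can be arbitrarily large on positive-probability events, complicating direct concentration. A cleaner fallback is to truncate via $\tilde a_i = a_i \wedge (1-\epsilon)$, $\tilde b_j = b_j \wedge (1-\epsilon)$, apply Theorem \ref{shaT3} with $z = 1 \in (1-\epsilon, 1/(1-\epsilon))$ to obtain $\tilde g^{(\epsilon)}(s, t) \le s\,\E[\tilde a/(1-\tilde a)] + t\,\E[\tilde b/(1-\tilde b)]$, and then establish $\tilde g^{(\epsilon)} \uparrow g$ by uniformly controlling the excess last-passage contribution from cells where $a_i > 1-\epsilon$ or $b_j > 1-\epsilon$, whose joint density in $\bfa, \bfb$ vanishes as $\epsilon \to 0$.
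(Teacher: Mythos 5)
Your lower bound via the explicit L-shaped path is a genuinely different route from what the paper does, and it is sound. The paper (by analogy with its proof of Theorem \ref{shaT2}) truncates the parameters, $\tilde{a}_i = a_i \wedge (1-\delta)$, $\tilde{b}_j = b_j \wedge (1-\delta)$, invokes Theorem \ref{shaT3} for the truncated model, and drives the truncation parameter to zero through the variational formula. Your approach instead constructs an explicit L-path through a column and a row whose parameters lie in $J_\epsilon = (1-\epsilon, 1-\epsilon')$, applies the ergodic theorem and a variance bound (which is $O(1)$ because the $\epsilon'$-cutoff keeps $a_{i^*}b_j$ bounded away from $1$), and then lets $\epsilon \downarrow 0$ by monotone convergence. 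What the paper's route buys is brevity once Theorem \ref{shaT3} is in hand; what yours buys is an explicit, purely path-based heuristic for why the formula is achieved without routing through the variational theorem. Both handle the divergent case $\E[a/(1-a)] = \infty$ correctly.

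Your primary upper bound, $g \le g_1$ via the stationary extension, is exactly the paper's: Lemma \ref{shaL4} is stated to cover $z = 1$ when $\bar\alpha = \bar\beta = 1$ under the integrability hypothesis, and the note following (\ref{shaeq79}) explicitly makes the stationary measure $\bfP_{\bfa,\bfb}^z$ sensible at $z = 1$. Your ``main obstacle'' paragraph worries about extending Proposition \ref{shap5} and a law of large numbers to the boundary value $z = 1$, but this is already resolved: Proposition \ref{shap5} has no restriction on $z$, and the proof of Lemma \ref{shaL4} proceeds via characteristic functions and the ergodic theorem rather than second-moment concentration, so unbounded variance of the boundary weights does not enter. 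When $\E[a/(1-a)+b/(1-b)] = \infty$ the upper bound is vacuous, which is consistent with $g = \infty$, and your lower bound supplies the matching divergence.

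Your ``cleaner fallback'' for the upper bound, however, contains a genuine gap. You truncate, obtain $\tilde{g}^{(\epsilon)} \le s\E[\tilde{a}/(1-\tilde{a})] + t\E[\tilde{b}/(1-\tilde{b})]$ from Theorem \ref{shaT3}, and then want to pass to the limit by claiming $\tilde{g}^{(\epsilon)} \uparrow g$. But truncation lowers the weights, so $\tilde{g}^{(\epsilon)} \le g$ is automatic while $\lim_{\epsilon \to 0} \tilde{g}^{(\epsilon)} = g$ is not; the step of ``uniformly controlling the excess contribution from cells with $a_i > 1-\epsilon$'' is precisely the hard part, and no argument is sketched. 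The paper sidesteps this entirely by using the truncated model only for the \emph{lower} bound: there stochastic dominance points the right way ($g \ge g^{\alpha_\delta, \beta_\delta} = \inf_z \{\cdots\}$), and one only needs the right-hand side, not $\tilde{g}^{(\epsilon)}$ itself, to converge, which follows from monotone convergence. Since your primary upper bound already works, the fallback can be dropped, but as written it does not close.
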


\begin{cor}
\label{shaC2}
Let $q \in (0, 1)$ and $0 < l, m < \sqrt{q}$. Choose $\alpha$ and $\beta$ as the distributions with densities proportional to $x \mapsto 1/x$ on the intervals $[\sqrt{q}-l, \sqrt{q}]$ and $[\sqrt{q}-m, \sqrt{q}]$, respectively. 
Then 
\begin{align*}
g(s, t) &= \frac{s}{L}\log\left(1 + \frac{l\sqrt{q}}{1-q}+\frac{l}{1-q}\frac{ly-mx+\sqrt{\Delta}}{2x(1+m\sqrt{q}-q)}\right)\\
&+ \frac{t}{M}\log\left(1 + \frac{m\sqrt{q}}{1-q}+\frac{m}{1-q}\frac{mx-ly+\sqrt{\Delta}}{2y(1+l\sqrt{q}-q)}\right)
\end{align*}
for $s, t > 0$, where $x = slM$, $y = tmL$, $L = \log\left(\dfrac{\sqrt{q}}{\sqrt{q}-l}\right)$, $M = \log\left(\dfrac{\sqrt{q}}{\sqrt{q}-m}\right)$ and 
\begin{align*}
\Delta = (ly-mx)^2+4xy(1+m\sqrt{q}-q)(1+l\sqrt{q}-q)
\end{align*}
\end{cor}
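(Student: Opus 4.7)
The plan is to follow the template of Corollary \ref{shaC1}, but with the geometric integrands from Theorem \ref{shaT3} and a clean M\"obius change of variable in the final step. Writing $p = \sqrt{q}$ for brevity, the theorem gives $g(s,t) = \inf_{z \in (p,1/p)}\{sA(z) + tB(z)\}$ with $A(z) = \E[a/(z-a)]$ and $B(z) = \E[bz/(1-bz)]$. The choice of densities proportional to $1/a$ and $1/b$ is engineered precisely so that the density factor cancels the $a$ (resp.\ $b$) in the numerator of the integrand, reducing both integrals to elementary logarithms:
\begin{equation*}
A(z) = \frac{1}{L}\log\!\left(1 + \frac{l}{z-p}\right), \qquad B(z) = \frac{1}{M}\log\!\left(1 + \frac{mz}{1-pz}\right).
\end{equation*}
Differentiating gives $A'(z) = -l/[L(z-p)(z-p+l)]$ and $B'(z) = m/[M(1-pz)(1-(p-m)z)]$; both blow up at the respective endpoints, so the critical values for the geometric analog of Corollary \ref{shac3} are $c_1 = 0$ and $c_2 = \infty$, and the infimum is attained at the unique interior root of $sA'(z) + tB'(z) = 0$. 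With the abbreviations $x = slM$, $y = tmL$, this equation becomes the quadratic $y(z-p)(z-p+l) = x(1-pz)(1-(p-m)z)$.

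The key step is the M\"obius substitution $\zeta = (z-p)/(1-pz)$, which carries $(p, 1/p)$ bijectively onto $(0,\infty)$. A short calculation shows
\begin{equation*}
z - p = \frac{\zeta(1-q)}{1+p\zeta}, \quad 1 - pz = \frac{1-q}{1+p\zeta}, \quad z - p + l = \frac{v\zeta + l}{1+p\zeta}, \quad 1 - (p-m)z = \frac{m\zeta + u}{1+p\zeta},
\end{equation*}
where $u = 1+mp-q$ and $v = 1+lp-q$. Substituting and clearing $(1+p\zeta)^2/(1-q)$ collapses the quadratic to $yv\,\zeta^2 + (yl-xm)\zeta - xu = 0$, whose discriminant is exactly $\Delta = (ly-mx)^2 + 4xy uv$. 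Since the constant term $-xu$ is negative, exactly one root is positive, namely $\zeta = (mx - ly + \sqrt{\Delta})/(2yv)$; this is the one corresponding to $z \in (p, 1/p)$.

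Finally, the arguments of the two logarithms in the answer come out naturally in terms of $\zeta$. Indeed, $1 + l/(z-p) = 1 + \tfrac{l}{1-q}(p + 1/\zeta)$ and $1 + mz/(1-pz) = 1 + \tfrac{m}{1-q}(p + \zeta)$. Inserting $\zeta$ directly into the second expression matches the second logarithm in the corollary, and rationalizing via $\Delta - (mx-ly)^2 = 4xyuv$ yields $1/\zeta = (ly - mx + \sqrt{\Delta})/(2xu)$, which matches the first logarithm. The only real obstacle is algebraic bookkeeping: expanding the quadratic, checking that its discriminant simplifies to the stated $\Delta$, and then verifying the rationalization of $1/\zeta$. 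The M\"obius substitution is what keeps this bookkeeping short and explains the symmetric appearance of $u = 1+m\sqrt{q}-q$ and $v = 1+l\sqrt{q}-q$ in the final formula.
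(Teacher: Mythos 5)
The paper states Corollary \ref{shaC2} without proof, so there is no in-paper argument to compare against; the natural template is the proof of Corollary \ref{shaC1} for the exponential model, which you follow faithfully. I checked the details: the normalizations $L$, $M$, the closed forms for $A$ and $B$, the derivatives and their blow-ups at the endpoints (so the infimum is interior and the critical equation has a unique root), the reduction to $y(z-p)(z-p+l) = x(1-pz)(1-(p-m)z)$, the M\"obius identities, the collapsed quadratic $yv\zeta^2 + (yl-xm)\zeta - xu = 0$, the sign argument selecting the positive root, and the rationalization $1/\zeta = (ly-mx+\sqrt{\Delta})/(2xu)$ via $\Delta - (mx-ly)^2 = 4xyuv$ are all correct, and substituting back produces exactly the displayed answer. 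The substitution $\zeta = (z-p)/(1-pz)$ is a genuine improvement over solving the quadratic directly in $z$ (which is what the paper does for the exponential case in Corollary \ref{shaC1}): it maps $(p, 1/p)$ onto $(0,\infty)$, makes the two constants $u = 1+m\sqrt{q}-q$ and $v = 1+l\sqrt{q}-q$ appear automatically, and explains why the final answer takes the symmetric form it does. This is correct and, if anything, is a cleaner derivation than a brute-force imitation of the exponential computation would give.
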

%Can we deduce this heuristically from the exponential model? 

\begin{cor}
Suppose that $\alpha = \beta$. Then $g(s, s) = 2s \E\left[\dfrac{a}{1-a}\right]$ for $s > 0$. 
\end{cor}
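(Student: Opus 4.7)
The proof should run parallel to the analogous corollary for the exponential model, but using the multiplicative symmetry $z \mapsto 1/z$ in place of the additive symmetry $z \mapsto -z$. The strategy is to split on whether $\bar{\alpha} < 1$ or $\bar{\alpha} = 1$, apply Theorem \ref{shaT3} or Theorem \ref{shaT4} respectively, and in the nontrivial case reduce to a pointwise inequality in the integrand whose minimizer over $z$ is $z = 1$.

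\textbf{Case $\bar{\alpha} = 1$.} Here $\bar{\alpha} = \bar{\beta} = 1$, so Theorem \ref{shaT4} applies directly with $\alpha = \beta$, and
\begin{equation*}
g(s, s) = s\E\left[\frac{a}{1-a}\right] + s\E\left[\frac{b}{1-b}\right] = 2s\E\left[\frac{a}{1-a}\right],
\end{equation*}
since $a$ and $b$ share the distribution $\alpha$.

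\textbf{Case $\bar{\alpha} < 1$.} Then $\bar{\alpha}\bar{\beta} = \bar{\alpha}^2 < 1$, so Theorem \ref{shaT3} applies. Using $\alpha = \beta$ to rewrite the $b$-expectation as an $a$-expectation, we obtain
\begin{equation*}
g(s, s) = s \inf_{z \in (\bar{\alpha},\, 1/\bar{\alpha})} \E\left[\frac{a/z}{1-a/z} + \frac{az}{1-az}\right].
\end{equation*}
The key step is to show that for each $a \in [\ubar{\alpha}, \bar{\alpha}]$ and each $z \in (\bar{\alpha}, 1/\bar{\alpha})$,
\begin{equation*}
\frac{a/z}{1-a/z} + \frac{az}{1-az} \ \ge\ \frac{2a}{1-a},
\end{equation*}
with equality exactly when $z = 1$. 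Dividing by $a$ and examining $f(z) = \frac{1}{z-a} + \frac{z}{1-az}$, one computes $f'(z) = -\frac{1}{(z-a)^2} + \frac{1}{(1-az)^2}$, whose only zero in $(a, 1/a)$ is $z = 1$; the second derivative is positive there, so $z = 1$ is the unique minimum of $f$ on this interval, and $f(1) = \frac{2}{1-a}$. Integrating the pointwise inequality against $\alpha$ gives a lower bound of $2\E[a/(1-a)]$, and evaluating the infimum at $z = 1 \in (\bar{\alpha}, 1/\bar{\alpha})$ achieves this bound.

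\textbf{Main obstacle.} There is essentially no obstacle: the argument is a one-variable convexity check, a direct analog of the trick $(a+z)^{-1} + (a-z)^{-1} \ge 2a^{-1}$ used in the exponential case. The only mild subtlety is the need to ensure $z = 1$ lies in the admissible interval $(\bar{\alpha}, 1/\bar{\alpha})$, which is immediate from $\bar{\alpha} < 1$; and to separate out the degenerate case $\bar{\alpha} = 1$ where Theorem \ref{shaT3} is replaced by Theorem \ref{shaT4}.
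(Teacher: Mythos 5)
The paper does not actually supply a proof of this corollary: it states the geometric-model results (Theorems \ref{shaT3}, \ref{shaT4} and their corollaries) with the remark ``We only report the analogous results for the geometric model,'' and gives a proof only for the exponential-model counterpart, which rests on the one-line pointwise bound $(a+z)^{-1}+(a-z)^{-1} \ge 2a^{-1}$ for $|z| \le \ubar{\alpha}$. Your proof is correct and is precisely the natural geometric-model adaptation: you replace the additive symmetry $z \mapsto -z$ of the exponential integrand by the multiplicative symmetry $z \mapsto 1/z$, identify the analogous pointwise inequality
\[
\frac{a/z}{1-a/z} + \frac{az}{1-az} \ge \frac{2a}{1-a} \qquad \text{for } z \in (\bar\alpha, 1/\bar\alpha),
\]
with equality at $z=1$, and verify it by a short first/second-derivative computation. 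You also correctly split on $\bar\alpha < 1$ versus $\bar\alpha = 1$ to choose between Theorems \ref{shaT3} and \ref{shaT4}, and observe that $\bar\alpha = \bar\beta$ under $\alpha = \beta$ makes both hypotheses match up. The calculus check you include is somewhat more explicit than what the paper offers even in the exponential case (where the pointwise inequality is simply asserted), so if anything your version is slightly more self-contained. The only cosmetic point is that the ``divide by $a$'' step implicitly uses $a>0$, which is fine since $\alpha(\{0\})=0$ by hypothesis, and the $a=0$ case of the pointwise inequality is trivial anyway; you could mention this in passing, but it is not a gap.
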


\section{The existence of the shape function} \label{shaS2}
\begin{lem}
\label{shaL1}
There exists a deterministic function $g:(0, \infty)^2 \rightarrow [0, \infty]$ such that  
\begin{equation*}
\lim \limits_{n \rightarrow \infty} \frac{G(\lf ns \rf, \lf nt \rf)}{n} = g(s, t) \quad \text{ for } s, t > 0 \quad \bbP\text{-a.s.}
\end{equation*}
Furthermore, $g$ is nondecreasing, homogeneous and concave. 
\end{lem}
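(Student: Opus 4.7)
The plan is to apply the subadditive ergodic theorem to obtain the limit along every rational direction, to propagate the structural properties from rational to arbitrary directions via a monotonicity sandwich, and to extract concavity from a superadditivity argument. Fix positive integers $p, r$ and work on the extended space $((\bfa, \bfb), W)$ equipped with the joint distribution $\mu(d\bfa, d\bfb)\,\bfP_{\bfa, \bfb}(dW)$. The joint shift $\tilde\theta : (\bfa, \bfb, W) \mapsto (\tau_p \bfa, \tau_r \bfb, (W(i+p, j+r))_{i,j})$ is measure-preserving (by stationarity of $(\bfa, \bfb)$ under $\tau_p \times \tau_r$ and the fact that $\bfP_{\bfa, \bfb}$ depends on $W$ only through $(\bfa, \bfb)$) and ergodic (since the base factor is totally ergodic and the fiber measures are products). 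The array
\begin{equation*}
T_{m, n} \;=\; \max_{\pi \in \Pi_{mp, mr, np, nr}} \sum_{(i, j) \in \pi} W(i, j), \qquad 0 \le m \le n,
\end{equation*}
satisfies the stationarity relation $T_{m+1, n+1} = T_{m, n} \circ \tilde\theta$ and the almost-superadditive bound $T_{l, n} + W(mp, mr) \ge T_{l, m} + T_{m, n}$, obtained by concatenating optimal subpaths at the shared corner $(mp, mr)$ which would otherwise be double-counted. Kingman's theorem, in a version that admits $[0, \infty]$ limits and an $o(n)$ boundary defect, then yields a deterministic $g(p, r) \in [0, \infty]$ with $T_{0, n}/n \to g(p, r)$ $\bbP$-a.s.

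Monotonicity of $G$ in its coordinates, applied between the bounds $G(kp, kr) \le G(\lf n p/q \rf, \lf n r/q \rf) \le G((k+1)p, (k+1)r)$ for $n = kq + j$ with $0 \le j < q$, upgrades this to $\lim_n G(\lf ns \rf, \lf nt \rf)/n = g(p, r)/q$ for every rational $(s, t) = (p/q, r/q)$, which I take to define $g(s, t)$ on rationals. Homogeneity of $g$ on rationals is then immediate, and superadditivity $g(s_1 + s_2, t_1 + t_2) \ge g(s_1, t_1) + g(s_2, t_2)$ comes from forcing a path from $(1, 1)$ to $(\lf n(s_1 + s_2) \rf, \lf n(t_1 + t_2) \rf)$ through the intermediate corner $(\lf n s_1 \rf, \lf n t_1 \rf)$ and applying the first paragraph to both the initial piece and the shifted piece (the latter has the same normalized limit $g(s_2, t_2)$ by stationarity of $\bbP$). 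Nondecreasingness of $g$ on rationals is inherited from that of $G$.

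To pass to arbitrary $(s, t) \in (0, \infty)^2$, I would use the sandwich
\begin{equation*}
\frac{G(\lf n s_- \rf, \lf n t_- \rf)}{n} \;\le\; \frac{G(\lf ns \rf, \lf nt \rf)}{n} \;\le\; \frac{G(\lf n s_+ \rf, \lf n t_+ \rf)}{n},
\end{equation*}
valid by monotonicity of $G$, for rationals $(s_\pm, t_\pm)$ with $s_- < s < s_+$ and $t_- < t < t_+$. Since $g$ is concave on the positive rational cone (from homogeneity plus superadditivity), it is continuous there, so letting $s_\pm \to s$ and $t_\pm \to t$ pins down the common a.s. limit $g(s, t) \in [0, \infty]$, and the three properties transfer automatically.

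The main obstacle is the first paragraph: the boundary weight $W(mp, mr)$ in the near-superadditivity must be shown to produce only an $o(n)$ defect (this is routine, since $\{W(kp, kr)\}_k$ is ergodic-stationary and hence the individual terms are $o(k)$ a.s.), and the exponential case $\ubar{\alpha} = \ubar{\beta} = 0$, where $\E\,W(1, 1)$ may be infinite, requires either a truncation argument (apply the theorem to $W \wedge M$ and send $M \to \infty$) or the invocation of an extended form of the subadditive ergodic theorem that accommodates $[0, \infty]$-valued limits.
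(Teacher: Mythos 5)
Your proposal is correct and follows essentially the same route as the paper: verify ergodicity of the joint law under the lattice shifts, apply Kingman/Liggett's subadditive ergodic theorem along integer (hence rational) directions, and then extend to arbitrary directions via the monotonicity sandwich and the continuity supplied by concavity and homogeneity. The only cosmetic difference is that your overlapping boxes introduce the corner weight $W(mp,mr)$ as a defect, which the paper avoids by taking the half-open boxes $[ms+1, ns]\times[mt+1, nt]$ (yielding exact superadditivity), and your base point $(0,0)$ should be $(1,1)$ to stay inside $\bbN^2$.
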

Here, nondecreasing means that $g(s', t') \le g(s, t)$ for $0 < s' \le s$ and $0 < t' \le t$, and homogeneity means that $g(cs, ct) = cg(s, t)$ for $s, t, c > 0$. In the exponential model, $g$ is finite if $\ubar{\alpha}+\ubar{\beta} > 0$. This is by the standard properties of the stochastic order \cite[Theorem 1.A3]{Shaked}. Briefly, the i.i.d. measure $P$ on $\bbR_+^{\bbN^2}$ under which each $W(i, j)$ is exponentially distributed with rate $\ubar{\alpha}+\ubar{\beta}$ stochastically dominates $\bfP_{\bfa, \bfb}$ and $G$ is a nondecreasing function of the weights. Thus, $g(s, t)$ does not exceed the right-hand side of (\ref{shaE1}) with $\lambda = \ubar{\alpha}+\ubar{\beta}$. Similarly, $g$ is finite in the geometric model if $\bar{\alpha}\bar{\beta} < 1$. Extend $g$ to $\bbR_+^2$ by setting $g(0, 0) = 0$, $g(s, 0) = \lim_{t \downarrow 0} g(s, t)$ and $g(0, t) = \lim_{s \downarrow 0} g(s, t)$ for $s, t > 0$. 

Lemma \ref{shaL1} can be proved using the ergodicity properties of $\bbP$ and superadditivity of the last-passage times. As this is quite standard, we will leave out many details. For $k, l \in \bbZ_+$, let $\theta_{k, l}: \bbR^{\bbN^2} \rightarrow \bbR^{\bbN^2}$ be given by $\theta_{k, l}(\omega)(i, j) = \omega(i+k, j+l)$ for $i, j \in \bbN$ and $\omega \in \bbR^{\bbN^2}$. Note that $\bbP$ is stationary with respect to $\theta_{k, l}$ because $\bbP(\theta_{k, l}^{-1}(B)) = \E\bfP_{\bfa, \bfb}(\theta_{k, l}^{-1}(B)) = \E\bfP_{\tau_k (\bfa), \tau_l (\bfb)}(B) = \bbP(B)$ for any Borel set $B \subset \bbR_+^{\bbN^2}$.
\begin{lem}
\label{shaL2}
$\bbP$ is ergodic with respect to $\theta_{k, l}$ for any $k, l \in \bbN$. 
\end{lem}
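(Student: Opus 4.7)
The plan is to verify ergodicity of $\bbP$ under $\theta := \theta_{k,l}$ by checking the Cesàro mixing condition
\begin{equation*}
\frac{1}{N}\sum_{n=1}^{N} \bbP(A \cap \theta^{-n} B) \longrightarrow \bbP(A)\bbP(B)
\end{equation*}
for $A, B$ in a generating $\pi$-system. The $\pi$-system of cylinder events (each depending on the weights $\{W(i,j) : (i,j) \in R\}$ for some finite $R \subset \bbN^2$) generates the Borel $\sigma$-algebra on $\bbR_+^{\bbN^2}$, so this will suffice.

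The first key input is the conditional independence of the weights given $(\bfa, \bfb)$. Fix cylinder events $A, B$ supported on a finite set $R$. For $n$ large enough that $R$ and $R + n(k, l)$ are disjoint, the weights determining $A$ and those determining $\theta^{-n} B$ become $\bfP_{\bfa, \bfb}$-independent. Combining this with the identity $\bfP_{\bfa, \bfb}(\theta^{-n} B) = \bfP_{\tau_{nk}\bfa,\, \tau_{nl}\bfb}(B)$ (which follows immediately from $\bfP_{\bfa, \bfb}(W(i+nk, j+nl) \in \cdot\,) = \bfP_{\tau_{nk}\bfa,\, \tau_{nl}\bfb}(W(i, j) \in \cdot\,)$ and conditional independence), we obtain
\begin{equation*}
\bfP_{\bfa, \bfb}(A \cap \theta^{-n} B) = \bfP_{\bfa, \bfb}(A)\, \bfP_{\tau_{nk}\bfa,\, \tau_{nl}\bfb}(B)
\quad \text{for all sufficiently large } n.
\end{equation*}

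The second step integrates this identity against $\mu$ and invokes the hypothesis that $(\bfa, \bfb)$ is ergodic under $\tau_k \times \tau_l$. Apply Birkhoff's theorem to the bounded measurable function $F(\bfa, \bfb) := \bfP_{\bfa, \bfb}(B)$ to get $N^{-1}\sum_{n=1}^{N} F \circ (\tau_k \times \tau_l)^n \to \E[F] = \bbP(B)$ $\mu$-a.s.\ and in $L^1$. Multiplying by the $n$-independent bounded quantity $\bfP_{\bfa, \bfb}(A)$ and taking $\mu$-expectations through the Cesàro sum (justified by dominated convergence) yields
\begin{equation*}
\frac{1}{N}\sum_{n=1}^{N} \bbP(A \cap \theta^{-n} B) \longrightarrow \E\bigl[\bfP_{\bfa, \bfb}(A)\bigr]\, \bbP(B) = \bbP(A)\, \bbP(B).
\end{equation*}

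The only subtlety is the usual $\pi$-$\lambda$ bootstrap from cylinder events to all Borel sets, which is standard. No genuine obstacle arises: the argument works because the total ergodicity hypothesis was tailored precisely so that the quenched probabilities $\bfP_{\tau_{nk}\bfa,\, \tau_{nl}\bfb}(B)$ average out correctly, while conditional independence handles the annealed correlations that would otherwise obstruct a direct mixing argument.
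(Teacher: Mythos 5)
Your proof is correct, and it takes a genuinely different route from the paper's. The paper works directly with an invariant set $B = \theta_{k,l}^{-1}(B)$: since $k, l \ge 1$, iterating the shift pushes the set into the tail $\sigma$-algebra, so Kolmogorov's $0$--$1$ law (applicable because $\bfP_{\bfa, \bfb}$ is a product measure) gives $\bfP_{\bfa, \bfb}(B) \in \{0, 1\}$; then $\{\bfP_{\bfa, \bfb}(B) = 1\}$ is a $(\tau_k \times \tau_l)$-invariant event, so ergodicity of $\mu$ finishes it. You instead verify the equivalent Ces\`aro mixing criterion on cylinder events, using conditional independence to factor $\bfP_{\bfa, \bfb}(A \cap \theta^{-n}B)$ for large $n$ and Birkhoff's theorem (for $\mu$ under $\tau_k \times \tau_l$) to average the quenched probabilities $\bfP_{\tau_{nk}\bfa, \tau_{nl}\bfb}(B)$, then bootstrap via $\pi$--$\lambda$. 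Both arguments rest on the same two inputs --- the conditional product structure of $\bfP_{\bfa, \bfb}$ and the ergodicity of $\mu$ --- but the paper's version goes straight at invariant sets and is shorter, bypassing Birkhoff, the mixing characterization, and the bootstrap, at the cost of needing to recognize the tail-$\sigma$-algebra structure. Your version is more hands-on and has the (unexploited) virtue of exhibiting the stronger Ces\`aro-averaged mixing rather than bare ergodicity.
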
 
\begin{proof}
Suppose $\theta_{k, l}^{-1}(B) = B$ for some Borel set $B \subset \bbR_+^{\bbN^2}$. For $n \ge 1$, let $\sT_n$ denote the $\sigma$-algebra generated by $A_n$, the collection of $W(i, j)$ with $i > k(n-1)$ and $j > l(n-1)$. Then $B$ is in $\sT = \bigcap_{n \in \bbN} \sT_n$. Also, $\sT$ is the tail $\sigma$-algebra of the $\sigma$-algebras generated by $A_{n}\smallsetminus A_{n+1}$. Because $\bfP_{\bfa, \bfb}$ is a product measure, by Kolmogorov's $0$--$1$ law,
$\bfP_{\bfa, \bfb}(B) \in \{0, 1\}$. Therefore, 
$\bbP(B) = \mu(\bfP_{\bfa, \bfb}(B) = 1)$. 
On the other hand, 
\[\begin{aligned}
(\tau_k \times \tau_l)^{-1}\{\bfP_{\bfa, \bfb}(B) = 1\} &= \{\bfP_{\tau_k \bfa, \tau_l \bfb}(B) = 1\} = \{\bfP_{\bfa, \bfb}(\theta_{k, l}^{-1}(B)) = 1\} = \{\bfP_{\bfa, \bfb}(B) = 1\}. 
\end{aligned}\]
Since $\mu$ is ergodic under $\tau_k \times \tau_l$, we conclude that $\bbP(B) \in \{0, 1\}$.  
\end{proof}

\begin{proof}[Proof of Lemma \ref{shaL1}]
Fix $s, t \in \bbN$ and define, for integers $0 \le m < n$,  
\begin{align*}Z(m, n) = -G((n-m)s, (n-m)t) \circ \theta_{ms, mt} = \max \limits_{\pi \in \Pi_{ms+1, mt+1, ns, nt}} \sum \limits_{(i, j) \in \pi} W(i, j).\end{align*}
Using the definition and Lemma \ref{shaL2}, we observe that $\{Z(m, n): 0 \le m < n\}$ is a subadditive process that satisfies the hypotheses of Liggett's subadditive ergodic theorem \cite{Liggett}.  
Hence, $Z(0, n)/n = G(ns, nt)/n$ converges $\bbP$-a.s. to a deterministic limit, $g(s, t)$. The existence of the 
limit for all $s, t > 0$ $\bbP$-a.s. and the claimed properties of $g$ follow as in the case of i.i.d. weights  \cite[Theorem~2.1]{Seppalainen09}. 
\end{proof}

\section{Stationary distributions of the last-passage increments}  
\label{shaS4}

%In this section, we consider recursion (\ref{shae2}) with suitably chosen distributions for the boundary values $\{G(i, 0), G(0, j): i, j \in \bbN\}$ such that the distributions of the last-passage increment processes $(G(i, k)-G(i-1, k))_{i \in \bbN}$ and $(G(k, j)-G(k, j-1))_{j \in \bbN}$ are product measures and do not depend on $k \in \bbZ_+$. Using these features, we then compute the corresponding shape functions $g_z$, where $z$ is a convenient real parameter indexing 
%various choices for the boundary values. We also obtain (\ref{shaE8}) and, thereby, connect $g_z$ with $g$.  The idea of using stationary processes to compute limit shapes was introduced in \cite{Seppalainen4}. Here, we adapt from \cite{Seppalainen} which relied on the above scheme to prove (\ref{shaE2}). 

Let us extend the sample space to $\bbR_+^{\bbZ_+^2}$. Now $W(i, j)$ denotes the projection onto coordinate $(i, j)$ for $i, j \in \bbZ_+$. Define the last-passage time $\widehat{G}(i, j)$ through recursion (\ref{shae2}) but with the boundary values 
$\widehat{G}(i, 0) = \sum_{k=1}^i W(k, 0)$ and $\widehat{G}(0, j) = \sum_{k=1}^j W(0, k)$ for $i, j \in \bbN$. 
We then have
\begin{equation}
\label{shaeq42}
\widehat{G}(m, n) = \max \limits_{\pi \in \Pi_{0, 0, m, n}} \sum \limits_{(i, j) \in \pi} W(i, j) \quad \text{ for } m, n \in \bbZ_+. 
\end{equation}

In the exponential model, 
%thus, $\bfa = (a_n)_{n \in \bbN}$ and $\bfb = (b_n)_{n \in \bbN}$ are random sequences in $(0, \infty)$ and $\bfP_{\bfa, \bfb}$ satisfies (\ref{shaE4}). 
for each value of $(\bfa, \bfb)$ such that $a_n \ge \ubar{\alpha}$ and $b_n \ge \ubar{\beta}$ for $n \in \bbN$ (which holds $\mu$-a.s.) and parameter $z \in (-\ubar{\alpha}, \ubar{\beta})$, define $\bfP_{\bfa, \bfb}^z$ as the product measure on $\bbR_+^{\bbZ_+^2}$ by 
\begin{equation}
\label{shaeq78}
\begin{aligned}
&\bfP_{\bfa, \bfb}^z(W(i, j) \ge x) = \exp(-(a_i+b_j)x) \quad &&\bfP_{\bfa, \bfb}^z(W(0, 0) = 0) = 1 \\
&\bfP_{\bfa, \bfb}^z(W(i, 0) \ge x) = \exp(-(a_i+z)x) \quad &&\bfP_{\bfa, \bfb}^z(W(0, j) \ge x) = \exp(-(b_j-z)x)
\end{aligned}
\end{equation}
for $x \ge 0$ and $i, j \in \bbN$. When $\ubar{\alpha} = \ubar{\beta} = 0$, we make definition (\ref{shaeq78}) for $z = 0$. Note that the projection of $\bfP_{\bfa, \bfb}^z$ onto coordinates $\bbN^2$ is $\bfP_{\bfa, \bfb}$. %Also, define the probability measure $\bbP^z$ by $\bbP^z(B) = \E[\bfP_{\bfa, \bfb}^z(B)]$ for any Borel set $B \subset \bbR_+^{\bbZ_+^2}$. 
For the geometric model, the construction is similar. %Now $\bfa$ and $\bfb$ are random sequences in $(0, 1)$ and $\bfP_{\bfa, \bfb}$ satisfies (\ref{shaE5}). 
For $z \in (\bar{\alpha}, 1/\bar{\beta})$ and each value of $(\bfa, \bfb)$ such that $a_n \le \bar{\alpha}$ and $b_n \le \bar{\beta}$ for $n \in \bbN$, the measure $\bfP_{\bfa, \bfb}^z$ is given by  
\begin{equation}
\label{shaeq79}
\begin{aligned}
&\bfP_{\bfa, \bfb}^z(W(i, j) \ge k) = a_i^k b_j^k \quad &&\bfP_{\bfa, \bfb}^z(W(0, 0) = 0) = 1 \\
&\bfP_{\bfa, \bfb}^z(W(i, 0) \ge k) = a_i^k/z^k \quad &&\bfP_{\bfa, \bfb}^z(W(0, j) \ge k) = b_j^kz^k
\end{aligned}
\end{equation}
for $k \in \bbZ_+$ and $i, j \in \bbN$. When $\bar{\alpha} = \bar{\beta} = 1$, definition (\ref{shaeq79}) makes sense for $z = 1$. 
 
Introduce the increment variables as $I(m, n) = \widehat{G}(m, n)-\widehat{G}(m-1, n)$ for $m \ge 1$ and $n \ge 0$, and $J(m, n) = \widehat{G}(m, n)-\widehat{G}(m, n-1)$ for $m \ge 0$ and $n \ge 1$. We capture the stationarity of the increments in the following proposition. 
\begin{prop}
\label{shap5}
Let $k, l \in \bbZ_+$. Under $\bfP_{\bfa, \bfb}^z$,   
\begin{enumerate}[(a)]
\item $I(i, l)$ has the same distribution as $W(i, 0)$ for $i \in \bbN$. 
\item $J(k, j)$ has the same distribution as $W(0, j)$ for $j \in \bbN$. 
\item The random variables $\{I(i, l): i > k\} \cup \{J(k, j): j > l\}$ are (jointly) independent. 
\end{enumerate}
\end{prop}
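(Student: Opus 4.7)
I would prove the proposition by induction on $(k, l) \in \bbZ_+^2$, with the inductive step carried by a classical single-site Burke-type identity. I write out the exponential case; the geometric case follows the same scheme with its own single-site identity.

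The single-site identity reads: if $\lambda, \mu > 0$ and $X, Y, W$ are independent with $X$ exponential of rate $\lambda$, $Y$ exponential of rate $\mu$, and $W$ exponential of rate $\lambda+\mu$, and one sets
\[ X' = W + (X-Y)_+, \qquad Y' = W + (Y-X)_+, \]
then $X'$ is exponential of rate $\lambda$, $Y'$ is exponential of rate $\mu$, and $X', Y'$ are independent. This can be verified by computing the joint Laplace transform and seeing that it factors, or by observing that $X \wedge Y \sim \mathrm{Exp}(\lambda+\mu)$ is, by memorylessness, independent of $|X-Y|$ and of $\mathbf{1}\{X \ge Y\}$, which together determine $(X', Y')$ up to an additive $W$.

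\textbf{Induction.} I would strengthen the claim so that the as-yet-unused bulk weights are included in the independent collection: for every $k, l \in \bbZ_+$, under $\bfP_{\bfa, \bfb}^z$ the random variables
\[ \{I(i, l) : i > k\} \cup \{J(k, j) : j > l\} \cup \{W(i, j) : i > k,\ j > l\} \]
are jointly independent with marginals $I(i, l) \sim \mathrm{Exp}(a_i + z)$, $J(k, j) \sim \mathrm{Exp}(b_j - z)$ and $W(i, j) \sim \mathrm{Exp}(a_i + b_j)$. The base case $(k, l) = (0, 0)$ is immediate from (\ref{shaeq78}), since $I(i, 0) = W(i, 0)$ and $J(0, j) = W(0, j)$. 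For the inductive step $(k, l) \to (k+1, l)$, the recursion (\ref{shae2}) yields at each site $(k+1, j)$ with $j \ge l+1$ the pair of identities
\[
\begin{aligned}
I(k+1, j) &= (I(k+1, j-1) - J(k, j))_+ + W(k+1, j), \\
J(k+1, j) &= (J(k, j) - I(k+1, j-1))_+ + W(k+1, j).
\end{aligned}
\]
I would sweep this column one site at a time. At $j = l+1$ the triple $(I(k+1, l), J(k, l+1), W(k+1, l+1))$ is independent with parameters $a_{k+1}+z$, $b_{l+1}-z$, $a_{k+1}+b_{l+1}$ and, by the inductive hypothesis, jointly independent of everything else in the collection. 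The single-site identity replaces it by the pair $(I(k+1, l+1), J(k+1, l+1))$, which are independent with the correct marginals; since they depend only on the consumed triple, their independence from the rest of the ensemble is automatic. Iterating upward in $j$ feeds $I(k+1, j-1)$ together with the fresh $J(k, j)$ and $W(k+1, j)$ into the identity at each step. After the full column sweep, the new collection
\[ \{I(i, l) : i > k+1\} \cup \{J(k+1, j) : j > l\} \cup \{W(i, j) : i > k+1,\ j > l\} \]
has the required joint distribution. The step $(k, l) \to (k, l+1)$ is symmetric.

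\textbf{Geometric case and main obstacle.} The geometric case uses the analogous single-site identity: for $p, q \in (0, 1)$ and independent $X, Y, W$ with $\bfP(X \ge k) = p^k$, $\bfP(Y \ge k) = q^k$, $\bfP(W \ge k) = (pq)^k$ for $k \in \bbZ_+$, the outputs $X' = W + (X-Y)_+$ and $Y' = W + (Y-X)_+$ are independent with the same respective geometric laws, which is checked by directly evaluating $\bfP(X' \ge m, Y' \ge n)$. The inductive argument above then copies over, using the matching condition $(a_i/z)(b_j z) = a_i b_j$ at interior sites to apply the identity. The algebra of both single-site identities is elementary; the real point of care is the bookkeeping in the inductive step, namely that each application of the identity consumes exactly one independent triple and outputs a pair depending only on that triple, so that independence from the rest of the ensemble is preserved and compounds cleanly through the column sweep.
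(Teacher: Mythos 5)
Your proof takes essentially the same route as the paper: induction on $(k,l)$ carried by a single-site Burke identity, which is precisely the argument the paper cites from Sepp\"{a}l\"{a}inen's Theorem~2.4 and leaves implicit. The only difference is that the paper's Lemma~\ref{shal6} retains the third output $x\wedge y$, framing the identity as a measure-preserving involution of $\bbR^3$, whereas you track only the pair $(X',Y')$ and correctly observe that its independence from the rest of the ensemble is automatic because the pair is a function of the consumed triple.
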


(\ref{shae2}) leads to the recursion \cite[(2.21)]{Seppalainen09}
\begin{equation}
\label{shaeq81}
\begin{aligned}
I(m, n) &= I(m, n-1) - I(m, n-1) \wedge J(m-1, n) + W(m, n) \\
J(m, n) &= J(m-1, n) - I(n, n-1) \wedge J(m-1, n) + W(m, n) 
\end{aligned}
\end{equation}
for $m, n \in \bbN$. Proposition \ref{shap5} can be proved via induction using (\ref{shaeq81}) and Lemma \ref{shal6} below. We will omit the induction argument as it is the same as in \cite[Theorem~2.4]{Seppalainen09}.  
 \begin{lem}
\label{shal6}
 Let $F: \bbR^3 \rightarrow \bbR^3$ denote the map 
$(x, y, z) \mapsto (x-x \wedge y + z, y - x \wedge y + z, x \wedge y)$. 
Let $P$ be a product measure on $\bbR^3$ with marginals $P_1, P_2, P_3$. Suppose that one of the following holds. 
\begin{enumerate}[(\romannumeral1)]
\item $P_1, P_2$ and $P_3$ are exponential distributions with rates $a, b$ and $a+b$, for some $a, b \in (0, \infty)$. 
\item $P_1, P_2$ and $P_3$ are geometric distributions with parameters $a, b$ and $ab$, for some $a, b \in (0, 1)$. 
\end{enumerate}
Then $P( F^{-1}(B)) = P(B)$ for any Borel set $B \subset \bbR^3$. 
 \end{lem}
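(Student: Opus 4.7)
The plan is to verify the measure-preserving property by a direct change-of-variables calculation. The map $F$ is the ``queue-departure'' transformation from the theory of M/M/1 queues, and its invariance of the product measure $P$ in both parameter regimes is an instance of the classical Burke property. The key observation is that $F$ restricts to a bijection on each piece of the partition $\bbR^3 = \{x<y\} \sqcup \{x=y\} \sqcup \{x>y\}$ and maps these three pieces respectively to $\{x'<y'\}$, $\{x'=y'\}$, $\{x'>y'\}$. On each off-diagonal piece $F$ is an affine map whose linear part is a coordinate permutation, so its Jacobian has absolute value $1$ in case (i).

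First I would write down the piecewise inverse explicitly. If $x<y$ then $F(x,y,z) = (z,\,y-x+z,\,x)$, so inversion gives $(x,y,z) = (z',\,y'-x'+z',\,x')$ on $\{x'<y'\}$; symmetrically $(x,y,z) = (x'-y'+z',\,z',\,y')$ on $\{x'>y'\}$; and on the diagonal $F(x,x,z) = (z,z,x)$ with inverse $(z',z',x')$. With the inverse in hand, I would push forward the joint density (case (i)) or the joint pmf (case (ii)) region by region. In case (i) the decisive algebraic identity is
\[
a z' + b(y' - x' + z') + (a+b) x' \;=\; a x' + b y' + (a+b) z',
\]
valid on $\{x' < y'\}$, together with its symmetric form on $\{x' > y'\}$. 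These identities convert the pushforward exponent into the original product-measure exponent, and since $|\det DF^{-1}| = 1$ and the normalizing constant $ab(a+b)$ is unchanged, the pushforward density agrees with the original on each open region, and hence everywhere. In case (ii) the analogous identity on $\{m < n\}$ is
\[
a^{k}\, b^{n-m+k}\,(ab)^{m} \;=\; a^{m+k}\, b^{n+k} \;=\; a^{m} b^{n} (ab)^{k},
\]
with the symmetric identity on $\{m > n\}$ and the trivial $a^{k} b^{k} (ab)^{m} = a^{m} b^{m} (ab)^{k}$ on the diagonal; summing the three preimage contributions recovers the product pmf.

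The only delicate bookkeeping point is the diagonal $\{x = y\}$, which has Lebesgue measure zero in case (i) and can be safely discarded, but carries strictly positive mass in case (ii) and must be handled on its own. This is where the calibration of the parameters is essential: $\text{rate}(Z) = a+b$ in case (i) and $\text{parameter}(Z) = ab$ in case (ii) are precisely what forces the cancellation in the pushforward identities and ensures that the diagonal contribution on the image side matches $P(\{X=Y\}\cap\{Z=\cdot\})$. Once the trichotomy is organized this way, the remainder is arithmetic, and $P \circ F^{-1} = P$ follows on measurable boxes and hence on all Borel sets by the monotone class theorem.
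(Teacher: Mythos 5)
Your proposal is correct and uses essentially the same strategy as the paper's proof: observe that $F$ is a measure-one-preserving bijection (implicitly $F^{-1}=F$), that its Jacobian is $1$ away from the diagonal $\{x=y\}$, and that the rate/parameter calibration makes the density (or pmf) at $F^{-1}(x',y',z')$ equal to the density at $(x',y',z')$ via the algebraic identity $a x + b y + (a+b) z = a(x - x\wedge y + z) + b(y - x\wedge y + z) + (a+b)(x\wedge y)$. The paper states this identity once in its global $x\wedge y$ form and changes variables in a single integral, whereas you unpack it into the three cases $x<y$, $x=y$, $x>y$; the content is the same. One small inaccuracy: since you are showing the pushforward density (or pmf) agrees pointwise with the original, no monotone class or box argument is needed at the end, and there is no ``summing of three preimage contributions'' in case (ii) — $F$ is a bijection, so each point has a single preimage and you only need the pointwise pmf identity on each of the three pieces.
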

In earlier work \cite[Lemma~2.3]{Seppalainen09} and \cite[Lemma~4.1]{Balasz}, Lemma \ref{shal6} was proved by comparing the Laplace transforms of the measures $P$ and $P(F^{-1}(\cdot))$. 
We include another proof below. 
 \begin{proof}[Proof of Lemma \ref{shal6}]
We prove (\romannumeral1) only as the proof of (\romannumeral2) is the discrete version of the same argument and is simpler. Observe that $F$ is a bijection on $\bbR^3$ with $F^{-1} = F$. 
It suffices to verify the claim for any open set $B$ in $\bbR^3$.  By continuity, $F^{-1}(B)$ is also open. Furthermore, $F$ is differentiable on the open set $\{(x, y, z): x > y \text{ or } x < y\}$ and its Jacobian equals $1$ in absolute value. Hence, by the change of variables \cite[Theorem 7.26]{Rudin}, 
\[
\begin{aligned}
P(F^{-1}(B)) &= ab(a+b)\int \limits_{F^{-1}(B)} e^{-ax-by-(a+b)z} dx dy dz \\
&= ab(a+b)\int \limits_{F^{-1}(B)} e^{-a(x-x \wedge y + z)-b(y-x \wedge y + z) -(a+b)(x \wedge y)} dx dy dz \\ 
&= ab(a+b)\int \limits_{B} e^{-au-bv-(a+b)w} du dv dw = P(B). \qedhere
\end{aligned}
\]

%The proof of (\romannumeral2) is a simpler, discrete version of the preceding argument.  Now, it suffices to verify (\ref{shaeq80})
%with $B \subset \bbZ_+^3$.  Note that $F^{-1}(B) = F(B) \subset \bbZ_+^3$ as well. Hence,  
%\[
% \begin{aligned}
%P(F^{-1}(B)) &= (1-a)(1-b)(1-ab) \sum \limits_{(i, j, k) \in F^{-1}(B)}  a^i b^j (ab)^k \\
%&= (1-a)(1-b)(1-ab) \sum \limits_{(i, j, k) \in F^{-1}(B)}  a^{i-i \wedge j+k} b^{j-i \wedge j + k} (ab)^{i \wedge j}  \\ 
%&=  (1-a)(1-b)(1-ab) \sum \limits_{(u, v, w) \in B} a^u b^v (ab)^w  \\
%&= P(B). \qedhere\\
%\end{aligned}
%\]
 \end{proof}

In the exponential and geometric models, respectively, define 
\begin{align*}g_z(s, t) &= s\E\left[\frac{1}{a+z}\right]+t\E\left[\frac{1}{b-z}\right] \quad \text{ for } s, t \ge 0 \text{ and } z \in [-\ubar{\alpha}, \ubar{\beta}]\\
g_z(s, t) &= s\E\left[\frac{a/z}{1-a/z}\right]+t\E\left[\frac{bz}{1-bz}\right] \quad \text{ for } s, t \ge 0 \text{ and } z \in [\bar{\alpha}, 1/\bar{\beta}]. 
\end{align*} 

\begin{lem}
\label{shaL4}
In the exponential model, let $z \in (-\ubar{\alpha}, \ubar{\beta})$ if $\ubar{\alpha}+\ubar{\beta} > 0$, and let $z = 0$ and assume that $\E[1/a+1/b] < \infty$ if $\ubar{\alpha} = \ubar{\beta} = 0$. In the geometric model, let $z \in (\bar{\alpha}, 1/\bar{\beta})$ if $\bar{\alpha}\bar{\beta} < 1$, and let $z = 1$ and assume that $\E[a/(1-a)+b/(1-b)] < \infty$ if $\bar{\alpha} = \bar{\beta} = 1$. Then 
\begin{align}\lim \limits_{n \rightarrow \infty} \frac{\widehat{G}(\lf ns \rf, \lf nt \rf)}{n} = g_z(s, t) \quad \text{ for } s, t \ge 0 \text{ in } \bfP_{\bfa, \bfb}^z\text{-probability for } \mu\text{-a.e. } (\bfa, \bfb). \label{shaE11}
\end{align} 
\end{lem}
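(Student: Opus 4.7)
The plan is to invoke the Burke stationarity property (Proposition \ref{shap5}) to rewrite $\widehat{G}(m, n)$ under $\bfP_{\bfa, \bfb}^z$ as essentially a sum of independent random variables with explicitly known parameters, then apply Birkhoff's ergodic theorem to the environment $(\bfa, \bfb)$ to identify the limiting mean, and finally invoke Chebyshev's inequality for concentration. I describe the exponential case; the geometric case is proved identically, replacing exponentials by geometrics throughout.

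First, I telescope the column increments at column $i = m$ to get
\begin{equation*}
\widehat{G}(m, n) = \widehat{G}(m, 0) + \sum_{j=1}^n J(m, j) = \sum_{i=1}^m W(i, 0) + \sum_{j=1}^n J(m, j).
\end{equation*}
Proposition \ref{shap5}(b)--(c) with $k = m$, $l = 0$ says that $\{J(m, j): j \ge 1\}$ is jointly independent under $\bfP_{\bfa, \bfb}^z$ with $J(m, j)$ exponential of rate $b_j - z$, while $\{W(i, 0): i \ge 1\}$ are independent exponentials with rates $a_i + z$ by construction. The two families need not be jointly independent of each other, since the $J(m, j)$'s depend on bulk and boundary weights in columns $1, \dots, m$, but each is an independent family internally. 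Taking expectations,
\begin{equation*}
\E_{\bfP_{\bfa, \bfb}^z}\!\bigl[\widehat{G}(m, n)\bigr] = \sum_{i=1}^m \frac{1}{a_i + z} + \sum_{j=1}^n \frac{1}{b_j - z},
\end{equation*}
and by Birkhoff's ergodic theorem applied separately to the ergodic sequences $\bfa$ and $\bfb$, this divided by $n$ converges to $g_z(s, t)$ for $\mu$-a.e. $(\bfa, \bfb)$ when $(m, n) = (\lf ns \rf, \lf nt \rf)$.

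For convergence in probability to the mean, set $X = \widehat{G}(m, 0)$ and $Y = \sum_{j=1}^n J(m, j)$. Cauchy--Schwarz yields
\begin{equation*}
\mathrm{Var}(X + Y) \le \bigl(\sqrt{\mathrm{Var}(X)} + \sqrt{\mathrm{Var}(Y)}\bigr)^2,
\end{equation*}
so it suffices to bound each factor. Independence within each family gives $\mathrm{Var}(X) = \sum_{i=1}^m (a_i + z)^{-2}$ and $\mathrm{Var}(Y) = \sum_{j=1}^n (b_j - z)^{-2}$. In the interior case $\ubar{\alpha} + \ubar{\beta} > 0$ with $z \in (-\ubar{\alpha}, \ubar{\beta})$, one has $a_i + z \ge \ubar{\alpha} + z > 0$ and $b_j - z \ge \ubar{\beta} - z > 0$, so the integrands are uniformly bounded, both variances are $O(n)$, and $\mathrm{Var}(\widehat{G})/n^2 \to 0$. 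Chebyshev then delivers the required $\bfP_{\bfa, \bfb}^z$-probability convergence.

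The main obstacle is the degenerate case $\ubar{\alpha} = \ubar{\beta} = 0$ (and the parallel geometric case $\bar{\alpha} = \bar{\beta} = 1$), in which the second moments $\E[(a + z)^{-2}]$ and $\E[(b - z)^{-2}]$ may be infinite and the naive Chebyshev bound breaks down. Here I would use the hypothesis $\E[1/a + 1/b] < \infty$ through truncation: replace each summand $W$ by $W \wedge K$, whose variance is at most $K^2$, so that the variance of the truncated partial sum is bounded by $nK^2$; the truncation bias $(1/n) \sum_{i=1}^{\lf ns \rf} e^{-a_i K}/a_i$ converges by the ergodic theorem to $s\, \E[e^{-aK}/a]$, which tends to $0$ as $K \to \infty$ by dominated convergence thanks to the $L^1$ assumption. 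Letting $K \to \infty$ slowly compared to $n$ recovers the weak law, and the same strategy handles the $J(m, j)$ sum and the geometric counterpart.
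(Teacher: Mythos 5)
Your proof is correct and shares the paper's starting point (the Burke decomposition $\widehat{G}(m,n) = \sum_{i\le m} W(i,0) + \sum_{j\le n} J(m,j)$ together with Proposition \ref{shap5}), but diverges at the concentration step: the paper shows convergence in distribution of $n^{-1}\sum_i I(i,0)$ to a constant by computing the characteristic function $\prod_i (1-\ii x/(n(a_i+z)))^{-1}$, bounding $|\log(1+\ii y)| \le |y|$ and then controlling the argument part through the $\arctan$ expansion, all of which only requires $\E[(a+z)^{-1}]<\infty$; you instead use Chebyshev's inequality, which requires second moments. In the interior case $z\in(-\ubar{\alpha},\ubar{\beta})$ the rates $a_i+z$ and $b_j-z$ are a.s.\ bounded away from zero, so the variance is $O(n)$ and Chebyshev works immediately, as you say. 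In the boundary case $z=0$, $\ubar{\alpha}=\ubar{\beta}=0$, the second moments $\E[(a+z)^{-2}]$ can be infinite and you correctly patch this with a truncation argument, using that the truncation bias $\E[e^{-aK}/a] \to 0$ by dominated convergence under the $L^1$ hypothesis. Both routes are valid. The paper's characteristic-function argument handles the interior and boundary cases in one stroke with no truncation, at the cost of some complex-variable bookkeeping; your Chebyshev-plus-truncation approach is more elementary and makes the role of the moment hypothesis $\E[1/a+1/b]<\infty$ more visibly parallel to the classical weak law under a first-moment condition, but requires the extra truncation step precisely where the paper's approach does not. One point worth tightening in a final write-up: the $K\to\infty$ after $n\to\infty$ order of limits should be spelled out (fix $\delta>0$, choose $K$ so that $s\,\E[e^{-(a+z)K}/(a+z)] < \delta/4$, apply Chebyshev to the truncated sum for this fixed $K$, then control $(1/n)\sum_i (W_i-K)_+$ by Markov and the ergodic theorem); the phrase ``$K\to\infty$ slowly compared to $n$'' gestures at this but does not pin it down.
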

In fact, the convergence in (\ref{shaE11}) is $\bfP_{\bfa, \bfb}^z$-a.s. for $\mu$-a.e $(\bfa, \bfb)$ provided that $\ubar{\alpha}+\ubar{\beta} > 0$ in the exponential model and $\bar{\alpha}\bar{\beta} < 1$ in the geometric model \cite[Theorem~4.3]{Emrah}. 
%This is claimed in the earlier version of this paper \cite[Theorem~4.3]{Emrah} erroneously for all $\alpha$ and $\beta$; however, the proof there does not work if $\ubar{\alpha} = \ubar{\beta} = 0$ and $\bar{\alpha} = \bar{\beta} = 1$ in the exponential and geometric models, respectively. This leads to a gap in the proof of \cite[Theorem~2.7]{Emrah}. We fix the gap using the weaker convergence in Lemma (\ref{shaL4}), which will also be sufficient for other purposes in the paper. 
By (\ref{shaeq51}), (\ref{shaeq42}) and nonnegativity of weights, $G(m, n) \le \widehat{G}(m, n)$ for $m, n \in \bbN$. Then Lemma \ref{shaL4} implies that $g(s, t) \le g_z(s, t)$ for any $s, t \ge 0$. The main result of this paper is that $g(s, t) = \inf_z g_z(s, t)$. 

\begin{proof}[Proof of Lemma \ref{shaL4}]
We will consider the exponential model only, the geometric model is treated similarly.   
Note that $\widehat{G}(\lf ns \rf, \lf nt \rf) = \sum_{i=1}^{\lf ns \rf} I(i, 0) + \sum_{j = 1}^{\lf nt \rf} J(\lf ns \rf, j)$ for $s, t \ge 0$ and $n \in \bbN$. By Proposition \ref{shap5}, $\{J(\lf ns \rf, j): j \in \bbN\}$ has the same distribution as $\{J(0, j): j \in \bbN\}$ under $\bfP_{\bfa, \bfb}^z$. Hence, it suffices to show that 
\begin{align*}
\lim_{n \rightarrow \infty} \frac{1}{n} \sum \limits_{i=1}^{n} I(i, 0) = \E\left[\frac{1}{a+z}\right] \text{ and }  \lim_{n \rightarrow \infty} \frac{1}{n} \sum \limits_{j = 1}^{n} J(0, j) = \E\left[\frac{1}{b-z}\right] %\label{shaee12}
\end{align*}
in $\bfP_{\bfa, \bfb}^z$ for $\mu$-a.s. We will only derive the first limit above, for which we will show that, for $z > -\ubar{\alpha}$ and for $z = -\ubar{\alpha}$ when $\E[(a-\ubar{\alpha})^{-1}] < \infty$,  
\begin{align}
\lim_{n \rightarrow \infty} \frac{1}{n} \sum \limits_{i=1}^{n} I(i, 0) = \E\left[\frac{1}{a+z}\right] \quad \text{ in } \bfQ_{\bfa}^z \ \mu\text{-a.s.}, \label{shaE16}
\end{align}
where $\bfQ_{\bfa}^z$ is the product measure on the coordinates $\bbN \times \{0\}$ given by $\bfQ_{\bfa}^z(W(i, 0) \ge x) = e^{-(a_i+z)x}$ for $i \in \bbN$ and $x \ge 0$.
It suffices to prove the convergence in distribution under $\bfQ_{\bfa}^z$ $\mu$-a.s. because the limit is deterministic.  

The characteristic function of
$n^{-1}\sum_{i=1}^n I(i, 0)$ under $\bfQ_{\bfa}^z$ is given by 
\begin{align*}\prod \limits_{i=1}^n \left(1-\frac{\bfi x}{n(a_i+z)}\right)^{-1} = \exp\left(-\sum \limits_{i=1}^n \log\left(1-\frac{\bfi x}{n(a_i+z)}\right)\right) \quad \text{ for } x \in \bbR, \end{align*}
where the complex logarithm denotes the principal branch. Hence, (\ref{shaE16}) follows if we prove   
\begin{align*}
\lim_{n \rightarrow \infty} -\sum \limits_{i=1}^n \log\left(1-\frac{\bfi x}{n(a_i+z)}\right) = \bfi x \E\left[\frac{1}{a+z}\right]\quad \text{ for } x \in \bbR \quad \mu\text{-a.s.,}
\end{align*}
Using the bound $|\log(1+\bfi x)| \le |x|$ for $x \in \bbR$ 
%\begin{align*}|\log(1+\bfi x)| \le \left|\int_{1}^{1+\bfi x} \frac{dw}{w}\right| = \left|\int_0^x \frac{\bfi du}{1+\bfi u}\right| \le |x| \quad \text{ for } x \in \bbR, \end{align*} 
%the triangle inequality 
and the ergodicity of $\bfa$, we obtain 
\begin{align*}\limsup_{n \rightarrow \infty} \left|\sum \limits_{i=1}^n \log\left(1-\frac{\bfi x}{n(a_i+z)}\right)\right| \le \lim_{n \rightarrow \infty} \frac{|x|}{n} \sum \limits_{i=1}^n \frac{1}{a_i+z} = |x| \E\left[\frac{1}{a+z}\right] \quad \text{ for } x \in \bbR \quad \mu\text{-a.s.}\end{align*} 
Therefore, it suffices to prove the following for $x \in \bbR$ $\mu$-a.s. 
\begin{align}\lim_{n \rightarrow \infty}-\sum \limits_{i=1}^n \arg\left(1-\frac{\bfi x}{n(a_i+z)}\right) - x\E\left[\frac{1}{a+z}\right] = \lim_{n \rightarrow \infty} \sum \limits_{i=1}^n \arctan\left(\frac{x}{n(a_i+z)}\right) - \frac{x}{n (a_i+z)} = 0.\label{shaE10}\end{align}
Since $\arctan x = \int_0^x (1+u^2)^{-1}du$, we can rewrite the second sum above as 
\begin{align*}
\sum \limits_{i=1}^n \int \limits_0^{xn^{-1}(a_i+z)^{-1}} \frac{du}{1+u^2}-\frac{x}{n (a_i+z)} &= -\sum \limits_{i=1}^n \int \limits_0^{xn^{-1}(a_i+z)^{-1}} \frac{u^2\ du}{1+u^2} 
= -\frac{x}{n} \sum \limits_{i=1}^n \int \limits_0^{(a_i+z)^{-1}} \frac{x^2v^2\ dv}{n^2+x^2v^2}, 
\end{align*}
where we changed the variables via $u = vx/n$. Pick $M > 0$. The limsup as $n \rightarrow \infty$ of the absolute value of the last sum is bounded $\mu$-a.s. by $|x|$ times  
\begin{align*}
\lim_{n \rightarrow \infty} \frac{1}{n} \sum \limits_{i=1}^n \int \limits_{0}^{(a_i+z)^{-1}} \frac{x^2v^2\ dv}{M^2+x^2v^2} =  \E\left[\int \limits_{0}^{(a+z)^{-1}} \frac{x^2 v^2 \ dv}{M^2+x^2v^2} \right],  
\end{align*}
where the a.s. convergence is due to the ergodicity of $\bfa$ and the integrability of 
\begin{align*}\int_0^{(a+z)^{-1}} \dfrac{x^2v^2 \ dv}{M^2+x^2v^2} \le \frac{1}{a+z}.\end{align*} 
The last integral is monotone in $x^2$ and vanishes as $M \rightarrow \infty$. Hence, (\ref{shaE10}) holds for $x \in \bbR$ $\mu$-a.s.
\end{proof}

The next proposition relates $g_z$ to $g$ through a variational formula. 
\begin{prop}
\begin{equation}\label{shaE8.1}g_z(1, 1) = \sup_{t \in [0, 1]} \max \{g_z(1-t, 0) + g(t, 1), g_z(0, 1-t) + g(1, t)\} \end{equation} For $z \in (-\ubar{\alpha}, \ubar{\beta})$ in the exponential model and for $z \in (\bar{\alpha}, 1/\bar{\beta})$ in the geometric model. 
\end{prop}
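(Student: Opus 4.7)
The plan is to start from an exact path-decomposition identity at scale $n$ and pass to the limit. Every directed path from $(0, 0)$ to $(n, n)$ leaves its starting axis at a unique point: if it begins with a rightward step, it traverses the bottom boundary to $(k, 0)$ for some $k \in \{1, \ldots, n\}$ before stepping up to $(k, 1)$ and continuing as an ordinary directed path to $(n, n)$ through the interior, and symmetrically for an upward first step. Writing $S_k^{\downarrow} = \sum_{i=1}^k W(i, 0)$, $S_k^{\leftarrow} = \sum_{j=1}^k W(0, j)$ for the boundary partial sums and $G_k^{\downarrow}(n, n), G_k^{\leftarrow}(n, n)$ for the ordinary last-passage times from $(k, 1)$ and $(1, k)$ to $(n, n)$, this decomposition produces
\begin{equation*}
\widehat{G}(n, n) = \max \bigl\{ \max_{1 \le k \le n} [ S_k^{\downarrow} + G_k^{\downarrow}(n, n) ],\ \max_{1 \le k \le n} [ S_k^{\leftarrow} + G_k^{\leftarrow}(n, n) ] \bigr\}.
\end{equation*}
Three pointwise convergences as $n \to \infty$ will feed into this identity: Lemma \ref{shaL4} gives $\widehat{G}(n, n)/n \to g_z(1, 1)$; Birkhoff applied to $\bfa$ together with a classical law of large numbers for conditionally independent weights yields $S_{\lfloor ns \rfloor}^{\downarrow}/n \to g_z(s, 0)$ and $S_{\lfloor ns \rfloor}^{\leftarrow}/n \to g_z(0, s)$; and, since $G_k^{\downarrow}(n, n)$ equals in distribution the ordinary last-passage time on $\{1, \ldots, n-k+1\} \times \{1, \ldots, n\}$ computed with the shifted sequence $\tau_{k-1}\bfa$, Lemma \ref{shaL1} applied after a shift (which preserves the limit by total ergodicity of $\mu$) yields $G_{\lfloor ns \rfloor}^{\downarrow}(n, n)/n \to g(1-s, 1)$ for each $s \in [0, 1)$, and the value $g(1, 1)$ at $s = 0$.

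The $\ge$ direction will then be immediate: for each rational $t \in [0, 1]$, inserting $k = \lfloor n(1-t) \rfloor$ into the first max and dividing by $n$ give $\widehat{G}(n, n)/n \ge S_{\lfloor n(1-t) \rfloor}^{\downarrow}/n + G_{\lfloor n(1-t) \rfloor}^{\downarrow}(n, n)/n$, and the pointwise limits above yield $g_z(1, 1) \ge g_z(1-t, 0) + g(t, 1)$. A symmetric choice in the second max gives $g_z(1, 1) \ge g_z(0, 1-t) + g(1, t)$. Maximizing over these two and taking the supremum over dense rationals produces the $\ge$ inequality once the continuity of the shape functions on $[0, 1]$ is used.

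For the $\le$ direction, I would exploit the monotonicities that $k \mapsto S_k^{\downarrow}$ is nondecreasing and $k \mapsto G_k^{\downarrow}(n, n)$ is non-increasing (the latter because prepending the horizontal segment $(k_1, 1) \to \cdots \to (k_2, 1)$ only adds nonnegative weights). Given a partition $0 = s_0 < s_1 < \cdots < s_N = 1$ of $[0, 1]$, for every $k$ with $k/n \in [s_i, s_{i+1}]$,
\begin{equation*}
\tfrac{1}{n}\bigl(S_k^{\downarrow} + G_k^{\downarrow}(n, n)\bigr) \le \tfrac{1}{n} S_{\lfloor n s_{i+1} \rfloor}^{\downarrow} + \tfrac{1}{n} G_{\lfloor n s_i \rfloor \vee 1}^{\downarrow}(n, n),
\end{equation*}
and the right side converges to $g_z(s_{i+1}, 0) + g(1 - s_i, 1)$, with the value $g(1, 1)$ understood at $s_0 = 0$. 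Taking the maximum over $i$, then refining the partition and using the continuity of $g_z(\cdot, 0)$ together with the continuous extension of $g(\cdot, 1)$ to $[0, 1]$ recalled in Section \ref{shaS2}, will upgrade the partition maximum to $\sup_{t \in [0, 1]} [g_z(1-t, 0) + g(t, 1)]$. Symmetric treatment of the second max and combining with $\widehat{G}(n, n)/n \to g_z(1, 1)$ finishes the argument. The hard part will be this $\le$ direction: interchanging $\max_k$ with $\sup_t$ is not automatic and requires both the two-sided monotonicities above and good behavior of $g(t, 1)$ at the degenerate boundary $t = 0$, which is where one must use the hypotheses $\ubar{\alpha} + \ubar{\beta} > 0$ (respectively $\bar{\alpha}\bar{\beta} < 1$) to keep $g(0, 1)$ finite and the extension continuous.
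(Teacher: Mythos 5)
Your proposal is essentially the same as the paper's argument: the path decomposition
$\widehat{G}(n,n) = \max_k \max\{\widehat G(k,0)+G(n-k+1,n)\circ\theta_{k-1,0},\ \widehat G(0,k)+G(n,n-k+1)\circ\theta_{0,k-1}\}$
is exactly the paper's (\ref{shaeq23}), and your $\le$ argument (fix a partition of $[0,1]$, use monotonicity of $k\mapsto S_k^{\downarrow}$ and $k\mapsto G_k^{\downarrow}(n,n)$ to replace each $k$ by partition points, pass to the limit, refine) is precisely the paper's (\ref{shaeq24}) with $s_i=i/L$ and $L\to\infty$. Two small points are worth noting. First, for $\ge$ the paper uses a cleaner one-liner — since $g\le g_z$ and $g_z$ is linear, $g(t,1)+g_z(1-t,0)\le g_z(t,1)+g_z(1-t,0)=g_z(1,1)$ for every $t$ — whereas your route re-uses the decomposition and needs the same limit-in-probability bookkeeping that the $\le$ direction does; both work, but the paper's is shorter and sidesteps a quantifier issue. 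Second, your plan leaves implicit the measure-theoretic step that the paper handles explicitly: the limits $G_{\lf ns\rf}^{\downarrow}(n,n)/n \to g(1-s,1)$ are obtained via stationarity of $\bbP$ and hence hold only in probability (the shift $\theta_{\lf ns\rf -1,0}$ changes with $n$), while $\widehat G(n,n)/n\to g_z(1,1)$ from Lemma \ref{shaL4} is also only in $\bfP_{\bfa,\bfb}^z$-probability. To combine finitely many such limits in a single inequality one must pass to a common subsequence along which all hold a.s., as the paper does around (\ref{shaE15})--(\ref{shaE14}); you should make that step explicit. With that addition your proof is correct and matches the paper's.
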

\begin{proof}
Fix $z \in (-\ubar{\alpha}, \ubar{\beta})$ in the exponential model. Since $g \le g_z$ and $g_z$ is linear, (\ref{shaE8.1}) with $\ge$ instead of $=$ is immediate. For the opposite inequality, we adapt the argument in \cite[Proposition~2.7]{Seppalainen09}. It follows from (\ref{shaeq51}) and (\ref{shaeq42}) that  
\begin{equation}
\label{shaeq23}
\widehat{G}(n, n) = \max \limits_{k \in [n]} \max \{\widehat{G}(k, 0) + G(n-k+1, n) \circ \theta_{k-1, 0}, \widehat{G}(0, k) + G(n, n-k+1) \circ \theta_{0, k-1}\}. 
\end{equation}
Let $L \in \bbN$ and consider $n > L$ large enough so that $\lc (i+1)n/L\rc > \lc in/L\rc$ for $0 \le i < L$. 
For any $k \in [n]$ there exists some $0 \le i < L$ such that $\lc in/L \rc < k \le \lc (i+1)n/L\rc$, and the weights are nonnegative. Therefore, (\ref{shaeq23}) implies that 
\begin{equation}
\label{shaeq24}
\begin{aligned}
\widehat{G}(n, n) \le \max \limits_{0 \le i < L} \max \{&\widehat{G}(\lc (i+1)n/L\rc, 0) + G(\lf (1-i/L)n \rf, n) \circ \theta_{\lc in/L \rc, 0},\\
&\widehat{G}(0, \lc (i+1)n/L\rc) + G(n, \lf (1-i/L)n \rf) \circ \theta_{0, \lc in/L \rc} \}.
\end{aligned}
\end{equation} 
By stationarity of $\bbP$, we have the following limits in $\bbP$-probability. 
\begin{equation}
\label{shaE15}
\begin{aligned}
\lim_{n \rightarrow \infty} \frac{G(\lf (1-i/L)n \rf, n) \circ \theta_{\lc in/L \rc, 0}}{n} &= g\left(1-i/L, 1\right) \\ \lim_{n \rightarrow \infty} \frac{G(n, \lf (1-i/L)n \rf) \circ \theta_{0, \lc in/L \rc}}{n} &= g(1, 1-i/L)
\end{aligned}
\end{equation}
Hence, these limits are $\bbP$-a.s. and, consequently, $\bfP_{\bfa, \bfb}$ a.s. $\mu$-a.s. if $n \rightarrow \infty$ along a suitable sequence $(n_k)_{k \in \bbN}$. Also, by Lemma \ref{shaL4}, there is a subsequence $(n'_k)_{k \in \bbN}$ in $\bbN$ $\mu$-a.s. such that $\bfP_{\bfa, \bfb}^z$ a.s. 
\begin{equation}
\label{shaE14}
\begin{aligned}
\lim_{k \rightarrow \infty} \frac{\widehat{G}(\lc (i+1)n'_k/L\rc, 0)}{n'_k} = g_z(i+1/L, 0) \quad \lim_{k \rightarrow \infty} \frac{\widehat{G}(0, \lc (i+1)n'_k/L\rc)}{n'_k} = g_z(0, i+1/L)
\end{aligned}
\end{equation}
Because $\bfP_{\bfa, \bfb}$ is a projection of $\bfP_{\bfa, \bfb}^z$, we can choose $(\bfa, \bfb)$ such that (\ref{shaE15}) and (\ref{shaE14}) hold $\bfP_{\bfa, \bfb}^z$-a.s. Hence, we obtain from (\ref{shaeq24}) that 
\[
\begin{aligned}
g_z(1, 1) &\le \max \limits_{0 \le i < L}\max\{g_z((i+1)/L, 0) + g(1-i/L, 1), \ g_z(0, (i+1)/L) + g(1, 1-i/L)\} \\
&\le \sup \limits_{0 \le t \le 1} \max \{g(t, 1) + g_z(1-t, 0), \ g(1, t) + g_z(0, 1-t)\} + \frac{\E[(a+z)^{-1}]+\E[(b-z)^{-1}]}{L}.
\end{aligned}
\]
Finally, let $L \rightarrow \infty$. The geometric model is treated similarly. 
\end{proof}

\section{Variational characterization of the shape function}
\label{shaS5}

We now prove Theorems \ref{shaT1} and \ref{shaT2}. The assumption $\ubar{\alpha}+\ubar{\beta} > 0$ is in force until the proof of Theorem \ref{shaT2}. We begin with computing $g$ on the boundary. Recall that $g$ is extended to the boundary of $\bbR_+^2$ through limits. By homogeneity, it suffices to determine $g(1, 0)$ and $g(0, 1)$. 
\begin{lem}
\label{shal7}
\[
\begin{aligned}
g(1, 0) = \E\left[\frac{1}{a+\ubar{\beta}}\right] \qquad g(0, 1) = \E\left[\frac{1}{b+\ubar{\alpha}}\right]. \\
\end{aligned}
\]
\end{lem}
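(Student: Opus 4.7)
The plan is to prove matching upper and lower bounds for $g(1, 0)$; the formula for $g(0, 1)$ then follows by swapping the roles of $\bfa$ and $\bfb$. Recall that $g(1, 0) = \lim_{t \downarrow 0} g(1, t)$ by the boundary extension set up after Lemma \ref{shaL1}.

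For the upper bound, I would use the consequence of Lemma \ref{shaL4} noted right after its statement, namely $g \le g_z$ pointwise for every admissible $z$. Specifically, for $z \in (-\ubar{\alpha}, \ubar{\beta})$,
\begin{equation*}
g(1, t) \le g_z(1, t) = \E\!\left[\frac{1}{a+z}\right] + t\,\E\!\left[\frac{1}{b-z}\right].
\end{equation*}
Both expectations are finite since $a+z \ge \ubar{\alpha}+z > 0$ and $b-z \ge \ubar{\beta}-z > 0$ $\mu$-a.s. Sending $t \downarrow 0$ yields $g(1, 0) \le \E[1/(a+z)]$, after which $z \uparrow \ubar{\beta}$ together with dominated convergence (dominant $1/(a+z_0)$ for any fixed $z_0 \in (-\ubar{\alpha}, \ubar{\beta})$) gives $g(1, 0) \le \E[1/(a+\ubar{\beta})]$.

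For the lower bound, I would work directly with an explicit path rather than with the stationary model. Fix $\epsilon > 0$. Since $\bfb$ is ergodic (a marginal of the totally ergodic $(\bfa, \bfb)$) and $\ubar{\beta}$ is the essential infimum of $b_1$, $\mu$-a.s.\ there exists $j^* = j^*(\bfb) \in \bbN$ with $b_{j^*} \le \ubar{\beta}+\epsilon$. Fix such a $j^*$, take $t > 0$, and choose $n$ with $\lf nt \rf \ge j^*$. Discarding the vertical weights, the path $(1,1) \to (1, j^*) \to (n, j^*)$ delivers
\begin{equation*}
G(n, \lf nt \rf) \ge \sum_{i=1}^{n} W(i, j^*).
\end{equation*}
Under $\bfP_{\bfa, \bfb}$, the variables $W(i, j^*)$ are independent exponentials with rates $a_i + b_{j^*} \ge \ubar{\alpha}+\ubar{\beta} > 0$, so their variances are uniformly bounded. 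Kolmogorov's SLLN (to subtract the independent noise) together with the Birkhoff ergodic theorem applied to $\bfa$ (to average the means) yields
\begin{equation*}
\frac{1}{n}\sum_{i=1}^{n} W(i, j^*) \longrightarrow \E\!\left[\frac{1}{a+b_{j^*}}\right] \quad \bbP\text{-a.s.}
\end{equation*}
Since Lemma \ref{shaL1} provides $G(n, \lf nt \rf)/n \to g(1, t)$ $\bbP$-a.s.\ with a deterministic limit, this forces $g(1, t) \ge \E[1/(a+b_{j^*})] \ge \E[1/(a+\ubar{\beta}+\epsilon)]$, the rightmost quantity being deterministic and independent of $t$. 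Letting $t \downarrow 0$ and then $\epsilon \downarrow 0$ (monotone convergence along $1/(a+\ubar{\beta}+\epsilon) \uparrow 1/(a+\ubar{\beta})$) completes the lower bound.

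The main delicacy is the lower bound: the naive path through row $j^* = 1$ produces only $\E[1/(a+b_1)]$, strictly smaller than $\E[1/(a+\ubar{\beta})]$ whenever $\beta$ is not a point mass. Using the ergodicity of $\bfb$ to locate an index $j^*$ with $b_{j^*}$ near $\ubar{\beta}$ is precisely what matches the variational upper bound. Once $j^*$ is selected, the remaining probabilistic work (Kolmogorov's SLLN for independent exponentials with uniformly bounded variances, followed by Birkhoff for $\bfa$) is routine.
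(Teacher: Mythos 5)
Your upper bound is the paper's, but your lower bound takes a genuinely different route. The paper picks a realization of $(\bfa, \bfb)$ with $b_1\le\ubar{\beta}+\epsilon$ (a positive-$\mu$-probability event), uses the path through row $1$, and relies on stochastic domination: the row-$1$ weights $\{W(i,1)\}$ under $\bfP_{\bfa,\bfb}$ dominate the boundary increments $\{I(i,0)\}$ under the stationary measure $\bfQ_\bfa^{\ubar{\beta}+\epsilon}$, whose normalized sum converges in probability to $\E[(a+\ubar{\beta}+\epsilon)^{-1}]$ by (\ref{shaE16}), established in the proof of Lemma \ref{shaL4} via a characteristic-function argument. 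You instead work with a.e.\ realization of $(\bfa, \bfb)$, locate via ergodicity of $\bfb$ a row $j^*$ with $b_{j^*}\le\ubar{\beta}+\epsilon$, and prove the LLN for $\frac{1}{n}\sum_i W(i,j^*)$ directly through Kolmogorov's SLLN for the conditionally independent exponentials followed by Birkhoff for $\bfa$; since $g(1,t)$ is deterministic and $\E_\alpha[(a+b_{j^*})^{-1}]\ge \E_\alpha[(a+\ubar{\beta}+\epsilon)^{-1}]$, the bound becomes uniform over realizations. Your route sidesteps the extended sample space, the variables $I(i,0)$, and the characteristic-function machinery behind (\ref{shaE16}); it succeeds precisely because $\ubar{\alpha}+\ubar{\beta}>0$ bounds the rates $a_i+b_{j^*}$ away from zero and hence gives bounded conditional variances, which is exactly the hypothesis in force for this lemma. (The more elaborate argument for (\ref{shaE16}) was designed to also cover endpoint parameters where such uniform bounds fail, which you do not need here.) Both approaches are correct and give the same conclusion.
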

\begin{proof}
We have $g(1, 0) \le g_z(1, 0) = \E[(a+z)^{-1}]$ for all $z \in (-\ubar{\alpha}, \ubar{\beta})$. Letting $z \uparrow \ubar{\beta}$ yields the upper bound $g(1, 0) \le \E[(a+\ubar{\beta})^{-1}]$. Now the lower bound. Let $\epsilon > 0$. By Lemma \ref{shaL1}, (\ref{shaE16}) and since $\mu(b_1 \le \ubar{\beta}+\epsilon) > 0$, there exists $(\bfa, \bfb)$ such that $b_1 \le \ubar{\beta}+\epsilon$ and 
\begin{align}
\lim_{n \rightarrow \infty} \frac{G(n, \lf n\epsilon \rf)}{n} &= g(1, \epsilon) \quad \bfP_{\bfa, \bfb}\text{-a.s.}\label{shaE17}\\
\lim_{n \rightarrow \infty} \frac{1}{n}\sum_{i=1}^{n} I(i, 0) &= \E\left[\frac{1}{a+\ubar{\beta}+\epsilon}\right]\quad \text{ in } \bfQ_{\bfa}^{\ubar{\beta}+\epsilon}\text{-probability.} \label{shaE18}
\end{align}
($\bfQ_{\bfa}^z$ is defined immediately after (\ref{shaE16})). 
The distribution of $\{W(i, 1): 1 \le i \le n\}$ under $\bfP_{\bfa, \bfb}$ stochastically dominates the distribution of $\{I(i, 0): 1 \le i \le n\}$ under $\bfQ_{\bfa}^{\ubar{\beta}+\epsilon}$ as these distributions have product forms and $i$th marginals are exponentials with rates $a_i+b_1 \le a_i+\ubar{\beta}+\epsilon$ for $i \in [n]$. Therefore, for $x \in \bbR$ and $n \ge 1/\epsilon$,  
\begin{align*}
\bfP_{\bfa, \bfb}(G(n, \lf n\epsilon \rf) \ge nx) \ge \bfP_{\bfa, \bfb}\left(\sum_{i=1}^n W(i, 1) \ge nx\right) \ge \bfQ_{\bfa}^{\ubar{\beta}+\epsilon}\left(\sum_{i=1}^n I(i, 0) \ge nx \right). 
\end{align*}
Set $x = \E[(a+\ubar{\beta}+\epsilon)^{-1}]-\epsilon$ and let $n \rightarrow \infty$. By (\ref{shaE17}) and (\ref{shaE18}), we obtain $g(1, \epsilon) \ge x$. Sending $\epsilon \downarrow 0$ gives $g(1, 0) \ge \E[(a+\ubar{\beta})^{-1}]$. Computation of $g(0, 1)$ is similar.  
\end{proof}

We now extract $g$ from (\ref{shaE8.1}). For this, we will only use the boundary values of $g$ provided in Lemma \ref{shal7}, and that $A(z) = \E[(a+z)^{-1}]$ and $B(z) = \E[(b-z)^{-1}]$ are continuous, stricly monotone functions on $(-\ubar{\alpha}, \ubar{\beta})$. 
\begin{lem} 
\label{shal8}
Let $r$ be a positive, continuous function on $[0, \pi/2]$. For $z \in (-\ubar{\alpha}, \ubar{\beta})$,  
\[\sup_{0 \le \theta \le \pi/2} \{g(x(\theta), y(\theta))-g_z(x(\theta), y(\theta))\} = 0, \]
where $(x(\theta), y(\theta)) = (r(\theta)  \cos \theta,  r(\theta) \sin \theta)$ for  $0 \le \theta \le \pi/2$. 
\end{lem}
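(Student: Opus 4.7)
The key observation is that both $g$ and $g_z$ are homogeneous of degree one, so the set $E = \{(s,t) \in \bbR_+^2 : g(s,t) = g_z(s,t)\}$ is a union of rays emanating from the origin. Since the curve $\theta \mapsto (x(\theta), y(\theta))$ intersects the ray of angle $\theta$ exactly at the point at distance $r(\theta) > 0$ from the origin, it will suffice to exhibit a single nontrivial ray contained in $E$.

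The upper bound $\sup \le 0$ is immediate, since $g \le g_z$ on all of $\bbR_+^2$ (a consequence of $G \le \widehat{G}$ pathwise together with Lemmas \ref{shaL1} and \ref{shaL4}). For the matching lower bound, I invoke the variational identity (\ref{shaE8.1}). The supremum on its right-hand side is actually attained: $t \mapsto g(t,1)$ is concave on $(0,1]$ (and its value at $t=0$ is defined as $\lim_{t \downarrow 0} g(t,1)$, giving right-continuity at the endpoint), and likewise for $t \mapsto g(1,t)$, while $g_z$ is linear. So the function being maximized over $[0,1]$ is continuous and therefore achieves its maximum at some $t^* \in [0,1]$.

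Assume the first of the two options in (\ref{shaE8.1}) realizes the maximum, so that $g_z(1-t^*,0) + g(t^*,1) = g_z(1,1)$. Subtracting the linearity identity $g_z(1-t^*,0) + g_z(t^*,1) = g_z(1,1)$ yields $g(t^*,1) = g_z(t^*,1)$; the symmetric alternative produces $g(1,t^*) = g_z(1,t^*)$. Either way, I obtain a point $(s_0,t_0) \in \bbR_+^2$ with $s_0 + t_0 > 0$ at which $g(s_0,t_0) = g_z(s_0,t_0)$.

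Let $\phi \in [0, \pi/2]$ be the polar angle of $(s_0,t_0)$. By homogeneity of $g$ and $g_z$, extended to the boundary of $\bbR_+^2$ through the limit-definitions $g(s,0) = \lim_{t \downarrow 0} g(s,t)$ and $g(0,t) = \lim_{s \downarrow 0} g(s,t)$, one has $g(\cos\phi, \sin\phi) = g_z(\cos\phi, \sin\phi)$; multiplying by $r(\phi) > 0$ gives $g(x(\phi), y(\phi)) = g_z(x(\phi), y(\phi))$, so the supremum is attained at $\theta = \phi$ and equals zero. The main subtlety is that (\ref{shaE8.1}) only provides equality somewhere on the union of line segments $\{(t,1) : t \in [0,1]\} \cup \{(1,t) : t \in [0,1]\}$, which need not intersect the prescribed curve at all; homogeneity is precisely the mechanism that converts this into the desired equality on the curve itself.
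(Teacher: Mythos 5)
Your proof is correct and follows essentially the same route as the paper's: both extract a point where $g = g_z$ by arguing that the supremum in (\ref{shaE8.1}) is attained on a compact interval (via continuity of $g$ and linearity of $g_z$), then invoke homogeneity of $g$ and $g_z$ to place that equality on the prescribed curve. The only difference is order of operations — you first isolate an equality point $(t^*,1)$ or $(1,t^*)$ and then project it along its ray to the curve, whereas the paper reparametrizes (\ref{shaE8.1}) by $t=\cot\theta$ or $\tan\theta$ and divides by $x(\theta)$ or $y(\theta)$ before invoking continuity, but the underlying cancellation $g_z(1-t^*,0)+g_z(t^*,1)=g_z(1,1)$ is identical.
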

\begin{proof}
We can rewrite (\ref{shaE8.1}) as 
\begin{align}
A(z) + B(z) &= \sup \limits_{\pi/4 \le \theta \le \pi/2} \{(1-\cot \theta) A(z) + g(\cot \theta, 1)\} \vee \sup \limits_{0 \le \theta \le \pi/4} \{(1-\tan \theta)B(z) + g(1, \tan \theta)\} \nonumber\\
&= \sup \limits_{\pi/4 \le \theta \le \pi/2} \left\{\left(1-\frac{x(\theta)}{y(\theta)}\right) A(z) + g\left(\frac{x(\theta)}{y(\theta)}, 1\right) \right\} \nonumber\\ 
&\ \vee \sup \limits_{0 \le \theta \le \pi/4} \left\{\left(1-\frac{y(\theta)}{x(\theta)}\right)B(z) + g\left(1, \frac{y(\theta)}{x(\theta)}\right)\right\}, \nonumber
\end{align}
where we use that $x$ and $y$ are nonzero, respectively, on the intervals $[0, \pi/4]$ and $[\pi/4, \pi/2]$. Collecting the terms on the right-hand side and using homogeneity, we obtain that  
\begin{equation}
\label{shaeq26}
\begin{aligned}
0 = \max \bigg \{&\sup \limits_{\pi/4 \le \theta \le \pi/2} \frac{1}{y(\theta)}\{-x(\theta)A(z) -y(\theta) B(z)+ g(x(\theta), y(\theta))\},\\ &\sup \limits_{0 \le \theta \le \pi/4} \frac{1}{x(\theta)}\{-x(\theta)A(z) -y(\theta) B(z)+ g(x(\theta), y(\theta))\} \bigg \}.
\end{aligned}
\end{equation}
The expressions inside the supremums in (\ref{shaeq26}) are continuous functions of $\theta$ over closed intervals. Hence, there exists $\theta_z \in [0, \pi/2]$ such that 
\begin{align*}
0 = -x(\theta_z)A(z) - y(\theta_z) B(z) + g(x(\theta_z), y(\theta_z)) = \sup \limits_{0 \le \theta \le \pi/2} \{-x(\theta)A(z) -y(\theta) B(z)+ g(x(\theta), y(\theta))\}, 
\end{align*}
where the second equality is due to $g \le g_z$.  
\end{proof}

\begin{cor} 
\label{shac2}
\begin{equation}
\label{shaeq89}
B(z) = \sup \limits_{0 \le s < \infty} \{-sA(z) + g(s, 1)\} \quad \text{ for } z \in (-\ubar{\alpha}, \ubar{\beta}).  
\end{equation}
\end{cor}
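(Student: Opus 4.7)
The plan is to extract Corollary \ref{shac2} from Lemma \ref{shal8} by applying it with a convenient radial parametrization and converting the resulting angular identity into a statement along the line $t = 1$ via homogeneity. One direction is essentially free: since $g \le g_z$ on the whole quadrant, we have $g(s,1) - sA(z) \le B(z)$ for every $s \ge 0$, so the supremum in (\ref{shaeq89}) is at most $B(z)$.

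For the reverse inequality, I would apply Lemma \ref{shal8} with the choice $r \equiv 1$ on $[0,\pi/2]$. This produces some $\theta_z \in [0,\pi/2]$ at which
\[
g(\cos\theta_z, \sin\theta_z) = A(z)\cos\theta_z + B(z)\sin\theta_z.
\]
The substance of the proof will be to argue that $\theta_z$ cannot sit on the boundary of the interval, so that $s := \cot\theta_z$ is a finite, strictly positive number. If $\theta_z = 0$ then the display above forces $g(1,0) = A(z)$; however, Lemma \ref{shal7} gives $g(1,0) = A(\ubar{\beta})$, while $A'(z) = -\E[(a+z)^{-2}] < 0$ makes $A$ strictly decreasing on $[-\ubar{\alpha}, \ubar{\beta}]$, so $A(\ubar{\beta}) < A(z)$ for $z < \ubar{\beta}$, a contradiction. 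The opposite endpoint $\theta_z = \pi/2$ is excluded by the symmetric argument: it would demand $g(0,1) = B(z)$, but $g(0,1) = B(-\ubar{\alpha}) < B(z)$ by Lemma \ref{shal7} and strict monotonicity of $B$.

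With $\theta_z \in (0,\pi/2)$ secured, I divide the identity above by $\sin\theta_z$ and use the degree-one homogeneity of $g$ to rewrite it as
\[
g(s,1) = s\,A(z) + B(z), \qquad s = \cot\theta_z \in (0,\infty),
\]
so that $g(s,1) - sA(z) = B(z)$ at this particular $s$. Combined with the inequality from the first paragraph, this yields (\ref{shaeq89}), and in fact shows the supremum is attained at some positive finite $s$.

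The only nontrivial step is the endpoint exclusion; everything else is bookkeeping using Lemma \ref{shal7} and the homogeneity of $g$. Since Lemma \ref{shal7} is already established and the strict monotonicity of $A$ and $B$ follows from differentiating under the expectation (as was done in the proof of Corollary \ref{shac3}), I expect no further obstacle.
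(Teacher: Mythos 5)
Your proof is correct, but it takes a genuinely different path from the paper's. The paper applies Lemma~\ref{shal8} with $r$ chosen so that the image of the quarter-circle traces the L-shaped contour $\{(s,1):0\le s\le S\}\cup\{(S,t):0\le t\le 1\}$, and then shows via (\ref{sha90}) that the supremum along the vertical leg tends to $-\infty$ as $S\to\infty$, so that for large $S$ it can be discarded; it then lets $S\to\infty$. You instead fix $r\equiv 1$, invoke the attainment of the supremum in Lemma~\ref{shal8} (established inside that lemma's proof by compactness and continuity), and rule out the boundary cases $\theta_z\in\{0,\pi/2\}$ by combining Lemma~\ref{shal7} with the strict monotonicity of $A$ and $B$. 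Both routes rest on the same two inputs — Lemma~\ref{shal8} and the boundary values from Lemma~\ref{shal7} — but yours isolates the boundary behavior into a clean endpoint-exclusion step and, as a byproduct, shows the supremum in (\ref{shaeq89}) is actually attained at a finite positive $s=\cot\theta_z$, which the paper's limiting argument does not explicitly produce. The paper's argument, on the other hand, avoids leaning on strict (as opposed to merely non-strict) monotonicity of $A$ and $B$. One small gap to smooth over if this is to stand alone: the statement of Lemma~\ref{shal8} only asserts that the supremum equals zero; the existence of a maximizer $\theta_z$ is established in that lemma's proof, not its statement, so you should either cite that step explicitly or re-supply the short compactness argument.
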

\begin{proof}
Let $S > 0$. The set $\{(s, 1): 0 \le s \le S\} \cup \{(S, t): 0 \le t \le 1\}$ is the image of a curve $\theta \mapsto (r(\theta)\cos\theta, r(\theta)\sin\theta)$ for $[0, \pi/2]$ with continuous and positive $r$. Hence, by Lemma \ref{shal8},  
\begin{equation}
\label{shaeq88}
0 = \max \{ \sup \limits_{0 \le s \le S} \{g(s, 1) - g_z(s, 1)\}, \sup \limits_{0 \le t \le 1} \{g(S, t) - g_z(S, t)\}\}.
\end{equation}
Using homogeneity and Lemma \ref{shal7}, we observe that  
\begin{equation}
\label{sha90}
\begin{aligned}
g(S, t)-g_z(S, t) &= g(S, t) - SA(z)-tB(z) \le S(g(1, 1/S) - A(z)) \rightarrow -\infty \quad \text{ as } S \rightarrow \infty.
\end{aligned}
\end{equation}
Hence, the second supremum in (\ref{shaeq88}) can be dropped provided that $S$ is sufficiently large, which results in 
$
0 = \sup_{0 \le s \le S} \{g(s, 1) - g_z(s, 1)\}.
$
This equality remains valid if $S$ is replaced with $\infty$ by (\ref{sha90}) with $t = 1$. Rearranging terms gives (\ref{shaeq89}). 
\end{proof}

\begin{proof}[Proof of Theorem \ref{shaT1}]
Define the function $\gamma:\bbR \rightarrow \bbR \cup \{\infty\}$ by $\gamma(s) = -g(s, 1)$ for $s \ge 0$ and $\gamma(s) = \infty$ for $s < 0$. By Proposition \ref{shaL1}, $\gamma$ is nonincreasing, continuous and convex on $[0, \infty)$ and completely determines $g$. Let $\gamma^*$ denote the convex conjugate of $\gamma$, that is,  
\begin{align}
\gamma^*(x) &= \sup \limits_{s \in \bbR} \{sx-\gamma(s)\} = \sup \limits_{s \ge 0} \{sx - \gamma(s)\} \quad \text{ for } x \in \bbR. \label{shaeq30}
\end{align}
Let $f$ be the function whose graph is the image of the curve $z \mapsto (-A(z), B(z))$. That is, $f$ is defined on the interval 
$(-A(-\ubar\alpha), -A(\ubar{\beta}))$ and is given by the formula $f(x) = B \circ A^{-1}(-x)$. %properties of f. 
By Corollary \ref{shac2}, 
\begin{equation}\label{shaeq29} f(x) = \sup \limits_{0 \le s < \infty} \{sx-\gamma(s)\} \quad \text{ for } x \in (-A(-\ubar{\alpha}), -A(\ubar{\beta}))\end{equation} Comparison of (\ref{shaeq30}) and (\ref{shaeq29}) shows that $\gamma^*$ coincides with $f$ on $(-A(-\ubar{\alpha}), -A(\ubar{\beta}))$. 
Since $\gamma$ is a lower semi-continuous, proper convex function on the real line, by the Fenchel-Moreau theorem, $\gamma$ equals the convex conjugate of $\gamma^*$, hence, 
\begin{equation}
\label{shaeq31}
\gamma(s) = \sup \limits_{x \in \bbR} \{sx - \gamma^*(x)\} \quad \text{ for } s \in \bbR
\end{equation} 

To prove the result, we need to show the supremum in (\ref{shaeq31}) can be taken over the interval $(-A(-\ubar{\alpha}), -A(\ubar{\beta}))$ instead of the real line. It is clear from (\ref{shaeq30}) that $\gamma^*$ is nondecreasing and is bounded below by $-\gamma(0) = g(0, 1) = B(-\ubar{\alpha})$. Since $\gamma^*$ agrees with $f$ on  $(-A(-\ubar{\alpha}), -A(\ubar{\beta}))$, 
\begin{equation}
\label{shaeq32}
\begin{aligned}
B(-\ubar{\alpha}) &\le \gamma^*(-A(-\ubar{\alpha})) \le \lim \limits_{x \downarrow -A(-\ubar{\alpha})} f(x) \\ 
&= \lim \limits_{x \downarrow -A(-\ubar{\alpha})} B \circ A^{-1}(-x) = \lim \limits_{z \rightarrow -\ubar{\alpha}} B(z)  = B(-\ubar{\alpha}), 
\end{aligned}
\end{equation}
where we used continuity of $A^{-1}$ and $B$. Hence, $\gamma^*(x) = B(-\ubar{\alpha})$ for $x \le -A(-\ubar{\alpha})$. On the other hand, if $x > -A(\ubar{\beta}) = -g(1, 0)$ then 
$\gamma^*(x) = \infty$ by (\ref{shaeq30}) because 
\[
\lim \limits_{s \rightarrow \infty} sx - \gamma(s) = \lim \limits_{s \rightarrow \infty} s(x + g(1, 1/s)) = \infty. 
\]
Finally, we compute $\gamma^*$ at $-A(\ubar{\beta})$. Being a convex conjugate, %lemma 
$\gamma^*$ is lower semi-continuous. Since $\gamma^*$ is also nondecreasing, 
$\lim_{y \uparrow x} \gamma^*(y) = \gamma^*(x)$ for any $x \in \bbR$. Then, proceeding as in (\ref{shaeq32}), 
\[\gamma^*(-A(\ubar{\beta})) = \lim \limits_{x \uparrow -A(\ubar{\beta})}f(x) = B(\ubar{\beta}).\] 
We conclude that the function $x \mapsto sx - \gamma^*(x)$ is increasing for $x \le -A(-\ubar{\alpha})$ and is $-\infty$ for $x > -A(\ubar{\beta})$. Moreover, the left- and right-hand limits agree with the value of the function at $-A(\ubar{\beta})$ and $-A(-\ubar{\alpha})$, respectively. Hence, by (\ref{shaeq31}), 
\[
\begin{aligned}
\gamma(s) &= \sup \limits_{s \in (-A(-\ubar{\alpha}), -A(\ubar{\beta}))} \{sx - \gamma^*(x)\} = \sup \limits_{z \in (-\ubar{\alpha}, \ubar{\beta})} \{-sA(z)-B(z)\} = -\inf \limits_{z \in (-\ubar{\alpha}, \ubar{\beta})} \{sA(z) + B(z)\}, 
\end{aligned}
\]
which implies (\ref{shaeq3}). 
\end{proof}

\begin{proof}[Proof of Theorem \ref{shaT2}]
Introduce $\delta > 0$ and let $\varphi: \bbR_+^{\bbN} \rightarrow \bbR_+^{\bbN}$ denote the map $(c_n)_{n \in \bbN} \mapsto (c_n \vee \delta)_{n \in \bbN}$. Because $\varphi$ commutes with the shift $\tau_1$, $\varphi(\bfa)$ and $\varphi(\bfb)$ are stationary sequences in $(0, \infty)$. Moreover, for each $k, l \in \bbN$, the distribution $\mu_\delta$ of $(\varphi(\bfa), \varphi(\bfb))$ is ergodic with respect to $\tau_k \times \tau_l$. To see this, suppose that $B = (\tau_k \times \tau_l)^{-1}(B)$ for some $k, l \in \bbN$ and Borel set $B \subset \bbR_+^{\bbN} \times \bbR_+^{\bbN}$. Then $(\varphi \times \varphi)^{-1}(B) = (\varphi \times \varphi)^{-1}((\tau_k \times \tau_l)^{-1}(B)) = (\tau_k \times \tau_l)^{-1}((\varphi \times \varphi)^{-1}(B))$. Hence, by the ergodicity of $\mu$, we get $\mu_\delta(B) = \mu((\varphi \times \varphi)^{-1}(B)) \in \{0, 1\}$. 
 
Let $\alpha_\delta$ and $\beta_\delta$ denote the marginal distributions of $\varphi(\bfa)$ and $\varphi(\bfb)$, respectively. Then $\ubar{{\alpha_\delta}} = \ubar{{\beta_\delta}} = \delta$. 
Applying Theorem \ref{shaT1} gives
\begin{align}g^{\alpha_\delta, \beta_\delta}(s, t) = \inf \limits_{z \in (-\delta, \delta)} \left\{s \E\left[\frac{1}{a \vee \delta + z}\right] + t \E\left[\frac{1}{b \vee \delta-z}\right]\right\}.\label{shaE9}\end{align}
Since $\bfP_{\bfa, \bfb}$ stochastically dominates $\bfP_{\varphi(\bfa), \varphi(\bfb)}$, we have $g^{\alpha_\delta, \beta_\delta}(s, t) \le g^{\alpha, \beta}(s, t)$ for $s, t \ge 0$. Using this and (\ref{shaE9}), we obtain 
\begin{align*}g^{\alpha, \beta}(s, t) \ge \inf \limits_{z \in (-\delta, \delta)} \left\{s \E\left[\frac{1}{a \vee \delta' + z}\right] + t \E\left[\frac{1}{b \vee \delta'-z}\right]\right\}, \end{align*}
where we fix $\delta' > \delta$. Because the expression inside the infimum is continuous in $z$, letting $\delta \downarrow 0$ yields $g^{\alpha, \beta}(s, t) \ge s\E[(a \vee \delta')^{-1}] + t\E[(b \vee \delta')^{-1}]$ for $s, t \ge 0$. 
Then, by monotone convergence, letting $\delta' \rightarrow 0$ results in
\begin{equation*}
g^{\alpha, \beta}(s, t) \ge s\E\left[\frac{1}{a}\right] + t\E\left[\frac{1}{b}\right] \quad \text{ for } s,t \ge 0. 
\end{equation*} 
The opposite inequality is noted after Lemma \ref{shaL4}.  
\end{proof}

\chapter{One-point Distribution of the Last-passage Time} \label{Ch3}

\section{Introduction}

In this expository chapter, we derive a Fredholm determinant representation for the probabilities $\bfP_{\bfa, \bfb}(G(m, n) \le k)$ based on the discussion in \cite{Johansson2};  
we refer the reader to \cite{Berestycki}, \cite{BorodinGorin}, \cite{Johansson3} for more detailed accounts. For a precise statement, we introduce some notation. Define 
\begin{align}
\label{flucE165}
F_{m, n, x}^{\bfa, \bfb}(z) &= \frac{\prod_{j = 1}^n (1-zb_j)}{\prod_{i = 1}^m (z-a_i)} \cdot z^{m+x} \quad \text{ for } m, n \in \bbN,\ x \in \bbZ_+ \text{ and } z \in \bbC \smallsetminus \{a_1, \dotsc, a_m\}   
\end{align} 
and the contour integral 
\begin{align}
\label{flucE3}
I_{m, n, x}^{\bfa, \bfb} = \frac{1}{2\pi\ii} \oint \limits_{|z| = 1} F_{m, n, x}^{\bfa, \bfb}(z) \dd z \quad \text{ for } m, n \in \bbN \text{ and } x \in \bbZ_+, 
\end{align}
where the circle of integration is oriented counter-clockwise. Define the kernel as 
\begin{align}
\label{flucE71}
\KK_{m, n}^{\bfa, \bfb}(x, y) = \sum \limits_{l=0}^\infty I_{m, n, x+l}^{\bfa, \bfb} I_{n, m, y+l}^{\bfb, \bfa} \quad \text{ for } m, n \in \bbN \text{ and } x, y \in \bbZ_+. 
\end{align}
The convergence of the series above will be verified at (\ref{flucEq5.1}). 
\begin{thm}
\label{flucT7}
Let $m, n \in \bbN$. Then, for $k \in \bbZ_+$,  
\begin{align}
\bfP_{\bfa, \bfb}(G(m, n) \le k) = 1 + \sum \limits_{l=1}^{n} \frac{(-1)^l}{l!} \sum \limits_{\substack{x_1, \dotsc, x_l \in \bbZ_+ \\ x_i \ge k}}\det\limits_{i, j \in [l]}[\KK_{m, n}^{\bfa, \bfb}(x_i, x_j)]. \label{flucE105}
\end{align}
\end{thm}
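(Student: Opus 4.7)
The plan is to derive (\ref{flucE105}) by combining the RSK correspondence with Okounkov's theory of Schur measures as determinantal point processes, following the blueprint in \cite{Johansson2}.

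First, I would apply the RSK correspondence to the $m\times n$ matrix of weights $(W(i,j))$. Since the entries are independent with $\bfP_{\bfa,\bfb}(W(i,j) \ge k) = (a_ib_j)^k$, RSK produces a random pair of semistandard Young tableaux $(P,Q)$ of common shape $\lambda$ with $\lambda_1 = G(m,n)$ (the Greene-type identity for longest weakly increasing subsequences in a two-line array). The multiplicativity of geometric weights together with the Cauchy identity then gives
\begin{equation*}
\bfP_{\bfa,\bfb}(\lambda = \mu) = \prod_{i=1}^m \prod_{j=1}^n (1-a_ib_j) \cdot s_\mu(a_1,\dots,a_m)\, s_\mu(b_1,\dots,b_n),
\end{equation*}
so $\lambda$ is distributed according to the Schur measure with the two specializations $(\bfa,\bfb)$. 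In particular $\lambda$ has at most $\min(m,n) = n$ (assuming without loss $m \ge n$ after symmetry) nonzero parts, which eventually produces the cutoff $l \le n$ in (\ref{flucE105}).

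Next I would invoke Okounkov's theorem that, under a Schur measure, the shifted configuration $\{\lambda_i - i : i \ge 1\} \subset \bbZ$ is a determinantal point process. Its correlation kernel admits a double contour integral representation
\begin{equation*}
\KK_{m,n}^{\bfa,\bfb}(x,y) = \frac{1}{(2\pi\ii)^2} \oint \oint \frac{H(z)}{H(w)}\, \frac{w^y}{z^{x+1}}\, \frac{\dd z\,\dd w}{z-w}, \qquad H(z) = \frac{\prod_j (1-zb_j)}{\prod_i (1-a_i/z)},
\end{equation*}
with nested contours ($|w| < |z|$). To bring this into the ``integrable'' form (\ref{flucE71}), I would use the geometric expansion $(z-w)^{-1} = \sum_{l\ge 0} w^l z^{-l-1}$ valid in the chosen region, which factorizes the double integral into a sum of products of single contour integrals. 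Recognising that
\begin{equation*}
\frac{1}{2\pi\ii}\oint_{|z|=1} H(z)\, z^{-x-l-1}\,\dd z = I_{m,n,x+l}^{\bfa,\bfb}, \qquad \frac{1}{2\pi\ii}\oint_{|w|=1} \frac{w^{y+l}}{H(w)}\,\dd w = I_{n,m,y+l}^{\bfb,\bfa}
\end{equation*}
(after using $H(w)^{-1}$ has the same structural form as $H$ with the roles of $\bfa$ and $\bfb$ swapped and $z \mapsto 1/w$) yields the decomposition in (\ref{flucE71}). Absolute convergence of the series, which also justifies the formal manipulations above, follows from estimating the $I$'s by shifting the contour and exploiting $a_i,b_j \in (0,1)$, together with $a_ib_j < 1$.

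Finally, the event $\{G(m,n) \le k\}$ coincides with $\{\lambda_1 \le k\}$, equivalently the gap event that the shifted point process contains no particle in $\{k, k+1,\dots\}$. The standard gap-probability identity for a determinantal point process gives $\bfP_{\bfa,\bfb}(G(m,n) \le k) = \det(I - \KK_{m,n}^{\bfa,\bfb})_{\ell^2(\{k,k+1,\dots\})}$, and expanding this Fredholm determinant by its defining series produces exactly (\ref{flucE105}); the truncation at $l = n$ comes from the fact that $\lambda$ has at most $n$ parts, so determinants of size larger than $n$ vanish. I expect the main obstacle to be the algebraic step in the previous paragraph: casting the double contour integral as the infinite bilinear sum $\sum_l I \cdot I$ while simultaneously choosing compatible contours and verifying convergence of the series, since the rest of the argument reduces to standard determinantal point process machinery once the kernel is in the form (\ref{flucE71}).
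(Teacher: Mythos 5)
Your route is genuinely different from the paper's. The paper proceeds in three elementary steps: (i) Theorem~\ref{flucTh3} derives the finite-sum kernel $K_n$ for the square case $n \times n$ with \emph{injective} parameter sequences, via RSK, the Schur polynomial identity~(\ref{flucEq56}), Cauchy--Binet (Proposition~\ref{flucPr1}), and an explicit inversion of the Cauchy matrix $C$ by Cramer's rule; (ii) the proof of Theorem~\ref{flucT7} shows by Cauchy's residue formula that the contour-integral kernel satisfies $\KK_{n,n}^{\bfa,\bfb}(x,y) = K_n(x+n,y+n)$, still for injective parameters; (iii) a continuity argument in the parameters removes the injectivity hypothesis, and setting $a_i = 0$ for $m < i \le n$ and $b_j = 0$ for $n < j \le m$ produces the rectangular case. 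You instead invoke Okounkov's theorem that a Schur measure's shifted particle configuration is determinantal with a double-contour-integral kernel, factorize it via the geometric expansion, and close with the general gap-probability identity for determinantal point processes. Your route is shorter and sidesteps both the injectivity restriction and the square-to-rectangular reduction (the Okounkov kernel works directly for $m \ne n$ and for repeated parameters), but it is not self-contained: you import the correlation-kernel and gap-probability results as black boxes, whereas the paper proves everything from first principles in this expository chapter.

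Two concrete details as written would need repair. First, your identification
\begin{align*}
\frac{1}{2\pi\ii}\oint_{|z|=1} H(z)\, z^{-x-l-1}\,\dd z = I_{m,n,x+l}^{\bfa,\bfb}
\end{align*}
does not match the paper's conventions (\ref{flucE165})--(\ref{flucE3}): unwinding $H(z) = \prod_j(1-zb_j)\cdot z^m/\prod_i(z-a_i)$, the left side carries the power $z^{m-x-l-1}$ while $I_{m,n,x+l}^{\bfa,\bfb}$ integrates $F$ with power $z^{m+x+l}$, so the exponents disagree. This is presumably a particle/hole convention or an unwritten $z \mapsto 1/z$; you flagged this step as the main obstacle, but as stated the identity is wrong. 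Second, the gap event needs an index shift: the paper's determinantal point configuration is $\sS = \{\Lambda_j - j + n : j \in [n]\}$, so $\Lambda_1 \le k$ is $\max\sS \le k+n-1$, i.e.\ no particle in $\{k+n, k+n+1, \dotsc\}$. That is why Theorem~\ref{flucTh3} sums over $x_i \ge k+n$, and only after the relation $\KK_{n,n}^{\bfa,\bfb}(x,y) = K_n(x+n,y+n)$ absorbs the shift does one recover the sum over $x_i \ge k$ in Theorem~\ref{flucT7}. Your statement that $\{G(m,n) \le k\}$ is the gap on $\{k, k+1, \dotsc\}$ glosses over this and would produce the wrong offset unless the Okounkov kernel is correspondingly re-indexed.
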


\section{One-point distribution of the last-passage time}\label{flucSe5}

One of the main tools utilized in the argument is the Robinson-Schensted-Knuth (RSK) correspondence. To state it, some definitions are in order. A \emph{weak composition} $\alpha = (\alpha_i)_{i \in \bbN}$ is a sequence in $\bbZ_+$ with finitely many nonzero terms. Define the \emph{length} and the \emph{size} of $\alpha$ as 
\begin{align*}l(\alpha) = \max \{i \in \bbN: \alpha_i > 0\} \qquad \text{ and } \qquad
|\alpha| = \sum_i \alpha_i, 
\end{align*} respectively. A \emph{partition} $\lambda = (\lambda_i)_{i \in \bbN}$ is a nonincreasing weak composition. Each $\lambda_i$ is called a \emph{part} of $\lambda$. To each partition $\lambda$, we associate a \emph{Young diagram} 
\begin{align*}Y(\lambda) = \{(i, j) \in \bbN^2: i \le \lambda_j \text{ and } j \le l(\lambda)\}.\end{align*} 
A \emph{semi-standard Young tableau (SSYT)} of \emph{shape} $\lambda$ is a map $P: Y(\lambda) \rightarrow \bbN$ such that $P = P(i, j)$ is nondecreasing in $i$ and (strictly) increasing in $j$. 
We write $\lambda = \shape(P)$. Also, define the \emph{type} of $P$ as the weak composition 
\begin{align*}\type(P) = (\hash\{(i, j): P(i, j) = k\})_{k \in \bbN}.\end{align*} 
See Figure \ref{flucFi3} for a visualization of a Young diagram and an SSYT. A \emph{generalized permutation} $\varsigma$ (of \emph{length} $l \in \bbN$) is a finite sequence $\varsigma = ((i_k, j_k))_{k \in [l]}$ in $\bbN^2$ that is nondecreasing with respect to the lexicographic order ($i_k \le i_{k+1}$ and if $i_k = i_{k+1}$ then $j_k \le j_{k+1}$ for all $1 \le k < l$). We write $L(\varsigma)$ for the maximal length of a nondecreasing subsequence of $(j_k)_{k \in [l]}$.
Let $\sP$ denote the set of all generalized permutations and $\sT$ denote the set of all pairs of SSYTs $(P, Q)$ such that $\shape(P) = \shape(Q)$. 

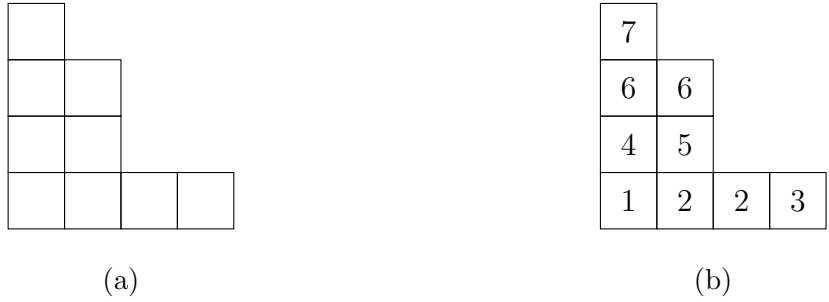
\begin{figure}[h!]
\centering
\begin{subfigure}{.5\textwidth}
  \centering
  \begin{tikzpicture}[scale = 1.5]
\draw (0,0) rectangle (0.5,0.5);
\draw (0.5, 0) rectangle (1,0.5);
\draw (1,0) rectangle (1.5,0.5);
\draw (1.5,0) rectangle (2,0.5);
\draw (0,0.5) rectangle (0.5,1);
\draw (0.5,0.5) rectangle (1,1);
\draw (0,1) rectangle (0.5,1.5);
\draw (0.5,1) rectangle (1,1.5);
\draw (0,1.5) rectangle (0.5,2);
 \end{tikzpicture}
 \caption{}
\end{subfigure}%
\begin{subfigure}{.5\textwidth}
  \centering
  \begin{tikzpicture}[scale = 1.5]
\draw (0,0) rectangle (0.5,0.5);
\node at (0.25,0.25) {1};
\draw (0.5, 0) rectangle (1,0.5);
\node at (0.75,0.25) {2};
\draw (1,0) rectangle (1.5,0.5);
\node at (1.25,0.25) {2};
\draw (1.5,0) rectangle (2,0.5);
\node at (1.75,0.25) {3};
\draw (0,0.5) rectangle (0.5,1);
\node at (0.25,0.75) {4};
\draw (0.5,0.5) rectangle (1,1);
\node at (0.75,0.75) {5};
\draw (0,1) rectangle (0.5,1.5);
\node at (0.25,1.25) {6};
\draw (0.5,1) rectangle (1,1.5);
\node at (0.75,1.25) {6};
\draw (0,1.5) rectangle (0.5,2); 
\node at (0.25,1.75) {7};
\end{tikzpicture}
  \caption{}
 \end{subfigure}
\caption[A Young diagram and a semi-standard Young Tableau]{\small{
(a) The Young diagram $Y(\lambda)$ for $\lambda = (4, 2, 2, 1, 0, \dotsc)$ viewed as the set of unit squares with upper-right corners at $(i, j) \in Y(\lambda)$. 
(b) An SSYT $P$ of shape $\lambda = (4, 2, 2, 1, 0, \dotsc)$. The value $P(i, j)$ is 
written inside the corresponding unit square. For example, $P(1, 2) = 4$ and $P(2, 3) = 6$. Note that the numbers are 
nondecreasing along rows and increasing along columns. Also, $\type(P) = (1, 2, 1, 1, 1, 2, 1, 0, \dotsc)$. 
}}
\label{flucFi3}
\end{figure}

\begin{thm}[RSK correspondence]
\label{flucTh4}
There exists a bijection $\RSK: \sP \rightarrow \sT$ with the following property: If $\varsigma = ((i_k, j_k))_{k \in [l]} \in \sP$ and $(P, Q) = \RSK(\varsigma)$ then 
\[\type(P) = (\hash \{k: j_k = n\})_{n \in \bbN}\]
\[\type(Q) = (\hash \{k: i_k = n\})_{n \in \bbN}\]
Furthermore, if $\lambda = (\lambda_i)_{i \in \bbN} = \shape(P) = \shape(Q)$ then $\lambda_1 = L(\varsigma)$.  
\end{thm}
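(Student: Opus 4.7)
The plan is to construct $\RSK$ explicitly via the classical \emph{row-insertion} (or \emph{bumping}) algorithm and then verify its claimed properties. First I would define, for an SSYT $P$ and $x \in \bbN$, a new SSYT $P \leftarrow x$ as follows: insert $x$ into the first row by replacing the leftmost entry strictly greater than $x$ (if any) with $x$; the bumped entry is then inserted into the second row by the same rule, and so on; if no entry is bumped in some row, append the current value as a new box at the end of that row and stop. A short lemma shows that $P \leftarrow x$ is again an SSYT whose shape differs from $\shape(P)$ by exactly one added box, and that the sequence of bumped values is strictly increasing while their column positions are weakly decreasing.

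Next, given $\varsigma = ((i_k, j_k))_{k \in [l]} \in \sP$, I would iteratively define
\[
P_0 = Q_0 = \emptyset, \qquad P_k = P_{k-1} \leftarrow j_k, \qquad Q_k = Q_{k-1} \text{ with } i_k \text{ placed in the new box},
\]
and set $\RSK(\varsigma) = (P_l, Q_l)$. That $\shape(P_k) = \shape(Q_k)$ holds by construction, and that $P_l$ is an SSYT is immediate from the bumping lemma. To see that $Q_l$ is an SSYT, one needs that when two successive steps $k$ and $k+1$ use the same $i$-value (so $i_k = i_{k+1}$ forces $j_k \le j_{k+1}$ by the definition of $\sP$), the new box at step $k+1$ lies strictly to the right of the new box at step $k$ in the same row or in a lower row, which follows from the monotonicity of bumping paths under insertion of a weakly larger letter. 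This gives weakly increasing rows and strictly increasing columns in $Q_l$. The type statements are then read off directly: $P_l$ records the multiset $\{j_k\}$ and $Q_l$ records $\{i_k\}$.

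The inverse map is built by reverse row insertion: the largest entry of $Q_l$ (breaking ties by rightmost box) identifies the box that was added last, and its position determines a reverse-bumping path in $P_l$ terminating at some value $j$ in the first row; removing the box and outputting the pair $(i,j)$ recovers the last step of $\varsigma$, and iterating produces $\varsigma$ in full. Bijectivity follows from the fact that forward and reverse insertion are inverse to each other at the single-step level, which is a direct consequence of the bumping rule.

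Finally, the identification $\lambda_1 = L(\varsigma)$ is Schensted's theorem and is the main obstacle. I would prove the two inequalities separately. For $\lambda_1 \ge L(\varsigma)$: given a nondecreasing subsequence $j_{k_1} \le \dots \le j_{k_r}$ of length $r = L(\varsigma)$, one shows by induction on $s$ that after processing step $k_s$, the first row of $P_{k_s}$ contains an entry in column $s$ that is $\le j_{k_s}$; this uses the observation that inserting a value at least as large as the current last entry of row $1$ simply appends it, while any bump only decreases first-row entries. For $\lambda_1 \le L(\varsigma)$: one tracks, for each $s \le \lambda_1$, a ``history'' pointer showing that the entry currently in position $(s,1)$ was placed there by the insertion of some $j_{k_s}$, and that the indices $k_1 < \dots < k_{\lambda_1}$ and the values $j_{k_1} \le \dots \le j_{k_{\lambda_1}}$ form a nondecreasing subsequence of $\varsigma$. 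Combining both inequalities yields $\lambda_1 = L(\varsigma)$, completing the proof.
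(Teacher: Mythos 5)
The paper itself does not prove Theorem~\ref{flucTh4}; it states the RSK correspondence and cites \cite{Fulton} and \cite{Stanley}. Your outline is the classical row-insertion construction, and most of it is accurate: the definition of $P \leftarrow x$, the bumping lemma (strictly increasing bumped values, weakly decreasing column positions), the construction of $(P_l, Q_l)$, the verification that $Q_l$ is semi-standard via the row-bumping lemma for $i_k = i_{k+1}$, the type statements, the reverse-insertion inverse, and the easy inequality $\lambda_1 \ge L(\varsigma)$ are all sound.

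However, your argument for $\lambda_1 \le L(\varsigma)$ has a genuine gap. You define $k_s$ to be the time at which the \emph{final} entry of position $(s,1)$ was placed, and assert that $k_1 < \dots < k_{\lambda_1}$ and $j_{k_1} \le \dots \le j_{k_{\lambda_1}}$ give a nondecreasing subsequence of $\varsigma$. This is false. Take the insertion sequence $2, 4, 1$ (so $\varsigma = ((1,2),(2,4),(3,1))$): after time $2$ the first row is $(2, 4)$, and at time $3$ the value $1$ bumps $2$, leaving first row $(1, 4)$. Thus $(1,1)$ last changed at time $k_1 = 3$ while $(2,1)$ last changed at time $k_2 = 2$, so $k_1 > k_2$ and the pair $(j_{k_1}, j_{k_2}) = (1, 4)$ is not a subsequence of the input. (Schensted's theorem still holds, of course: $\lambda_1 = 2$ and $(2,4)$ is a nondecreasing subsequence of length $2$.) The correct trace-back argument does not use the final entry of each box. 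Instead, take the step $j_{k_{\lambda_1}}$ whose insertion lands in column $\lambda_1$ of row $1$; at that moment the entry sitting in column $\lambda_1 - 1$ is $\le j_{k_{\lambda_1}}$ and was placed there by some earlier insertion $j_{k_{\lambda_1-1}}$ landing in column $\lambda_1-1$, and so on. The columns are read off at the time of each relevant insertion, not at the end of the process. With this modification the indices do strictly increase and the values are nondecreasing, yielding $\lambda_1 \le L(\varsigma)$.
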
  
For a proof, see \cite{Fulton}, \cite{Stanley}. We will use the corollary below noted in \cite{Johansson00}. For $m, n \in \bbN$, define 
\begin{align*}\sP_{m, n} &= \{((i_k, j_k))_{k \in [l]} \in \sP: i_k \in [m] \text{ and } j_k \in [n] \text{ for } k \in [l]\} \\ 
\sT_{m, n} &= \{(P, Q) \in \sT: l(\type(P)) \le n \text{ and } l(\type(Q)) \le m\}.
\end{align*} 
\begin{cor}
\label{flucCo1}
Let $m, n \in \bbN$. There exists a bijection $f_{m, n}: \bbZ_+^{[m] \times [n]} \rightarrow \sT_{m, n}$ such that if $A \in \bbZ_+^{[m] \times [n]}$ and $(P, Q) = f_{m, n}(A)$ then 
\begin{enumerate}[(a)]
\item $|\lambda| = \sum_{i = 1}^m \sum_{j=1}^n A(i, j)$, where $\lambda = \shape(P) = \shape(Q)$. 
\item $\type(P)_j  = \sum_{i=1}^m A(i, j)$ for $j \in [n]$. 
\item $\type(Q)_i = \sum_{j=1}^n A(i, j)$ for $i \in [m]$. 
\item $\lambda_1 = \max_{\pi \in \Pi(m, n)} \sum_{(i, j) \in \pi} A(i, j)$, where $\lambda_1$ is the largest part of $\lambda$ in (a). 
\end{enumerate} 
\end{cor}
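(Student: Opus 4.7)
The plan is to deduce the corollary from Theorem \ref{flucTh4} by first realizing every matrix $A \in \bbZ_+^{[m] \times [n]}$ as a generalized permutation in $\sP_{m,n}$. Given $A$, define $\varsigma_A \in \sP$ to be the unique sequence in $\bbN^2$, ordered lexicographically, in which each pair $(i,j) \in [m] \times [n]$ appears exactly $A(i,j)$ times. This gives an obvious bijection $A \mapsto \varsigma_A$ from $\bbZ_+^{[m] \times [n]}$ to $\sP_{m,n}$. Composing with RSK produces $f_{m,n} := \RSK \circ (A \mapsto \varsigma_A)$, and I would then check that this sends $\bbZ_+^{[m]\times[n]}$ bijectively onto $\sT_{m,n}$.

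Properties (a)--(c) should fall out directly from Theorem \ref{flucTh4}. Writing $(P,Q) = f_{m,n}(A)$ and $\lambda = \shape(P) = \shape(Q)$, the type identities
\begin{align*}
\type(P)_j &= \hash\{k : j_k = j\} = \sum_{i=1}^m A(i,j), \quad j \in [n],\\
\type(Q)_i &= \hash\{k : i_k = i\} = \sum_{j=1}^n A(i,j), \quad i \in [m],
\end{align*}
give (b) and (c) at once. Summing (b) over $j$ (or (c) over $i$) yields $|\lambda| = |\type(P)| = \sum_{i,j} A(i,j)$, which is (a). In addition these same identities show $l(\type(P)) \le n$ and $l(\type(Q)) \le m$, so $(P,Q) \in \sT_{m,n}$; conversely, any $(P,Q) \in \sT_{m,n}$ is hit, since its preimage under RSK is a generalized permutation whose coordinates lie in $[m] \times [n]$, hence equals $\varsigma_A$ for a unique $A \in \bbZ_+^{[m] \times [n]}$ recovered by (b) or (c). This establishes the bijectivity onto $\sT_{m,n}$.

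The main obstacle is (d), i.e.\ identifying $\lambda_1 = L(\varsigma_A)$ with the last-passage value of $A$. I would argue by a two-sided comparison. For the lower bound, given any directed path $\pi \in \Pi(m,n)$, I collect in order of appearance in $\varsigma_A$ all copies of each pair $(i,j) \in \pi$; since $\pi$ has $i$- and $j$-coordinates weakly increasing along its steps, the resulting subsequence of $j$'s is nondecreasing, so $L(\varsigma_A) \ge \sum_{(i,j) \in \pi} A(i,j)$. For the reverse inequality, take any nondecreasing subsequence $j_{k_1} \le \cdots \le j_{k_L}$ of length $L = L(\varsigma_A)$ in $\varsigma_A$. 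Lexicographic ordering forces $i_{k_1} \le \cdots \le i_{k_L}$, so the set of distinct lattice points appearing among $(i_{k_r}, j_{k_r})$ forms a chain under componentwise $\le$ in $[m] \times [n]$; this chain can be completed to a directed path $\pi \in \Pi(m,n)$, and the number of chosen indices at each distinct point is at most $A(i,j)$, whence $L \le \sum_{(i,j)\in \pi} A(i,j)$. Combining the two bounds with the final assertion of Theorem \ref{flucTh4} gives (d).

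I expect the bijection construction and properties (a)--(c) to be essentially bookkeeping, while the only substantive point is the chain-to-path translation used for (d); the lexicographic ordering of $\varsigma_A$ is exactly what makes this translation work, so no additional ergodic or analytic ingredient is needed.
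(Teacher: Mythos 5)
Your construction $f_{m,n} = \RSK \circ (A \mapsto \varsigma_A)$ is exactly the paper's $f_{m,n} = \RSK_{m,n}\circ g_{m,n}$, and the properties are derived from Theorem~\ref{flucTh4} in the same way. You fill in more detail than the paper does for the verification that the image is $\sT_{m,n}$ and especially for the two-sided comparison in (d) (where the paper just asserts that nondecreasing subsequences correspond to directed paths), but the approach is the same.
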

\begin{proof}
For each $A \in \bbZ_+^{[m] \times [n]}$, define $g_{m, n}(A)$ as the unique $\varsigma = ((i_k, j_k))_{k \in [l]} \in \sP_{m, n}$ such that each $(i, j) \in [m] \times [n]$ is repeated exactly $A(i, j)$ times in $\varsigma$. Note that $g_{m, n}$ is a bijection and $l = \sum_{i=1}^m \sum_{j=1}^n A(i, j)$. Moreover, the lengths of the maximal nondecreasing subsequences of $(j_k)_{k \in [l]}$ are given by $\sum_{(i, j) \in \pi} A(i, j)$ for various  $\pi \in \Pi_{m, n}$.  Hence, $L(\varsigma)$ equals the last-passage time $\max_{\pi \in \Pi(m, n)} \sum_{(i, j) \in \pi} A(i, j)$. It follows from Theorem \ref{flucTh4} that the map $\RSK$ restricts to a bijection $\RSK_{m, n}$ between $\sP_{m, n}$ and $\sT_{m, n}$. Now, the composition $f_{m, n} = \RSK_{m, n} \circ g_{m, n}$ is a bijection between $\bbZ_+^{[m] \times [n]}$ and $\sT_{m, n}$ with properties (a)-(d). 
\end{proof}

We will also rely on the following generalization of the Cauchy-Binet identity \cite[Proposition~2.10]{Johansson3}. 
\begin{prop}
\label{flucPr1}
Let $(X, \mu)$ be a measure space, $n \in \bbN$ and $f_i, g_i: X \rightarrow \bbC$ be measurable functions for $i \in [n]$ such that $f_i g_j$ is integrable for any $i, j \in [n]$. Then 
\begin{align*}
\det_{i, j \in [n]} \left[ \int_{X} f_i(x) g_j(x) \mu(dx) \right] = \frac{1}{n!} \int_{X^n} \det_{i, j \in [n]}[f_i(x_j)] \det_{i, j \in [n]}[g_i(x_j)] \mu(\dd x_1) \dotsc \mu(\dd x_n). 
\end{align*}  
\end{prop}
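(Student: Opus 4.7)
The plan is to apply the Robinson-Schensted-Knuth correspondence to recast $\bfP_{\bfa, \bfb}(G(m,n) \le k)$ as a Schur-measure observable, and then identify the resulting sum as a Fredholm determinant via the determinantal point process on $\bbZ$ associated with this measure.

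First, under $\bfP_{\bfa, \bfb}$ the weights $W(i,j)$ are independent geometrics, so for any array $A \in \bbZ_+^{[m] \times [n]}$ we have $\bfP_{\bfa, \bfb}(W = A \text{ on } [m]\times[n]) = \prod_{i,j}(1-a_ib_j) a_i^{A(i,j)} b_j^{A(i,j)}$. Since $G(m,n)$ depends only on these weights and equals $L(A)$ from Corollary \ref{flucCo1}(d), summing over arrays with $L(A) \le k$ and applying the bijection $f_{m,n}$ yields
\begin{align*}
\bfP_{\bfa, \bfb}(G(m,n) \le k) = \prod_{i,j}(1-a_ib_j) \sum_{\substack{\lambda: \lambda_1 \le k \\ \ell(\lambda) \le \min(m,n)}} s_\lambda(a_1,\dots,a_m)\, s_\lambda(b_1,\dots,b_n),
\end{align*}
where I use the combinatorial identity $\sum_{P: \shape(P) = \lambda, P \text{ in } [n]} \prod_j b_j^{\type(P)_j} = s_\lambda(b_1,\dots,b_n)$ to collect over $P$ (and analogously over $Q$). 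The length constraint $\ell(\lambda) \le n$ is automatic since $s_\lambda(b_1,\dots,b_n) = 0$ otherwise, and similarly for $\bfa$.

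Second, I convert to a point process on $\bbZ$ via $\lambda \mapsto X(\lambda) = \{\lambda_i - i : i \ge 1\}$, under which the event $\{\lambda_1 \le k\}$ becomes $\{X(\lambda) \cap \{k, k+1, \dots\} = \emptyset\}$. Using Weyl's formula $s_\lambda(a_1,\dots,a_m) = \det[a_i^{\lambda_j+m-j}]/\prod_{i<j}(a_i - a_j)$ and its analogue in $\bfb$, together with Proposition \ref{flucPr1} (Cauchy-Binet), the normalized sum over $\lambda$ becomes the exponential of a determinantal correlation structure. The standard inclusion-exclusion for determinantal processes then gives
\begin{align*}
\bfP_{\bfa, \bfb}(G(m,n) \le k) = \sum_{l \ge 0} \frac{(-1)^l}{l!} \sum_{x_1,\dots,x_l \ge k} \det_{i,j \in [l]}[K(x_i, x_j)]
\end{align*}
for a kernel $K$ to be identified; the series terminates at $l \le n$ because at most $n$ of the shifts $\lambda_i - i$ can exceed $-n-1$ when $\ell(\lambda) \le n$, so only finitely many $x_i$ can fall in $\{k, k+1,\dots\}$.

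Third, the remaining task is to identify $K(x,y)$ with $\KK_{m,n}^{\bfa,\bfb}(x,y)$ from (\ref{flucE71}). The Cauchy-Binet unfolding produces $K$ as a sum $\sum_l \phi_l(x)\psi_l(y)$ where $\phi_l(x)$ and $\psi_l(y)$ are rows of the Vandermonde-divided determinants in $\bfa$ and $\bfb$ respectively. Each row is then rewritten as the coefficient of a rational generating function, and a residue computation converts this coefficient into the contour integral $I_{m,n,x+l}^{\bfa,\bfb}$ along $|z|=1$: the poles at $z = a_i$ (all in $(0,1)$) lie inside the contour while the factors $(1-zb_j)$ are analytic there, which is precisely the setup used in (\ref{flucE165}). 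The geometric summation over the ``unused'' indices from Cauchy-Binet supplies the outer sum $\sum_{l \ge 0}$ in (\ref{flucE71}).

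I expect the main obstacle to be the bookkeeping in this last step: carefully reconciling the shift $\lambda_j - j$ (used for the point process) with the shift $\lambda_j + m - j$ (used in Weyl's formula), matching the asymmetric roles of $m, n$ in $I_{m,n,x+l}^{\bfa,\bfb}$ vs.\ $I_{n,m,y+l}^{\bfb,\bfa}$, and establishing absolute convergence of the $l$-series. Convergence should follow from the bound $|I_{m,n,x+l}^{\bfa,\bfb}| \lesssim \rho^{x+l}$ for some $\rho < 1$ obtained by deforming the $|z|=1$ contour slightly outward, using $\bar{\alpha} < 1$ to keep the poles inside and $\bar{\beta} < 1$ to control the numerator.
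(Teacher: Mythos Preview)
Your proposal does not address Proposition \ref{flucPr1} at all. The proposition is the generalized Cauchy--Binet identity, a purely linear-algebraic/measure-theoretic statement relating a determinant of integrals to an integral of a product of determinants. Its proof (which the paper does not actually supply, citing instead \cite[Proposition~2.10]{Johansson3}) is elementary: expand the right-hand side using the Leibniz formula for each determinant, apply Fubini (justified by the integrability of every $f_i g_j$), and collapse the double sum over permutations back into a single determinant.

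What you have written is a proof sketch for Theorem~\ref{flucT7}, the Fredholm determinant representation of $\bfP_{\bfa,\bfb}(G(m,n)\le k)$. You even invoke Proposition~\ref{flucPr1} as an ingredient (``together with Proposition~\ref{flucPr1} (Cauchy--Binet)'') rather than proving it. So there is a basic mismatch between the target statement and the content of your proposal; nothing in the RSK/Schur-measure/contour-integral machinery you outline bears on the identity in question.
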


We next obtain a Fredholm determinant representation for the distribution of $G(n, n)$ in the case of injective $(a_i)_{i \in [n]}$ and $(b_j)_{j \in [n]}$ (terms do not repeat). A more general version of the following proof can also be found in \cite{BorodinGorin} and \cite{Johansson3}.  
\begin{thm}
\label{flucTh3}
Let $n \in \bbN$. Suppose that $(a_i)_{i \in [n]}$ and $(b_j)_{j \in [n]}$ are injective sequences. Define 
\begin{align}
\label{flucEq67}K_n(x, y) = \sum_{i, j \in [n]} \frac{a_i^x b_j^y}{1-a_i b_j} \frac{\prod \limits_{k \in [n]} (1-a_ib_k) (1-a_kb_j)} {\prod \limits_{\substack{k \in [n] \\ k \neq i}} (a_k-a_i) \prod \limits_{\substack{k \in [n] \\ k \neq j}} (b_k-b_j)}
\quad \text{ for } x, y \in \bbZ_+.
\end{align}
Then 
\begin{align}
\label{flucEq99}
\bfP(G(n, n) \le k) = 1 + \sum_{l=1}^n \frac{(-1)^l}{l!} \sum_{x_1, \dotsc, x_l \ge k+n} \det_{i, j \in [l]}\left[K_{n}(x_i, x_j)\right] \quad \text{ for } k \in \bbZ_+.  
\end{align}
\end{thm}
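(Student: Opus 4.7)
My plan is to reduce the probability to a truncated Cauchy-type sum of Schur polynomials via RSK, then convert the resulting $n \times n$ determinant into a Fredholm determinant through standard matrix manipulations.

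Starting from the joint distribution $\bfP(W = A) = \prod_{i,j \in [n]}(1-a_ib_j)(a_ib_j)^{A(i,j)}$ and applying Corollary \ref{flucCo1}, I use $\prod_{i,j}(a_ib_j)^{A(i,j)} = \prod_i a_i^{\type(Q)_i}\prod_j b_j^{\type(P)_j}$ for $(P,Q) = f_{n,n}(A)$. Grouping $(P,Q) \in \sT_{n,n}$ by common shape $\lambda$ and invoking the generating-function identity $s_\lambda(x_1,\dots,x_n) = \sum_{P : \shape(P) = \lambda,\ l(\type(P)) \le n} \prod_i x_i^{\type(P)_i}$ for the Schur polynomial gives
\begin{align*}
\bfP(G(n,n) \le k) = \prod_{i,j \in [n]}(1-a_ib_j) \sum_{\substack{\lambda:\ \lambda_1 \le k \\ l(\lambda) \le n}} s_\lambda(a)\, s_\lambda(b).
\end{align*}
Next, the Weyl bialternant formula $s_\lambda(a) = \det(a_i^{\lambda_j + n - j})/\Delta(a)$ (with $\Delta(a) = \prod_{i<j}(a_i - a_j)$) and the substitution $h_j = \lambda_j + n - j$ recast the constraints as $h_1 > \dots > h_n \ge 0$ with $h_1 \le k+n-1$. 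Antisymmetrizing (both alternants are antisymmetric in permutations of $(h_j)$, their product is symmetric and vanishes when two $h_j$ coincide) and applying Proposition \ref{flucPr1} with $f_i(h) = a_i^h$, $g_j(h) = b_j^h$ and counting measure on $\{0,\dots,k+n-1\}$ collapses the sum to a single $n \times n$ determinant:
\begin{align*}
\bfP(G(n,n) \le k) = \frac{\prod_{i,j}(1-a_ib_j)}{\Delta(a)\Delta(b)}\det_{i,j \in [n]}\left[\frac{1 - (a_ib_j)^{k+n}}{1 - a_ib_j}\right].
\end{align*}

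Now split each entry as $M_{ij} - N_{ij}$ with $M_{ij} = (1-a_ib_j)^{-1}$ and $N_{ij} = \sum_{x \ge k+n}(a_ib_j)^x$. Cauchy's classical determinant identity $\det M = \Delta(a)\Delta(b)/\prod_{i,j}(1-a_ib_j)$ cancels the prefactor, leaving $\bfP(G(n,n) \le k) = \det(I_n - M^{-1}N)$. Writing $N$ as a rank-$n$ product $N = AB$ with $A(i,x) = a_i^x$ and $B(x,j) = b_j^x$ indexed by $x \ge k+n$, and applying Sylvester's identity $\det_n(I - M^{-1}AB) = \det_\infty(I - BM^{-1}A)$ on $\ell^2(\{k+n, k+n+1, \dots\})$ (justified by the finite-rank structure of $N$), I obtain $\bfP(G(n,n) \le k) = \det(I - \widetilde K)$ with Fredholm kernel
\begin{align*}
\widetilde K(x,y) = \sum_{i,j \in [n]} b_j^x(M^{-1})_{ji}a_i^y.
\end{align*}

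The last step is to identify $\widetilde K$ with $K_n$ from (\ref{flucEq67}); since $K_n(x,y) = \sum_{i,j}a_i^x b_j^y (M^{-1})_{ji}$ differs from $\widetilde K$ only by the swap $x \leftrightarrow y$, which preserves the Fredholm determinant (kernel transposition), it suffices to verify the Cauchy-matrix inverse
\begin{align*}
(M^{-1})_{ji} = \frac{1}{1-a_ib_j}\cdot\frac{\prod_{k \in [n]}(1-a_ib_k)(1-a_kb_j)}{\prod_{k \ne i}(a_k - a_i)\prod_{k \ne j}(b_k - b_j)}.
\end{align*}
I would verify this by computing $\sum_{j \in [n]} M_{lj}(M^{-1})_{ji}$: factoring out the $i,l$-dependent terms reduces the sum to $\sum_{j} q(b_j)/\prod_{k \ne j}(b_k - b_j)$ for an explicit rational function $q$, which evaluates to $\delta_{il}$ via Lagrange interpolation (partial fractions) at the nodes $\{b_j\}$. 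The Fredholm expansion of $\det(I - K_n)$ then yields (\ref{flucEq99}), with the sum truncating at $l = n$ because $K_n$ has rank at most $n$. The main obstacle I anticipate is the careful bookkeeping in this final step: tracking signs in Cauchy's identity and the bialternant formula, and handling the partial-fractions verification; the Sylvester reduction itself is safe thanks to the finite-rank structure of $N$.
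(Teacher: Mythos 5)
Your proposal is correct and follows essentially the same route as the paper: RSK to express the shape distribution as a Schur measure, the bialternant formula and Andr\'eief/Cauchy--Binet (Proposition~\ref{flucPr1}) to collapse the constrained sum to a single $n\times n$ Cauchy-type determinant, and then identification of the kernel as a conjugated Cauchy inverse. The only real divergences are in the final bookkeeping: where you invoke Sylvester's identity $\det_n(I-M^{-1}AB)=\det_{\ell^2}(I-BM^{-1}A)$ to pass to the Fredholm determinant, the paper instead expands the $n\times n$ determinant $\det[\delta_{ij}-\sum_{m\ge k+n}b_j^m h_i(m)]$ by the finite inclusion--exclusion and applies Proposition~\ref{flucPr1} twice more, which amounts to a by-hand proof of the same identity and sidesteps any appeal to operator-theoretic machinery; and where you propose verifying the Cauchy inverse via Lagrange interpolation at the nodes $\{b_j\}$, the paper observes that the $(i,j)$-minor of a Cauchy matrix is itself a Cauchy determinant of the Borchardt--Cauchy form and reads off the inverse from Cramer's rule together with (\ref{flucEq61})--(\ref{flucEq62}), which is a cleaner cancellation. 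Both variants are sound; yours is perhaps marginally slicker at the cost of quoting Sylvester's identity on $\ell^2$.
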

\begin{proof}
Let $\Phi$ denote the map that sends $A \in \bbZ_+^{[n] \times [n]}$ to the common shape of the corresponding SSYT pair under the bijection in Corollary \ref{flucCo1}, and define $\Lambda = \Phi([W(i, j)]_{i, j \in [n]})$. Then $\bfP(G(n, n) \le k) = \bfP(\Lambda_1 \le k)$. Moreover, for any partition $\lambda$, we have 
\begin{align}
\bfP(\Lambda = \lambda) &= \sum_{A: \Phi(A) = \lambda} \bfP(W(i, j) = A(i, j) \text{ for } i, j \in [n]) \nonumber\\
&= \sum_{A: \Phi(A) = \lambda} \prod_{i, j \in [n]} (1-a_i b_j) a_i^{A(i, j)} b_j^{A(i, j)} \nonumber\\
&= \prod_{i, j \in [n]} (1-a_i b_j) \sum_{A: \Phi(A) = \lambda}  \prod_{i \in [n]} a_i^{\sum_{j \in [n]} A(i, j)} \prod_{j \in [n]} b_j^{\sum_{i \in [n]} A(i, j)} \nonumber\\
&= \prod_{i, j \in [n]} (1-a_i b_j) \sum_{\substack{P: \shape(P) = \lambda \\ l(\type(P)) \le n}}  \prod_{j \in [n]} b_j^{\type(P)_j} \sum_{\substack{Q: \shape(Q) = \lambda \\ l(\type(Q)) \le n}} \prod_{i \in [n]} a_i^{\type(Q)_i} \label{flucEq57}
\end{align} 
Note the inequality $l(\type(P)) \ge l(\shape(P))$ for any SSYT $P$; hence, (\ref{flucEq57}) is zero unless $l(\lambda) \le n$. 

We now use the polynomial identity 
\begin{align}
\label{flucEq56}
\sum_{\substack{P: \shape(P) = \lambda \\ l(\type(P)) \le n}}  \prod_{j \in [n]} X_j^{\type(P)_j} = \frac{\det \limits_{i, j \in [n]}[X_i^{\lambda_j-j+n}]}{\det \limits_{i, j \in [n]}[X_i^{-j+n}]}, 
\end{align}
either side of which is the Schur polynomial indexed by $\lambda$ in $n$ variables $X_1, \dotsc, X_n$. For a proof of (\ref{flucEq56}), see \cite[Chapter~7]{Stanley}. Since $(a_i)_{i \in [n]}$ and $(b_j)_{j \in [n]}$ are injective, the Vandermonde determinant 
\begin{align*}\det \limits_{i, j \in [n]}[X_i^{-j+n}] = \prod_{1 \le i < j \le n} (X_i-X_j)\end{align*} 
is nonzero when evaluated by setting $X_i = a_i$ for $i \in [n]$ or $X_i = b_i$ for $i \in [n]$.   
Hence, by (\ref{flucEq57}) and (\ref{flucEq56}), we obtain 
\begin{align}\label{flucEq66}\bfP(\Lambda = \lambda) = Z_n^{-1} \det \limits_{i, j \in [n]}[a_i^{\lambda_j-j+n}] \det \limits_{i, j \in [n]}[b_i^{\lambda_j-j+n}], \end{align} where the normalization constant is given by 
\begin{align}
\label{flucEq61}
Z_n = \frac{\prod_{1 \le i < j \le n} (a_i-a_j) (b_i-b_j)}{\prod_{i, j \in [n]}(1-a_i b_j)}. 
\end{align}
The probability $\bfP(\Lambda_1 \le k)$ can then be written as  
\begin{align}
\bfP(\Lambda_1 \le k) &= \frac{1}{Z_n}\sum_{\substack{\lambda: \lambda_1 \le k \\ l(\lambda) \le n}} \det \limits_{i, j \in [n]}[a_i^{\lambda_j-j+n}] \det \limits_{i, j \in [n]} [b_i^{\lambda_j-j+n}]\nonumber\\
&= \frac{1}{n!Z_n} \sum_{\substack{m_1, \dotsc, m_n \in \bbZ_+ \\ m_j \le k+n-1}} \det \limits_{i, j \in [n]}[a_i^{m_j}] \det \limits_{i, j \in [n]}[b_i^{m_j}]. \label{flucEq58}  
\end{align}
For the last equality, first change the summation index from $\lambda$ to the decreasing sequence $(m_j)_{j \in [n]} = (\lambda_j-j+n)_{j \in [n]}$ in $\bbZ_+$ with $m_1 \le k+n-1$. (Note that $\lambda$ is uniquely determined from $(m_j)_{j \in [n]}$ because $l(\lambda) \le n$). Then remove the ordering of $(m_j)_{j \in [n]}$, which introduces the factor $1/n!$ Finally, allow repeats  in $(m_j)_{j \in [n]}$, which does not alter the sum because the added terms are all zero. 

It remains to turn (\ref{flucEq58}) into the desired form. For any $l \in \bbZ_+$, by Proposition \ref{flucPr1}, 
\begin{align}
\sum_{\substack{m_1, \dotsc, m_n \in \bbZ_+ \\ m_i \le l}} \det \limits_{i, j \in [n]}[a_i^{m_j}] \det\limits_{i, j \in [n]}[b_i^{m_j}] = n! \det\limits_{i, j \in [n]} \left[\sum_{m=0}^{l} a_i^m b_j^m\right] = n! \det\limits_{i, j \in [n]}\left[\frac{1-a_i^{l+1}b_j^{l+1}}{1-a_i b_j}\right]. \label{flucEq59}   
\end{align}
Letting $k \rightarrow +\infty$ in (\ref{flucEq58}) and $l \rightarrow +\infty$ in (\ref{flucEq59}) yield 
\begin{align}
Z_n = \det\limits_{i, j \in [n]} \left[\sum_{m=0}^{\infty} a_i^m b_j^m\right] = \det\limits_{i, j \in [n]}\left[\dfrac{1}{1-a_i b_j}\right]. \label{flucEq62}
\end{align} 
Since $Z_n \neq 0$ by (\ref{flucEq61}), the matrix $C = \left[(1-a_i b_j)^{-1}\right]_{i, j \in [n]}$ is invertible, and  
\begin{align*}
\bfP(\Lambda_1 \le k) &= \frac{\det\limits_{i, j \in [n]}\left[C(i, j)-\sum_{m=k+n}^{\infty} a_i^m b_j^m\right]}{\det\limits_{i, j \in [n]} [C(i, j)]} \\
&= \det\limits_{i, j \in [n]} \left[\delta_{i, j}-\sum_{p=1}^n C^{-1}(i, p) \sum_{m=k+n}^{\infty} a_p^m b_j^m\right] \\
&= \det\limits_{i, j \in [n]}\left[\delta_{i, j}-\sum_{m=k+n}^{\infty} b_j^m h_i(m) \right], 
\end{align*}
where $h_i(m) = \sum_{p=1}^n C^{-1}(i, p) a_p^m$. Then, %applying Lemma \ref{flucL10} and 
applying Proposition \ref{flucPr1} twice, we obtain  
\begin{align}
\bfP(\Lambda_1 \le k) &= 1 + \sum_{q=1}^{n} \frac{(-1)^q}{q!} \sum_{\substack{i_1, \dotsc, i_q \in [n]}}\det\limits_{r, s \in [q]}\left[\sum_{m=k+n}^{\infty} b_{i_s}^m h_{i_r}(m)\right] \nonumber\\
&= 1 + \sum_{q=1}^{n} \frac{(-1)^q}{q!} \sum_{\substack{i_1, \dotsc, i_q=1}}^n \frac{1}{q!}\sum_{m_1, \dotsc, m_q \ge k+n} \det\limits_{r, s \in [q]}[b_{i_s}^{m_{r}}] \det\limits_{r, s \in [q]}[h_{i_s}(m_r)] \nonumber\\
&= 1 + \sum_{q=1}^{n} \frac{(-1)^q}{q!}  \sum_{m_1, \dotsc, m_q \ge k+n} \frac{1}{q!} \sum_{\substack{i_1, \dotsc, i_q=1}}^n  \det\limits_{r, s \in [q]}[b_{i_s}^{m_{r}}] \det\limits_{r, s \in [q]}[h_{i_s}(m_r)] \nonumber\\
&= 1 + \sum_{q=1}^{n} \frac{(-1)^q}{q!}  \sum_{m_1, \dotsc, m_q \ge k+n} \det\limits_{r, s \in [q]}\left[\sum_{i=1}^n h_i(m_r) b_i^{m_s}\right] \nonumber\\
&= 1 + \sum_{q=1}^{n} \frac{(-1)^q}{q!}  \sum_{m_1, \dotsc, m_q \ge k+n} \det\limits_{r, s \in [q]}\left[\sum_{i=1}^n \sum_{p=1}^n a_p^{m_r} C^{-1}(i, p) b_i^{m_s}\right].  \label{flucEq64}
\end{align}

Finally, we compute the inverse of $C$. Note that $C^{i, j}$, the $i, j$-minor of $C$, has the same structure (in terms of entries) as $C$; therefore, by (\ref{flucEq61}) and (\ref{flucEq62}), 
\begin{align*}
\det C^{i, j} &= \prod \limits_{\substack{k, l \in [n] \\ k \neq i \\ l \neq j}} \frac{1}{1-a_k b_l} \prod \limits_{\substack{k, l \in [n] \\ k < l \\ k, l \neq i}} (a_k-a_l) \prod \limits_{\substack{k, l \in [n] \\ k < l \\ k, l \neq j}} (b_k-b_l) \\
&= \frac{\det C}{1-a_i b_j} \prod \limits_{k \in [n]} (1-a_i b_k) (1-a_k b_j) \\
&\cdot (-1)^{i+j} \prod \limits_{\substack{k \in [n] \\ k \neq i}}(a_i-a_k)^{-1} \prod \limits_{\substack{k \in [n] \\ k \neq j}}(b_j-b_k)^{-1}
\end{align*}
Then, by Cramer's rule, 
\begin{align}\label{flucEq63}C^{-1}(i, j) = (-1)^{i+j}\frac{\det C^{j, i}}{\det C} = \frac{1}{1-a_jb_i}
\frac{\prod \limits_{k \in [n]} (1-a_jb_k)(1-a_kb_i)}{\prod \limits_{\substack{k \in [n] \\ k \neq i}} (b_k-b_i) \prod \limits_{\substack{k \in [n] \\ k \neq j}}(a_k-a_j)}.\end{align}
Inserting (\ref{flucEq63}) into (\ref{flucEq64}) completes the proof. 
\end{proof} 
Note that the assumption of independent weights distributed as (\ref{shaE5}) is crucial in the preceding proof to obtain (\ref{flucEq57}), the representation of the distribution of the last-passage times in terms of the Schur polynomials. 
The probability measure (\ref{flucEq66}) on the space of partitions is an example of a Schur measure introduced in \cite{Okounkov}. 

Another probability measure of interest derived from (\ref{flucEq66}) is the distribution of the random set $\sS = \{\Lambda_j-j+n: j \in [n]\}$, which is given by  
\begin{align}
\label{flucEq96}
\bfP(\sS = S) = \frac{1}{n! Z_n}\det\limits_{i, j \in [n]}[a_i^{m_j}] \det\limits_{i, j \in [n]}[b_i^{m_j}]
\end{align}
for any $S = \{m_1, \dotsc, m_n\} \subset \bbZ_+$. By a general fact from the theory of point processes, for any distinct $x_1, \dotsc, x_q \in \bbZ_+$,  
\begin{align}
\label{flucEq97}
\bfP(\{x_1, \dotsc, x_q\} \subset \sS\} = \det\limits_{r, s \in [q]}[K_n(x_r, x_s)]. 
\end{align}
In the language of the theory, $\sS$ can be viewed as a determinantal point process on $\bbZ_+$ with correlation kernel $K_n$. Since $G(n, n) = \Lambda_1 = \max \sS-n+1$, a restatement of (\ref{flucEq99}) is that 
\begin{align*}
\bfP(\max \sS \le k+n-1) = 1+\sum_{l=1}^n \frac{(-1)^l}{l!} \sum \limits_{x_1, \dotsc, x_l \ge k+n} \bfP(\{x_1, \dotsc, x_l\} \subset \sS) \quad \text{ for } k \in \bbZ_+, 
\end{align*} 
which is an application of the inclusion/exclusion principle. This furnishes a probabilistic interpretation of (\ref{flucEq99}). 
For a proof of (\ref{flucEq97}) and a detailed discussion of the notions in this paragraph, we refer the reader to \cite{Borodin2}, \cite{BorodinGorin},  and \cite{Johansson3}. 

A useful conclusion from (\ref{flucEq97}) is that 
\begin{align}\label{flucEq98}\det\limits_{r, s \in [q]} [K_n(x_r, x_s)] \ge 0 \quad \text{ for any } x_1, \dotsc, x_q \in \bbZ_+.\end{align} 
Moreover, this determinant equals $0$ if $q > n$. One can also make these observations more directly using Proposition \ref{flucPr1}; we have 
\begin{align}
\det\limits_{r, s \in [q]}[K_n(x_r, x_s)] &= \det\limits_{r, s \in [q]}\left[\sum_{i, j \in [n]} a_i^{x_r} C^{-1}(j, i) b_j^{x_s}\right] \nonumber\\
&= \frac{1}{(q!)^2} \sum_{i_1, \dotsc, i_q \in [n]} \sum_{j_1, \dotsc, j_q \in [n]} \det\limits_{r, s \in [q]}[a_{i_s}^{x_r}]\det\limits_{r, s \in [q]}[C^{-1}(j_s, i_r)]\det\limits_{r, s \in [q]}[b_{j_s}^{x_r}] \label{flucEq70} \\
&= \frac{1}{q!} \sum_{i_1, \dotsc, i_q \in [n]} \sum_{j_1, \dotsc, j_q \in [n]} \frac{\det \limits_{s, r \in [q]}[a_{i_s}^{x_r}]\det\limits_{s, r \in [q]}[b_{j_s}^{x_r}]}{q! \det\limits_{s, r \in [q]}[C(j_s, i_r)]},  \label{flucEq69} 
\end{align}
where the last equality requires $q \le n$; otherwise the determinants in (\ref{flucEq70}) are zero. For each choice of $i_1, \dotsc, i_q$ and $j_1, \dotsc, j_q$, the summand is a probability by (\ref{flucEq96}) and, hence, the sum is nonnegative.  

\section{Contour integral representation of the kernel}

For the purposes of asymptotics as well as to extend Theorem \ref{flucTh3} to the case of noninjective parameters, it is useful to express (\ref{flucEq67}) as a contour integral.  
Let $\max_{1 \le i \le m} a_i \vee \max_{1 \le j \le n} b_j < \rho < 1$. Then, deforming the contour in (\ref{flucE3}), we obtain
\begin{equation}
\label{flucEq8.1}
I_{m, n, x}^{\bfa, \bfb} = \frac{1}{2\pi \ii} \oint \limits_{|z| = \rho} F_{m, n, x}^{\bfa, \bfb}(z) \dd z.  
\end{equation}
Using the identity 
$(1-zw)^{-1} = \sum_{l = 0}^\infty z^l w^l$ and Fubini-Tonelli theorem, we can rearrange (\ref{flucE71}) as
%Since $\rho < 1$, using the identity 
%$(1-zw)^{-1} = \sum_{l = 0}^\infty z^l w^l$ and Fubini-Tonelli theorem, we can rearrange (\ref{flucE5.1}) as 
\begin{align}
K_{m, n}^{\bfa, \bfb}(x, y) &= \frac{1}{(2\pi \ii)^2} \oint \limits_{|w| = \rho} \oint \limits_{|z| = \rho} \sum \limits_{l=0}^\infty F_{m, n, x+l}^{\bfa, \bfb}(z) F_{n, m, y+l}^{\bfb, \bfa}(w) \ dz\ dw \nonumber\\
&= \frac{1}{(2\pi \ii)^2} \oint \limits_{|w| = \rho} \oint \limits_{|z| = \rho} \frac{F_{m, n, x}^{\bfa, \bfb}(z) F_{n, m, y}^{\bfb, \bfa}(w)} {1-zw}\ \dd z\ \dd w, \label{flucEq5.1} 
\end{align}
where the circles of integration are oriented counterclockwise. 

\begin{proof}[Proof of Theorem \ref{flucT7}]
If $(a_i)_{i \in [n]}$ and $(b_{i})_{i \in [n]}$ are injective, then for each $u \in \Disc(0, 1)$ the only singularities of the functions $z \mapsto \dfrac{F^{\bfa, \bfb}_{n, n, x}(z)}{1-zu}$ and $w \mapsto \dfrac{F^{\bfb, \bfa}_{n, n, y}(w)}{1-wu}$ inside $\Disc(0, 1)$ are simples poles at $a_1, \dotsc, a_n$ and $b_1, \dotsc, b_n$, respectively. Therefore, by Cauchy's residue formula and (\ref{flucEq67}),  
\begin{align*}
K_{n, n}^{\bfa, \bfb}(x, y) &= \sum_{i, j \in [n]} \frac{\Res_{a_i} F_{n, n, x}^{\bfa, \bfb} \Res_{b_j} F_{n, n, y}^{\bfb, \bfa}}{1-a_i b_j} \\
&= \sum_{i, j \in [n]} \frac{a_i^{x+n}b_j^{y+n}}{1-a_ib_j}\frac{\prod \limits_{k \in [n]} (1-a_ib_k) (1-a_kb_j)}{\prod \limits_{\substack{k \in [n] \\ k \neq i}} (a_k-a_i) \prod \limits_{\substack{k \in [n] \\ k \neq j}} (b_k-b_j)} \\ 
&= K_{n}(x+n, y+n)
\end{align*}
for $x, y \in \bbZ_+$ provided that $(a_i)_{i \in [n]}$ and $(b_{i})_{i \in [n]}$ are injective. Then, by Theorem \ref{flucTh3}, 
\begin{align}\label{flucEq68}\bfP(G(n, n) \le k) = 1 + \sum \limits_{l=1}^{n} \frac{(-1)^l}{l!} \sum \limits_{\substack{x_1, \dotsc, x_l \in \bbZ_+ \\ x_i \ge k}}\det\limits_{i, j \in [l]}[K_{n, n}^{\bfa, \bfb}(x_i, x_j)] \quad \text{ for } k \in \bbZ_+.\end{align}

We claim that both sides of (\ref{flucEq68}) are continuous in parameters $(a_i)_{i \in [n]}$ and $(b_i)_{i \in [n]}$. Then (\ref{flucEq68}) holds even if $(a_i)_{i \in [n]}$ or $(b_i)_{i \in [n]}$ has repeats. In particular, setting $a_i = 0$ for $m < i \le n$ and $b_j = 0$ for $n < j \le m$, we obtain the result. To prove the claim, note that $\bfP(G(n, n) \le k)$ is continuous because it can be written as the finite sum of probabilities 
\begin{align}\label{flucEq100}\bfP(W(i, j) = A(i, j) \text{ for } i, j \in [n]) = \prod_{i, j \in [n]} (1-a_ib_j) a_i^{A(i, j)}b_j^{A(i, j)}\end{align} 
over matrices $A \in \bbZ_+^{[n] \times [n]}$ for which $\max_{\pi \in \Pi(m, n)} \sum_{(i, j) \in \pi} A(i, j) \le k$, and (\ref{flucEq100}) is continuous. Pick $\delta > 0$ small so that 
\begin{align}\label{flucEq101}\max_{1 \le i \le n} a_i \vee \max_{1 \le j \le n} b_j < \rho-\delta, \end{align}
where $\rho$ is as in (\ref{flucEq5.1}). Then there exists $C > 0$ (which depends on $n$ and $\delta$) such that 
$K_{n, n}^{\bfa, \bfb}(x, y) \le C \rho^{x+y}$
for any $x, y \in \bbZ_+$, which leads to the bound 
\begin{align*}
\det\limits_{i, j \in [l]}[K_{n, n}^{\bfa, \bfb}(x_i, x_j)] &= \sum_{\sigma \in \sS_l} \sgn(\sigma) \prod_{i=1}^l K_{n, n}^{\bfa, \bfb}(x_i, x_{\sigma(i)}) \le l! C^l \rho^{2 \sum_{i=1}^l x_i},  
\end{align*} 
which is summable over $x_1, \dotsc, x_l \ge k$. Hence, the inner sum in (\ref{flucEq68}) converges uniformly on the set $[0, \rho-\delta)^{2n}$. This and continuity of $K_{n, n}^{\bfa, \bfb}$ imply that the right-hand side of (\ref{flucEq68}) is also continuous on $[0, \rho-\delta)^{2n}$. Since $\rho-\delta$ can be chosen arbitrarily close to $1$, the claim is proved. 
\end{proof}

\chapter{Fluctuations and Right Tail Deviations}\label{Ch4}

\section{Introduction}

Recall the description of the model from Subsection \ref{intS2}. Let $\Theta: (0, 1)^{\bbN} \rightarrow (0, 1)^{\bbN}$ denote the shift map $(c_n)_{n \in \bbN} \mapsto (c_{n+1})_{n \in \bbN}$. In Chapter \ref{sha}, we computed $\bfP$-a.s. limit of $n^{-1}G(\lf nr \rf, n)$ as $n \rightarrow \infty$ for $r > 0$ when $\bfa$ and $\bfb$ are random and satisfy this total ergodicity assumption. 
\begin{align}
\label{flucA1}
\text{The pair $(\bfa, \bfb)$ is ergodic with respect to $\Theta^k \times \Theta^l$ separately for each $(k, l) \in \bbN^2$.} 
\end{align} 
\noindent In particular, $\bfa$ and $\bfb$ are ergodic (with respect to $\Theta$). Let $\alpha$ and $\beta$ denote the common marginal distributions of each term in $\bfa$ and $\bfb$, respectively. 
To describe the a.s. limit, define  
\begin{align}
g(z, r) &= r \int \limits_{(0, 1)} \frac{a\alpha(\dd a)}{z-a}+\int \limits_{(0, 1)} \frac{bz\beta(\dd b)}{1-bz} \quad \text{ for } z \in \bbC \smallsetminus [0, \bar{\alpha}] \smallsetminus [1/\bar{\beta}, \infty).  \label{flucE11}
\end{align}
Use (\ref{flucE11}) also to define the values of $g$ for $z = \bar{\alpha}$ and $z = 1/\bar{\beta}$. (These values equal $\infty$ precisely when $\int_{(0, 1)} (\bar{\alpha}-a)^{-1} \alpha(\dd a) = \infty$ and $\int_{(0, 1)} (\bar{\beta}-b)^{-1} \beta(\dd b) = \infty$). Let 
\begin{align}
\gamma(r) &= \inf \limits_{z \in [\bar{\alpha}, 1/\bar{\beta}]} g(z, r). \label{flucE12}
\end{align} 
Then (see Theorems \ref{shaT3} and \ref{shaT4})
\begin{align}
\label{flucE122}
\lim_{n \rightarrow \infty} \frac{G(\lf nr \rf, n)}{n} = \gamma(r) \quad \text{ for } r > 0 \quad \bfP\text{-a.s.} \text{ for a.e. } (\bfa, \bfb).
\end{align} 
 
In the case $\bar{\alpha} \bar{\beta} < 1$, 
introduce the critical values  
\begin{align}
c_1(\alpha, \beta) = \frac{\int_0^1 b(1-b\bar{\alpha})^{-2}\beta(\dd b)}{\int_0^1 a(\bar{\alpha}-a)^{-2} \alpha(\dd a)}
\qquad  c_2(\alpha, \beta) = \frac{\int_0^1 b(\bar{\beta}-b)^{-2}\beta(\dd b)}{\int_0^1 a(1-a\bar{\beta})^{-2} \alpha(\dd a)}. \label{flucE20}
\end{align}
By the strict convexity of $g$ on $(\bar{\alpha}, 1/\bar{\beta})$, there exists a unique minimizer $\zeta(r) \in [\bar{\alpha}, 1/\bar{\beta}]$ in (\ref{flucE12}). Moreover, 
$\zeta = \bar{\alpha}$ if $r \le c_1$, $\zeta = 1/\bar{\beta}$ if $r \ge c_2$ and $\zeta \in (\bar{\alpha}, 1/\bar{\beta})$ otherwise. As a consequence, $\gamma$ is strictly concave for $c_1 < r < c_2$, and is linear for $r \le c_1$ or $r \ge c_2$, see Corollary \ref{shac3}. 

\section{Results} \label{flucS2}

In this section, we list the recurring assumptions imposed on the model throughout, introduce some notation and then give the formal statements of our main results. 

We replace (\ref{flucA1}) with the weaker assumption that 
\begin{align}\label{flucA2}\text{$\bfa$ and $\bfb$ are random and ergodic. }\end{align}
\noindent In particular, the joint distribution of $\bfa$ and $\bfb$ is not relevant in contrast with Chapter \ref{sha} and \cite{EmrahJanjigian15}. Various limit statements and bounds below are valid for a.e. realization of $\bfa$ and $\bfb$. %{\color{red} We also obtain some of the results only in the special case when $\bfa$ and $\bfb$ are i.i.d. sequences.} 
We next assume that  
\begin{align}\label{flucA3}\bar{\alpha} \bar{\beta} < 1. \end{align}
Hence, the critical values $c_1(\alpha, \beta) < c_2(\alpha, \beta)$ in (\ref{flucE20}) are defined. For $i, j \in \bbN$, we have $\bfE W(i, j) = a_i b_j(1-a_i b_j)^{-1}$ and, by (\ref{flucA2}), 
\begin{align}\limsup_{i \rightarrow \infty} a_i \stackrel{\text{a.s.}}{=} \bar{\alpha} \qquad \text{ and } \qquad \limsup_{j \rightarrow \infty} b_j \stackrel{\text{a.s.}}{=} \bar{\beta}.\label{flucE196}\end{align} 
Therefore, an assumption equivalent to (\ref{flucA3}) is $\sup_{i, j \in \bbN} \bfE W(i, j) < \infty$ a.s. We will restrict attention to the strictly concave region of the shape function i.e. assume that 
\begin{align} \label{flucA4}c_1(\alpha, \beta) < r <  c_2(\alpha, \beta). \end{align}
Hence, $\zeta(r) \in (\bar{\alpha}, 1/\bar{\beta})$. 
%Finally, assume that the parameter $x$ satisfies 
%\begin{align}
%x &< \min\{g(\bar{\alpha}, r), g(1/\bar{\beta}, r)\}-\gamma(r),  \label{flucA5}
%\end{align}
%which together with (\ref{flucA4}) implies that $\zeta^{\pm}(r, x) \in (\bar{\alpha}, 1/\bar{\beta})$ as well. Consequently, $\zeta^-(r, x)$ and $\zeta^+(r, x)$ are unique on the intervals $(\bar{\alpha}, \zeta(r)]$ and $(\zeta(r), 1/\bar{\beta})$ such that 
%\begin{align}
%g(\zeta^\pm(r, x), r) = \gamma(r)+x. 
%\end{align}
%In summary, assumptions (\ref{flucA3}), (\ref{flucA4}) and (\ref{flucA5}) correspond to the situation depicted in Figure \ref{flucF2}c. 
The analysis of this case is simpler mainly because $\zeta$ is bounded away from the zeros and poles of the integrands in (\ref{flucE3}). 

%Since $\partial_z g(z) < 0$ for $z \in (\bar{\alpha}, \zeta)$ and $\partial_z g(z) > 0$ for $z \in (\zeta, 1/\bar{\beta})$, we can define 
%\begin{align}
%\sigma^-(r, x) &= \{-\zeta^-(r, x) \partial_z g(\zeta^-(r, x), r)\}^{1/2} \nonumber\\
%&= (\zeta^-(r, x))^{1/2} \bigg(r \int \limits_{(0, 1)} \frac{a\alpha(\dd a)}{(\zeta^-(r, x)-a)^2} - \int \limits_{(0, 1)} \frac{b\beta(\dd b)}{(1-b\zeta^-(r, x))^2}\bigg)^{1/2} \label{flucE126}\\
%\sigma^+(r, x) &= \{\zeta^+(r, x)\partial_z g(\zeta^+(r, x), r)\}^{1/2} \nonumber \\
%&= (\zeta^+(r, x))^{1/2}\bigg(-r \int \limits_{(0, 1)} \frac{a\alpha(\dd a)}{(\zeta^+(r, x)-a)^2} + \int \limits_{(0, 1)} \frac{b\beta(\dd b)}{(1-b\zeta^+(r, x))^2}\bigg)^{1/2}. \label{flucE127}
%\end{align}
%for $x > 0$. In the case $x = 0$, the definitions of $\sigma^{\pm}(r, 0)$ are the same and given by 
Define
\begin{align}
\label{flucE128}
\sigma(r) &= \bigg(\frac{1}{2}\zeta(r)^{2} \partial_z^2 g(\zeta(r), r)\bigg)^{1/3} = \zeta(r)^{2/3} \left(r \int \limits_{(0, 1)} \frac{a\alpha(\dd a)}{(\zeta(r)-a)^3} + \int \limits_{(0, 1)} \frac{b^2\beta(\dd b)}{(1-b\zeta(r))^3}\right)^{1/3}. 
\end{align}

For $m, n \in \bbN$, introduce the empirical distributions 
\begin{align}
\alpha_m = \frac{1}{m} \sum_{i=1}^m \delta_{a_i} \qquad \beta_n = \frac{1}{n} \sum_{j=1}^n \delta_{b_j}.  
\end{align}
The statements of our results involve quantities %$\gamma, \zeta^\pm$ and $\sigma^\pm$ 
$\gamma, \zeta$ and $\sigma$ 
computed with $\alpha_m$ and $\beta_n$ in place of $\alpha$ and $\beta$, respectively, and $r = m/n$. In this case, we will put $m, n$ in the subscripts for distinction, and omit $r$. Now, (\ref{flucE11}) and (\ref{flucE12}) specialize to 
\begin{align}
g_{m, n}(z) &= \frac{1}{n}\sum \limits_{i=1}^m \frac{a_i}{z-a_i} + \frac{1}{n} \sum \limits_{j=1}^n \frac{b_j z}{1-b_j z} \quad \text{ for } z \in \bbC \smallsetminus \{a_1, \dotsc, a_m, 1/b_1, \dotsc, 1/b_n\} \label{flucE7} \\
\gamma_{m, n} &= \inf \limits_{\substack{a_i < z < 1/b_j \\ i \in [m], j \in [n]}} \left\{g_{m, n}(z)\right\}, \label{flucE5}
\end{align}
respectively. Note that (\ref{flucE7}) also extends the domain $\bbC \smallsetminus [0, \max_{i \in [m]} a_i] \smallsetminus [\min_{j \in [n]} 1/b_j, \infty)$ of $g_{m, n}$ in (\ref{flucE11}). We have (\ref{flucA3}) and, due to 
\begin{align}\int_{(0, 1)} (\bar{\alpha}_m-a)^{-1} \alpha_m(\dd a) = \int_{(0, 1)} (\bar{\beta}_n-b)^{-1} \beta_n(\dd b) = \infty,\end{align} (\ref{flucA4}) %and (\ref{flucA5}) 
as well for $\alpha_m$ and $\beta_n$. Therefore, %$\zeta_{m, n, x}^\pm \in (\max_{i \in [m]} a_i, \min_{j \in [n]} 1/b_j)$.
$\zeta_{m, n} \in (\max_{i \in [m]} a_i, \min_{j \in [n]} 1/b_j)$.
Finally, definition  
%(\ref{flucE126}), (\ref{flucE127}) and 
(\ref{flucE128}) takes the form 
\begin{align}
%\sigma^-_{m, n, x} &= (\zeta^-_{m, n, x})^{1/2} \bigg(\frac{1}{n} \sum \limits_{i=1}^{m} \frac{a_i}{(\zeta_{m, n, x}^--a_i)^2} -  \frac{1}{n} \sum \limits_{i=1}^{n} \frac{b_j}{(1-b_j\zeta^-_{m, n, x})^2}\bigg)^{1/2} \label{flucE129}\\
%\sigma^+_{m, n, x} &= (\zeta^+_{m, n, x})^{1/2}\bigg(-\frac{1}{n} \sum \limits_{i=1}^{m} \frac{a_i}{(\zeta^+_{m, n, x}-a_i)^2} + \frac{1}{n} \sum \limits_{i=1}^{m} \frac{b_j}{(1-b_j\zeta^+_{m, n, x})^2}\bigg)^{1/2} \label{flucE130} \\
\sigma_{m, n} &= (\zeta_{m, n})^{2/3} \left(\frac{1}{n} \sum \limits_{i=1}^{m} \frac{a_i}{(\zeta_{m, n}-a_i)^3} + \frac{1}{n} \sum \limits_{i=1}^{n} \frac{b_j^2}{(1-b_j\zeta_{m, n})^3}\right)^{1/3}. \label{flucE131}
\end{align}
%respectively. 

Let $m_k, n_k \in \bbN$ such that $\lim_{k \rightarrow \infty} m_k/n_k = r$ and $n_k \rightarrow \infty$. Also, for $k \in \bbN$ and $s \in \bbR$, define 
\begin{align}
p_k(s) &= \lf n_k \gamma_{m_k, n_k} + n_k^{1/3} \sigma_{m_k, n_k} s \rf \label{flucE17} %\\
%p_k'(x) &= p_k(x\sigma_{m_k, n_k}^{-1}n_k^{2/3}) = \lf n_k\gamma_{m_k, n_k} + n_k x\rf. \label{flucE178}
\end{align}
For brevity, we will sometimes only write $k$ in place of the pair $m_k, n_k$ in the subscripts. 

Our first main result is a right tail bound for the last-passage times. 
\begin{thm}
\label{flucT6}
There exist (deterministic) constants $C, c > 0$ such that, for a.e. $(\bfa, \bfb)$, there exists $k_0 = k_0^{\bfa, \bfb}$ such that 
\begin{align}
\bfP_{\bfa, \bfb}(G(m_k, n_k) \ge n_k\gamma_{m_k, n_k}^{\bfa, \bfb} + n_ks) &\le  Ce^{-cn_k(s^{3/2} \wedge s)}
\end{align}
for $s > 0$ and $k \ge k_0$. %We can also use \gamma_{m_k, n_k} for gamma. 
\end{thm}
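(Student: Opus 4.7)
The plan is to derive the bound from the Fredholm determinant representation in Theorem \ref{flucT7} via a steepest-descent analysis. First I would use Markov's inequality for the determinantal point process with correlation kernel $\KK_{m_k, n_k}^{\bfa, \bfb}$ (equivalently, truncation of the Fredholm expansion (\ref{flucE105}) combined with the positivity (\ref{flucEq98}) of the principal minors) to reduce the statement to a diagonal tail estimate on the kernel:
\begin{align*}
\bfP_{\bfa, \bfb}(G(m_k, n_k) \ge \ell_k) \le \sum_{x \ge \ell_k} \KK_{m_k, n_k}^{\bfa, \bfb}(x, x), \qquad \ell_k = \lceil n_k \gamma_k + n_k s \rceil,
\end{align*}
where I abbreviate $\gamma_k = \gamma_{m_k, n_k}$ and similarly $\zeta_k, \sigma_k$.

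Next I would work with the double contour integral (\ref{flucEq5.1}) for $\KK_{m_k, n_k}^{\bfa, \bfb}(x, x)$. A direct calculation from (\ref{flucE165}) shows
\begin{align*}
\partial_z \log F_{m_k, n_k, x}^{\bfa, \bfb}(z) = \frac{x - n_k\, g_{m_k, n_k}(z)}{z},
\end{align*}
so when $x = n_k \gamma_k = n_k g_{m_k, n_k}(\zeta_k)$ the empirical minimizer $\zeta_k$ is a critical point of $\log F$. Since $g'_{m_k, n_k}(\zeta_k) = 0$ by minimality, $\partial_z^2 \log F|_{\zeta_k}$ also vanishes, and $\partial_z^3 \log F|_{\zeta_k} = -n_k\, g''_{m_k, n_k}(\zeta_k)/\zeta_k$; thus $\zeta_k$ is a second-order saddle with Airy-scale curvature parameter $\sigma_k$. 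Writing $x = n_k(\gamma_k + t)$ simply adds a tilt $n_k t \log z$ to the phase. I would deform both the $z$- and $w$-contours to pass through $\zeta_k$ along the steepest-descent curves of the base phase $\Re \log F_{m_k, n_k, n_k\gamma_k}^{\bfa, \bfb}$, invoking Section \ref{flucAp1} for global existence and quantitative geometry of these curves and Section \ref{flucAp2} to replace the integrand by its cubic Taylor model with controlled remainder. Matching the cubic decay of the phase against the exponential tilt gives the Airy-type bound $\KK_{m_k, n_k}^{\bfa, \bfb}(x, x) \le C e^{-c n_k t^{3/2}}$ for $0 < t \le 1$; for $t > 1$, deforming instead through the relocated (first-order) saddle $z_k(t)$ solving $g_{m_k, n_k}(z_k(t)) = \gamma_k + t$ (or directly estimating $|F|$ on suitably chosen circles) yields $\KK_{m_k, n_k}^{\bfa, \bfb}(x, x) \le C e^{-c n_k t}$. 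Summing over $x \ge \ell_k$ introduces only a constant factor thanks to the geometric decay of $K(x,x)$ in $x$, producing the stated bound $C e^{-c n_k (s^{3/2} \wedge s)}$.

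The main obstacle is uniformity: the constants $C, c$ must be deterministic, while the saddle $\zeta_k$, the curvature $\sigma_k$, and the entire steepest-descent contours depend on unboundedly many terms of $\bfa, \bfb$. I would address this using the ergodicity assumption (\ref{flucA2}): the empirical measures $\alpha_{m_k}, \beta_{n_k}$ converge weakly to $\alpha, \beta$; the maxima $\max_{i \le m_k} a_i \uparrow \bar{\alpha}$ and $\max_{j \le n_k} b_j \uparrow \bar{\beta}$ remain strictly below $\zeta_k$ and $1/\zeta_k$ by the strict-concavity hypothesis $r \in (c_1, c_2)$; and $\zeta_k \to \zeta(r)$, $\sigma_k \to \sigma(r)$ $\mu$-a.s. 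The quantitative tools of Section \ref{flucAp1} then promote this pointwise ergodic convergence to uniform control of the integrand along the contours, absorbing all $\bfa, \bfb$-dependence into the threshold $k_0 = k_0^{\bfa, \bfb}$ while keeping $C, c$ deterministic.
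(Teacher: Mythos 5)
Your proposal follows the paper's essential strategy---Fredholm representation (Theorem \ref{flucT7}), steepest descent through $\zeta_{m_k, n_k}$, the quantitative contour lemmas of Appendices \ref{flucAp1}--\ref{flucAp2}, and ergodicity to keep the constants deterministic with all $(\bfa, \bfb)$-dependence absorbed into $k_0$---but it diverges from the paper in one genuine respect. Your Markov reduction to $\sum_{x \ge \ell_k}\KK_{m_k, n_k}^{\bfa, \bfb}(x, x)$ is exactly the $l = 1$ term of the paper's bound, which instead controls the entire Fredholm series via Lemma \ref{flucL17}; these are equivalent up to constants, and the paper keeps the full series only because Lemma \ref{flucL17} is already in hand from Theorem \ref{flucT2}. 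The real difference is your treatment of $s > 1$: you propose relocating the saddle to $z_k(t)$ with $g_{m_k, n_k}(z_k(t)) = \gamma_{m_k, n_k} + t$, whereas the paper never moves the saddle. Proposition \ref{flucP6} keeps the contour through $\zeta_{m_k, n_k}$ for every $s$ and reads off both exponents from a single $\min$: the $s^{3/2}$ branch is the cubic-phase/tilt balance you describe, while the linear-in-$s$ branch comes for free because the steepest-descent arc stays inside $\Disc(0, \zeta_{m_k, n_k})$ (Lemma \ref{flucL8}), so the tilt $n_k^{1/3}\sigma_{m_k, n_k}\, p_k'(s)\log(|z|/\zeta_{m_k, n_k})$ already contributes a uniform $-c\, n_k^{1/3} s$ once $z$ has moved a fixed distance from $\zeta_{m_k, n_k}$. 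Your relocated-saddle route can be made rigorous (it is the standard path to the right-tail rate function), but it carries a cost your sketch does not address: $z_k(t)$ approaches $\max_i a_i$ as $t$ grows, so all the uniform-in-$(\bfa, \bfb)$ control that Lemmas \ref{flucL2}--\ref{flucL9} provide near the \emph{fixed} interior point $\zeta_{m_k, n_k}$ would have to be re-established near a moving saddle point that crowds into an accumulation of poles. The paper's single-contour device gets the $s \wedge s^{3/2}$ exponent for all $s > 0$ simultaneously and avoids this complication entirely.
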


Let us write $F_{\GUE}$ for the Tracy-Widom GUE distribution, see Section \ref{flucSe2}. 
\begin{thm}
\label{flucT1} 
%Let $r \in (c_1, c_2)$.  
\begin{equation}
\label{fluceq23}
\lim \limits_{n \rightarrow \infty} \bfP_{\bfa, \bfb}(G(m_k, n_k) \le n_k \gamma_{m_k, n_k}^{\bfa, \bfb} + n_k^{1/3}\sigma_{m_k, n_k}^{\bfa, \bfb}s) = F_{\GUE}(s) \quad \text{ for } s \in \bbR \text{ for a.e. } (\bfa, \bfb). \end{equation}
\end{thm}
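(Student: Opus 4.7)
The plan is to start from the Fredholm determinant formula in Theorem \ref{flucT7} and carry out a saddle-point analysis of the kernel $\KK^{\bfa,\bfb}_{m_k,n_k}$, following the strategy of Gravner--Tracy--Widom \cite{GravnerTracyWidom02a} but with the steepest-descent curves chosen adaptively for each $k$ through the minimizer $\zeta_{m_k,n_k}$ of the empirical variational problem (\ref{flucE5}). Writing $F^{\bfa,\bfb}_{m_k,n_k,x}(z) = \exp\bigl(n_k h_k(z;x/n_k)\bigr)$ where
\[
h_k(z;u) = -\frac{1}{n_k}\sum_{i=1}^{m_k}\log(z-a_i) + \frac{1}{n_k}\sum_{j=1}^{n_k}\log(1-zb_j) + (m_k/n_k + u)\log z,
\]
the centering $x = p_k(s) + \xi$, $y = p_k(s) + \eta$ from (\ref{flucE17}) is precisely designed so that $\partial_z h_k$ vanishes at $\zeta_{m_k,n_k}$ to leading order in $n_k$ (the integer part in $p_k$ contributes $O(n_k^{-1})$, harmless), and $\partial_z^2 h_k(\zeta_{m_k,n_k};\,\cdot\,) = 2\sigma_{m_k,n_k}^3/\zeta_{m_k,n_k}^2$ by the very definition (\ref{flucE131}). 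Thus $\zeta_{m_k,n_k}$ is a double saddle point of $h_k$.

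Next I would deform the circular contours in the double-integral form (\ref{flucEq5.1}) of the kernel into steepest-descent contours $\Gamma_k^-$ (for $z$, descending through $\zeta_{m_k,n_k}$ at angles $\pm 2\pi/3$) and $\Gamma_k^+$ (for $w$, through $1/\zeta_{m_k,n_k}$ at angles $\pm \pi/3$), chosen along level curves of $\Im h_k$. The appendix machinery promised in Sections \ref{flucAp1}--\ref{flucAp2} provides quantitative local descriptions of these curves and an approximation of the meromorphic integral by its steepest-descent contribution. After the change of variables $z = \zeta_{m_k,n_k}(1+n_k^{-1/3}\tilde z)$, $w = \zeta_{m_k,n_k}^{-1}(1+n_k^{-1/3}\tilde w)$, Taylor expansion of $h_k$ gives a local integrand converging pointwise to $\exp(\tilde z^3/3 - s\tilde z)$ times its $\tilde w$ counterpart, producing in the limit the Airy kernel and hence $F_{\GUE}$ after the standard computation. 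The right-tail bound in Theorem \ref{flucT6}, applied in Hadamard's inequality for the Fredholm expansion, then legitimizes dominated convergence on the outer series in (\ref{flucE105}), upgrading pointwise convergence of the kernel to convergence of the Fredholm determinant.

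Two ingredients keep everything $k$-uniform, and this is where ergodicity of $\bfa$ and $\bfb$ enters. First, the weak convergence $\alpha_{m_k}\to\alpha$, $\beta_{n_k}\to\beta$ together with $m_k/n_k\to r$ yields $\zeta_{m_k,n_k}\to\zeta(r)$, $\gamma_{m_k,n_k}\to\gamma(r)$, $\sigma_{m_k,n_k}\to\sigma(r)$ a.s., and one must also verify that $\zeta_{m_k,n_k}$ stays bounded away from the nearest atoms $\max_{i\le m_k} a_i$ and $1/\min_{j\le n_k} b_j$ by a positive distance a.s.; this follows from (\ref{flucA4}), (\ref{flucE196}) and strict convexity of $g_{m_k,n_k}$. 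Second, the ergodic theorem gives a.s. uniform (on compact neighborhoods of $\zeta(r)$ avoiding $[0,\bar\alpha]\cup[1/\bar\beta,\infty)$) convergence of the derivatives of $h_k$ to those of the deterministic limit $h$. This is exactly the control needed to bound the contributions of $\Gamma_k^\pm$ away from the saddle by a quantity like $e^{-c n_k}$ uniformly in $k$ and in a bounded range of $s$, and to compare $h_k$ with its quadratic approximation in a shrinking neighborhood of $\zeta_{m_k,n_k}$.

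The main obstacle, in my view, is precisely this uniform control of an infinite family of random steepest-descent curves: one must simultaneously (i) choose $\Gamma_k^\pm$ so they avoid the random poles $a_i, 1/b_j$ that can cluster near $\bar\alpha$ and $1/\bar\beta$, (ii) show the real part of $h_k$ is uniformly coercive along $\Gamma_k^\pm$ outside a neighborhood of the saddle, and (iii) bound the remainder in the cubic Taylor expansion uniformly. Steps (i)--(ii) are delicate because the randomness pushes atoms of $\alpha_{m_k},\beta_{n_k}$ arbitrarily close to $\bar\alpha,\bar\beta$ infinitely often; ergodicity provides the right framework (quantitative escape rates in probability one) but makes the arguments noticeably heavier than in the i.i.d.\ deterministic-parameter case of \cite{Johansson00}. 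Once (i)--(iii) are in place, combining the local Airy asymptotics with the tail bound from Theorem \ref{flucT6} delivers (\ref{fluceq23}).
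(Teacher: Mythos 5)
Your proposal matches the paper's strategy: start from the Fredholm determinant formula (Theorem \ref{flucT7}), carry out a steepest-descent analysis of the kernel centered at the double saddle $\zeta_{m_k,n_k}$ (where $f_{m_k,n_k}'=f_{m_k,n_k}''=0$ by (\ref{flucE18}) and $f_{m_k,n_k}'''(\zeta_{m_k,n_k})=-2\sigma_{m_k,n_k}^3/\zeta_{m_k,n_k}^3$ by (\ref{flucE137})) to extract the Airy kernel (Theorem \ref{flucT5}), and apply a $k$-uniform tail bound on the kernel determinants together with Hadamard's inequality to justify dominated convergence of the Fredholm series, with ergodicity of $\bfa$ and $\bfb$ delivering exactly the uniform control of the random steepest-descent curves developed in Section \ref{flucS5}. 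The two minor deviations are organizational rather than substantive: the paper works with the series representation $\KK_{m,n}^{\bfa,\bfb}=\sum_l I^{\bfa,\bfb}_{m,n,x+l}I^{\bfb,\bfa}_{n,m,y+l}$ and estimates each single contour integral separately (Lemma \ref{flucL7}, using the symmetry $\zeta_{n,m}^{\bfb,\bfa}=1/\zeta_{m,n}^{\bfa,\bfb}$ from Lemma \ref{flucL16}) rather than deforming the double contour in (\ref{flucEq5.1}), and the dominated-convergence input is the determinant tail bound Lemma \ref{flucL17} (from Theorem \ref{flucT2}) rather than the probability tail bound Theorem \ref{flucT6}, which is itself derived from that lemma.
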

%{\color{red} What if centered by $\gamma$}

Preceding results are based on suitable estimates and a scaling limit of the correlation kernel defined in (\ref{flucE71}). We present some of the results on the correlation kernel here as well. Let \begin{align}P_k(x) = x_+^{3/2} \wedge (x_+n_k^{1/3}).\label{flucE143}\end{align}

\begin{thm}
\label{flucT2}
Let $s_0 \ge 0$. There exist (deterministic) constants $C, c > 0$ such that, for a.e. $(\bfa, \bfb)$, there exists $k_0 = k_0^{\bfa, \bfb}$ such that 
\begin{align*}
\det \limits_{i, j \in [l]}[\KK_{m_k, n_k}^{\bfa, \bfb}(p_k^{\bfa, \bfb}(s_i), p_k^{\bfa, \bfb}(s_j))] \le C^l l^{l/2} n_k^{-l/3} e^{-c \sum_{i=1}^l P_k(s_i)}
\end{align*}
for $l \in \bbN$, $k \ge k_0$ and $s_1, \dotsc, s_l \ge -s_0$. 
\end{thm}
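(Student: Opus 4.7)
The plan is to reduce the estimate to an entry-wise bound of the form
$$
|\KK^{\bfa,\bfb}_{m_k, n_k}(p_k(s), p_k(t))| \le C_0 n_k^{-1/3} \exp\big(-c_0 (P_k(s)+P_k(t))/2\big), \qquad s,t \ge -s_0,
$$
valid for a.e.\ $(\bfa,\bfb)$ and all $k \ge k_0(\bfa,\bfb)$, and then to apply Hadamard's inequality $|\det M| \le \prod_i \|M_i\|_2$. Since $P_k \ge 0$, the row norms satisfy $\|M_i\|_2^2 \le l C_0^2 n_k^{-2/3} e^{-c_0 P_k(s_i)}$, hence
$$
\big|\det[\KK_k(p_k(s_i), p_k(s_j))]\big| \le l^{l/2} C_0^l n_k^{-l/3} \exp\Big(-\tfrac{c_0}{2} \textstyle\sum_i P_k(s_i)\Big),
$$
which is the stated bound with $C = C_0$ and $c = c_0/2$.

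For the entry bound I will run steepest descent on the double contour integral (\ref{flucEq5.1}). Writing $r_k = m_k/n_k$ and
$$
\Psi_k(z) := (r_k + \gamma_k)\log z + \tfrac{1}{n_k}\textstyle\sum_{j=1}^{n_k} \log(1-zb_j) - \tfrac{1}{n_k}\textstyle\sum_{i=1}^{m_k} \log(z-a_i),
$$
one has $\log F_{m_k,n_k,p_k(s)}^{\bfa,\bfb}(z) = n_k\Psi_k(z) + n_k^{1/3}\sigma_k s\log z + O(1)$. The identity $z\Psi_k'(z) = \gamma_k - g_{m_k,n_k}(z)$, combined with $g_{m_k,n_k}(\zeta_k) = \gamma_k$ and $g_{m_k,n_k}'(\zeta_k) = 0$ from (\ref{flucE5}), gives $\Psi_k'(\zeta_k) = \Psi_k''(\zeta_k) = 0$, while (\ref{flucE131}) yields $\Psi_k'''(\zeta_k) = -2\sigma_k^3/\zeta_k^3 \ne 0$. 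So $\zeta_k$ is a cubic saddle of $\Psi_k$. The elementary identity $\tfrac{1}{n_k}\sum_i wa_i/(1-wa_i) + \tfrac{1}{n_k}\sum_j b_j/(w-b_j) = g_{m_k,n_k}(1/w)$ then identifies the cubic saddle of the $w$-exponent at $w = 1/\zeta_k$.

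Deform the $z$- and $w$-contours onto the steepest-descent curves through $\zeta_k$ and $1/\zeta_k$ respectively, and rescale $z = \zeta_k(1 + \tau/(n_k^{1/3}\sigma_k))$, $w = \zeta_k^{-1}(1 + \tau'/(n_k^{1/3}\sigma_k))$. Near the saddle pair the exponent becomes $-\tau^3/3 + s\tau - (\tau')^3/3 + t\tau'$ modulo $o(1)$ on compact sets of $(\tau,\tau')$; the Jacobians give $n_k^{-2/3}$ and the Cauchy factor $(1-zw)^{-1} \sim -n_k^{1/3}\sigma_k/(\tau+\tau')$, so the net prefactor is $n_k^{-1/3}$ and the resulting Airy-type double integral delivers the decay $e^{-c_0(s_+^{3/2}+t_+^{3/2})}$ whenever $|s|,|t|$ are bounded. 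For $s \gtrsim n_k^{1/3}$ I will not linearize the shift but move the saddle: the equation $g_{m_k,n_k}(\zeta_k(s)) = \gamma_k + n_k^{-2/3}\sigma_k s$ admits a unique solution $\zeta_k(s) \in (\bar{\alpha}_{m_k}, 1/\bar{\beta}_{n_k})$ by convexity of $g_{m_k,n_k}$ on the interval and its blow-up at the endpoints, and deforming through $\zeta_k(s)$ produces the linear-in-$s$ exponential cost $n_k[\Psi_k(\zeta_k(s)) - \Psi_k(\zeta_k)] \sim -c_0 s n_k^{1/3}$. The minimum of the two regimes is exactly $P_k(s)$, and a Cauchy--Schwarz splitting between the two contours symmetrizes the decay into the product form.

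The main obstacle will be uniformity in $k$ for a.e.\ $(\bfa,\bfb)$: the steepest-descent contours and the lower bounds on $\Re(\Psi_k(\zeta_k) - \Psi_k(z))$ along them depend on the full parameter sets $\{a_i\}_{i \le m_k}, \{b_j\}_{j \le n_k}$, not just on the scalars $\gamma_k, \zeta_k, \sigma_k$. Ergodicity (\ref{flucA2}) combined with (\ref{flucA4}) provides a.s.\ weak convergence $\alpha_{m_k} \to \alpha$, $\beta_{n_k} \to \beta$, and hence a.s.\ convergence $\gamma_k \to \gamma(r)$, $\zeta_k \to \zeta(r) \in (\bar{\alpha}, 1/\bar{\beta})$, $\sigma_k \to \sigma(r) > 0$. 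Feeding this input into the quantitative steepest-descent lemmas of Section \ref{flucAp1} and the meromorphic contour approximations of Section \ref{flucAp2} should furnish contours $\Gamma_z^k, \Gamma_w^k$ and lower bounds on $\Re(\Psi_k(\zeta_k) - \Psi_k(\cdot))$ that are uniform in $k \ge k_0(\bfa,\bfb)$, closing the argument.
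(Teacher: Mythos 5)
Your reduction to an entry-wise bound plus Hadamard has the right shape, but the claimed entry-wise bound is false, and this is a genuine gap. The kernel as defined in (\ref{flucE71}) is \emph{not} symmetric in its two arguments: the identity (\ref{flucEq105}), $F_{m,n,x}^{\bfa,\bfb}(z)F_{n,m,y}^{\bfb,\bfa}(z^{-1}) = z^{x-y}$, evaluated at $z = \zeta_k$, shows that $\KK_{m_k,n_k}^{\bfa,\bfb}(p_k(s),p_k(t))$ carries a multiplicative factor $\zeta_k^{p_k(s)-p_k(t)} \approx \zeta_k^{n_k^{1/3}\sigma_k(s-t)}$. Unless $\zeta_k = 1$ this factor is exponentially large in $n_k^{1/3}$ for $s\neq t$, so no bound of the form $C_0 n_k^{-1/3} e^{-c_0(P_k(s)+P_k(t))/2}$ can hold on the kernel entries themselves. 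The correct entry-wise bound (which is what the paper obtains at (\ref{flucE73}), by estimating the factors $I_{m_k,n_k,p_k(s)+l}^{\bfa,\bfb}$ and $I_{n_k,m_k,p_k(t)+l}^{\bfb,\bfa}$ separately via Lemma \ref{flucL7}b, using Lemma \ref{flucL16} to exchange $(\bfa,m)\leftrightarrow(\bfb,n)$, and then summing the geometric-type series over $l$ with Lemma \ref{flucL18}) has exactly this asymmetric $\zeta_k^{p_k(s)-p_k(t)}$ factor. The missing idea is the gauge transformation: since $\det[\KK(p_k(s_i),p_k(s_j))]_{i,j} = \det[\zeta_k^{p_k(s_j)-p_k(s_i)}\KK(p_k(s_i),p_k(s_j))]_{i,j}$ by conjugation with the diagonal matrix $\diag(\zeta_k^{p_k(s_i)})$, one can cancel the offending factor before applying Hadamard. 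Without that step Hadamard is applied to a matrix whose entries are not uniformly small, and the argument breaks down.

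A secondary, but related, issue is your proposal to run steepest descent directly on the double contour integral (\ref{flucEq5.1}). The Cauchy factor $(1-zw)^{-1}$ has a pole exactly at the double saddle $(z,w)=(\zeta_k,1/\zeta_k)$, so the two contours cannot both be placed through their respective saddles while keeping $|zw|<1$ and staying away from the singularity; the paper sidesteps this entirely by working with the series representation (\ref{flucE71}) of $\KK$, bounding the two one-dimensional integrals separately, and summing over $l$. Your large-$s$ analysis via a moving saddle $\zeta_k(s)$ is also a departure from the paper, which instead exploits that $\Phi_k([\tau_k,T_k)) \subset \Disc(0,c\zeta_k)$ (Lemma \ref{flucL8}), so the $|z|^{p_k(s)}$ factor produces the linear-in-$s$ cost without re-solving a saddle equation. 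That could in principle work, but you would still need uniform (in $k$, a.s.) quantitative control of the moved saddle, which the ergodicity inputs you cite deliver only near the fixed saddle $\zeta$.
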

%k_0 or C, c may possibly be chosen nonrandom. 
\begin{thm}
\label{flucT5}
Let $s_0 \ge 0$.   
\begin{align*}
\lim \limits_{k \rightarrow \infty} \det \limits_{i, j \in [l]} [\KK_{m_k, n_k}^{\bfa, \bfb}(p_k^{\bfa, \bfb}(s_i), p_k^{\bfa, \bfb}(s_{j}))] (\sigma_{m_k, n_k}^{\bfa, \bfb})^l n_k^{l/3}  = \det \limits_{i, j \in [l]}[\AiK(s_i, s_j)]
\end{align*}
uniformly in $s_1, \dotsc, s_l \in [-s_0, s_0]$ and $l \in \bbN$ for a.e. $(\bfa, \bfb)$.
\end{thm}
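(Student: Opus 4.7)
My plan is a steepest-descent analysis of the double contour representation (\ref{flucEq5.1}) of the kernel, followed by Hadamard-type bounds and Theorem \ref{flucT2} to upgrade entrywise convergence into uniform convergence of the $l \times l$ determinants in $(l, s_1, \ldots, s_l)$.

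\textbf{Identifying the double saddle.} Starting from (\ref{flucEq5.1}), I would write $F_{m,n,x}^{\bfa,\bfb}(z) = \exp H_{m,n,x}(z)$ for the natural logarithm. Differentiation yields $H'_{m,n,x}(z) = (x - n g_{m,n}(z))/z$, so the critical-point equation reads $g_{m,n}(z) = x/n$; for $x = p_k(0)$ this is solved, up to $O(1)$ floor error, by $z = \zeta_{m,n}$. Since $\zeta_{m,n}$ is the minimizer, $g'_{m,n}(\zeta_{m,n}) = 0$; a short algebraic manipulation using $(\zeta-a)^{-2} = [(\zeta-a)^{-1} + a(\zeta-a)^{-2}]/\zeta$ (and its $b$-analogue) then gives $H''_{m,n,p_k(0)}(\zeta_{m,n}) = 0$ as well. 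So $\zeta_{m,n}$ is a \emph{double} saddle of the $z$-phase, and a parallel computation delivers the cubic coefficient $H'''_{m,n,p_k(0)}(\zeta_{m,n}) = -2n\sigma_{m,n}^3/\zeta_{m,n}^3$. The symmetry in $F_{n,m,y}^{\bfb,\bfa}$ places the $w$-phase double saddle at $w = 1/\zeta_{m,n}$ with matching cubic coefficient.

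\textbf{Deformation and rescaling to Airy.} I would deform $|z|=\rho$ and $|w|=\rho$ to steepest-descent contours through $\zeta_{m,n}$ and $1/\zeta_{m,n}$, with tangent rays at $\pm 2\pi/3$ from the real axis so that the cubic is real-negative along them; the existence, global structure, and admissibility of these deformations (avoiding $\{a_i\}$ and $\{1/b_j\}$ and maintaining $1-zw \neq 0$ between the contours) come from the tools in Sections \ref{flucAp1} and \ref{flucAp2}. Under (\ref{flucA3}) and (\ref{flucA4}), $\zeta(r) \in (\bar\alpha, 1/\bar\beta)$ strictly, and ergodicity of $\bfa, \bfb$ gives $\alpha_{m_k} \Rightarrow \alpha$ and $\beta_{n_k} \Rightarrow \beta$ weakly, hence $\zeta_{m_k,n_k} \to \zeta(r)$, $\sigma_{m_k,n_k} \to \sigma(r)$ and uniform-in-$k$ control of $g_{m_k,n_k}$ and its derivatives on a fixed neighborhood of $\zeta(r)$. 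Substituting $z = \zeta_{m_k,n_k}(1 + u/(\sigma_{m_k,n_k} n_k^{1/3}))$ and $w = \zeta_{m_k,n_k}^{-1}(1 - v/(\sigma_{m_k,n_k} n_k^{1/3}))$, the Taylor expansion to cubic order, together with the Jacobian and $(1-zw)^{-1} \sim -\sigma_{m_k,n_k} n_k^{1/3}/(u-v)$, yields
\begin{align*}
\sigma_{m_k,n_k} n_k^{1/3}\, \KK_{m_k,n_k}^{\bfa,\bfb}(p_k(s), p_k(t)) \longrightarrow \phi_k(s)\, \AiK(s,t)\, \phi_k(t)^{-1}
\end{align*}
uniformly in $s,t \in [-s_0, s_0]$, for a gauge factor $\phi_k$ (a power of $\zeta_{m_k,n_k}$) that cancels inside the determinant.

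\textbf{Uniformity in $l$.} For each fixed $l$, the entrywise uniform convergence above converts to uniform convergence of the gauge-free $l \times l$ determinants on $[-s_0, s_0]^l$ by continuity of $\det$ in its entries. To handle arbitrary $l$, I would invoke Theorem \ref{flucT2}: on $[-s_0, s_0]$, $P_k(s_i) \ge 0$, so
\begin{align*}
\bigl| \sigma_{m_k,n_k}^l\, n_k^{l/3}\, {\det}_{i,j \in [l]} \KK_{m_k,n_k}^{\bfa,\bfb}(p_k(s_i), p_k(s_j)) \bigr| \le (C \sigma_{m_k,n_k})^l\, l^{l/2},
\end{align*}
while Hadamard's inequality applied to the Airy kernel gives a matching bound $|\det_{i,j}[\AiK(s_i,s_j)]| \le (C_0 \sqrt{l})^l$. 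This common majorant, combined with pointwise convergence at each finite $l$, delivers the claimed uniformity in $(l, s_1, \ldots, s_l)$. The main obstacle I foresee is the uniform-in-$k$ steepest-descent analysis in the presence of random ergodic parameters: unlike the i.i.d. or deterministic cases, the contours depend on the full sequences $(a_i), (b_j)$ through their empirical measures, so uniform Taylor error bounds require quantitative a.s. control of $\alpha_{m_k}, \beta_{n_k}$ near $\zeta(r)$. Assembling these estimates through ergodicity is precisely what motivates the general framework of Sections \ref{flucAp1}--\ref{flucAp2}, and is the technical heart of the proof.
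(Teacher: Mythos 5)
Your proposal is mathematically sensible, but it follows a genuinely different route than the paper. You work directly from the double contour representation (\ref{flucEq5.1}) and rescale both $z$ and $w$ near the double saddles $\zeta_{k}$ and $1/\zeta_{k}$, with the Airy kernel emerging from $(1-zw)^{-1}$. The paper instead uses the series representation (\ref{flucE71}), $\KK_{k}(x, y) = \sum_{l \ge 0} I_{m_k, n_k, x+l}\, I_{n_k, m_k, y+l}$, applies Lemma \ref{flucL7}a separately to each single-contour factor $I$, and then performs a Riemann-sum limit in $l$ to recover $\int_0^\infty \Ai(s+x)\Ai(t+x)\,\dd x = \AiK(s,t)$. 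Each route has advantages. Yours is arguably shorter and is standard in the KPZ literature, but it carries the extra burden of managing the pole $1-zw = 0$ that sits exactly at the double saddle $(z,w) = (\zeta_k, \zeta_k^{-1})$: one must nest the $z$- and $w$-contours so that $|zw| < 1$ throughout, then justify the $1/(u-v)$ limit inside the rescaled integral. The paper's choice sidesteps this entirely, and it dovetails with its appendix machinery (Propositions \ref{flucP2}--\ref{flucP6}), which is built for single-variable steepest-descent contours, not for simultaneously deforming a pair; your claim that Sections \ref{flucAp1}--\ref{flucAp2} cover the double deformation is optimistic and would need extra work. Your saddle computations ($H'=(x-n g_{m,n})/z$, the vanishing of $H''$ at $\zeta_{m,n}$, and $H''' = -2n\sigma_{m,n}^3/\zeta_{m,n}^3$) are all correct and in fact match (\ref{flucE77}), (\ref{flucE18}), and (\ref{flucE137}).

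One soft spot: the final step claiming uniformity over all $l \in \bbN$. Having a common Hadamard-type majorant $(C\sqrt{l})^l$ on both sides, plus fixed-$l$ uniform convergence in $(s_1, \dotsc, s_l)$, does not imply convergence that is uniform in $l$ --- the majorant grows rather than shrinks with $l$, so it does not suppress the difference at large $l$. This is the same imprecision present in the paper's own wording, and it is harmless in practice because the downstream application in the proof of Theorem \ref{flucT1} (estimate (\ref{flucE38})) only invokes fixed-$l$ uniformity in $(s_1,\dots,s_l)$. But be aware that if you truly need the stronger uniform-in-$l$ statement, neither your argument nor the one in the paper, as written, delivers it.
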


\section{Steepest-descent analysis}  

A main step in our approach to obtain the results in Section \ref{flucS2} is to understand the behavior of the contour integrals (\ref{flucE3}) with $m = m_k, n = n_k$ and $x = p_k(s)$,
\begin{align}
\label{flucE192}
I_{m_k, n_k, p_k(s)}^{\bfa, \bfb} = \frac{1}{2\pi \ii}\oint \limits_{|z| = 1} F_{m_k, n_k, p_k(s)}^{\bfa, \bfb}(z) \dd z, 
\end{align}
as $k$ grows large. Following the standard ideas in the method of steepest-descent \cite{deBruijn}, we seek a suitable deformation of the contour $|z| = 1$ to simplify the analysis. More specifically, we would like the absolute value of the integrand $|F_{m_k, n_k, p_k(s)}|$ to have a unique maximizer on the new contour and the ratio of $|F_{m_k, n_k, p_k(s)}|$ to its maximum value to decay exponentially as we move away from the maximizer along the contour. Then the main contribution to the integral should come from a small neighborhood of the maximizer. The construction of a contour with such properties naturally leads to the consideration of the steepest-descent curves (see Definition \ref{flucD1}) of $\log |F_{m_k, n_k, p_k(s)}|$. 

The preceding discussion motivates us to define  
\begin{align}f_{m, n}(z) = -\frac{1}{n} \sum \limits_{i = 1}^m \log(z-a_i)  +  \frac{1}{n} \sum \limits_{j=1}^n \log(1-zb_j) + \bigg(\gamma_{m, n}+\frac{m}{n}\bigg) \log z\label{flucE25}\end{align}
for $z \in \bbC \smallsetminus (-\infty, \max_{i \in [m]} a_i] \smallsetminus [\min_{j \in [n]} 1/b_j, \infty)$, where the logarithms are principal branches. The real and the imaginary parts of $f_{m, n}$ are, respectively, given by 
\begin{align}
u_{m, n}(z) &=  -\frac{1}{n} \sum \limits_{i = 1}^m \log|z-a_i|  +  \frac{1}{n} \sum \limits_{j=1}^n \log|1-zb_j| + \bigg(\gamma_{m, n}+\frac{m}{n}\bigg) \log |z| \label{flucE4}\\
v_{m, n}(z) &= -\frac{1}{n} \sum \limits_{i = 1}^m \arg(z-a_i)  +  \frac{1}{n} \sum \limits_{j=1}^n \arg(1-zb_j) + \bigg(\gamma_{m, n}+\frac{m}{n}\bigg) \arg z. \label{flucE134}
\end{align}
We also compute the derivative
\begin{align}
f_{m, n}'(z) = -\frac{1}{n} \sum \limits_{i = 1}^m \frac{1}{z-a_i}  -  \frac{1}{n} \sum \limits_{j=1}^n \frac{b_j}{1-zb_j} + \bigg(\gamma_{m, n}+\frac{m}{n}\bigg) \frac{1}{z}. \label{flucE6}
\end{align}
Extend the domains of $u_{m, n}$ and $f_{m, n}'$ to $\bbC \smallsetminus \{0, a_1, \dotsc, a_m, 1/b_1, \dotsc, 1/b_j\}$ through the formulas in (\ref{flucE4}) and (\ref{flucE6}), respectively. It follows from the identity $\nabla (\log |z|) = \dfrac{1}{\conj{z}}$ for $z \in \bbC \smallsetminus \{0\}$ and the chain rule that 
\begin{align}
\label{flucE190}
\nabla u_{m, n}(z) = \conj{f_{m, n}'}(z) \quad \text{ for } z \in \bbC \smallsetminus \{0, a_1, \dotsc, a_m, 1/b_1, \dotsc, 1/b_j\}. 
\end{align}

Let us make the utility of (\ref{flucE25}) more clear. For $s \in \bbR$ and $k \in \bbN$, define 
\begin{align}
p_k'(s) = \frac{p_k(s)-n_k \gamma_{m_k, n_k}}{n_k^{1/3} \sigma_k} = \frac{\lf n_k \gamma_{m_k, n_k} + n_k^{1/3} \sigma_k s \rf - n_k \gamma_{m_k, n_k}}{n_k^{1/3} \sigma_k}, \label{flucEE4}
\end{align}
which is a bounded sequence because $|p_k'(s)| \le |s| + n_k^{-1/3}\sigma_k^{-1}$ and $\sigma_k$ has a positive limit (see Lemma \ref{flucL5} below). Hence, $f_{m_k, n_k}$ captures the leading order behavior of $F_{m_k, n_k, p_k(s)}$ since  
\begin{align}
F_{m_k, n_k, p_k(s)}(z) &= \exp\{n_k f_{m_k, n_k}(z) + n_k^{1/3}\sigma_k p_k'(s) \log z\} 
\end{align}
for $z \in \bbC \smallsetminus (-\infty, \max_{i \in [m]} a_i] \smallsetminus [\min_{j \in [n]} 1/b_j, \infty)$. We also have 
\begin{align}
|F_{m_k, n_k, p_k(s)}(z)| &= \exp\{n_k u_{m_k, n_k}(z) + n_k^{1/3}\sigma_k p_k'(s) \log |z|\} \label{flucE193}
\end{align}
for $z \in \bbC \smallsetminus \{0, a_1, \dotsc, a_m, 1/b_1, \dotsc, 1/b_n\}$. The last equality leads us to study the steepest-descent curves of $u_{m_k, n_k}$, which should presumably approximate well the steepest-descent curves of $\log |F_{m_k, n_k, p_k(s)}|$ for large $k$. 

Fix $m, n \in \bbN$ and the sequences $\bfa$ and $\bfb$. The nature of the steepest-descent curves of $u_{m, n}$ depends on the poles $\sP_{m, n} = \{0, a_1, \dotsc, a_m, 1/b_1, \dotsc, 1/b_n\}$ and the zeros $\sZ_{m, n}$ of $\nabla u_{m, n} = \conj{f'}_{m, n}$. The next lemma locates the zeros.  
\begin{lem}
\label{flucL24}
Let $(\check{a}_i)_{i \in [\check{m}]}$ and $(\check{b}_j)_{j \in [\check{n}]}$ denote the distinct terms of $(a_i)_{i \in [m]}$ and $(b_j)_{j \in [n]}$ in increasing order. $f_{m, n}'$ has a zero of order $1$ in each of the intervals $(\check{a}_i, \check{a}_{i+1})$ for $1 \le i < \check{m}$ and $(1/\check{b}_{j+1}, 1/\check{b}_j)$ for $1 \le j < \check{n}$. $f_{m, n}'$ has a zero of order $2$ at $\zeta_{m, n}$. There are no other zeros. 
\end{lem}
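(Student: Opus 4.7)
The plan is to reduce the problem to locating the zeros of the rational function $g_{m,n}(z) - \gamma_{m,n}$ via the identity
\begin{align*}
z f'_{m,n}(z) = \gamma_{m,n} - g_{m,n}(z).
\end{align*}
I would verify this by writing $\tfrac{z}{z-a_i} = 1 + \tfrac{a_i}{z-a_i}$ inside $zf'_{m,n}(z)$: the $-m/n$ produced by the first sum cancels the $m/n$ in the $1/z$ coefficient, and the remaining terms reassemble into $-g_{m,n}(z)+\gamma_{m,n}$. Since $g_{m,n}(0) = -m/n < \gamma_{m,n}$, the point $0$ is not a zero of $\gamma_{m,n}-g_{m,n}$, so the zeros of $f'_{m,n}$ in $\bbC$ coincide, with matching multiplicities, with those of $\gamma_{m,n}-g_{m,n}$.

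Next I would count the zeros of $\gamma_{m,n}-g_{m,n}$ globally. Using $b_j z/(1-b_j z) \to -1$ as $z \to \infty$, one has $g_{m,n}(z) \to -1$, so $\gamma_{m,n}-g_{m,n}(z) \to \gamma_{m,n}+1 \neq 0$ at infinity. Combining $g_{m,n}$ over the common denominator $D(z) = \prod_{i=1}^{\check{m}}(z-\check{a}_i)\prod_{j=1}^{\check{n}}(1-z\check{b}_j)$ of degree $\check{m}+\check{n}$, the numerator of $\gamma_{m,n}-g_{m,n}$ has degree exactly $\check{m}+\check{n}$. Hence $\gamma_{m,n}-g_{m,n}$ has at most $\check{m}+\check{n}$ zeros in $\bbC$ counted with multiplicity.

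The remaining task is to exhibit exactly $\check{m}+\check{n}$ zeros with the advertised multiplicities. On each interval $(\check{a}_i, \check{a}_{i+1})$ with $i < \check{m}$, the dominant poles drive $g_{m,n}$ to $+\infty$ at $\check{a}_i^{+}$ and to $-\infty$ at $\check{a}_{i+1}^{-}$, so the intermediate value theorem supplies at least one zero of $g_{m,n}-\gamma_{m,n}$; a symmetric analysis of $(1/\check{b}_{j+1}, 1/\check{b}_j)$ for $j < \check{n}$ yields $\check{n}-1$ further zeros. At the minimizer $\zeta_{m,n}$ from (\ref{flucE5}), which lies in the open interval $(\check{a}_{\check{m}}, 1/\check{b}_{\check{n}})$ where $g_{m,n}$ is strictly convex (since $g''_{m,n}>0$ term by term there), one has $g_{m,n}(\zeta_{m,n})=\gamma_{m,n}$ and $g'_{m,n}(\zeta_{m,n})=0$, contributing a zero of order at least $2$. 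The tally $(\check{m}-1)+(\check{n}-1)+2=\check{m}+\check{n}$ saturates the degree bound from the previous paragraph, so every intermediate-value zero must be simple, the zero at $\zeta_{m,n}$ has order exactly $2$, and no further zeros exist anywhere in $\bbC$. Translating back via the identity delivers the lemma; the only non-bookkeeping step is the asymptotic $g_{m,n}(z)\to -1$ at infinity, which requires noting that the $b_j$-sum contributes a constant rather than a decaying term.
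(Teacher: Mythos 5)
Your proof is correct and follows essentially the same route as the paper's: reduce to the rational function $\gamma_{m,n}-g_{m,n}$ via the identity $zf'_{m,n}(z)=\gamma_{m,n}-g_{m,n}(z)$, exhibit $\check{m}+\check{n}$ zeros by intermediate value plus the double zero at the minimizer, and observe that this saturates the degree of the numerator over the common denominator. Your extra checks (the value at $0$, the asymptotic $g_{m,n}(z)\to -1$ to pin down the numerator degree) make explicit a couple of points the paper states more tersely, but the underlying counting argument is identical.
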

\begin{proof}
Note from (\ref{flucE7}) and (\ref{flucE6}) that 
\begin{align}zf_{m, n}'(z) = -g_{m, n}(z) + \gamma_{m, n}. \label{flucE77} \end{align}
Since $\zeta_{m, n}$ is the minimizer of (\ref{flucE5}), 
\begin{align}
f_{m, n}'(\zeta_{m, n}) = f_{m, n}''(\zeta_{m, n}) = 0. \qquad  \label{flucE18}
\end{align} 
Let $\check{m}_i$ and $\check{n}_j$ be the multiplicities of $\check{a}_i$ and $\check{b}_j$ in $(a_i)_{i \in [n]}$ and $(b_j)_{j \in [m]}$, respectively. We have  
\begin{align}
zf_{m, n}'(z) &= \gamma_{m, n} +1 - \frac{1}{n}\sum \limits_{i=1}^{\check{m}} \frac{\check{m}_i \check{a}_i}{z-\check{a}_i} - \frac{1}{n} \sum \limits_{j=1}^{\check{n}} \frac{\check{n}_j}{1-\check{b}_j z} \label{flucE194}\\
&= \frac{P(z)}{\prod \limits_{i=1}^{\check{m}} (z-\check{a}_i) \prod \limits_{j = 1}^{\check{n}} (1-\check{b}_j z)}, \label{flucE8} 
\end{align}
where $P$ is a polynomial of degree $\check{m}+\check{n}$ that is nonzero at $0$ (since (\ref{flucE194}) equals $\gamma_{m, n} + m/n > 0$ for $z = 0$), $\check{a}_i$ for $i \in [\check{m}]$ and $1/\check{b}_j$ for $j \in [\check{n}]$. Hence, the zeros of (\ref{flucE194}), $P$ and $f_{m, n}'$ are the same. There is at least one zero of $P$ in each of the intervals $(\check{a}_i, \check{a}_{i+1})$ and $(1/\check{b}_{j+1}, 1/\check{b}_{j})$. This is because (\ref{flucE194}) is continuous and real-valued on $\bbR \smallsetminus \sP_{m, n}$, and its limits at the endpoints of each of these intervals are infinities with opposite signs. Hence, we have counted (with multiplicity) at least $2+\check{m}-1+\check{n}-1 = \check{m}+\check{n} = \deg P$ zeros of $P$. Hence, these are the all zeros.   
\end{proof}

Note from (\ref{flucE4}) that $u_{m, n}$ is harmonic since it equals the linear combination of various functions obtained from the harmonic function $z \mapsto \log |z|$ via translations and dilations. In the remainder of this section, we will rely heavily on Subsections \ref{S2Sub1} and \ref{S2Sub2}, which develop precisely the notion of steepest-descent curves of harmonic functions that emanate from a point. 

Using (\ref{flucE77}) and (\ref{flucE18}), we compute
\begin{align}
f_{m, n}'''(\zeta_{m, n}) &= - \frac{g_{m, n}''(\zeta_{m, n})}{\zeta_{m, n}} = -\frac{2\sigma_{m, n}^3}{\zeta_{m, n}^3} < 0. \label{flucE137}
\end{align}
Take $u = u_{m, n}$, $f = f_{m, n}$, $a = \zeta_{m, n}$ and $\xi = e^{\bfi \pi /3}$ in the appendix, see the paragraph preceding (\ref{flucE112}). Let $\Phi_{m, n}: [0, T_{m, n}) \rightarrow (\bbC \smallsetminus (\sP_{m, n} \cup \sZ_{m, n})) \cup \{\zeta_{m, n}\}$ and $\Psi_{m, n}: [0, S_{m, n}) \rightarrow (\bbC \smallsetminus (\sP_{m, n} \cup \sZ_{m, n})) \cup \{\zeta_{m, n}\}$ denote, respectively, the curves $\varphi_3^+$ and $\varphi_2^+$ defined by (\ref{flucE110}) (with $n=3$).  
By Proposition \ref{flucL6} and the remarks surrounding it, $\Phi_{m, n}$ and $\Psi_{m, n}$ are, respectively, steepest-descent and -ascent curves of $u_{m, n}$ that emanate from $\zeta_{m, n}$, that is, 
\begin{align}
\Phi_{m, n}'(t) &= - \frac{\nabla u_{m, n}(\Phi_{m, n}(t))}{|\nabla u_{m, n}(\Phi_{m, n}(t))|} \quad \text{ for } t \in (0, T_{m, n})\label{flucE19}\\ 
\Psi_{m, n}'(t) &= \frac{\nabla u_{m, n}(\Psi_{m, n}(t))}{|\nabla u_{m, n}(\Psi_{m, n}(t))|} \qquad \text{ for } t \in (0, S_{m, n}). \label{flucE189}\\
\Phi_{m, n}(0) &= \Psi_{m, n}(0) = \zeta_{m, n} \label{flucE184}
\end{align}
The endpoints $T_{m, n}, S_{m, n} \in (0, \infty]$ are maximal in the sense that there are no proper extensions of $\Phi_{m, n}$ and $\Psi_{m, n}$ satisfying (\ref{flucE19}), (\ref{flucE189}) and (\ref{flucE184}). It follows from (\ref{flucE64}), $\Phi_{m, n}'$ and $\Psi_{m, n}'$ are continuous up to the endpoint $0$ and, by (\ref{appE4}), 
\begin{align}
\Phi_{m, n}'(0) &= e^{\bfi2\pi/3} \qquad \Psi_{m, n}'(0) = e^{\bfi \pi/3}. \label{flucE39}
\end{align}
Also, note that $|\Phi_{m, n}'(t)| = 1$ for $t \in [0, T_{m, n})$ and $|\Psi_{m, n}'(t)| = 1$ for $t \in [0, S_{m, n})$. In particular, the lengths of $\Phi_{m, n}$ and $\Psi_{m, n}$ are $T_{m, n}$ and $S_{m, n}$, respectively. 

We mention a few other steepest-descent and -ascent curves of $u_{m, n}$, which are illustrated in Figure \ref{flucF2} below along with $\Phi_{m, n}$ and $\Psi_{m, n}$. Observe from (\ref{flucE6}) that $\conj{f_{m, n}'}(z) = f_{m, n}'(\conj{z})$ for $z \in \bbC \smallsetminus \sP_{m, n}$. Hence, by (\ref{flucE190}) and that $\dd/\dd t$ commutes with complex conjugation, the curve $t \mapsto \conj{\Phi}_{m, n}(t)$ for $t \in [0, T_{m, n})$ also satisfies the ODE in (\ref{flucE19}), and $\conj{\Phi}_{m, n}(0) = \zeta_{m, n}$. Thus, $\conj{\Phi}_{m, n}$ is a steepest-descent curve of $u_{m, n}$ that emanates from $\zeta_{m, n}$. By (\ref{appE4}) and $\conj{\Phi_{m, n}'}(0) = e^{-\ii 2\pi/3}$, $\conj{\Phi}_{m, n}$ corresponds to $\varphi_2^-$ defined in (\ref{appE5}). Similarly, the curve $\conj{\Psi}_{m, n}$ defined on $[0, S_{m, n})$ is a steepest-ascent curve of $u_{m, n}$ that emanates from $\zeta_{m, n}$ and corresponds to $\varphi_3^-$. Since $\zeta_{m, n}$ is real and $\nabla u_{m, n} = \conj{f_{m, n}'}$ is real-valued on $\bbR \smallsetminus \sP_{m, n}$, we can apply Proposition \ref{flucL19}. Due to the sign in (\ref{flucE137}) being negative, the proposition states that the curves $t \mapsto t+\zeta_{m, n}$ for $t \in [0, \min_{j \in [n]} 1/b_j - \zeta_{m, n})$ and $t \mapsto -t+\zeta_{m, n}$ for $t \in [0, \zeta_{m, n}-\max_{i \in [m]} a_i]$ are, respectively, steepest-descent and -ascent curves of $u_{m, n}$ that emanate from $\zeta_{m, n}$. These curves correspond to $\varphi_1^+$ and $\varphi_1^-$, respectively. We also consider the situation at $x \in \sZ_{m, n} \smallsetminus \{\zeta_{m, n}\}$. If $x < \zeta_{m, n}$ then, by Lemma \ref{flucL24}, there exist unique $i, i' \in [m]$ such that $(a_{i}, a_{i'}) \cap (\sP_{m, n} \cup \sZ_{m, n}) = \{x\}$. Since the limits of $u_{m, n}(z)$ as $z \rightarrow a_i$ and $z \rightarrow a_{i'}$ are $+\infty$, the minimum of $u_{m, n}$ over $(a_i, a_{i'})$ occurs at $x$. This and $f_{m, n}''(x) \neq 0$ (by Lemma \ref{flucL24}) implies that $f_{m, n}''(x) < 0$. Then, by Propositions \ref{flucL25} and \ref{flucL19} (now $n = 2$), the curves $t \mapsto x+t$ for $t \in [0, a_{i'}-x)$ and $t \mapsto x-t$ for $t \in [0, x-a_i)$ are the all steepest-ascent curves of $u_{m, n}$ that emanate from $x$. Similarly, if $x > \zeta_{m, n}$ then $(1/b_j, 1/b_{j'}) \cap (\sP_{m, n} \cup \sZ_{m, n})$ for some $j, j' \in [n]$ and $f_{m, n}''(x) > 0$. Hence, the curves $t \mapsto x+t$ for $t \in [0, 1/b_{j'}-x)$ and $t \mapsto x-t$ for $t \in [0, x-1/b_j)$ are the all steepest-descent curves of $u_{m, n}$ that emanate from $x$.

We next justify through Lemma \ref{flucL4} below that $\Phi_{m, n}$ and $\Psi_{m, n}$ both stay in $\bbH$ for positive time values, $\Phi_{m, n}$ ends at $0$ in finite time (i.e. has finite length), and $\Psi_{m, n}$ goes off to infinity as depicted in Figure \ref{flucF2}. (That $\Phi_{m, n}((0, T_{m, n}))$ and $\Psi_{m, n}((0, S_{m, n}))$ do not intersect is because $u_{m, n} \circ \Phi_{m, n}$ is decreasing but $u_{m, n} \circ \Psi_{m, n}$ is increasing). We will only use the properties of $\Phi_{m, n}$ in the sequel. 

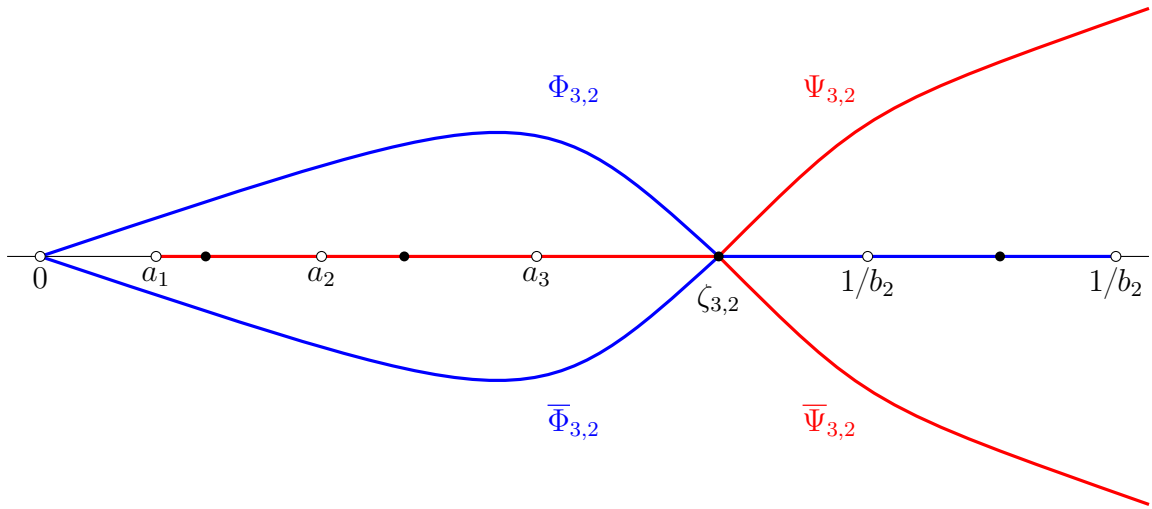
\begin{figure}[h]
\centering
\begin{tikzpicture}[scale = 2.2]
\draw[-](0.8, 0)--(1,0)node[below]{$0$}--(1.7, 0)node[below]{$a_1$}--(2.7, 0)node[below]{$a_2$}--(4, 0)node[below]{$a_3$}--(6, 0)node[below]{$1/b_2$}--(7.5, 0)node[below]{$1/b_2$}--(7.7, 0);
\node at (5.1, -0.1)[below]{$\zeta_{3, 2}$};
%\draw[dashed, thick](6, 0)..controls (5.7, 0.5)..(5.3, 0.83);
%\draw[dashed,  thick](6, 0)..controls (5.7, -0.5)..(5.3, -0.83);
%\draw[dashed,  thick] (5.3, 0.83)..controls (5, 1)..(1, 0);
%\draw[dashed, thick] (5.3, -0.83)..controls (5, -1)..(1, 0);
\draw[blue, very thick] (5.1, 0)..controls(4, 1)..(1, 0);
\draw[red, very thick] (5.1, 0)..controls(6, 0.9)..(7.7, 1.5);
\draw[blue, very thick] (5.1, 0)..controls(4, -1)..(1, 0);
\draw[red, very thick] (5.1, 0)..controls(6, -0.9)..(7.7, -1.5);

\draw[red, very thick](1.7, 0)--(2.7, 0);
\draw[red, very thick](4, 0)--(2.7, 0);
\draw[red, very thick](4, 0)--(5.1, 0);
\draw[blue, very thick](6, 0)--(5.1, 0);
\draw[blue, very thick](6, 0)--(7.5, 0);

\fill[white](1,0)circle(0.03);
\draw(1,0)circle(0.03);
\fill[white](1.7,0)circle(0.03);
\draw(1.7,0)circle(0.03);
\fill[white](2.7,0)circle(0.03);
\draw(2.7,0)circle(0.03);
\fill[white](4,0)circle(0.03);
\draw(4,0)circle(0.03);
\fill[white](6,0)circle(0.03);
\draw(6,0)circle(0.03);
\fill[white](7.5,0)circle(0.03);
\draw(7.5,0)circle(0.03);

\fill(5.1,0)circle(0.03);
\fill(2,0)circle(0.03);
\fill(3.2,0)circle(0.03);
\fill(6.8,0)circle(0.03);

%\fill(5.3, 0.83)circle(0.03);
%\node at (5.3, 0.90)[above]{$\Phi_k(\tau_k)$};
%\node at (5.3, -0.90)[below]{$\conj{\Phi_k}(\tau_k)$};
%\fill(5.3, -0.83)circle(0.03);
%\draw[-, red, thick](6, 0)--(5.3, 0.83);
%\draw[-, red, thick](6, 0)--(5.3, -0.83);
%\draw[blue] (6.2, 0) arc (0: 120: 0.2);
%\node at (6.2, 0.1)[above]{{\color{blue}$2\pi/3$}}; 
%\draw[red] (6.5, 0) arc (0: 130: 0.5);
%\node at (6.5, 0.4)[above]{{\color{red}$2\pi/3+\phi$}}; 
\node at (4, 1.0)[right]{{\color{blue}$\Phi_{3, 2}$}};
\node at (6, 1.0)[left]{{\color{red}$\Psi_{3, 2}$}};
\node at (4, -1.0)[right]{{\color{blue}$\conj{\Phi}_{3, 2}$}};
\node at (6, -1.0)[left]{{\color{red}$\conj{\Psi}_{3, 2}$}};
\end{tikzpicture}
\caption[An illustration of steepest-descent and -ascent curves]{\small{An illustration of some steepest-descent (blue) and -ascent (red) curves of $u_{m, n}$ that emanate from the zeros (black dots) of $\nabla u_{m, n}$ for $m = 3$, $n = 2$, $a_1 < a_2 < a_3$ and $b_1 < b_2$.}}\label{flucF2}
\end{figure}
\begin{lem}
\label{flucL4} \ 
\begin{enumerate}[(a)]
\item $T_{m, n} < \infty$, $\lim_{t \rightarrow T_{m, n}} \Phi_{m, n}(t) = 0$ and $\Phi_{m, n}((0, T_{m, n})) \subset \bbH$.  
\item $S_{m, n} = \infty$, $\lim_{t \rightarrow \infty} |\Psi_{m, n}(t)| = \infty$ and $\Psi_{m, n}((0, \infty)) \subset \bbH$. 
\end{enumerate}
\end{lem}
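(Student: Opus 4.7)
I will prove (a); part (b) follows by parallel reasoning, in fact cleaner. The plan has three steps: confine $\Phi_{m,n}$ to $\bbH$, reduce the terminus to the poles where $u_{m,n} \to -\infty$, and identify that pole as $0$.

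For the confinement, the symmetry $u_{m,n}(\conj{z}) = u_{m,n}(z)$ forces $\partial_y u_{m,n} \equiv 0$ on $\bbR \smallsetminus \sP_{m,n}$, so $-\nabla u_{m,n}/|\nabla u_{m,n}|$ is smooth and horizontal at every $x \in \bbR \smallsetminus (\sP_{m,n} \cup \sZ_{m,n})$, making the real line locally invariant under the ODE (\ref{flucE19}). If $\Phi_{m,n}$ first returned to $\bbR$ at some $t^{*} \in (0, T_{m,n})$, necessarily at a regular point (as $\Phi_{m,n}((0,T_{m,n})) \subset \bbC \smallsetminus (\sP_{m,n} \cup \sZ_{m,n})$ by construction), then ODE uniqueness would force $\Phi_{m,n}$ to coincide with the $\bbR$-valued solution near $t^{*}$, contradicting $\Phi_{m,n}((0,t^{*})) \subset \bbH$ (valid for small $t>0$ by $\Phi_{m,n}'(0) = e^{\ii 2\pi/3}$ at (\ref{flucE39})). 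The same argument confines $\Psi_{m,n}$ to $\bbH$.

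For the reduction, maximality of the parametrization forces $\Phi_{m,n}$ to accumulate on $\sP_{m,n} \cup \sZ_{m,n} \cup \{\infty\}$ as $t \uparrow T_{m,n}$. Strict decrease of $u_{m,n} \circ \Phi_{m,n}$ excludes the $a_i$ and $\infty$ (where $u_{m,n}\to+\infty$; as $|z|\to\infty$ the coefficient of $\log|z|$ is $1+\gamma_{m,n}>0$), and confinement to $\bbH$ with $\sZ_{m,n} \subset \bbR$ (Lemma \ref{flucL24}) excludes zeros of $\nabla u_{m,n}$. Moreover $\Phi_{m,n}$ traces the level curve $\{v_{m,n}=0\}$: $v_{m,n}$ is constant on steepest-descent curves by Cauchy--Riemann, and $v_{m,n}(\zeta_{m,n}) = 0$ as $f_{m,n}$ is real on $(\check{a}_{\check{m}}, 1/\check{b}_{\check{n}})$; at any other point of this interval $\{v_{m,n}=0\}$ is locally just $\bbR$, so $\Phi_{m,n}$ cannot return to $\bbR$ there. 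Hence the terminus lies in $\{0, 1/b_1, \dotsc, 1/b_n\}$ where $u_{m,n}\to-\infty$.

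For the identification, I will use local expansions near each candidate: $v_{m,n}(z) = (\gamma_{m,n}+m/n)\arg z - m\pi/n + O(|z|)$ near $0$, giving a unique $\bbH$-approach angle $\theta_0 = m\pi/(n\gamma_{m,n}+m)\in(0,\pi)$; and $v_{m,n}(z) = (\check{n}_j/n)(\arg(z-1/\check{b}_j)-\pi) + M_j\pi/n + O(|z-1/\check{b}_j|)$ near each $1/\check{b}_j$ with $M_j = \sum_{k:\check{b}_k>\check{b}_j}\check{n}_k$, yielding a $\bbH$-approach at angle $\theta = \pi(\check{n}_j-M_j)/\check{n}_j$ iff $M_j \in (0,\check{n}_j)$, while the axis-descent from $\zeta_{m,n}$ to $1/\check{b}_{\check{n}}$ uses the degenerate boundary angle $\pi$ at $j=\check{n}$ (where $M_{\check{n}}=0$). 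Pairing the two $\bbH$-branches at $\zeta_{m,n}$ --- $\Phi_{m,n}$ at $2\pi/3$ and $\Psi_{m,n}$ at $\pi/3$ --- with these endpoints using the non-crossing of $\{v_{m,n}=0\}$-components in $\bbH$ (no critical points of $v_{m,n}$ there) together with the mirror arc $\conj{\Phi}_{m,n}$ in the lower half plane yields $\Phi_{m,n}\to 0$. Finiteness $T_{m,n}<\infty$ follows from unit speed and the asymptotic radial convergence of $-\nabla u_{m,n}/|\nabla u_{m,n}|$ toward $0$ governed by the dominant $(\gamma_{m,n}+m/n)\log|z|$ term. Part (b) closes at the angle-counting stage: the analogous expansion at each $\check{a}_i$ gives $\theta = -L_i\pi/\check{m}_i \notin (0,\pi)$ (with $L_i = \sum_{k:\check{a}_k>\check{a}_i}\check{m}_k\ge 0$), so no $\check{a}_i$ admits a $\bbH$-approach at all, forcing $\Psi_{m,n}\to\infty$ and $S_{m,n}=\infty$ by unit speed and unboundedness. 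The main obstacle is the global pairing in part (a): whereas for $\check{a}_i$ the angle inventory cleanly excludes all $\bbH$-approaches, for $1/\check{b}_j$ it permits approaches when $M_j \in (0,\check{n}_j)$, so ruling out $\Phi_{m,n}\to 1/b_j$ in non-monotone multiplicity regimes requires a careful topological argument --- a Jordan-curve / non-crossing pairing using $\conj{\Phi}_{m,n}$ and the inventory of $\{v_{m,n}=0\}$-components in $\bbH$ --- rather than a purely local angle count.
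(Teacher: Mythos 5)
There is a genuine gap, and you have already put your finger on it: the global pairing step that would force $\Phi_{m,n} \rightarrow 0$ rather than $\Phi_{m,n} \rightarrow 1/b_j$ is only sketched ("a careful topological argument... rather than a purely local angle count"), not carried out. This is precisely the crux of the proof, and acknowledging the difficulty does not substitute for resolving it. The confinement to $\bbH$ (via $\bbR$-invariance of the ODE and uniqueness), the exclusion of $\infty$ and the $a_i$ by monotonicity of $u_{m,n}\circ\Phi_{m,n}$, and the exclusion of the off-$\zeta_{m,n}$ zeros by the level-set identity $v_{m,n}\circ\Phi_{m,n}\equiv 0$ are all fine and broadly parallel to the paper's use of Propositions~\ref{flucL25}, \ref{flucL19}, \ref{flucP1}. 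But after this reduction the remaining dichotomy is $0$ versus $\{1/b_j\}$, and that is where the proposal stops short. The paper closes this by a structural argument coupling (a) and (b): suppose $x_0 := \lim_{t\rightarrow T_{m,n}}\Phi_{m,n}(t) > \zeta_{m,n}$; form the Jordan curve $J$ from $\Phi_{m,n}$ and the segment $[\zeta_{m,n}, x_0]$; then the initial tangents $e^{\bfi 2\pi/3}$, $e^{\bfi\pi/3}$ and the fact that $\Psi_{m,n}$ never meets $J$ trap $\Psi_{m,n}$ in the bounded interior of $J$; by Proposition~\ref{flucP1} and the increase of $u_{m,n}\circ\Psi_{m,n}$ (which rules out the poles $1/b_j$ and $0$ where $u_{m,n}\rightarrow-\infty$), $\Psi_{m,n}$ would have to converge to a zero $y_0 > \zeta_{m,n}$, which is impossible since the only steepest-descent curves emanating from such $y_0$ lie on $\bbR$. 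This handles all $x_0 > \zeta_{m,n}$ at once, with no case analysis on multiplicities.

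A secondary issue: there is a sign error in your local expansion at $1/\check{b}_j$. With the principal branch and $z\in\bbH$ near $1/\check{b}_j$, one finds $\arg(1-zb_k)\rightarrow -\pi$ for $k>j$ and $\rightarrow 0$ for $k<j$, so the constant term is $-M_j\pi/n$, not $+M_j\pi/n$. Consequently a vanishing level set requires $\arg(z-1/\check{b}_j)=\pi(1+M_j/\check{n}_j)\geq\pi$, which is never strictly interior to $(0,\pi)$: the "non-monotone multiplicity regime" $0<M_j<\check{n}_j$ you worried about does not in fact produce an admissible $\bbH$-approach. The remaining delicacy is only the boundary case $j=\check{n}$ (i.e.\ $M_j=0$, $\theta\rightarrow\pi$, tangential approach to $1/\check{b}_{\check{n}}$ from $\bbH$), but this too is not eliminated by the local angle count --- so you still need the topological step, and the corrected computation does not rescue the proposal. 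The paper's route through $\Psi_{m,n}$ and the Jordan curve sidesteps all of this.
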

%See Figure \ref{flucF3}. 
Extend $\Phi_{m, n}$ to $[0, T_{m, n}]$ by setting $\Phi_{m, n}(T_{m, n}) = 0$. By Lemma \ref{flucL4}a, the curve $\Gamma_{m, n}: [0, 2T_{m, n}] \rightarrow \bbC$ given by 
\begin{align}
\Gamma_{m, n}(t) = 
\begin{cases}\Phi_{m, n}(t) \quad &\text{ for } t \in [0, T_{m, n}] \\
\conj{\Phi}_{m, n}(2T_{m, n}-t) \quad &\text{ for } t \in (T_{m, n}, 2T_{m, n}]
\end{cases} 
\end{align}
is simple, closed, oriented counter-clockwise and piecewise $\sC^1$, and encloses $a_i$ for $i \in [m]$. Therefore, $\Gamma_{m, n}$ can also serve as the contour of integration in (\ref{flucE192}) i.e. 
\begin{align}
\label{flucE89}
I_{m_k, n_k, p_k(s)} = \frac{1}{2\pi \ii} \oint \limits_{\Gamma_{m_k, n_k}} F_{m_k, n_k, p_k(s)}(z)\ \dd z. 
\end{align}
Since $\Phi_{m_k, n_k}$ and $\conj{\Phi}_{m_k, n_k}$ are both steepest-descent curves of $u_{m_k, n_k}$ that emanate from $\zeta_{m_k, n_k}$ and by (\ref{flucE193}), we expect that the analysis of (\ref{flucE89}) reduces to a small neighborhood of $\zeta_{m_k, n_k}$ for large $k$. In Section \ref{flucS6}, we will see that this is indeed the case. 

\begin{proof}[Proof of Lemma \ref{flucL4}]
Let $t_0 \in (0, T_{m, n})$ and put $z_0 = \Phi_{m, n}(t_0)$. The curve with the reverse orientation $\tilde{\Phi}:[0, t_0) \rightarrow (\bbC \smallsetminus (\sP_{m, n} \cup \sZ_{m, n})) \cup \{z_0\}$ defined by $\tilde{\Phi}(t) = \Phi_{m, n}(t_0-t)$ for $t \in [0, t_0)$ is a steepest-ascent curve of $u_{m, n}$ that emanate from $z_0$, see the remarks following Definition \ref{flucD1}. (Although we will not make use of it, note that the domain of $\tilde{\Phi}$ is maximal since $\lim_{t \rightarrow t_0} \tilde{\Phi}(t) = \zeta_{m, n} \in \sZ_{m, n}$). The image of $\tilde{\Phi}$ contains points from $\bbH$ due to (\ref{flucE39}) (also see Proposition \ref{flucL10}b). If $z_0 \in \bbR \smallsetminus (\sP_{m, n} \cup \sZ_{m, n})$ then $\tilde{\Phi}$ is the unique steepest-ascent curve that emanates from $z_0$ by Proposition \ref{flucL25} and, since $\nabla u$ is real-valued on $\bbR \smallsetminus \sP_{m, n}$, the image of $\tilde{\Phi}$ is contained in $\bbR$ by Proposition \ref{flucL19}. We conclude from this contradiction, the arbitrariness of $z_0 = \Phi_{m, n}(t_0)$ and $\Phi_{m, n}(t) \in \bbH$ for small values of $t$ that $\Phi_{m, n}((0, T_{m, n})) \subset \bbH$. Similar reasoning shows that $\Psi_{m, n}((0, S_{m, n})) \subset \bbH$ as well. 

Note from (\ref{flucE4}) that $u_{m, n}(z)-(1+\gamma_{m, n})\log |z|$ is bounded for $|z| \ge R$ for some $R > 0$. Then, since $u_{m, n} \circ \Phi_{m, n}$ is decreasing, $\Phi_{m, n}$ remains inside $\cl{\Disc}(0, R)$ provided that $R$ is sufficiently large. Hence, by Proposition \ref{flucP1}, $T_{m, n} < \infty$ and $\lim_{t \rightarrow T_{m, n}} \Phi_{m, n}(t) = x_0$ for some $x_0 \in (\sP_{m, n} \cup \sZ_{m, n}) \smallsetminus \{\zeta_{m, n}\}$. 

We argue that $x_0 \in \sZ_{m, n}$ with $x_0 < \zeta_{m, n}$ leads to a contradiction. Note that, since $T_{m, n} < \infty$ and $x_0$ is in the domain of $u_{m, n}$, the curve $\tilde{\Phi}$ can be defined with $t_0 = T_{m, n}$ and $z_0 = x_0$, and is a steepest-ascent curve of $u_{m, n}$ that emanate from $x_0$. However, as shown in Figure \ref{flucF2} and pointed out in the preceding discussion as a consequence of Propositions \ref{flucL25} and \ref{flucL19}, the images of both steepest-ascent curves that emanate from $x_0$ (there are two of them) are contained in $\bbH$. We then have a contradiction because $\tilde{\Phi}$ passes through nonreal points. By a similar reasoning, $\Psi_{m, n}$ cannot converge to a point in $\sZ_{m, n}$ greater than $\zeta_{m, n}$. 

Define the Jordan curve (simple, closed curve) $J: [0, T_{m, n}+|x_0-\zeta_{m, n}|] \rightarrow \bbC$ by $J(t) = \Phi_{m, n}(t)$ for $t \in [0, T_{m, n})$ and $J(t) = x_0 - \sgn(x_0-\zeta_{m, n})t$ for $t \in [T_{m, n}, T_{m, n}+|x_0-\zeta_{m, n}|]$. The image of $J$ consists of the image of $\Phi_{m, n}$ and the line segment from $\zeta_{m, n}$ to $x_0$. 
We now rule out that $x_0 > \zeta_{m, n}$. If this were the case then $\Psi_{m, n}$ would remain in the interior of $J$ because of (\ref{flucE39}) and that $\Psi_{m, n}((0, S_{m, n}))$ does not intersect the image of $J$. 
Then, since $u_{m, n} \circ \Psi_{m, n}$ is increasing and $\lim_{z \rightarrow 1/b_j} u_{m, n}(z) = -\infty$ for $j \in [n]$, Proposition \ref{flucP1} forces that $\lim_{t \rightarrow S_{m, n}} \Psi_{m, n}(t) = y_0$ for some $y_0 \in \sZ_{m, n}$ with $y_0 > \zeta_{m, n}$. However, this is not possible either as noted at the end of the preceding paragraph.  

The conclusion is that $x_0 = 0$. Now $\Psi_{m, n}$ is in the exterior of $J$ and in $\bbH$. Therefore, $\Psi_{m, n}$ remains a positive distance away from all points of $\sP_{m, n} \cup \sZ_{m, n}$ less than $\zeta_{m, n}$. Since $\Psi_{m, n}$ also does not converge to any point in $\sP_{m, n} \cup \sZ_{m, n}$ greater or equal to $\zeta_{m, n}$, the only remaining possibility is that $S_{m, n} = \infty$ and $\lim_{t \rightarrow S_{m, n}} |\Psi_{m, n}| = \infty$, by Proposition \ref{flucP1}. 
\end{proof}

\section{Uniform control of the steepest-descent curves}
\label{flucS5}

For brevity, let us write $k$ in place of $m_k, n_k$ in the subscripts e.g. $\Phi_{k}$ stands for $\Phi_{m_k, n_k}$. In this section, we first show that various quantities defined with the empirical distributions $\alpha_{m_k}$ and $\beta_{n_k}$ a.s.\ converge as $k \rightarrow \infty$ to their counterparts defined with $\alpha$ and $\beta$. We then obtain upper bounds that are uniform in $k$ for $|\Phi_k|$ and the length of $\Phi_k$. The lemmas developed here will enable us to 
estimate the integral (\ref{flucE89}) uniformly in $k$ (see Lemma \ref{flucL7}). 

\subsection{Various limits}
This subsection computes the limits of the sequences $g_k, \zeta_k, \gamma_k, \sigma_k$ and $f_k$, and introduces constants that will be used in the sequel. Some of these convergences do not require assumptions (\ref{flucA3}) or (\ref{flucA4}). We elaborate on this point at the end of the subsection.

\begin{lem}
\label{flucL31}
Let $K \subset \bbC$ be nonempty, compact and disjoint from $[0, \bar{\alpha}] \cup [1/\bar{\beta}, \infty)$. Then $\lim \limits_{k \rightarrow \infty} \sup \limits_{z \in K} |g_k(z)-g(z)| = 0$ for a.e. $(\bfa, \bfb)$.
\end{lem}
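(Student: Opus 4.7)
The plan is to combine pointwise convergence from the Birkhoff ergodic theorem with Vitali's theorem on normal families of holomorphic functions. Throughout, one works on a single event of full measure.

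First, I would enlarge $K$ to a convenient open set. Since $K$ is compact and disjoint from the closed set $E := [0,\bar{\alpha}] \cup [1/\bar{\beta},\infty)$, choose a bounded open set $U \supset K$ with $\cl{U} \cap E = \emptyset$. Set
\[
d_1 = \dist(\cl{U}, [0,\bar{\alpha}]), \quad d_2 = \dist(\cl{U}, [1/\bar{\beta},\infty)), \quad M = \max_{z \in \cl{U}}|z|,
\]
all positive and finite. Since $a_i \in \supp(\alpha) \subset [0,\bar{\alpha}]$ and $b_j \in \supp(\beta) \subset [0,\bar{\beta}]$ a.s., the poles of $g_k$ lie in $E$ and $g_k$ is holomorphic on $U$ for every $k$, a.s. Moreover, for $z \in \cl{U}$ we have $|z-a_i| \ge d_1$, and for $b_j > 0$ we have $|1-b_jz| = b_j|1/b_j - z| \ge b_j d_2$, so $|b_jz/(1-b_jz)| \le M/d_2$. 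This gives the uniform bound
\[
\sup_{z \in \cl{U}}|g_k(z)| \le \frac{m_k}{n_k}\cdot\frac{\bar{\alpha}}{d_1} + \frac{M}{d_2},
\]
which is bounded in $k$ since $m_k/n_k \to r$. Thus $\{g_k\}$ is a normal family on $U$.

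Second, I would obtain pointwise convergence. For fixed $z \in U$, the functions $a \mapsto a/(z-a)$ and $b \mapsto bz/(1-bz)$ are bounded and continuous on $[0,\bar{\alpha}]$ and $[0,\bar{\beta}]$ respectively. By the Birkhoff ergodic theorem applied to the ergodic sequence $\bfa$ under assumption (\ref{flucA2}),
\[
\frac{1}{m_k}\sum_{i=1}^{m_k}\frac{a_i}{z-a_i} \xrightarrow[k\to\infty]{\text{a.s.}} \int_{(0,1)}\frac{a\,\alpha(\dd a)}{z-a},
\]
and analogously for $\bfb$. Multiplying the first by $m_k/n_k \to r$ and adding the second yields $g_k(z) \to g(z,r)$ a.s. Fix a countable dense subset $D \subset U$; intersecting the corresponding countable family of full-measure sets, one obtains a single event $\Omega_0$ of full measure on which $g_k(z) \to g(z,r)$ simultaneously for every $z \in D$.

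Finally, on $\Omega_0$, Vitali's theorem applies: the uniformly bounded holomorphic functions $g_k$ on $U$ converge pointwise on the dense set $D$, hence locally uniformly on $U$ to some holomorphic function, which by the identity theorem must equal $g(\cdot, r)$. In particular $\sup_{z \in K}|g_k(z) - g(z,r)| \to 0$, which is the desired conclusion. The only mildly delicate point is the uniform-in-$k$ bound on $g_k$: this is handled by the a.s.\ containment $a_i \le \bar{\alpha}$, $b_j \le \bar{\beta}$, so no obstacle worth grinding through remains once the normal-families framework is set up.
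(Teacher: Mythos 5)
Your proof is correct, but it takes a genuinely different route from the paper. The paper proves the lemma by establishing uniform convergence \emph{directly}: it applies Lemma~\ref{flucAL1}, a quantitative uniform ergodic theorem for families $\{f_i\}_{i\in\sI}$ of $\sC^1$ functions with uniformly bounded derivatives. Concretely, the paper bounds $\sup_{z\in K}\sup_a |\partial_a(a/(z-a))|$ and $\sup_{z\in K}\sup_b |\partial_b(bz/(1-bz))|$, then invokes Lemma~\ref{flucAL1} twice with $\sI = K$. You instead get pointwise convergence on a countable dense $D\subset U$ from plain Birkhoff, observe that $\{g_k\}$ is a locally bounded holomorphic family on $U$, and upgrade to locally uniform convergence via Vitali's theorem. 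Your route is shorter and relies on soft complex analysis; the paper's route is more hands-on and real-variable, and its key tool Lemma~\ref{flucAL1} does not require holomorphicity of the summands (which is why the paper states it in that generality, even though here the integrands happen to be holomorphic in $z$). Both are sound, and the derivative bounds in the paper's version are of the same level of effort as your $L^\infty$ bounds. Two small points worth making explicit in a written-up version of your argument: Vitali's theorem should be applied on each connected component of $U$ (density of $D$ in $U$ supplies a limit point in each component), and the limit must be identified with $g(\cdot, r)$ — which follows immediately from density of $D$ and continuity, so the identity theorem is not strictly needed.
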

\begin{proof}
Put $\delta = \dist\{K, [0, \bar{\alpha}] \cup [1/\bar{\beta}, \infty)\}$ and $M = \sup_{z \in K} |z|$. Note that 
\begin{align}
\sup_{z \in K} \sup_{a \in [0, \bar{\alpha}]} \left|\partial_a \left(\frac{a}{z-a}\right)\right| &= \sup_{z \in K} \sup_{a \in [0, \bar{\alpha}]} \frac{|z|}{|z-a|^2} \le \frac{M}{\delta^2} < \infty \nonumber\\
\sup_{z \in K} \sup_{b \in [0, \bar{\beta}]} \left|\partial_b \left(\frac{bz}{1-bz}\right)\right| &= \sup_{z \in K} \sup_{b \in [0, \bar{\beta}]} \frac{|z|}{|1-bz|^2} \nonumber\\ 
&=  M \sup_{z \in K} \sup_{b \in [0, \bar{\beta}]} \max \bigg\{\frac{\one_{\{b \le (M+1)^{-1}\}}}{|1-bz|^2}, \frac{\one_{\{b > (M+1)^{-1}\}}}{b^2|1/b-z|^2}\bigg\} \nonumber \\
&\le M(M+1)^2\max\left\{1, \frac{1}{\delta^2}\right\} < \infty. \nonumber
\end{align}
Applying Lemma \ref{flucAL1} twice, we obtain 
\begin{align*}
\lim_{k \rightarrow \infty} \frac{1}{m_k} \sum_{i=1}^{m_k} \frac{a_i}{z-a_i} = \int \limits_{(0, 1)} \frac{a\alpha(\dd a)}{z-a} \qquad
\lim_{k \rightarrow \infty} \frac{1}{n_k} \sum_{j=1}^{n_k} \frac{b_jz}{1-b_jz} = \int \limits_{(0, 1)} \frac{bz\beta(\dd b)}{1-bz}
\end{align*}
uniformly for $z \in K$ a.s. Also because $\lim_{k \rightarrow \infty} m_k/n_k = r$, the claim follows.
\end{proof}

\begin{lem}
\label{flucL5} 
$\lim \limits_{k \rightarrow \infty} \zeta_k = \zeta$, $\lim \limits_{k \rightarrow \infty} \gamma_k = \gamma$ and $\lim \limits_{k \rightarrow \infty} \sigma_k = \sigma$ for a.e. $(\bfa, \bfb)$. 
%{\color{red}$\lim_{k \rightarrow \infty} \zeta_k^{\pm} = \zeta^{\pm}$, $\lim_{k \rightarrow \infty} \gamma_k = \gamma$ and $\lim_{k \rightarrow \infty} \sigma_k = \sigma$ for a.e. $(\bfa, \bfb)$. Furthermore, when $x > 0$, $\lim_{k \rightarrow \infty} \sigma_k^{-} = \sigma^-$ if (\ref{flucA5}) and $\lim_{k \rightarrow \infty} \sigma_k^{+} = \sigma^+$ if (\ref{flucA6}). }  
\end{lem}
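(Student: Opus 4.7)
The plan is in three steps: first obtain $\zeta_k \to \zeta$ by combining the uniform convergence from Lemma \ref{flucL31} with strict convexity; then $\gamma_k \to \gamma$ follows by continuity; and finally $\sigma_k \to \sigma$ reduces to uniform convergence of second derivatives.

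For the first step, fix $\epsilon > 0$ small enough that $[\zeta-\epsilon,\zeta+\epsilon] \subset (\bar{\alpha}, 1/\bar{\beta})$, which exists by assumption (\ref{flucA4}) since $\zeta \in (\bar{\alpha}, 1/\bar{\beta})$. By (\ref{flucE196}), for a.e.\ $(\bfa,\bfb)$ one has $\max_{i \in [m_k]} a_i < \zeta - \epsilon$ and $\min_{j \in [n_k]} 1/b_j > \zeta + \epsilon$ for all sufficiently large $k$, so $K := [\zeta-\epsilon, \zeta+\epsilon]$ eventually lies in the domain of $g_k$. Apply Lemma \ref{flucL31} on $K$ to get $g_k \to g(\cdot,r)$ uniformly on $K$. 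Because $g(\cdot,r)$ is strictly convex on $(\bar{\alpha}, 1/\bar{\beta})$ with unique minimizer $\zeta$ (as recorded after (\ref{flucE20})), the number $\delta := \min\{g(\zeta-\epsilon,r), g(\zeta+\epsilon,r)\} - g(\zeta,r)$ is strictly positive, so uniform convergence yields $g_k(\zeta) < g_k(\zeta \pm \epsilon)$ for large $k$. Each $g_k$ is strictly convex on $(\max_i a_i, \min_j 1/b_j)$, since term-by-term differentiation of (\ref{flucE7}) gives
\[
g_k''(z) \;=\; \frac{2}{n_k}\sum_{i=1}^{m_k} \frac{a_i}{(z-a_i)^3} + \frac{2}{n_k}\sum_{j=1}^{n_k} \frac{b_j^2}{(1-b_j z)^3} \;>\; 0
\]
on that interval. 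Therefore $\zeta_k \in (\zeta-\epsilon, \zeta+\epsilon)$ for large $k$, and since $\epsilon$ is arbitrary, $\zeta_k \to \zeta$ a.s.

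For the second step, split
\[
|\gamma_k - \gamma| \;\le\; \sup_{z \in K} |g_k(z) - g(z,r)| + |g(\zeta_k, r) - g(\zeta, r)|,
\]
valid once $\zeta_k \in K$. The first term vanishes by Lemma \ref{flucL31} and the second by continuity of $g(\cdot,r)$ together with $\zeta_k \to \zeta$.

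For the third step, formulas (\ref{flucE128}) and (\ref{flucE131}) can be rewritten as $\sigma^3 = \tfrac{1}{2}\zeta^2\,\partial_z^2 g(\zeta,r)$ and $\sigma_k^3 = \tfrac{1}{2}\zeta_k^2\, g_k''(\zeta_k)$, so it suffices to show $g_k''(\zeta_k) \to \partial_z^2 g(\zeta,r)$. The proof of Lemma \ref{flucL31} adapts directly to $g_k''$: the integrands $a(z-a)^{-3}$ and $b^2(1-bz)^{-3}$ have partial derivatives in $a \in [0,\bar{\alpha}]$ and $b \in [0,\bar{\beta}]$ that are uniformly bounded in $z$ ranging over a compact set disjoint from $[0,\bar{\alpha}] \cup [1/\bar{\beta}, \infty)$, so Lemma \ref{flucAL1} applies exactly as before and delivers $g_k'' \to \partial_z^2 g(\cdot,r)$ uniformly on $K$. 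Combined with $\zeta_k \to \zeta$ and continuity, this yields $g_k''(\zeta_k) \to \partial_z^2 g(\zeta,r)$ and hence $\sigma_k \to \sigma$ after taking cube roots. The main (mild) obstacle is simply keeping track of the domain: one must use (\ref{flucE196}) to ensure that $\zeta$ lies a definite distance from the random singular sets $\{a_i\}$ and $\{1/b_j\}$ of $g_k$ for all large $k$, so that the compact set $K$ can be used uniformly in Lemma \ref{flucL31} and its second-derivative analogue.
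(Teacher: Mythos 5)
Your proposal is correct and follows the paper's overall outline: uniform convergence from Lemma~\ref{flucL31}, strict convexity of $g_k$ and $g$ to localize the minimizer, then the triangle inequality for $\gamma_k$ and convergence of second derivatives for $\sigma_k$. Two points of divergence worth noting. First, for $\zeta_k \to \zeta$ you use a symmetric ``gap'' argument (the positive quantity $\delta = \min\{g(\zeta\pm\epsilon)\} - g(\zeta)$ plus uniform convergence), while the paper argues one-sidedly by contradiction: if $\zeta_k \ge \zeta+\delta$ infinitely often, then $g_k(\zeta+\delta/2) > g_k(\zeta+\delta)$ passes in the limit to $g(\zeta+\delta/2) \ge g(\zeta+\delta)$, contradicting strict monotonicity of $g$ on $[\zeta, 1/\bar\beta)$; the two are essentially equivalent, and yours is arguably cleaner. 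Second, for $g_k'' \to g''$ you re-establish the hypotheses of Lemma~\ref{flucAL1} for the differentiated integrands directly, whereas the paper observes that $g_k$ is holomorphic so that uniform convergence on compact subsets of $\bbC \smallsetminus [0,\bar\alpha] \smallsetminus [1/\bar\beta, \infty)$ automatically upgrades to uniform convergence of all derivatives. The holomorphicity shortcut is less work and extends to all orders at once; your computation is more elementary and self-contained. Both are valid. One minor over-invocation: you cite (\ref{flucE196}) to ensure $[\zeta-\epsilon,\zeta+\epsilon]$ is eventually in the domain of $g_k$, but in fact $a_i \le \bar\alpha$ and $b_j \le \bar\beta$ hold a.s.\ for every index (support bound), so the interval lies in the domain for all $k$, not merely large $k$; this does not affect correctness.
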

\begin{proof}
Let $\delta > 0$ satisfy $(\zeta-\delta, \zeta+\delta) \subset (\bar{\alpha}, 1/\bar{\beta})$, and $S = \{k \in \bbN: \zeta_k \ge \zeta+\delta\}$. Note that $g_k(\zeta+\delta/2) > g_k(\zeta+\delta)$ for $k \in S$. If $S$ is infinite with positive probability then Lemma \ref{flucL31} implies that $g(\zeta+\delta/2) \ge g(\zeta+\delta)$, which contradicts that $g$ is (strictly) increasing on $[\zeta, 1/\bar{\beta})$. Hence, $S$ is a.s.\ finite. Since $\delta$ is arbitrary, we conclude that $\limsup_{k \rightarrow \infty} \zeta_k \stackrel{\text{a.s.}}{\le} \zeta$. Similarly, we obtain $\liminf_{k \rightarrow \infty} \zeta_k \stackrel{\text{a.s.}}{\ge} \zeta$. Hence, $\lim_{k \rightarrow \infty} \zeta_k \stackrel{\text{a.s.}}{=} \zeta$. Now, $\zeta_k \in [\zeta-\delta, \zeta+\delta]$ for sufficiently large $k$ a.s., and   
\begin{align*}
|\gamma - \gamma_{k}| &= |g(\zeta)-g_{k}(\zeta_{k})| \le |g(\zeta)-g(\zeta_{k})| + |g(\zeta_{k})-g_{k}(\zeta_{k})| \\
&\le |g(\zeta)-g(\zeta_{k})| + \sup \limits_{z \in [\zeta-\delta, \zeta+\delta]}|g(z)-g_{k}(z)|. 
\end{align*}
The last expression vanishes as $k \rightarrow 0$ a.s.\ by the continuity of $g$ at $\zeta$ and Lemma \ref{flucL31}. 
Therefore, $\lim_{k \rightarrow \infty} \gamma_{k} \stackrel{\text{a.s.}}{=} \gamma$. Since $g_k$ is holomorphic, Lemma \ref{flucL31} also implies that $\lim_{k \rightarrow \infty} \partial_z^i g_k = \partial_z^i g$ uniformly on compact subsets of $\bbC \smallsetminus [0, \bar{a}] \smallsetminus [1/\bar{\beta}, \infty)$ a.s.\ for $i \in \bbN$. Then $\lim_{k \rightarrow \infty} g_k''(\zeta_k) \stackrel{\text{a.s.}}{=} g''(\zeta)$ since, for sufficiently large $k$,  
\begin{align*}
|g''(\zeta)-g_k''(\zeta_k)| \stackrel{\text{a.s.}}{\le} |g''(\zeta)-g''(\zeta_{k})| + \sup \limits_{z \in [\zeta-\delta, \zeta+\delta]}|g''(z)-g_{k}''(z)|
\end{align*}
Using this, $\lim_{k \rightarrow \infty} \zeta_k \stackrel{\text{a.s.}}{=} \zeta$ and (\ref{flucE128}) gives
\begin{align*}
\lim_{k \rightarrow \infty} \sigma_k = \lim_{k \rightarrow \infty} \bigg(\frac{1}{2}\zeta_k^2 g_k''(\zeta_k)\bigg)^{1/3} \stackrel{\text{a.s.}}{=} \bigg(\frac{1}{2}\zeta^2 g''(\zeta)\bigg)^{1/3} = \sigma, 
\end{align*}
which completes the proof. 
\end{proof}

We next describe the limit of $f_{k}$. Using the principal branches for the logarithms, define 
\begin{align}\label{flucE24}f(z, r)&= - r \int \limits_{(0, 1)} \log(z-a) \alpha(\dd a) +  \int \limits_{(0, 1)} \log(1-zb) \beta(\dd b) + (\gamma(r) +r)\log z
\end{align}
for $z \in \bbC \smallsetminus (-\infty, \bar{\alpha}] \smallsetminus [1/\bar{\beta}, \infty)$. Note from (\ref{flucE25}) that (\ref{flucE24}) is precisely $f_k$ if we set $r = m_k/n_k$ and replace $\alpha$ and $\beta$ with $\alpha_{m_k}$ and $\beta_{n_k}$, respectively. Write $u$ and $v$ for the real and the imaginary parts of $f$. Hence, 
\begin{align}
u(z, r) &=  - r \int \limits_{(0, 1)} \log|z-a| \alpha(\dd a) +  \int \limits_{(0, 1)} \log|1-zb| \beta(\dd b) + (\gamma(r) +r)\log |z| \label{flucE141}\\
v(z, r) &= - r \int \limits_{(0, 1)} \arg(z-a) \alpha(\dd a) +  \int \limits_{(0, 1)} \arg(1-zb) \beta(\dd b) + (\gamma(r) +r)\arg z . \label{flucEE181}
\end{align}
Extend the domain of $u$ and the derivative 
\begin{align}
\label{flucE21}
f'(z) = - r \int \limits_{(0, 1)} \frac{\alpha(\dd a)}{z-a} - \int \limits_{(0, 1)} \frac{b\beta(\dd b)}{1-bz} + \frac{\gamma(r) +r}{z}. 
\end{align}
to $\bbC \smallsetminus [0, \bar{\alpha}] \smallsetminus [1/\bar{\beta}, \infty)$. By (\ref{flucE11}) and (\ref{flucE21}), we have 
\begin{align}zf'(z) = -g(z) + \gamma \quad \text{ for } z \in \bbC \smallsetminus [0, \bar{\alpha}] \smallsetminus [1/\bar{\beta}, \infty). \label{flucE140}\end{align} 
It follows from (\ref{flucE140}) that 
\begin{align}
\label{flucE90}
f'(\zeta) = f''(\zeta) = 0 \qquad f'''(\zeta) = -\frac{g''(\zeta)}{\zeta} = -2\bigg(\frac{\sigma}{\zeta}\bigg)^3, 
\end{align}
Another observation of later use is that $f'$ has no zero other than $\zeta$. 
\begin{lem}
\label{flucL32}
$f'(z) \neq 0$ for $z \in \bbC \smallsetminus [0, \bar{\alpha}] \smallsetminus [1/\bar{\beta}, \infty) \smallsetminus \{\zeta\}$. 
\end{lem}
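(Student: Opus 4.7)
The plan is to argue by contradiction via Hurwitz's theorem, leveraging the finite-level location of zeros of $f_{m,n}'$ from Lemma~\ref{flucL24}. Setting $D := \bbC \smallsetminus [0, \bar{\alpha}] \smallsetminus [1/\bar{\beta}, \infty)$, the identity $zf'(z) = -g(z) + \gamma$ from (\ref{flucE140}) shows that, since $0 \notin D$, the equation $f'(z) = 0$ on $D$ is equivalent to $g(z) = \gamma$. Hence it suffices to prove that $g - \gamma$ has no zero on $D \smallsetminus \{\zeta\}$.

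The first step is to upgrade the convergence $g_k \to g$ on compacts of $D$ from Lemma~\ref{flucL31} and $\gamma_k \to \gamma$ from Lemma~\ref{flucL5} to a convergence $f_k' \to f'$ uniform on compacts of $D$. This follows by rewriting $f_k'(z) = (\gamma_k - g_k(z))/z$, noting that $|z|$ is bounded below on any compact $K \subset D$ (because $0 \notin D$). Since the statement of Lemma~\ref{flucL32} is deterministic in $\alpha, \beta, r$, I can then fix a single realization of $(\bfa, \bfb)$ in the probability-one event on which all these convergences hold.

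Now suppose, for contradiction, that $f'(z_0) = 0$ for some $z_0 \in D \smallsetminus \{\zeta\}$. Since $D$ is open and $z_0 \neq \zeta$, pick $\epsilon > 0$ with $\cl{\Disc}(z_0, \epsilon) \subset D$ and $\zeta \notin \cl{\Disc}(z_0, \epsilon)$. Hurwitz's theorem, applied to the uniformly convergent sequence $f_k' \to f'$ on $\cl{\Disc}(z_0, \epsilon)$, then forces a zero of $f_k'$ to lie inside $\Disc(z_0, \epsilon)$ for all sufficiently large $k$. However, Lemma~\ref{flucL24} asserts that every zero of $f_k'$ is real and, apart from $\zeta_{m_k, n_k}$, lies in some interval $(\check{a}_i, \check{a}_{i+1}) \subset (0, \bar{\alpha})$ or $(1/\check{b}_{j+1}, 1/\check{b}_j) \subset (1/\bar{\beta}, \infty)$, and is therefore outside $D$. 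Thus the only zero of $f_k'$ that can possibly lie in $\Disc(z_0, \epsilon) \subset D$ is $\zeta_{m_k, n_k}$, but by Lemma~\ref{flucL5} we have $\zeta_{m_k, n_k} \to \zeta \notin \cl{\Disc}(z_0, \epsilon)$, ruling this out for large $k$ and delivering the contradiction.

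The delicate part of the plan is the bookkeeping with the two excluded slits $[0, \bar{\alpha}]$ and $[1/\bar{\beta}, \infty)$: I need the simple zeros of $f_k'$ from Lemma~\ref{flucL24} to land in these slits (not in $D$), which reduces to the a.s.\ inclusions $\check{a}_i \le \bar{\alpha}$ and $\check{b}_j \le \bar{\beta}$ coming from the supports of $\alpha$ and $\beta$. Once this is in hand, the entire argument is a direct invocation of Hurwitz's theorem and the zero-counting provided by Lemma~\ref{flucL24}.
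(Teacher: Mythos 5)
Your proposal follows essentially the same strategy as the paper: combine Lemma~\ref{flucL24}'s location of the zeros of $f_k'$, the uniform convergence $f_k' \to f'$ on compacts (via Lemmas~\ref{flucL31} and \ref{flucL5} and the identity (\ref{flucE140})), and Hurwitz's theorem. You localize the argument at a putative zero $z_0 \in D \smallsetminus \{\zeta\}$ with $D = \bbC \smallsetminus [0, \bar{\alpha}] \smallsetminus [1/\bar{\beta}, \infty)$, which neatly covers the interval $(\bar{\alpha}, 1/\bar{\beta}) \smallsetminus \{\zeta\}$ and the off-axis region in one stroke, whereas the paper handles the real interval directly through the sign of $-g(x)+\gamma$ and applies the Hurwitz dichotomy only on $\bbC \smallsetminus [0, \infty)$.

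There is one small but real gap. The form of Hurwitz you invoke --- that $f_k'$ must eventually have a zero in $\Disc(z_0, \epsilon)$ because $f'(z_0)=0$ --- holds only under the additional hypothesis $f' \not\equiv 0$. Your contradiction hypothesis $f'(z_0)=0$ would be satisfied trivially if $f'$ vanished identically on $D$, and in that case Hurwitz says nothing: a sequence of nonvanishing holomorphic functions can converge locally uniformly to the zero function. So you must separately rule out $f' \equiv 0$. This is cheap --- (\ref{flucE90}) gives $f'''(\zeta) = -2(\sigma/\zeta)^3 \neq 0$, or, as the paper does, one notes that $zf'(z) = -g(z)+\gamma < 0$ for $z \in (\bar{\alpha}, 1/\bar{\beta}) \smallsetminus \{\zeta\}$ because $\zeta$ is the unique minimizer --- but it must be stated before Hurwitz can be applied. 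The paper's last sentence in its proof is doing exactly this job.
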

\begin{proof}
By (\ref{flucE140}), the only zero of $f'$ on the interval $(\bar{\alpha}, 1/\bar{\beta})$ is $\zeta$. Lemma \ref{flucL24} shows that $f_k'$ has no zero in $\bbC \smallsetminus [0, \infty)$ and, by Lemmas \ref{flucL31} and \ref{flucL5}, $\lim_{k \rightarrow \infty}f_k' = f'$ uniformly over compact subsets of $\bbC \smallsetminus [0, \infty)$. Since $f_k'$ are holomorphic, it follows that either $f' = 0$ identically or $f'$ is nonzero on $\bbC \smallsetminus [0, \infty)$ \cite[Exercise~10.20]{Rudin}. The former is not the case because $\lim_{z \rightarrow x} zf'(z) = -g(x)+\gamma < 0$ for any $x \in (\bar{\alpha}, 1/\bar{\beta}) \smallsetminus \{\zeta\}$ since $\zeta$ is the unique minimizer of (\ref{flucE12}). 
\end{proof}

\begin{lem}
\label{flucL30}
Let $K \subset \bbC$ be nonempty and compact. If $K$ is disjoint from $[0, \bar{\alpha}] \cup [1/\bar{\beta}, \infty)$ then, for a.e.\ $(\bfa, \bfb)$,  
$\lim \limits_{k \rightarrow \infty} \sup \limits_{z \in K} |f_k'(z)-f'(z)| = 0$ and $\lim \limits_{k \rightarrow \infty} \sup \limits_{z \in K} |u_k(z)-u(z)| = 0$. If $K$ is disjoint from $(-\infty, \bar{\alpha}] \cup [1/\bar{\beta}, \infty)$ then $\lim \limits_{k \rightarrow \infty} \sup \limits_{z \in K} |v_k(z)-v(z)| = 0$ for a.e. $(\bfa, \bfb)$. 
\end{lem}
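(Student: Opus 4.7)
My plan is to reduce each of the three limits to the uniform empirical-measure convergence provided by Lemma \ref{flucAL1}, exactly as in the proof of Lemma \ref{flucL31}. For the $f_k'$ assertion, no new work is needed: identities (\ref{flucE77}) and (\ref{flucE140}) give
\begin{align*}
z(f_k'(z)-f'(z)) = -(g_k(z)-g(z)) + (\gamma_k-\gamma),
\end{align*}
and since $K$ is compact and disjoint from $\{0\}\subset [0,\bar{\alpha}]$, the factor $|z|$ is bounded below on $K$, so Lemmas \ref{flucL31} and \ref{flucL5} yield the claim at once.

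For $u_k\to u$, I would write
\begin{align*}
u_k(z) = -\frac{m_k}{n_k}\int \log|z-a|\,\alpha_{m_k}(\dd a) + \int \log|1-zb|\,\beta_{n_k}(\dd b) + \bigl(\gamma_k + m_k/n_k\bigr)\log|z|
\end{align*}
and apply Lemma \ref{flucAL1} to each of the first two integrals. The partial derivatives of the integrands in the variables $a$ and $b$ are, respectively, $(a-\Re z)/|z-a|^2$ and $(b|z|^2-\Re z)/|1-zb|^2$, both of which are uniformly bounded on $K\times[0,\bar{\alpha}]$ and $K\times[0,\bar{\beta}]$ (the denominators are bounded below by $\delta^2$, where $\delta=\dist(K,[0,\bar{\alpha}]\cup[1/\bar{\beta},\infty))>0$, and the numerators are clearly bounded on the compact $K$). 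The remaining $(\gamma_k+m_k/n_k)\log|z|$ term converges uniformly on $K$ by Lemma \ref{flucL5} together with boundedness of $\log|z|$ on the compact set $K\subset\bbC\smallsetminus\{0\}$. The argument for $v_k\to v$ is structurally identical, with $\arg$ replacing $\log|\cdot|$ throughout; computing in terms of the principal logarithm gives $\partial_a \arg(z-a)=\Im z/|z-a|^2$ and $\partial_b \arg(1-zb)=-\Im z/|1-zb|^2$, so the same uniform Lipschitz bounds in $a$ and $b$ hold and Lemma \ref{flucAL1} applies.

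The only step that requires genuine attention, and the reason for the strengthened hypothesis on $K$ in the $v_k$ statement, is checking that the principal branch of $\arg$ is smooth along each integrand. The condition $K\cap\bigl((-\infty,\bar{\alpha}]\cup[1/\bar{\beta},\infty)\bigr)=\emptyset$ is exactly what I need: for any $z\in K$, $a\in[0,\bar{\alpha}]$ and $b\in[0,\bar{\beta}]$, none of $z$, $z-a$, or $1-zb$ lies on the branch cut $(-\infty,0]$. Once that is settled, all derivative bounds extend to compact sets and the rest is routine bookkeeping built on top of Lemmas \ref{flucAL1}, \ref{flucL31} and \ref{flucL5}, so I anticipate no further obstacles.
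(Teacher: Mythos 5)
Your proposal is correct, and the $f_k'\to f'$ step is identical to the paper's. However, for $u_k\to u$ and $v_k\to v$ you take a genuinely different (and more direct) route. The paper first covers $K$ by finitely many small disks $\Disc(z_p,\delta)$, then writes $u_k(z)-u(z)$ as $(u_k-u)(z_p)$ plus a line integral of $\nabla(u_k-u)=\conj{f_k'-f'}$ along $[z_p,z]$; uniform convergence on each disk is then obtained from the already-established gradient convergence together with pointwise convergence at the center $z_p$, which in turn is proved by direct appeal to the Birkhoff ergodic theorem after checking boundedness of the integrands $\log|z_p-a|$, $\log|1-bz_p|$. You instead apply Lemma \ref{flucAL1} directly to the families $a\mapsto\log|z-a|$, $b\mapsto\log|1-zb|$ (and the $\arg$ analogues) indexed by $z\in K$, verifying the uniform derivative bound that the lemma requires. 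Both arguments are sound; yours is more self-contained, since it bypasses the covering and the fundamental-theorem-of-calculus reduction, and does not reuse the gradient convergence. The paper's version has the modest advantage of recycling $f_k'\to f'$, so only a single-point ergodic limit is needed per disk, and the same covering skeleton is then reused nearly verbatim for $v_k$. One small thing worth saying explicitly in a write-up of your route: for $\arg z$ to be smooth you need $K\cap(-\infty,0]=\emptyset$, which follows because $(-\infty,0]\subset(-\infty,\bar{\alpha}]$ — you implicitly rely on this but never state it.
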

\begin{proof}
Suppose that $K$ is disjoint from $[0, \bar{\alpha}] \cup [1/\bar{\beta}, \infty)$. By Lemma \ref{flucL31}, $\lim_{k \rightarrow \infty} g_k(z) = g(z)$ uniformly in $z \in K$ a.s. Also, by Lemma \ref{flucL5}, $\lim_{k \rightarrow \infty} \gamma_k \stackrel{\text{a.s}}{=} \gamma$. Then, using identities (\ref{flucE77}) and (\ref{flucE140}), we obtain 
\begin{align}\label{flucEE1}\lim_{k \rightarrow \infty} f_k'(z) = \lim_{k \rightarrow \infty} \dfrac{-g_k(z)+\gamma_k}{z} = \dfrac{-g(z)+\gamma}{z} = f'(z)\end{align} 
uniformly in $z \in K$ a.s.

We next show that $\lim_{k \rightarrow \infty} u_k(z) = u(z)$ uniformly in $z \in K$ a.s. Since $K$ is compact, there exist $\delta > 0$ and finitely many points $z_1, \dotsc, z_n \in \bbC$ such that $\conj{D}(z_p, \delta)$ is disjoint from $[0, \bar{\alpha}] \cup [1/\bar{\beta}, \infty)$ and $K \subset \bigcup_{p \in [n]} \Disc(z_p, \delta)$. It suffices to show that $\lim_{k \rightarrow \infty} u_k(z) = u(z)$ uniformly in $z \in \conj{\Disc}(z_p, \delta)$ a.s.\ for any $p \in [n]$. For $z \in \Disc(z_p, \delta)$, the line segment $[z_p, z] \subset \Disc(z_p, \delta)$ and, thus, we have 
\begin{align}
|u_k(z)-u(z)| &= \bigg|u_k(z_p)-u(z_p)+\int_{0}^1 \nabla \{u_k-u\}(z_p+t(z-z_p)) \cdot (z-z_p) \dd t\bigg| \nonumber\\
&\le |u_k(z_p)-u(z_p)| + |z-z_p| \int_{0}^1 \bigg| \nabla \{u_k-u\}(z_p+t(z-z_p))\bigg| \dd t \nonumber\\
&\le |u_k(z_p)-u(z_p)| + \delta \max \limits_{\conj{\Disc}(z_p, \delta)} |\nabla u_k-\nabla u| \nonumber\\
&= |u_k(z_p)-u(z_p)| + \delta \max \limits_{\conj{\Disc}(z_p, \delta)} |f_k'-f'| \label{flucEE2}
\end{align}
We claim that the right-hand side tends to $0$ a.s. Note that (\ref{flucEE1}) (using $\conj{\Disc}(z_p, \delta)$ in place of $K$) implies that $\lim_{k \rightarrow \infty} \max \limits_{\conj{\Disc}(z_p, \delta)} |f_k'-f| \stackrel{\text{a.s.}}{=} 0$. The inequalities  
\begin{align*}
\log \delta &\le \log |z_p-a| \le \log (|z_p| + \bar{\alpha}) \quad \text{ for } a \in [0, \bar{\alpha}]
\end{align*}
show that the function $a \mapsto \log |z_p-a|$ is bounded on $[0, \bar{\alpha}]$. Then, 
\begin{align*}
\lim_{k \rightarrow \infty} \frac{1}{m_k} \sum_{i=1}^{m_k} \log |z_p-a_i| &\stackrel{\text{a.s.}}{=} \int \limits_{(0, 1)} \log|z_p-a| \alpha(\dd a), 
\end{align*}
by the ergodicity of $\bfa$. Similarly, for $b \in [0, \bar{\beta}]$,  
\begin{align*}
-\one_{\{b|z_p| \le 1/2\}} \log 2 + \one_{\{b|z_p| > 1/2\}} \log \bigg(\frac{\delta}{2|z_p|}\bigg) &\le \log |1-bz_p| \le \log (1 + \bar{\beta}|z_p|). 
\end{align*} 
(For the lower bound in the case $b|z_p| > 1/2$, use $\log|1-bz_p| = \log b + \log |1/b-z_p|$). Then, by the ergodicity  of $\bfb$,  
\begin{align*}
\lim_{k \rightarrow \infty} \frac{1}{n_k} \sum_{j=1}^{n_k} \log|1-b_j z_p| &\stackrel{\text{a.s.}}{=} \int \limits_{(0, 1)} \log|1-bz_p| \beta(\dd b). 
\end{align*}
Also, $\lim_{k \rightarrow \infty} m_k/n_k = r$ and $\lim_{k \rightarrow \infty} \gamma_k \stackrel{\text{a.s.}}{=} \gamma$. Then, recalling (\ref{flucE4}) and (\ref{flucE141}), we have $\lim_{k \rightarrow \infty} u_k(z_p) \stackrel{\text{a.s.}}{=} u(z_p)$. The claim follows.  

Suppose now that $K$ is disjoint from $(-\infty, \bar{\alpha}] \cup [1/\bar{\beta}, \infty)$. To prove $\lim_{k \rightarrow \infty} v_k(z) = v(z)$ uniformly in $z \in K$ a.s., we only indicate the modifications needed in the preceding paragraph. We now have $\conj{\Disc}(z_p, \delta)$ disjoint from $(-\infty, \bar{\alpha}] \cup [1/\bar{\beta}, \infty)$ for $p \in [n]$. Instead of (\ref{flucEE2}), we have 
\begin{align*}
|v_k(z)-v(z)| \le |v_k(z_p)-v_k(z)| + \delta \max \limits_{\conj{\Disc}(z_p, \delta)} |f_k'-f'|. 
\end{align*}
Since $\arg$ is bounded, 
\begin{align*}
\lim_{k \rightarrow \infty} \frac{1}{m_k} \sum_{i=1}^{m_k} \arg(z_p-a_i) &\stackrel{\text{a.s.}}{=} \int \limits_{(0, 1)} \arg(z_p-a) \alpha(\dd a) \\
\lim_{k \rightarrow \infty} \frac{1}{n_k} \sum_{j=1}^{n_k} \arg(1-b_j z_p) &\stackrel{\text{a.s.}}{=} \int \limits_{(0, 1)} \arg(1-bz_p) \beta(\dd b), 
\end{align*}
by the ergodicity of $\bfa$ and $\bfb$. This implies that $\lim_{k \rightarrow \infty} v_k(z_p) \stackrel{\text{a.s.}}{=} v(z_p)$.
\end{proof}

%Is it true for general r?
%\begin{lem}
%\label{flucL30}
%Let $K \subset \bbC$ be compact and disjoint from $(-\infty, \bar{\alpha}] \cup [1/\bar{\beta}, \infty)$. Then $\lim \limits_{k \rightarrow \infty} \sup \limits_{z \in K} |f_k(z)-f(z)| = 0$ for a.e. $(\bfa, \bfb)$.  
%\end{lem}
%\begin{proof}
%We argue as in the proof of Lemma \ref{flucL31}. Let $\delta = \dist(K, (-\infty, \bar{\alpha}] \cup [1/\bar{\beta}, \infty))$ and $M = \sup_{z \in K} |z|$. Then  
%\begin{align}
%\sup_{z \in K} \sup_{a \in [0, \bar{\alpha}]} \left|\partial_a \log(z-a) \right| &= \sup_{z \in K} \sup_{a \in [0, \bar{\alpha}]} \frac{1}{|z-a|} \le \frac{1}{\delta} < \infty \nonumber\\
%\sup_{z \in K} \sup_{b \in [0, \bar{\beta}]} \left|\partial_b \log(1-bz) \right| &= \sup_{z \in K} \sup_{b \in [0, \bar{\beta}]} \frac{|z|}{|1-bz|} =  M(M+1)\max\left\{1, \frac{1}{\delta}\right\} < \infty. \nonumber
%\end{align}
%Now, by Lemma \ref{flucAL1}, 
%\begin{align*}
%\lim_{k \rightarrow \infty} \frac{1}{m_k} \sum_{i=1}^{m_k} \log(z-a_i) &= \int \limits_{(0, 1)} \log(z-a) \alpha(\dd a) \\
%\lim_{k \rightarrow \infty} \frac{1}{n_k} \sum_{j=1}^{n_k} \log(1-b_j z) &= \int \limits_{(0, 1)} \log(1-bz) \beta(\dd b). 
%\end{align*}
%uniformly for $z \in K$ a.s. Also, $\lim_{k \rightarrow \infty} (\gamma_k + m_k/n_k) \log z = (\gamma + r) \log z$ uniformly for $z \in K$ a.s., by Lemma \ref{flucL5}. 
%\end{proof}

We now specialize the discussion in Appendix \ref{flucAp1} to the case $\varphi = \Phi_{k}$ and $a = \zeta_{k}$. Our purpose is to observe that the counterparts of the constants $K_0$ and $\epsilon_0$ defined by (\ref{flucE112}) are bounded uniformly in $k$. Choose $\rho > 0$ such that $(\zeta-\rho, \zeta+\rho) \subset (\bar{\alpha}, 1/\bar{\beta})$. 
By Lemma \ref{flucL5}, $\lim_{k \rightarrow \infty} \zeta_{k} = \zeta$ a.s. Therefore, we can a.s.\ $k_0 \in \bbN$ such that 
\begin{align}(\zeta_{k}-\rho, \zeta_{k}+\rho) \subset (\bar{\alpha}, 1/\bar{\beta}) \qquad \text{ for } k \ge k_0. \label{flucE146}\end{align} 
For $k \ge k_0$, define 
\begin{align}
K_{0, k} &= \frac{48}{\rho^{4}} \frac{\zeta_k^3}{\sigma_k^{3}} \sup \limits_{\conj{\Disc}(\zeta_k, \rho/2)} \{|f_k|+ \rho|f_{k}'|\}\label{flucE173}\\ 
\epsilon_{0, k} &= \frac{1}{16} \min \bigg\{\rho, \frac{1}{K_{0, k}}, 1\bigg\}. \label{flucE174}
\end{align}
It follows from Lemmas \ref{flucL5} and \ref{flucL30} that there exists a finite, deterministic (not dependent on $\bfa, \bfb$) constant $K_0$ such that 
\begin{align}
K_0 > \sup \limits_{k \ge k_0} K_{0, k}, \label{flucE157}
\end{align}
by choosing $k_0$ larger if necessary. Put
\begin{align}
\epsilon_0 &= \frac{1}{16} \min \bigg\{\rho, \frac{1}{K_{0}}, 1\bigg\} < \inf \limits_{k \ge k_0} \epsilon_{0, k}. \label{flucE158}
\end{align}
From now on, we will restrict to the full probability event of all realizations of $\bfa$ and $\bfb$ for which $k_0$ exists and the preceding inequalities hold. 

We conclude with some remarks on assumptions (\ref{flucA2}), (\ref{flucA3}) and (\ref{flucA4}). Note that the proof of Lemma \ref{flucL31} only uses the ergodicity of $\bfa$ and $\bfb$. We can also prove $\lim_{k \rightarrow \infty} \zeta_k \stackrel{\text{a.s.}}{=} \zeta$ under the ergodicity assumption only by adding a few sentences to the proof of Lemma \ref{flucL5}. By (\ref{flucE196}) and $\zeta_{k} \in (\max_{i \in [m_k]} a_i, \min_{j \in [n_k]}1/b_j)$,  
\begin{align}\bar{\alpha} \stackrel{\text{a.s.}}{\le} \liminf_{k \rightarrow \infty} \zeta_k \le \limsup_{k \rightarrow \infty} \zeta_k \stackrel{\text{a.s.}}{\le} 1/\bar{\beta}. \label{flucE195}\end{align} 
If  (\ref{flucA3}) fails then 
$\lim_{k \rightarrow \infty} \zeta_k \stackrel{\text{a.s.}}{=} \bar{\alpha} = 1/\bar{\beta} = \zeta = 1$. Now suppose that (\ref{flucA3}) holds but $\zeta = \bar{\alpha}$, which means that (\ref{flucA4}) fails. In the proof of Lemma \ref{flucL5}, the argument until the conclusion $\limsup_{k \rightarrow \infty} \zeta_k \stackrel{\text{a.s.}}{\le} \zeta = \bar{\alpha}$ goes through provided that $\delta > 0$ is now chosen to satisfy $\bar{\alpha}+\delta < 1/\bar{\beta}$. Also using the first inequality in (\ref{flucE195}), we conclude that $\lim_{k \rightarrow \infty} \zeta_k \stackrel{\text{a.s.}}{=} \bar{\alpha}$. The remaining case $\zeta = 1/\bar{\beta}$ is treated similarly.   
%
%We can also recover the lemmas of this section if we work with the following deterministic assumptions instead of Assumption \ref{flucA2}. Suppose that $\bfa$ and $\bfb$ are fixed (nonrandom) sequences. Since the measures $\alpha$ and $\beta$ are no longer available, we will take (\ref{flucE196}) as the definition of $\bar{\alpha}$ and $\bar{\beta}$ now. Suppose that 

\subsection{A uniform bound for the distances to the origin}

In this subsection, we show that $\Phi_{k}((0, T_{k})) \subset \Disc(0, \zeta_{k})$ for $k \in \bbN$, and there exists a deterministic constant $c \in (0, 1)$ such that $\Phi_k$, after leaving a small disk of fixed radius around $\zeta_k$, remains inside $\Disc(0, c\zeta_{k})$ for sufficiently large $k$ a.s. These facts are recorded in Lemma \ref{flucL8} below and will come in handy in obtaining the bound on $I_{m_k, n_k, p_k(s)}$ in Lemma \ref{flucL7}b without any upper bound on $s$. We predicted Lemma \ref{flucL8} based on a similar observation made in the proof of \cite[Lemma 2.2]{CorwinLiuWang} for the geometric model with i.i.d.\  weights.   

Let $\epsilon > 0$ and, for $k \in \bbN$, define 
\begin{align}\tau_{k}(\epsilon) = \inf \{t \in [0, T_{k}): |\Phi_{k}(t)-\zeta_{k}| = \epsilon\}. \label{flucE28}\end{align} 
We will suppress the dependence on $\epsilon$ from notation. It follows from Lemma \ref{flucL4}a that $\tau_k < \infty$ provided that $\epsilon < \zeta_k$.  
\begin{lem}\label{flucL2}
If $\epsilon < \epsilon_0/128$ then 
$\tau_{k} < 2\epsilon$ and $|\Phi_{k}(t)-\zeta_{k}| \ge \epsilon/2$ for $t \in [\tau_{k}, T_{k})$ and $k \ge k_0$.  
\end{lem}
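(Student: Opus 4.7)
The plan is to localize the analysis to $\Disc(\zeta_k, \epsilon_0)$, where quantitative estimates from Appendix \ref{flucAp1}, specialized to the cubic saddle ($n = 3$, $\xi = e^{\ii \pi/3}$) at $a = \zeta_k$, apply uniformly in $k \ge k_0$ thanks to $K_{0,k} \le K_0$ and $\epsilon_{0,k} \ge \epsilon_0$ from (\ref{flucE157})--(\ref{flucE158}). Two ingredients will be needed. First, a tangent-approximation estimate roughly of the form $|\Phi_k(t) - \zeta_k - t e^{\ii 2\pi/3}| \le C K_0 t^2$ for $t \in [0, \epsilon_0]$, reflecting the initial direction in (\ref{flucE39}). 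Second, a cubic Taylor bound on $u_k$ obtained from (\ref{flucE137}):
\[
\Bigl|u_k(z) - u_k(\zeta_k) + \tfrac{\sigma_k^3}{3\zeta_k^3}\,\Re[(z-\zeta_k)^3]\Bigr| \le C K_0\, |z-\zeta_k|^4 \quad \text{for } z \in \Disc(\zeta_k, \epsilon_0).
\]

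For $\tau_k < 2\epsilon$, I would substitute $t = 2\epsilon$ into the tangent estimate. Under $\epsilon < \epsilon_0/128$, the inequality $\epsilon_0 K_0 \le 1/16$ forces $K_0 \epsilon < 1/2048$, and the reverse triangle inequality then yields $|\Phi_k(2\epsilon) - \zeta_k| \ge 2\epsilon(1 - O(K_0\epsilon)) > \epsilon$. Since $t \mapsto |\Phi_k(t) - \zeta_k|$ is continuous and vanishes at $t = 0$, the intermediate value theorem produces some $t \in (0, 2\epsilon)$ on which this distance equals $\epsilon$, giving $\tau_k < 2\epsilon$.

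For the second claim, I would argue by contradiction: suppose $|\Phi_k(t_0) - \zeta_k| < \epsilon/2$ for some $t_0 > \tau_k$. Then $\Phi_k(t_0) \in \Disc(\zeta_k, \epsilon_0)$ because $\epsilon/2 < \epsilon_0$. Combining the trivial $\Re[w^3] \le |w|^3$ with the cubic Taylor estimate yields
$u_k(\Phi_k(t_0)) \ge u_k(\zeta_k) - \tfrac{\sigma_k^3 \epsilon^3}{24 \zeta_k^3} - O(K_0 \epsilon^4).$
Meanwhile, the tangent estimate applied at $t = \tau_k < 2\epsilon$ shows that $\arg(\Phi_k(\tau_k) - \zeta_k)$ differs from $2\pi/3$ by $O(K_0 \epsilon)$, so that $\Re[(\Phi_k(\tau_k) - \zeta_k)^3] \ge \epsilon^3(1 - O(K_0 \epsilon))$; plugging back into the Taylor bound gives
$u_k(\Phi_k(\tau_k)) \le u_k(\zeta_k) - \tfrac{\sigma_k^3 \epsilon^3}{3 \zeta_k^3}(1 - O(K_0\epsilon)) + O(K_0 \epsilon^4).$
Subtracting, the leading-order gap is $\tfrac{7\sigma_k^3 \epsilon^3}{24\zeta_k^3}$, dominating all corrections once $K_0 \epsilon < 1/2048$. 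Thus $u_k(\Phi_k(t_0)) > u_k(\Phi_k(\tau_k))$, contradicting the strict decrease of $u_k \circ \Phi_k$ along (\ref{flucE19}). Combined with $|\Phi_k(\tau_k) - \zeta_k| = \epsilon > \epsilon/2$ and continuity of the distance, this gives $|\Phi_k(t) - \zeta_k| \ge \epsilon/2$ on $[\tau_k, T_k)$.

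The main obstacle will be extracting the precise tangent and Taylor estimates above from the appendix; in particular, matching the normalization $\zeta_k^3/\sigma_k^3$ appearing in (\ref{flucE173}) to the natural cubic scale of the saddle. Once those forms are secured, the slack built into the factor $1/128$ in the hypothesis makes the elementary estimates above work with ample room to spare.
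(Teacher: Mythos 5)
Your reconstruction is sound, but it essentially re-derives what the paper has already packaged into Proposition \ref{flucL10}. The paper's actual proof is a one-liner: apply that proposition with $\varphi = \Phi_k$, $a = \zeta_k$, $n = 3$, $\xi = e^{\ii\pi/3}$, $j = 3$; part (a) gives $\tau_k \le \epsilon(1 + 2C\epsilon) < 2\epsilon$ once $\epsilon < \epsilon_{0,k}/128$ (with $C = 64/\epsilon_{0,k}$), part (d) is verbatim the second conclusion, and the uniformity in $k \ge k_0$ comes from $\epsilon < \epsilon_0/128 < \epsilon_{0,k}/128$ via (\ref{flucE158}). Your tangent estimate is exactly what appears inside the proof of Proposition \ref{flucL10}(a) (obtained from Lemma \ref{flucL37} by integrating the steepest-descent ODE), and your Taylor-bound contradiction mirrors the proof of parts (c) and (d) — the appendix compares both $u_k(\Phi_k(t_0))$ and $u_k(\Phi_k(\tau_k))$ against $u_k(\zeta_k)$ rather than against each other, but these are interchangeable. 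Two details you would need to spell out in a full write-up: (i) to apply the tangent estimate at $t = 2\epsilon$ without circularity, invoke that $\Phi_k$ has unit speed, so $|\Phi_k(s) - \zeta_k| \le s \le 2\epsilon < \epsilon_0/4$ for $s \in [0, 2\epsilon]$, which keeps $\Phi_k$ in the region where Lemma \ref{flucL37} controls $h'$; and (ii) the normalization ``$C K_0 t^2$'' does line up: the appendix bound is $32 t^2/\epsilon_{0,k}$ with $\epsilon_{0,k} K_{0,k} \le 1/16$, and $K_{0,k} < K_0$, $\epsilon_{0,k} > \epsilon_0$ for $k \ge k_0$ by (\ref{flucE157})--(\ref{flucE158}). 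The approach buys you nothing the cited proposition does not already provide, at the cost of reproducing much of Subsection \ref{appS2Sub3}.
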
 
\begin{proof}    
Apply Proposition \ref{flucL10} with $\varphi = \Phi_{k}$ and $a = \zeta_{k}$. Then parts (a) and (d) of the proposition imply the claimed conclusions for $\epsilon < \epsilon_{0, k}/128$. Now the result comes from (\ref{flucE158}). 
\end{proof}

\begin{lem}
\label{flucL8}
$|\Phi_{k}(t)| < \zeta_{k}$ for $t \in (0, T_{k})$ and $k \in \bbN$. Moreover, there exists a deterministic $c \in (0, 1)$ such that, for a.e. $\bfa$ and $\bfb$, there exists $k_0 = k_0^{\bfa, \bfb} \in \bbN$ such that  
\begin{align}
|\Phi_k(t)| \le c\zeta_k \quad \text{ for } t \in [\tau_k, T_k) \text{ and } k \ge k_0. \label{flucE142}
\end{align}
\end{lem}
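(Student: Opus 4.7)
The plan is to reduce both assertions to a sign analysis of $\Re g_k - \gamma_k$. Starting from the steepest-descent ODE (\ref{flucE19}) and the identity $zf_k'(z) = -g_k(z)+\gamma_k$ from (\ref{flucE77}), a short computation gives
\begin{equation*}
\frac{d}{dt}|\Phi_k(t)|^2 \;=\; 2\,\Re\bigl(\conj{\Phi_k(t)}\,\Phi_k'(t)\bigr) \;=\; \frac{2\bigl(\Re g_k(\Phi_k(t)) - \gamma_k\bigr)}{|f_k'(\Phi_k(t))|}.
\end{equation*}
The key algebraic identity is, for $z = \zeta_k e^{\ii\theta}$,
\begin{equation*}
\Re g_k(z) - \gamma_k \;=\; (\cos\theta - 1)\,Q_k(z),
\end{equation*}
where
\begin{equation*}
Q_k(z) \;=\; \frac{1}{n_k}\sum_{i=1}^{m_k}\frac{a_i\zeta_k(\zeta_k+a_i)}{(\zeta_k-a_i)\,|z-a_i|^2} + \frac{1}{n_k}\sum_{j=1}^{n_k}\frac{b_j\zeta_k(1+b_j\zeta_k)}{(1-b_j\zeta_k)\,|1-b_jz|^2} \;>\; 0.
\end{equation*}
This is obtained by simplifying $\Re(a_i/(z-a_i)) - a_i/(\zeta_k-a_i)$ (numerator collapses to $\zeta_k(\zeta_k+a_i)(\cos\theta-1)$ once $|z|^2 = \zeta_k^2$ is substituted) and its $b_j$-analogue. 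In particular, $\Re g_k < \gamma_k$ strictly on the arc $\{|z|=\zeta_k\}\cap\bbH$.

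For the first assertion, (\ref{flucE39}) yields $|\Phi_k(t)|^2 = \zeta_k^2 - \zeta_k t + o(t) < \zeta_k^2$ for small $t>0$. If $t_0 := \inf\{t>0 : |\Phi_k(t)|=\zeta_k\}$ were in $(0,T_k)$, Lemma \ref{flucL4}(a) would force $\Phi_k(t_0) = \zeta_k e^{\ii\theta_0}$ with $\theta_0\in(0,\pi)$, and the minimality would imply $(|\Phi_k|^2)'(t_0) \ge 0$; but the displays above give $(|\Phi_k|^2)'(t_0) = 2(\cos\theta_0 - 1)Q_k(\Phi_k(t_0))/|f_k'(\Phi_k(t_0))| < 0$, a contradiction.

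For the second assertion, set $R_k := \sup_{t\in[\tau_k,T_k)}|\Phi_k(t)|$, attained at some $t^*\in[\tau_k,T_k)$ since $|\Phi_k(t)|\to 0$ as $t\to T_k$. If $t^* = \tau_k$, then Proposition \ref{flucL10}, combined with the uniform constants (\ref{flucE157})--(\ref{flucE158}), gives $\Phi_k(\tau_k) = \zeta_k + \epsilon e^{\ii 2\pi/3} + O(\epsilon^2)$ with a deterministic implicit constant, so $|\Phi_k(\tau_k)|^2 \le \zeta_k^2 - \zeta_k\epsilon + C\epsilon^2$; fixing $\epsilon$ small once and for all (and using $\zeta_k\to\zeta$ from Lemma \ref{flucL5}) gives $|\Phi_k(\tau_k)|\le c_1\zeta_k$ for a deterministic $c_1<1$ and $k$ large. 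If instead $t^* > \tau_k$, maximality forces $\Re g_k(\Phi_k(t^*)) = \gamma_k$. Let $\theta_{\epsilon,k} := 2\arcsin(\epsilon/(4\zeta_k))$ so that the arc $A_k := \{\zeta_k e^{\ii\theta} : \theta_{\epsilon,k} \le \theta \le \pi\}$ satisfies $|z-\zeta_k|\ge\epsilon/2$ on it. The key identity together with a uniform positive lower bound on $Q_k$ over $A_k$ (from Lemma \ref{flucL31} applied to a compact set in $\bbH$ disjoint from $[0,\bar\alpha]\cup[1/\bar\beta,\infty)$, and $\zeta_k\to\zeta$) yields $\max_{A_k}(\Re g_k - \gamma_k) \le -c_0$ for a deterministic $c_0>0$ and large $k$. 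A uniform Lipschitz bound for $\Re g_k$ on compact sets (Lemma \ref{flucL30}) extends this to an $\eta$-neighborhood of $A_k$ for deterministic $\eta>0$. Hence $\Phi_k(t^*)$ is at distance $>\eta$ from $A_k$; combined with $\Phi_k(t^*)\in\bbH$, $|\Phi_k(t^*)|\le\zeta_k$ (Part 1), and $|\Phi_k(t^*)-\zeta_k|\ge\epsilon/2$ (Lemma \ref{flucL2}), an elementary case split on whether $\arg\Phi_k(t^*) \ge \theta_{\epsilon,k}$ (forcing $|\Phi_k(t^*)|<\zeta_k - \eta$ via distance to $A_k$) or $\arg\Phi_k(t^*)<\theta_{\epsilon,k}$ (forcing $|\Phi_k(t^*)|<\zeta_k(1-\epsilon^2/(4\zeta_k^2))$ via the constraint $|\Phi_k(t^*)-\zeta_k|\ge\epsilon/2$) produces $|\Phi_k(t^*)|\le c_2\zeta_k$ for a deterministic $c_2<1$. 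Taking $c := \max(c_1,c_2)$ finishes the proof. The main obstacle is precisely this last step, promoting $\Re g_k(\Phi_k(t^*))=\gamma_k$ to a \emph{deterministic} gap $c<1$, which requires the uniform-in-$k$ control of both $Q_k$ on $A_k$ and the Lipschitz constant of $\Re g_k$ near $A_k$, both of which are consequences of the convergence results in Section \ref{flucS5}.
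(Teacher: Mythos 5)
Your proof is correct, though it takes a noticeably different route for the second assertion. Both your argument and the paper's reduce the first assertion to the same core fact: $\Re g_k(\zeta_k e^{\ii\theta}) < \gamma_k$ for $\theta\in(0,\pi)$, equivalently $v_k(\zeta_k e^{\ii\theta}) > 0$ (via $\tfrac{d}{d\theta}v_k(\zeta_k e^{\ii\theta}) = \gamma_k - \Re g_k(\zeta_k e^{\ii\theta})$). The paper obtains this from strict convexity of $\theta\mapsto v_k(\zeta_k e^{\ii\theta})$ by an explicit second-derivative calculation; you instead factor the first derivative directly as $(1-\cos\theta)\,Q_k$, which is a shorter, more transparent route to the same sign conclusion. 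You then package the conclusion through the ODE for $|\Phi_k|^2$ and a first-crossing argument, while the paper uses the stationary-curve property of $\Phi_k$ (which lies on $\{v_k = 0\}$, disjoint from the open semicircle). These are equivalent in spirit. For the second assertion the paper's argument is cleaner: it shows $\Phi_k$ crosses the circle $\Circle(0,\zeta_k-\epsilon/4)$ exactly once (again via convexity of $v_k$ on that smaller circle, together with the location estimate of $\Phi_k(\tau_k)$ from Proposition~\ref{flucL10}b), so that $\Phi_k([\tau_k,T_k))\subset\Disc(0,\zeta_k-\epsilon/4)$ and the deterministic constant $c$ pops out directly. Your extremal-point argument with the case split on $\arg\Phi_k(t^*)$ is valid — the geometric inequality $\rho\le\zeta_k(1-\epsilon^2/(4\zeta_k^2))$ in the small-angle case checks out — but it requires more machinery: the uniform-in-$k$ lower bound on $Q_k$ over the arc $A_k$ and a uniform Lipschitz bound on $\Re g_k$ near $A_k$ (both available from the Section~\ref{flucS5} convergence lemmas, as you say, though the invocation of Lemma~\ref{flucL31} should really be supplemented with a Cauchy-estimate step to get the Lipschitz constant for $g_k$ from uniform convergence of $g_k$). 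Your explicit factorization of $\Re g_k - \gamma_k$ on the circle is an attractive identity and would make a good addition; the trade-off is that the paper's unique-crossing argument for Part 2 needs less bookkeeping.
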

\begin{proof}
Introduce $x \in (\bar{\alpha}, 1/\bar{\beta})$. Recall from (\ref{flucE24}) that the domain of $v = \Im f$ is $\bbC \smallsetminus (-\infty, \bar{\alpha}] \smallsetminus [1/\bar{\beta}, \infty)$. We first show that the function $\theta \mapsto v(xe^{\ii \theta})$ for $\theta \in [0, \pi)$ is strictly convex. By the chain rule (see the dot product on $\bbC$ defined in Section \ref{intS5}),   
\begin{align}
\frac{\dd}{\dd\theta}v(x e^{\ii \theta}) = \nabla v (x e^{\ii \theta}) \cdot \{\ii x e^{\ii \theta}\} = \{\ii \conj{f'}(x e^{\ii \theta})\} \cdot \{\ii x e^{\ii \theta}\} = \Re \left\{f'(x e^{\ii \theta})x e^{\ii \theta}\right\}  \label{flucEE9}
\end{align}
for $\theta \in [0, \pi)$. Using (\ref{flucE11}), $\gamma = g(\zeta)$, (\ref{flucE140}) and $\alpha\{(\bar{\alpha}, 1)\} = \beta\{(\bar{\beta}, 1)\} = 0$, we note that 
\begin{align}
\Re(zf'(z)) &= \Re(-g(z)+g(\zeta)) \nonumber \\
&= r\int \limits_{(0, \bar{\alpha}]} \Re \{M_1(z)\} \frac{a}{\zeta-a} \alpha(\dd a) +\int \limits_{(0, \bar{\beta}]} \Re \{M_2(z)\} \frac{b}{1-b \zeta} \beta(\dd b), \label{flucE10}
\end{align}
where $M_1(z) = \dfrac{z-\zeta}{z-a}$ and $M_2(z) = \dfrac{\zeta-z}{1-bz}$. Now compute the derivatives 
\begin{align}
\frac{\dd}{\dd \theta} \Re \{M_1(xe^{\ii \theta})\} &= \nabla \{\Re M_1\}(xe^{\ii \theta}) \cdot \{x \ii e^{\ii \theta}\} = \{\conj{M_1'}(x e^{\ii \theta})\} \cdot \{x \ii e^{\ii \theta}\}\nonumber\\
&= -x \Im \{M_1'(xe^{\ii \theta})e^{\ii \theta}\} = \frac{x(x^2-a^2)(\zeta-a)\sin \theta}{|xe^{\ii \theta}-a|^2} \label{flucE182}\\
\frac{\dd}{\dd \theta} \Re \{M_2(xe^{\ii \theta})\} &= -x \Im \{M_2'(xe^{\ii \theta})e^{\ii \theta}\} = \frac{x(1-b^2x^2)(1-b\zeta)\sin \theta}{|1-bxe^{\ii \theta}|^2} \label{flucE183}
\end{align}
Since $x, \zeta \in (\bar{\alpha}, 1/\bar{\beta})$, the last expressions in (\ref{flucE182}) and (\ref{flucE183}) as well as $\dfrac{a}{\zeta-a}$ and $\dfrac{b}{1-b\zeta}$ are positive for $a \in (0, \bar{\alpha}]$, $b \in (0, \bar{\beta}]$ and $\theta \in (0, \pi)$. Hence, we conclude from (\ref{flucE10}) that (\ref{flucEE9}) is increasing in $\theta$, which implies the strict convexity of $\theta \mapsto v(xe^{\ii \theta})$ for $\theta \in [0, \pi)$. 

We have $v(x) = 0$ by (\ref{flucE181}) and $\arg(x-a) = \arg(1-bx) = \arg x = 0$ for $a \in [0, \bar{\alpha}]$ and $b \in [0, \bar{\beta}]$. In particular, $v(\zeta) = 0$. Note from (\ref{flucEE9}) and (\ref{flucE10}) that 
\begin{align}
\frac{\dd}{\dd \theta}v(\zeta e^{\ii \theta}) \bigg|_{\theta = 0} = 0. 
\end{align}
Then, by strict convexity, $\dfrac{\dd}{\dd \theta}v(\zeta e^{\ii \theta}) > 0$ for $\theta \in (0, \pi)$. Therefore, $v(\zeta e^{\ii \theta}) > 0$ for $\theta \in (0, \pi)$. Specializing now to the case $r = m_k/n_k$, $\alpha = \alpha_{m_k}$ and $\beta = \beta_{n_k}$ yields $v_k(\zeta_k e^{\ii \theta}) > 0$ for $\theta \in (0, \pi)$. Since $\Phi_k$ is a stationary curve of $v_k$ that emanate from $\zeta_k$, we also have $v_k(\Phi_k(t)) = v_k(\zeta) = 0$ for $t \in [0, T_k)$. Then we conclude from Lemma \ref{flucL4}a that $\Phi_k((0, T_k)) \subset \Disc(0, \zeta_k)$. This completes the half of the proof. 

To derive (\ref{flucE142}), we may assume that $\epsilon < \dfrac{\epsilon_0}{2048}$, where $\epsilon_0$ is given by (\ref{flucE158}). Apply Proposition \ref{flucL10}b with $\Phi_k$ and $\tau_k$ (recalling from (\ref{flucE184}) and (\ref{flucE39}) that $\xi = e^{\bfi \pi/3}$, $j = 3$ and $n=3$ in the present setting). We obtain 
\begin{align}
|\Phi_k(\tau_k)-\zeta_k-\epsilon e^{\ii 2\pi/3}| \le \frac{256}{\epsilon_0}\epsilon^2 \le \frac{\epsilon}{8}\quad \text{ for } k \in \bbN. 
\end{align}
Then, using $\cos(2\pi/3) = -1/2$, we have 
\begin{align}
\label{flucE185}
\Re \Phi_k(\tau_k) \le \zeta_k - \frac{3\epsilon}{8} \quad \text{ for } k \ge k_0. 
\end{align}
Let $z_k$ denote the unique element in $\Circle(0, \zeta_k-\epsilon/4) \cap \Circle(\zeta_k, \epsilon) \cap \bbH$ for $k \in \bbN$. By an elementary computation that we omit, 
\begin{align}
\Re z_k = \zeta_k - \frac{\epsilon}{4} - \frac{15\epsilon^2}{32\zeta_k} \quad \text{ for } k \in \bbN. \label{flucE186} 
\end{align}
Note from (\ref{flucE158}) that $\epsilon_0 \le \dfrac{\rho}{16} < \dfrac{\zeta_k}{16}$ for $k \ge k_0$. Combining this with (\ref{flucE185}) and (\ref{flucE186}) leads to 
\begin{align}
\Re z_k > \zeta_k - \frac{9\epsilon}{32} > \Re \Phi_k(\tau_k) \quad \text{ for } k \ge k_0. 
\end{align}
Also since $\Phi_k(\tau_k) \in \Circle(\zeta_k, \epsilon)$ (by (\ref{flucE28})), we conclude that $\Phi_k(\tau_k) \in \Disc(0, \zeta_k-\epsilon/4)$ for $k \ge k_0$, see Figure \ref{flucF3} below. Because $\Phi_k(0) = \zeta_k$, by continuity of $|\Phi_k|$, there exist $t_k \in (0, \tau_k)$ and $\theta_k \in (0, \pi)$ such that $\Phi_k(t_k) = (\zeta_k-\epsilon/4)e^{\ii \theta_k}$ for $k \ge k_0$. Hence, 
\begin{align}
v_k((\zeta_k-\epsilon/4)e^{\ii \theta_k}) = v_k(\Phi_k(t_k)) = 0 \quad \text{ for } k \ge k_0. 
\end{align}
Recall now that $\theta \mapsto v_k((\zeta_k-\epsilon/4)e^{\ii \theta})$ for $\theta \in [0, \pi)$ is strictly concave and $v_k((\zeta_k-\epsilon/4)) = 0$. Therefore, $v_k((\zeta_k-\epsilon/4)e^{\ii \theta}) \neq 0$ for $\theta \in (0, \pi) \smallsetminus \{\theta_k\}$ and $k \ge k_0$. Then, since $u_k \circ \Phi_k$ is decreasing, $\Phi_k(t) \in \Circle(0, \zeta_k-\epsilon/4)$ if and only if $t = t_k$ for $k \ge k_0$. Using this, $\tau_k \ge t_k$ and Lemma \ref{flucL4}a, we conclude that $\Phi_k([\tau_k, T_k)) \subset \Disc(0, \zeta_k-\epsilon/4)$ for $k \ge k_0$. Choosing $k_0$ larger if necessary, we also have $\zeta_k < 2\zeta$ for $k \ge k_0$. Hence, 
\begin{equation*}
\frac{\Phi_k(t)}{\zeta_k} < 1-\frac{\epsilon}{4\zeta_k} < 1 - \frac{\epsilon}{8\zeta} \quad \text{ for } t \in [\tau_k, T_k) \text{ and } k \ge k_0. \qedhere
\end{equation*} 
\end{proof}
\begin{figure}[h]
\centering
\begin{tikzpicture}[scale = 2.2]
\draw[-](0.8, 0)--(1,0)node[below]{$0$}--(7.7, 0);
\node at (5.1, -0.1)[below]{$\zeta_{k}$};
%\draw[dashed, thick](6, 0)..controls (5.7, 0.5)..(5.3, 0.83);
%\draw[dashed,  thick](6, 0)..controls (5.7, -0.5)..(5.3, -0.83);
%\draw[dashed,  thick] (5.3, 0.83)..controls (5, 1)..(1, 0);
%\draw[dashed, thick] (5.3, -0.83)..controls (5, -1)..(1, 0);
\draw[black, very thick] (5.1, 0)..controls(4, 1)..(1, 0);
\draw[red, very thick](5.1, 0)circle(1.5);
\draw[blue, very thick](4.725, 0)arc(0:25:4.725);
\draw[blue, very thick](4.725, 0)arc(0:-25:4.725);

\fill[white](1,0)circle(0.03);
\draw(1,0)circle(0.03);

\fill(5.1,0)circle(0.03);
\fill(4.723, 0.345)circle(0.03);
\fill(4.525, 1.375)circle(0.03);
\fill(3.79, 0.75)circle(0.03);
\fill(4.725, 0)circle(0.03);
\node at (3.62, 0.75)[above]{$\Phi_k(\tau_k)$};
\node at (5.85, 0)[below]{$\epsilon$};
\node at (4.2, 0)[below]{$3\epsilon/4$};
\node at (4.725, 0.345)[right]{$\Phi_k(t_k)$};
\node at (4.5, 1.4)[left]{$z_k$};

%\fill(5.3, 0.83)circle(0.03);
%\node at (5.3, 0.90)[above]{$\Phi_k(\tau_k)$};
%\node at (5.3, -0.90)[below]{$\conj{\Phi_k}(\tau_k)$};
%\fill(5.3, -0.83)circle(0.03);
%\draw[-, red, thick](6, 0)--(5.3, 0.83);
%\draw[-, red, thick](6, 0)--(5.3, -0.83);
%\draw[blue] (6.2, 0) arc (0: 120: 0.2);
%\node at (6.2, 0.1)[above]{{\color{blue}$2\pi/3$}}; 
%\draw[red] (6.5, 0) arc (0: 130: 0.5);
%\node at (6.5, 0.4)[above]{{\color{red}$2\pi/3+\phi$}}; 
\end{tikzpicture}
\caption[A figure for the proof of Lemma \ref{flucL8}]{\small{An illustration of $\Phi_k((0, T_k))$ (black), $\Circle(\zeta_k, \epsilon)$ (red) and an arc of $\Circle(0, \zeta_k-\epsilon/4)$ (blue) along with the points $\zeta_k, \Phi_k(\tau_k), z_k$ and $\Phi_k(t_k)$ as described in the proof of Lemma \ref{flucL8}.}}\label{flucF3}
\end{figure}
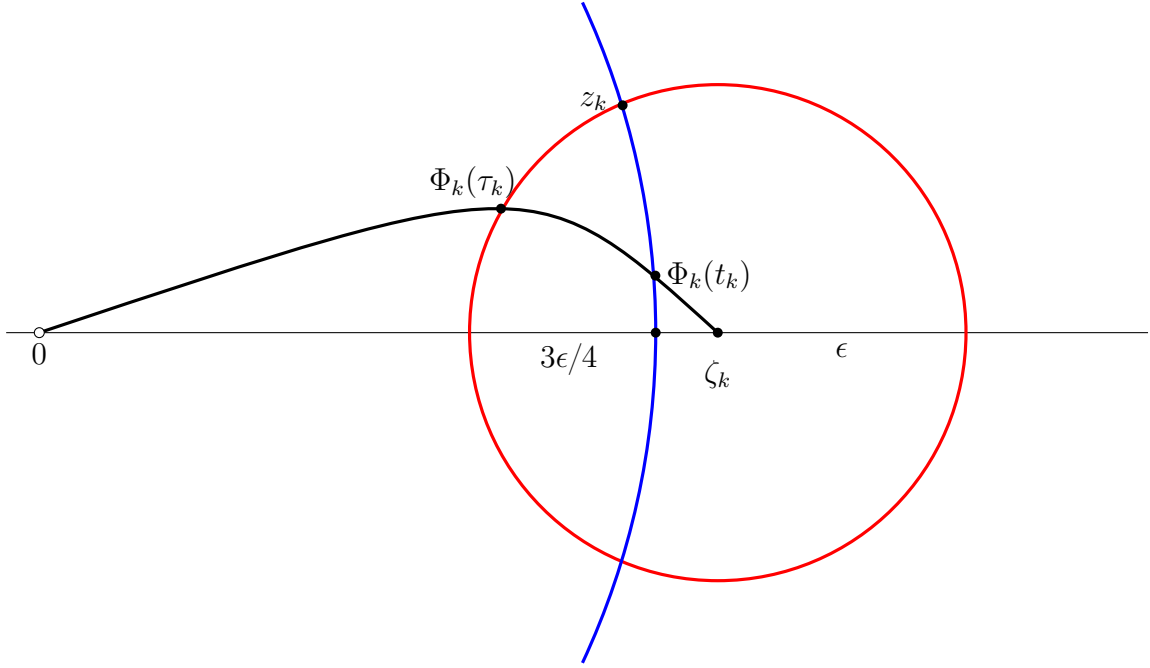

\subsection{A uniform bound for the lengths}
In this subsection, we show that there exists a deterministic constant $C > 0$ such that the length of $\Phi_k$ up to the first time $\Phi_k$ hits a fixed small disk around $0$ is bounded by $C$ for sufficiently large $k$ a.s., see Lemma \ref{flucL9} below. To obtain this result, we will use Proposition \ref{flucL14} together with an upper bound for $|u_k|$ and a lower bound for $|\nabla u_k|$ that are uniform in $k$. Hence, we first argue that $\Phi_k$, until it is within a small disk around $0$, remains a uniform positive distance away from $\sP_{k} \cup \sZ_{k}$. 

For $\epsilon > 0$ and $\theta \in (0, \pi/2)$, define
\begin{align}
E_1(\epsilon, \theta) &= \{0 < \arg(z) < \theta \text{ and } \pi-\theta < \arg(z-\bar{\alpha}+\epsilon/2) < \pi\} \label{flucE144}\\ 
E_2(\epsilon, \theta) &= \{0 < \arg(z-1/\bar{\beta}+\epsilon/2) < \theta\} \label{flucE145} \\
E(\epsilon, \theta) &= \Disc(\bar{\alpha}, \epsilon) \cup \Disc(1/\bar{\beta}, \epsilon) \cup E_1(\epsilon, \theta) \cup E_2(\epsilon, \theta). 
\end{align}
See Figure \ref{flucF1}. The next lemma is adapted from \cite[Lemma 3.4]{GravnerTracyWidom02a}.  
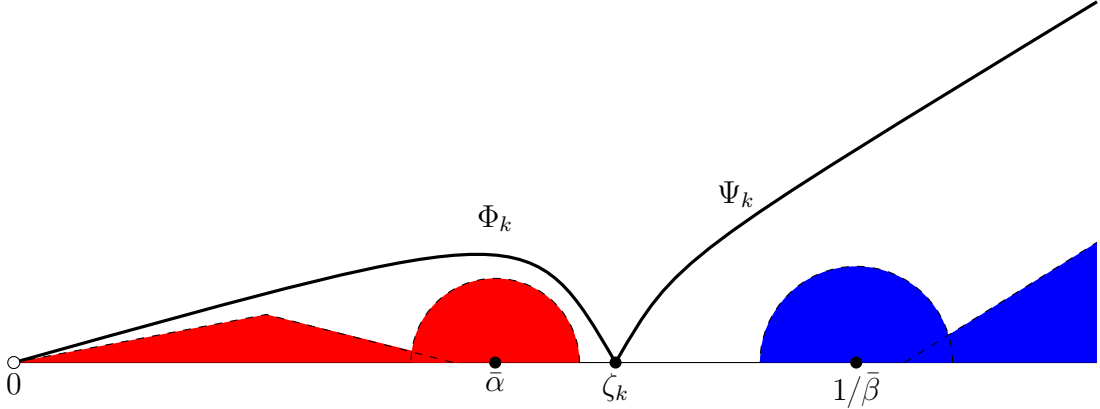
\begin{figure}[h]
\centering
\begin{tikzpicture}[scale = 1.6]
\draw[-](1,0)node[below]{$0$}--(5, 0)node[below]{$\bar{\alpha}$}--(6, 0)node[below]{$\zeta_k$}--(8, 0)node[below]{$1/\bar{\beta}$}--(10, 0);
\fill[red](5.7, 0) arc (0:180:0.7);
\fill[red](4.7, 0)--(3.1, 0.4)--(1, 0);
\draw[dashed](5.7, 0) arc (0:180:0.7);
\draw[dashed](4.65, 0)--(3.1, 0.4)--(1, 0);
\fill[blue](8.8, 0)arc(0:180:0.8);
\fill[blue](8.4, 0)--(10, 1)--(10, 0);
\draw[dashed](8.8, 0)arc(0:180:0.8);
\draw[dashed](8.4, 0)--(10, 1);
%\draw[red, very thick](6, 0)--(3, 0);
\draw[very thick](6, 0)..controls (6.5, 0.865)..(10, 3);
\draw[very thick](6, 0)..controls (5.3, 1.2)..(1, 0);
\node at (7,1.2)[above] {$\Psi_k$};
\node at (5,1)[above] {$\Phi_k$};
\fill[white](1,0)circle(0.05);
\draw(1, 0)circle(0.05);
\fill(5,0)circle(0.05);
\fill(6,0)circle(0.05);
\fill(8,0)circle(0.05);
\end{tikzpicture}
\caption[An illustration of the sets $E_1$ (red) and $E_2$ (blue).]{An illustration of the sets $E_1$ (red) and $E_2$ (blue). The images of the curves $\Phi_k$ and $\Psi_k$ do not intersect $E_1 \cup E_2$. }\label{flucF1}
\end{figure}

\begin{lem} 
\label{flucL3}
There exist $\epsilon_1 > 0$ and $\theta_0 \in (0, \pi/2]$ such that, for a.e. $(\bfa, \bfb)$, there exists $k_0 = k_0^{\bfa, \bfb} \in \bbN$ such that $\Phi_{k}([0, T_k)) \cup \Psi_{k}([0, \infty))$ is disjoint from $E(\epsilon, \theta)$ for $k \ge k_0$, $0 < \epsilon \le \epsilon_1$ and $\theta \in (0, \theta_0]$.
\end{lem}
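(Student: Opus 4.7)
The key observation is that $\Phi_k$ and $\Psi_k$ lie in the zero level set of the harmonic conjugate $v_k=\Im f_k$: being integral trajectories of $\pm\nabla u_k$ and starting at the real point $\zeta_k$ where $v_k=0$ (as one reads off from \eqref{flucE134}, since every argument vanishes at a real point in $(\bar{\alpha}_{m_k},1/\bar{\beta}_{n_k})$), they follow level curves of $v_k$ by the Cauchy--Riemann identity. Consequently the lemma reduces to showing $v_k(z)\neq 0$ for all $z\in E(\epsilon,\theta)\cap\bbH$, uniformly in $k\ge k_0$, $0<\epsilon\le\epsilon_1$, $0<\theta\le\theta_0$.

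My plan splits $E(\epsilon,\theta)\cap\bbH$ into the sector regions $E_1(\epsilon,\theta)\cup E_2(\epsilon,\theta)$ with the discs removed, where Lemma \ref{flucL30} gives $v_k\to v$ uniformly and it suffices to analyze the limit, and the two discs $\Disc(\bar{\alpha},\epsilon)\cap\bbH$ and $\Disc(1/\bar{\beta},\epsilon)\cap\bbH$, which require direct analysis. On the sector part I would compute the boundary traces of $v$ on $\bbR$ from $\bbH$ using the principal branch convention $\arg(x-a+\ii 0^+)=\pi$ for $a>x$,
\[
v(x+\ii 0^+)=-r\pi\,\alpha((x,\bar{\alpha}])\ \text{on}\ (0,\bar{\alpha}),\qquad v(x+\ii 0^+)=-\pi\,\beta([1/x,\bar{\beta}])\ \text{on}\ (1/\bar{\beta},\infty),
\]
both bounded above by a strictly negative constant once $x$ is kept $\epsilon/2$-away from the endpoints (using that $\bar{\alpha},\bar{\beta}$ are in the supports of $\alpha,\beta$). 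Combined with the asymptotics $v(z)=(\gamma+r)\arg z-r\pi+o(1)$ as $z\to 0$ and $v(Re^{\ii\phi})=(\gamma+1)\phi-\pi+o(1)$ as $R\to\infty$, both strictly negative for $\arg z,\phi<\theta_0:=\tfrac{1}{2}\min\{r\pi/(\gamma+r),\pi/(\gamma+1)\}$, continuity of $v$ yields a uniform bound $v\le -c_1(\epsilon)<0$ on the sector region. Lemma \ref{flucL30} transfers this bound to $v_k$ for $k\ge k_0$.

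For the discs I would proceed by direct analysis of $v_k$ on the bounding semicircle $\partial\Disc(\bar{\alpha},\epsilon)\cap\bbH$ combined with a topological argument. Since $\Phi_k,\Psi_k\subset\bbH$ and their endpoints $\zeta_k,0,\infty$ all lie outside $\Disc(\bar{\alpha},\epsilon)$ for $\epsilon$ small and $k$ large, any entry of either curve into the disc must cross the bounding semicircle; showing $v_k<0$ on the semicircle forces the zero curves $\Phi_k,\Psi_k$ to stay outside. The estimate on the semicircle splits into three pieces: (i) near $\phi\to 0^+$, i.e.\ just above the point $\bar{\alpha}+\epsilon$, Cauchy--Riemann gives $v_k(x+\ii y)=yf_k'(x)+O(y^2)$ and $f_k'(x)<0$ on $(\bar{\alpha}_{m_k},1/\bar{\beta}_{n_k})\setminus\{\zeta_k\}$ by Lemma \ref{flucL24} and \eqref{flucE137}, so $v_k<0$; (ii) near $\phi\to\pi^-$, i.e.\ just above $\bar{\alpha}-\epsilon$, the boundary trace from the previous paragraph gives $v_k<0$; (iii) for $\phi$ bounded away from $0$ and $\pi$, write $v_k(\bar{\alpha}+\rho e^{\ii\phi})=-r_k\int\arg((\bar{\alpha}-a)+\rho e^{\ii\phi})\,\alpha_{m_k}(\mathrm{d}a)+R_k$ with a bounded, non-positive remainder $R_k$ (non-positivity inherited from the computation in the proof of Lemma \ref{flucL8} using $\bar{\alpha}\bar{\beta}<1$); the monotonicity $s\mapsto\arg(\rho e^{\ii\phi}+s)$ decreasing on $[0,\infty)$, together with the ergodic convergence $\alpha_{m_k}([\bar{\alpha}-\delta,\bar{\alpha}])\to\alpha([\bar{\alpha}-\delta,\bar{\alpha}])>0$, forces $v_k\le-c_2(\epsilon)<0$. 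The analogous analysis at $1/\bar{\beta}$ handles $\Psi_k$.

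The main obstacle is piece (iii) when $\bar{\alpha}$ is not an atom of $\alpha$: the leading $O(1)$ angular contribution in the expansion of $v(\bar{\alpha}+\rho e^{\ii\phi})$ vanishes, so the negativity must be extracted from the next-order term involving $\alpha([\bar{\alpha}-\rho,\bar{\alpha}])$, which itself tends to $0$ as $\rho\to 0$. Maintaining a uniform negative bound as $k\to\infty$ requires a delicate joint tuning of $\delta$ and $\epsilon$ and a compactness argument along $(\bfa,\bfb)$ in the spirit of \cite[Lemma 3.4]{GravnerTracyWidom02a}, balancing the $\delta$-dependence of the singular integral against the positive contribution from $(\gamma+r)\arg z$.
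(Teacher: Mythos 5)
Your reduction of the lemma to the statement that $v_k$ is nonzero on $E(\epsilon,\theta)\cap\bbH$ (using that $\Phi_k,\Psi_k$ lie on the zero level set of $v_k$ and are contained in $\bbH$) is valid for the sector regions $E_1\cup E_2$, and it is also what the paper does there. But the rest of the argument has two genuine gaps, one of which you acknowledge yourself.

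First, for the sector region you invoke Lemma \ref{flucL30} to get $v_k\to v$ uniformly and then analyze the limit $v$. Lemma \ref{flucL30} gives uniform convergence of $v_k$ only on \emph{compact} sets disjoint from $(-\infty,\bar{\alpha}]\cup[1/\bar{\beta},\infty)$, and the closures of $E_1(\epsilon,\theta)$ and $E_2(\epsilon,\theta)$ touch that branch cut along $[0,\bar{\alpha}-\epsilon/2]$ and $[1/\bar{\beta}+\epsilon/2,\infty)$. Near the real axis, $v_k$ has jump discontinuities of size $\pi/n_k$ across each $a_i$, so the convergence to your boundary trace is not uniform; the argument as stated does not control $v_k$ on the thin boundary layer. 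The paper avoids this by bounding $v_k$ directly from the formula \eqref{flucE134}: for $z\in E_1$ and $a_i>\bar{\alpha}-\epsilon/2$, each $\arg(z-a_i)$ is at least $\pi-\theta$ and all other terms are sign-controlled, which yields the bound \eqref{flucE133} whose $k\to\infty$ limit is the quantity $(\theta-\pi)r\alpha\{(\bar{\alpha}-\epsilon/2,1)\}+\theta(\gamma+r)$ you wrote down; the point is that the bound on $v_k$ is established before passing to the limit, not after.

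Second, and more seriously, your treatment of the discs $\Disc(\bar{\alpha},\epsilon)$ and $\Disc(1/\bar{\beta},\epsilon)$ via showing $v_k<0$ on the bounding semicircles is an approach the paper does not take, and it does not close. As you note, when $\alpha$ has no atom at $\bar{\alpha}$ the leading contribution $\alpha([\bar{\alpha}-\rho,\bar{\alpha}])\to 0$, and it is genuinely unclear whether the positive contribution from $(\gamma_k+m_k/n_k)\arg z$ and the negative remainder balance in your favor uniformly in $k$; the claimed non-positivity of $R_k$ does not follow from the computation in Lemma \ref{flucL8}, which concerns convexity of $\theta\mapsto v(xe^{\ii\theta})$ on circles through $x\in(\bar{\alpha},1/\bar{\beta})$ rather than on circles centered at $\bar{\alpha}$. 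The paper circumvents the $v_k$-sign problem entirely by working with $u_k$ instead: one shows that $u_k(\zeta_k)<u_k(z)$ for $z\in\conj{\Disc}(\bar{\alpha},\epsilon)\smallsetminus\{a_i\}$ (each term of \eqref{flucE4} is minimized over the disc at the worst-case boundary point, yielding \eqref{flucE132}; then use that $u$ is strictly decreasing on $(\bar{\alpha},1/\bar{\beta})$), so that $\Phi_k$, along which $u_k$ decreases, can never enter the disc. Dually $u_k(\zeta_k)>u_k(z)$ on $\conj{\Disc}(1/\bar{\beta},\epsilon)$ keeps $\Psi_k$ out. Finally, a Jordan-curve argument using $\Phi_k\cup[0,\zeta_k]$ (plus $\lim_{t\to\infty}|\Psi_k(t)|=\infty$) shows that $\Psi_k$ also avoids the first disc and $\Phi_k$ the second; this purely topological step does not depend on any sign condition for $v_k$ on the circles. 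Replacing your semicircle argument by this monotonicity-of-$u_k$ argument is what removes the obstruction you identified.
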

\begin{proof}
Choose $\epsilon_1 > 0$ such that $\zeta \in (\bar{\alpha}+\epsilon_1, 1/\bar{\beta}-\epsilon_1)$. Since $\lim_{k \rightarrow \infty} \zeta_k \stackrel{\text{a.s.}}{=} \zeta$, for a.e. $(\bfa, \bfb)$, there exists $k_0 = k_0^{\bfa, \bfb} \in \bbN$ such that $\zeta_k$ is not contained in $\conj{\Disc}(\bar{\alpha}, \epsilon_1) \cup \conj{\Disc}(1/\bar{\beta}, \epsilon_1)$ for $k \ge k_0$. From now on we work with the a.s.\ event on which $k_0$ exists, and $a_i \le \bar{\alpha}$ for $i \in \bbN$ and $b_j \le \bar{\beta}$ for $j \in \bbN$.  Let $\epsilon \in (0, \epsilon_1]$. Minimizing each term in (\ref{flucE4}) separately over $z \in \conj{\Disc}(\bar{\alpha}, \epsilon) \smallsetminus \{a_i: i \in [m_k]\}$, we obtain 
\begin{align}
u_{k}(z) &\ge -\frac{1}{n_k} \sum \limits_{i = 1}^{m_k} \log|\bar{\alpha}+\epsilon-a_i|  +  \frac{1}{n_k} \sum \limits_{j=1}^{n_k} \log|1-(\bar{\alpha}+\epsilon)b_j| + \bigg(\gamma_{k}+\frac{m_k}{n_k}\bigg) \log |\bar{\alpha}-\epsilon| \nonumber\\
&= u_{k}(\bar{\alpha}+\epsilon) + \bigg(\gamma_k + \frac{m_k}{n_k}\bigg) \log \left|\frac{\bar{\alpha}-\epsilon}{\bar{\alpha}+\epsilon}\right|. \label{flucE132}
\end{align}
Similarly, maximizing each term in (\ref{flucE4}) separately over $z \in \conj{\Disc}(1/\bar{\beta}, \epsilon) \smallsetminus \{1/b_j: j \in [n_k]\}$ gives 
\begin{align}
u_{k}(z) &\le -\frac{1}{n_k} \sum \limits_{i = 1}^{m_k} \log|1/\bar{\beta}-\epsilon-a_i|  +  \frac{1}{n_k} \sum \limits_{j=1}^{n_k} \log|1-(1/\bar{\beta}-\epsilon)b_j| + \bigg(\gamma_{k}+\frac{m_k}{n_k}\bigg) \log |1/\bar{\beta}+\epsilon| \nonumber\\
&= u_{k}(1/\bar{\beta}-\epsilon) + \bigg(\gamma_k + \frac{m_k}{n_k}\bigg) \log \left|\frac{1/\bar{\beta}+\epsilon}{1/\bar{\beta}-\epsilon}\right|.\label{flucE13}
\end{align}
Note from $(\ref{flucEE181})$ that $v = 0$ on $(\bar{\alpha}, 1/\bar{\beta})$. Hence, by (\ref{flucE140}), $u'(x) = f'(x) < 0$ for $x \in (\bar{\alpha}, 1/\bar{\beta}) \smallsetminus \{\zeta\}$ and, therefore, $u$ is decreasing on $(\bar{\alpha}, 1/\bar{\beta})$. Then, choosing $\epsilon_1$ smaller if necessary,  
\begin{align}
u(\zeta) &< u(\bar{\alpha}+\epsilon) + (\gamma + r) \log \left|\frac{\bar{\alpha}-\epsilon}{\bar{\alpha}+\epsilon}\right|\\
u(\zeta) &> u(1/\bar{\beta}-\epsilon) + (\gamma + r) \log \left|\frac{1/\bar{\beta}+\epsilon}{1/\bar{\beta}-\epsilon}\right|. 
\end{align} 
We also have $\lim_{k \rightarrow \infty} (\gamma_k+m_k/n_k) \stackrel{\text{a.s.}}{=} \gamma+r$ and $\lim_{k \rightarrow \infty} u_k(\zeta_k) \stackrel{\text{a.s.}}{=} u(\zeta)$, by Lemmas \ref{flucL5} and \ref{flucL30}. To see the last convergence, also use $\zeta \in (\bar{\alpha}, 1/\bar{\beta})$ and the inequality 
\begin{align}
|u_k(\zeta_k)-u(\zeta)| \stackrel{\text{a.s.}}{\le} |u(\zeta_k)-u(\zeta)| + \sup_{z \in [\zeta-\delta, \zeta+\delta]} |u_k(z)-u(z)|, 
\end{align}
which holds for sufficiently large $k$ since $\lim_{k \rightarrow \infty} \zeta_k \stackrel{\text{a.s.}}{=} \zeta$. 
It follows that we can restrict further to an a.s.\ event and choose $k_0$ larger if necessary such that  
\begin{align}
u_k(\zeta_k) &< u_k(\bar{\alpha}+\epsilon) + \bigg(\gamma_k + \frac{m_k}{n_k}\bigg) \log \left|\frac{\bar{\alpha}-\epsilon}{\bar{\alpha}+\epsilon}\right|\\
u_k(\zeta_k) &> u_k(1/\bar{\beta}-\epsilon) + \bigg(\gamma_k + \frac{m_k}{n_k}\bigg) \log \left|\frac{1/\bar{\beta}+\epsilon}{1/\bar{\beta}-\epsilon}\right|. 
\end{align} 
Then, for $k \ge k_0$, by (\ref{flucE132}), $u_k(\zeta_k) < u_k(z)$ for $z \in \conj{\Disc}(\bar{\alpha}, \epsilon) \smallsetminus \{a_i: i \in [m_k]\}$ and, by (\ref{flucE13}), $u_k(\zeta_k) > u_k(z)$ for $z \in \conj{\Disc}(1/\bar{\beta}, \epsilon) \smallsetminus \{1/b_j: j \in [n_k]\}$. Hence, for $k \ge k_0$, $\Phi_{k}([0, T_{k}))$ is disjoint from $\conj{\Disc}(\bar{\alpha}, \epsilon)$ because $\Phi_{k}(0) = \zeta_{k}$ and $u_{k} \circ \Phi_{k}$ is decreasing, and $\Psi_{k}([0, \infty))$ is disjoint from $\conj{\Disc}(1/\bar{\beta}, \epsilon)$ because $\Psi_{k}(0) = \zeta_{k}$ and $u_{k} \circ \Psi_{k}$ is increasing.  

By Lemma \ref{flucL4}, the curve $J_k(t): [0, T_{k}+\zeta_{k}] \rightarrow \bbC$ defined by $J_k(t) = \Phi_{k}(t)$ for $t \in [0, T_{k}]$ and $J_k(t) = t-T_{k}$ for $t \in [T_{k}, T_{k}+\zeta_{k}]$ is a Jordan curve, and $\Psi_{k}((0, \infty))$ is in the exterior of $J_k$ because $\Psi_{k}((0, \infty))$ is disjoint from the image of $J_k$ and $\lim_{t \rightarrow \infty} |\Psi_{k}(t)| = \infty$). On the other hand, $\conj{\Disc}(\bar{\alpha}, \epsilon) \cap \bbH$ is in the interior of $J_k$ for $k \ge k_0$. Therefore, $\Psi_{k}([0, \infty))$ is disjoint from $\conj{\Disc}(\bar{\alpha}, \epsilon)$ for $k \ge k_0$. To see that $\Phi_{k}([0, T_k))$ is disjoint from $\conj{\Disc}(1/\bar{\beta}, \epsilon)$ for $k \ge k_0$, argue by contradiction. Suppose this is not the case for some $k \ge k_0$. Then there exists a minimal $\tilde{T} \in (0, T_{k})$ with $\Phi_{k}(\tilde{T}) \in \Circle(1/\bar{\beta}, \epsilon)$. Consider the Jordan curve $\tilde{J}$ the image of which consists of $\Phi_{k}([0, \tilde{T}])$, the arc of the circle $\Circle(1/\bar{\beta}, \epsilon)$ in $\conj{\bbH}$ from $\Phi_{k}(\tilde{T})$ to $1/\bar{\beta}-\epsilon$ and the line segment $[\zeta_{k}, 1/\bar{\beta}-\epsilon]$. Now, $\Psi_{k}((0, \infty))$ does not intersect the image of $\tilde{J}$, and contains points from the interior $\tilde{J}$ 
%because $\Psi_{k}(0) = \zeta_{k} \in (\zeta_{k}, 1/\bar{\beta}-\epsilon)$ and $\Im \Psi_{k}'(0) > 0$ (see (\ref{flucE78})) when $x > 0$ and 
because $\Psi_{k}(0) = \zeta_k = \Phi_k(0)$ and $\Re \Phi_k'(0) < \Re \Psi_k'(0)$ (see (\ref{flucE39}). %when $x = 0$. 
Hence, $\tilde{J}$ encloses $\Psi_{k}((0, \infty))$. However, this is not possible since $\lim_{t \rightarrow \infty} |\Psi_{k}(t)| = \infty$. In conclusion, $\Phi_{k}([0, T_{k}))$ does not intersect $\conj{\Disc}(1/\bar{\beta}, \epsilon)$ for $k \ge k_0$. 

For $z \in \bbH$, we have $\arg (z-a_i) \ge 0$ for $i \in \bbN$ and $\arg (1-b_j z) = \arg(1/b_j -z) \le 0$ for $j \in \bbN$. Furthermore, if $z \in E_1$ and $a_i > \bar{\alpha}-\epsilon/2$ then $\arg(z-a_i) > \pi - \theta$, and if $z \in E_2$ and $1/b_j < 1/\bar{\beta}+\epsilon/2$ then $\arg(1-b_j z) < \theta-\pi$ and $\arg z < \theta$. %See Figure \ref{flucF3}. 
Hence, by (\ref{flucE134}),  
\begin{align}
v_{k}(z) &< (\theta-\pi) \frac{1}{n_k} \sum_{i=1}^{m_k} \one_{\{a_i > \bar{\alpha}-\epsilon/2\}} + \theta \bigg(\gamma_k+\frac{m_k}{n_k}\bigg) \quad \text{ for } z \in E_1 \label{flucE133} \\
v_{k}(z) &<  (\theta-\pi) \frac{1}{n_k} \sum_{j=1}^{n_k} \one_{\{1/b_j < 1/\bar{\beta}+\epsilon/2\}} + \theta \bigg(\gamma_k+\frac{m_k}{n_k}\bigg) \quad \text{ for } z \in E_2 \label{flucE135}
\end{align}
By the ergodicity of $\bfa, \bfb$ and Lemma \ref{flucL5}, the a.s.\ limits of (\ref{flucE133}) and (\ref{flucE135}) as $k \rightarrow \infty$, respectively, are 
\begin{align}
(\theta-\pi)r\alpha\{(\bar{\alpha}-\epsilon/2, 1)\} + \theta(\gamma + r) < 0\nonumber \\
(\theta-\pi)r\beta\bigg\{\bigg(\frac{1}{1/\bar{\beta}+\epsilon/2}, 1\bigg)\bigg\} + \theta(\gamma + r) < 0
\end{align}
where the inequalities hold provided that $\theta$ is small enough. Hence, once more restricting further to an a.s.\ event and choosing $k_0$ larger if necessary, we have 
\begin{align}
v_k(z) < 0 \quad \text{ for } z \in E_1 \cup E_2 \quad \text{ and } \quad k \ge k_0. 
\end{align}
On the other hand, $\Phi_k$ and $\Psi_k$ are stationary curves of $v_k$ by Lemma \ref{flucL1}. Therefore, $v_k(\Phi_k(t)) = v_k(\Psi_k(s)) = v_k(\zeta_k) = 0$ for $t \in [0, T_k)$ and $s \in [0, \infty)$, where the last equality can be seen from (\ref{flucE134}) using $a_i < \zeta_k < 1/b_j$ for $i \in [m_k]$ and $j \in [n_k]$. It follows that $\Phi_{k}([0, T_k)) \cup \Psi_{k}([0, \infty))$ does not intersect $E_1 \cup E_2$ for $k \ge k_0$. 
\end{proof}
In the preceding proof, the conclusion that $\Phi_k((0, T_k))$ does not intersect $\conj{\Disc}(1/\bar{\beta}, \epsilon)$ for $k \ge k_0$ also comes directly from $\Phi_k((0, T_k)) \subset \Disc(0, \zeta_k)$ for $k \in \bbN$ (Lemma \ref{flucL8}) and $\zeta_k < 1/\bar{\beta}-\epsilon$ for $k \ge k_0$. 

We next derive a uniform bound in $k$ for the length of $\Phi_{k}$ up to the first time $\Phi_{k}$ hits a fixed disk around $0$. One would expect a uniform bound for the full length of $\Phi_k$ but we could prove this only in the special case $\ubar{\alpha} > 0$ i.e. when $\liminf_{i \rightarrow \infty} a_i \stackrel{\text{a.s.}}{>} 0$. Let $\delta > 0$ and, for $k \in \bbN$, define    
\begin{align}
\tau_{k}'(\delta) &= \inf \{t \in [0, T_{k}): |\Phi_{k}(t)| = \delta\}. \label{flucE29}
\end{align}
We will drop $\delta$ from notation. It follows from Lemma \ref{flucL4}a that $\tau_{k}' < \infty$ provided that $\delta < \zeta_{k}$ for $k \in \bbN$. 
\begin{lem}
\label{flucL9}
Let $\delta \in (0, \zeta)$. There exists a deterministic $C > 0$ such that, for a.e. $(\bfa, \bfb)$, there exists $k_0 = k_0^{\bfa, \bfb} \in \bbN$ such that $\tau_{k}' < C$ for $k \ge k_0$. 
\end{lem}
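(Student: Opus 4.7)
The plan is to split the interval $[0,\tau_k']$ at the time $\tau_k=\tau_k(\epsilon)$ defined in (\ref{flucE28}) for a sufficiently small $\epsilon>0$, chosen below and independent of $k$. By Lemma \ref{flucL2}, the initial segment contributes $\tau_k<2\epsilon$, so only the tail $[\tau_k,\tau_k']$ requires real work. On this tail we want to invoke Proposition \ref{flucL14}, whose hypothesis asks for a uniform upper bound on $|u_k|$ and a uniform strictly positive lower bound on $|\nabla u_k|=|f_k'|$ along the portion of the curve under consideration. Equivalently, one can integrate $\tfrac{\dd}{\dd t}(u_k\circ\Phi_k)(t)=-|f_k'(\Phi_k(t))|$ directly to obtain
\begin{align*}
\tau_k'-\tau_k
\;\le\; \frac{u_k(\Phi_k(\tau_k))-u_k(\Phi_k(\tau_k'))}{\inf_{t\in[\tau_k,\tau_k']}|f_k'(\Phi_k(t))|}
\;\le\; \frac{2\,\sup_{t\in[\tau_k,\tau_k']}|u_k(\Phi_k(t))|}{\inf_{t\in[\tau_k,\tau_k']}|f_k'(\Phi_k(t))|},
\end{align*}
so everything reduces to locating $\Phi_k([\tau_k,\tau_k'])$ inside a single deterministic compact set on which $u$ is bounded and $f'$ is bounded away from $0$.

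The first task is to produce a fixed compact corral $K$ containing $\Phi_k([\tau_k,\tau_k'])$ for every sufficiently large $k$ (a.s.). Fix $\rho>0$ with $[\zeta-\rho,\zeta+\rho]\subset(\bar\alpha,1/\bar\beta)$ and pick $\epsilon_1>0$ and $\theta_0\in(0,\pi/2]$ as furnished by Lemma \ref{flucL3}. Taking $\epsilon<\epsilon_0/128$ small enough that moreover $\epsilon/4<\rho$, I combine the following facts:  Lemma \ref{flucL4}(a) gives $\Phi_k([\tau_k,\tau_k'])\subset\cl{\bbH}$; Lemma \ref{flucL8} gives $|\Phi_k|\le\zeta_k\le\zeta+\rho$ for large $k$; the definition of $\tau_k'$ and continuity give $|\Phi_k(t)|\ge\delta$ on $[\tau_k,\tau_k']$; Lemma \ref{flucL2} gives $|\Phi_k(t)-\zeta_k|\ge\epsilon/2$ on this interval, and since $\zeta_k\to\zeta$ (Lemma \ref{flucL5}) this upgrades for large $k$ to $|\Phi_k(t)-\zeta|\ge\epsilon/4$; finally Lemma \ref{flucL3} rules out the neighborhoods of $\bar\alpha$ and $1/\bar\beta$. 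Consequently, for all sufficiently large $k$,
\begin{align*}
\Phi_k([\tau_k,\tau_k'])\;\subset\; K\;:=\;\bigl\{z\in\cl{\bbH}:\;\delta\le|z|\le\zeta+\rho,\;|z-\zeta|\ge\epsilon/4\bigr\}\smallsetminus E(\epsilon_1,\theta_0),
\end{align*}
and $K$ is a (deterministic) compact subset of $\bbC\smallsetminus[0,\bar\alpha]\smallsetminus[1/\bar\beta,\infty)\smallsetminus\{\zeta\}$.

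The second task is to bound $|u_k|$ from above and $|f_k'|$ from below on $K$, uniformly in large $k$. Both are disk-compactness arguments powered by the uniform convergence statements of Lemma \ref{flucL30}. Because $K$ avoids $[0,\bar\alpha]\cup[1/\bar\beta,\infty)$, the continuous limit $u$ is bounded on $K$, and $u_k\to u$ uniformly on $K$, so $\sup_K|u_k|\le M$ for $k\ge k_0(\bfa,\bfb)$ for some deterministic $M$. For the gradient bound, I use Lemma \ref{flucL32}: since $f'$ is holomorphic on $\bbC\smallsetminus[0,\bar\alpha]\smallsetminus[1/\bar\beta,\infty)$ and vanishes only at $\zeta$, and $K$ excludes $\zeta$, one has $|f'|\ge c_0>0$ on $K$ for some deterministic $c_0$. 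The uniform convergence $f_k'\to f'$ on $K$ (again Lemma \ref{flucL30}) therefore yields $|f_k'|\ge c_0/2$ on $K$ for all large $k$ a.s.

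Plugging these two estimates into the integral bound above gives $\tau_k'-\tau_k\le 4M/c_0$, and adding $\tau_k<2\epsilon$ from Lemma \ref{flucL2} yields $\tau_k'<2\epsilon+4M/c_0=:C$, which is deterministic. The main obstacle, and the step where care is needed, is the uniform-in-$k$ lower bound on $|f_k'|$ along the curve: one must simultaneously exclude a neighborhood of the double zero $\zeta$ (using Lemmas \ref{flucL2} and \ref{flucL5}) and stay away from the poles (using Lemmas \ref{flucL3} and \ref{flucL8}) in order to apply holomorphicity of $f'$ and the uniform convergence $f_k'\to f'$ on a fixed compact set.
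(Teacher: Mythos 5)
Your proposal is correct and follows essentially the same route as the paper's proof: split at $\tau_k(\epsilon)$ using Lemma \ref{flucL2}, corral $\Phi_k([\tau_k,\tau_k'])$ into a deterministic compact set $K$ avoiding $[0,\bar\alpha]$, $[1/\bar\beta,\infty)$, $\Disc(0,\delta)$, a neighborhood of $\zeta$, and $E(\cdot,\theta_0)$ via Lemmas \ref{flucL2}--\ref{flucL5}, \ref{flucL8}, \ref{flucL3}, then bound the remaining time via Proposition \ref{flucL14} (whose proof is precisely the integral inequality you write out explicitly) together with the uniform convergence from Lemma \ref{flucL30} and the non-vanishing of $f'$ on $K$ from Lemma \ref{flucL32}. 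The only cosmetic differences are your use of $\zeta+\rho$ instead of $\zeta+\epsilon$ for the outer radius and your use of $E(\epsilon_1,\theta_0)$ instead of $E(\epsilon,\theta_0)$; both variants are harmless.
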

\begin{proof}
Recall $\epsilon_1$ and $\theta_0$ from Lemma \ref{flucL3}. Let $0 < \epsilon < \min\{\epsilon_0/128, \epsilon_1\}$ and 
\begin{align}K = \bigg(\conj{\bbH} \cap \conj{\Disc}(0, \zeta+\epsilon)\bigg) \smallsetminus \bigg(E(\epsilon, \theta_0) \cup \Disc(\zeta, \epsilon/4) \cup \Disc(0, \delta)\bigg).\end{align}
By Lemma \ref{flucL5}, for a.e. $(\bfa, \bfb)$ there exists $k_0 = k_0^{\bfa, \bfb} \in \bbN$ such that $\zeta_k \in \Disc(\zeta, \epsilon/4)$ for $k \ge k_0$. We will work with the a.s. event on which $k_0$ exists. Then $\Disc(\zeta, \epsilon/4) \subset \Disc(\zeta_k, \epsilon/2)$ for $k \ge k_0$. Hence, by Lemma \ref{flucL2}, $\Phi_k(t) \not \in \Disc(\zeta, \epsilon/4)$ for $t \in [\tau_k, T_k)$ and $k \ge k_0$ after choosing $k_0$ larger if necessary. By Lemma \ref{flucL4}, $\Phi_k(t) \subset \bbH$ for $t \in [0, T_k)$ and $k \in \bbN$. Also, by Lemma \ref{flucL8}, $\Phi_k(t) \in \conj{\Disc}(0, \zeta_k) \subset \Disc(0, \zeta+\epsilon/4)$ for $t \in [0, T_k)$ and $k \ge k_0$. Finally, by Lemma \ref{flucL3}, $\Phi_k(t) \not \in E(\epsilon, \theta_0)$ for $t \in [0, T_k)$ and $k \ge k_0$ again after choosing $k_0$ larger if necessary. It follows that $\Phi_k(t) \subset K$ for $t \in [\tau_k, \tau_k']$ and $k \ge k_0$. 

Note that $K$ is compact. By Lemma \ref{flucL24}, $\nabla u_{k} = \conj{f_{k}'}$ is nonzero on $K$. Applying Proposition \ref{flucL14} yields 
\begin{align}\tau_{k}'-\tau_{k} \le \dfrac{2\max \limits_{K} |u_{k}|}{\min \limits_{K} |\nabla u_{k}|} \quad \text{ for } k \ge k_0.
\label{flucEE3}
\end{align}
Since $K$ is disjoint from $[0, \bar{\alpha}] \cup [1/\bar{\beta}, \infty)$, by Lemma \ref{flucL30}, $\lim_{k \rightarrow \infty} u_k(z) = u(z)$ and $\lim_{k \rightarrow \infty} \nabla u_k(z) = \nabla u(z)$ both uniformly in $z \in K$ a.s. It follows that 
\begin{align}
\lim_{k \rightarrow \infty} \max_{K} |u_k| &\stackrel{\text{a.s.}}{=} \max_{K} |u| \\
\lim_{k \rightarrow \infty} \min_{K} |\nabla u_k| &\stackrel{\text{a.s.}}{=} \min_{K}|\nabla u| > 0, 
\end{align}
where the inequality is due to Lemma \ref{flucL32}. Combining these with (\ref{flucEE3}), we conclude that, further restricting to an a.s. event and choosing $k_0$ larger if necessary, we have 
\begin{align}
\tau_k'-\tau_k \le \dfrac{2\max \limits_{K} |u_{k}|}{\min \limits_{K} |\nabla u_{k}|}+1 \quad \text{ for } k \ge k_0. 
\end{align}
Also, $\tau_k < 2\epsilon$ for $k \ge k_0$ due to Lemma \ref{flucL2}. Hence, the result. 
\end{proof}
%\noindent One would expect a uniform bound for the lengths of $\Phi_k([0, T_k))$ as well but we could prove this only when $\bfa$ is bounded from below by a positive number.  

\section{Integral asymptotics and estimates} 
\label{flucS6}
In this section, we compute the first order asymptotics (\ref{flucE89}) as $k \rightarrow \infty$ and obtain upper bounds uniform in $k$ and $s$. These results will be derived as corollaries of the more general calculations carried out in Appendix \ref{flucAp2}.  

Let $0 < \delta < \zeta$ and recall $\tau_{k}'(\delta)$ defined in (\ref{flucE29}). We can further deform $\Gamma_{k}$ in (\ref{flucE89}) into the contour that consists of the curves   
\begin{itemize}
\item $\Gamma_{k}': [-\tau_{k}', \tau_{k}'] \rightarrow \bbC$ given by $t \mapsto \Phi_{k}(t)$ for $t \in [0, \tau_{k}']$ and $t \mapsto \conj{\Phi}_{k}(t)$ for $t \in [-\tau_{k}', 0]$,
\item $\Gamma_{k}'': [0, 2\delta(\pi-\arg \Phi_{k}(\tau_{k}'))] \rightarrow \bbC$ given by $t \mapsto \delta \exp\{\ii (\arg \Phi_{k}(\tau_{k}') + \delta^{-1}t)\}$, which parametrizes with unit speed the arc of the circle $|z| = \delta$ that contains $-\delta$ and goes from  $\Phi_{k}(\tau_{k}')$ to $\conj{\Phi}_{k}(\tau_{k}')$. 
\end{itemize} 
Note that the new contour is also simple ($\Gamma_k'$ and $\Gamma_k''$ do not intersect because $|\Phi_k(t)| > \delta$ for $t \in [0, \tau_k')$ by definition (\ref{flucE29})), closed and encloses $\{a_1, \dotsc, a_{m_k}\}$. See Figure \ref{flucF5}. 

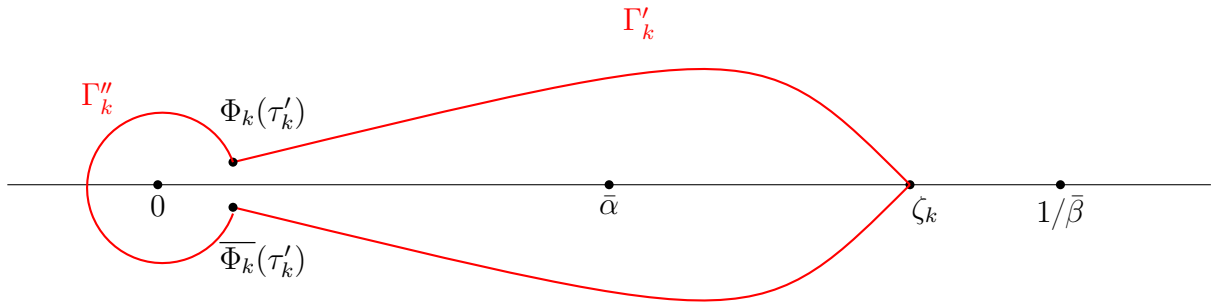
\begin{figure}[h]
\centering
\begin{tikzpicture}[scale = 2]
\draw[-](0, 0)--(1,0)node[below]{$0$}--(4, 0)node[below]{$\bar{\alpha}$}--(6.1, 0)node[below]{$\zeta_k$}--(7, 0)node[below]{$1/\bar{\beta}$}--(8, 0);
\fill(1,0)circle(0.03);
\fill(4,0)circle(0.03);
\fill(6,0)circle(0.03);
\fill(7,0)circle(0.03);
%\draw[dashed, thick](6, 0)..controls (5.7, 0.5)..(5.3, 0.83);
%\draw[dashed,  thick](6, 0)..controls (5.7, -0.5)..(5.3, -0.83);
%\draw[dashed,  thick] (5.3, 0.83)..controls (5, 1)..(1, 0);
%\draw[dashed, thick] (5.3, -0.83)..controls (5, -1)..(1, 0);
\draw[red, thick] (6, 0)..controls(5, 1)..(1.5, 0.15);
\draw[red, thick] (6, 0)..controls (5, -1)..(1.5, -0.15);
%\fill(5.3, 0.83)circle(0.03);
%\node at (5.3, 0.90)[above]{$\Phi_k(\tau_k)$};
%\node at (5.3, -0.90)[below]{$\conj{\Phi_k}(\tau_k)$};
\node at (1.7, 0.3)[above]{$\Phi_k(\tau_k')$};
\node at (1.7,- 0.3)[below]{$\conj{\Phi_k}(\tau_k')$};
\fill(1.5, 0.15)circle(0.03);
\fill(1.5, -0.15)circle(0.03);
%\fill(5.3, -0.83)circle(0.03);
%\draw[-, red, thick](6, 0)--(5.3, 0.83);
%\draw[-, red, thick](6, 0)--(5.3, -0.83);
%\draw[blue] (6.2, 0) arc (0: 120: 0.2);
%\node at (6.2, 0.1)[above]{{\color{blue}$2\pi/3$}}; 
%\draw[red] (6.5, 0) arc (0: 130: 0.5);
%\node at (6.5, 0.4)[above]{{\color{red}$2\pi/3+\phi$}}; 
%\node at (5.7, -0.7)[right]{{\color{blue}$\Gamma_1$}};
%\node at (5.7, 0.3)[left]{{\color{red}$\Gamma_{k}'$}};
\node at (4.2, 0.9)[above]{{\color{red}$\Gamma_{k}'$}};
\draw[red, thick] (1.5, 0.15) arc (20: 340: 0.5);
\node at (0.6, 0.4)[above]{{\color{red}$\Gamma_{k}''$}};
\end{tikzpicture}
\caption[The choice for the contour of integration]{\small{An illustration of the deformation of contour $\Gamma_k$ described in the text and the curves $\Gamma_{k}'$ and $\Gamma_{k}''$.}}\label{flucF5}
\end{figure}

The reason for this additional deformation is that we do not have a uniform bound in $k$ for the lengths of $\Phi_k$ inside $\Disc(0, \delta)$ (see the remarks preceding Lemma \ref{flucL9}), which is needed to uniformly control the contribution to the integral (\ref{flucE89}) from the parts of the contour outside a small disk around $\zeta_k$.   

Recall $P_k(s) = s_+^{3/2} \wedge (s_+ n_k^{1/3})$ defined for $k \in \bbN$ at (\ref{flucE143}). Also, we will abbreviate $m_k, n_k, p_k(s)$ in the subscripts as $k, s$ for $k \in \bbN$ and $s \in \bbR$.  
\begin{lem} 
\label{flucL7}
Let $s_0 \ge 0$. Then, 
\begin{enumerate}[(a)]
\item There exists a deterministic constant $C > 0$ such that, for a.e. $(\bfa, \bfb)$, there exists $k_0 = k_0^{\bfa, \bfb} \in \bbN$ such that 
\begin{align}
\bigg|\frac{\sigma_{k} n_{k}^{1/3} I_{k, s}}
{ \zeta_{k} F_{k, s}(\zeta_{k})} - \Ai(s)\bigg| \le \frac{C}{n_k^{1/3}} \qquad \text{ for } s \in [-s_0, s_0] \text{ and } k \ge k_0. \label{flucE163}
\end{align}
\item There exist deterministic constants $C, c > 0$ such that, for a.e. $(\bfa, \bfb)$, there exists $k_0 = k_0^{\bfa, \bfb} \in \bbN$ such that 
\begin{align}
|I_{k, s}| \le \frac{C}{n_k^{1/3}} F_{k, s}(\zeta_{k}) \exp\{-c P_k(s)\} \qquad \text{ for } s \ge -s_0 \text{ and } k \ge k_0.  \label{flucE164}
\end{align}
\end{enumerate}
\end{lem}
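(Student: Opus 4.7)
The plan is to apply a steepest descent analysis to the contour integral
\[
I_{k,s} = \frac{1}{2\pi\ii}\oint_{\Gamma_k'}F_{k,s}(z)\,\dd z + \frac{1}{2\pi\ii}\oint_{\Gamma_k''}F_{k,s}(z)\,\dd z,
\]
writing $F_{k,s}(z) = \exp\{n_k f_k(z) + n_k^{1/3}\sigma_k p_k'(s)\log z\}$. The critical point is $\zeta_k$, which by (\ref{flucE18}) and (\ref{flucE137}) is a third order saddle of $f_k$ with $f_k'''(\zeta_k) = -2\sigma_k^3/\zeta_k^3$. The contribution to $I_{k,s}$ should split into (i) a local piece around $\zeta_k$ on $\Gamma_k'$ that produces the Airy asymptotic, and (ii) several tail pieces whose magnitude is exponentially smaller than $n_k^{-1/3}F_{k,s}(\zeta_k)$ and which can be absorbed into the error.

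\emph{Local analysis near $\zeta_k$.} First I would fix a small radius $\epsilon < \epsilon_0/128$ and consider the portion of $\Gamma_k'$ inside $\Disc(\zeta_k,\epsilon)$. Using the rescaling $z = \zeta_k(1 + w/(n_k^{1/3}\sigma_k))$, the Taylor expansion of $f_k$ around $\zeta_k$ combined with $f_k'''(\zeta_k) = -2\sigma_k^3/\zeta_k^3$ yields
\[
n_k f_k(z) - n_k f_k(\zeta_k) = -\tfrac{1}{3}w^3 + O(n_k^{-1/3}w^4),
\]
while $n_k^{1/3}\sigma_k p_k'(s)\log z - n_k^{1/3}\sigma_k p_k'(s)\log\zeta_k = p_k'(s)w + O(n_k^{-1/3}w^2)$, with $p_k'(s) \to s$ uniformly on $[-s_0,s_0]$ by (\ref{flucEE4}) and Lemma \ref{flucL5}. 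After this change of variables, the image of the local piece of $\Gamma_k'$ converges to the reversal of the standard Airy contour (using (\ref{flucE39}), which says $\Phi_k$ leaves $\zeta_k$ along $e^{\ii 2\pi/3}$), and $\dd z = (\zeta_k/n_k^{1/3}\sigma_k)\,\dd w$. Altogether the local integral equals $F_{k,s}(\zeta_k)\cdot(\zeta_k/n_k^{1/3}\sigma_k)\cdot\mathrm{Ai}(s)$ to leading order. The error terms can be controlled uniformly in $s\in[-s_0,s_0]$ and $k\ge k_0$ by the uniform bounds on $f_k$, its derivatives and $\zeta_k,\sigma_k$ supplied by Lemmas \ref{flucL30} and \ref{flucL5} together with the definitions (\ref{flucE173})--(\ref{flucE158}); in practice this is exactly what the tools developed in Appendix \ref{flucAp2} are designed to do, and I would cite those estimates.

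\emph{Tail estimates on $\Gamma_k'$ and $\Gamma_k''$.} For the portion of $\Gamma_k'$ outside $\Disc(\zeta_k,\epsilon)$, I use that $u_k\circ\Phi_k$ is strictly decreasing (steepest-descent). Proposition \ref{flucL10} provides cubic decay $u_k(\Phi_k(t))-u_k(\zeta_k)\le -c t^3$ on the intermediate range $t\in[\tau_k,\epsilon_0]$, and the uniform positive lower bound $\min_{K}|\nabla u_k|$ from Lemma \ref{flucL9}'s proof gives linear decay in $t$ for $t\in[\epsilon_0,\tau_k']$, where $\tau_k'$ is uniformly bounded by Lemma \ref{flucL9}. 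Combined with Lemma \ref{flucL8}, which says $|\Phi_k(t)|\le c\zeta_k < \zeta_k$ for $t\in[\tau_k,T_k)$, the integrand is pointwise bounded by $F_{k,s}(\zeta_k)\exp\{-cn_k\}$ on the $\epsilon$-exterior of $\Gamma_k'$ (for $|s|\le s_0$). The $\Gamma_k''$ piece is similar: $u(z)-u(\zeta)$ is uniformly negative on $\Circle(0,\delta)\cap\conj{\bbH}$ for small $\delta$ (since $u(z)\to-\infty$ as $|z|\to 0$), and the $\log|z|$ factor only helps for $s > 0$. These contributions are $O(e^{-cn_k})F_{k,s}(\zeta_k)$, which is much smaller than $n_k^{-1/3}F_{k,s}(\zeta_k)$, establishing (a).

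\emph{Part (b).} The bound $F_{k,s}(\zeta_k)\exp(-cP_k(s))$ requires uniformity in $s\ge -s_0$. For $s$ in a bounded range, the Airy factor $\mathrm{Ai}(s)$ itself decays like $e^{-\frac{2}{3}s^{3/2}}$ for $s\to\infty$, which gives the $e^{-cs^{3/2}}$ branch of $P_k(s)$. The crucial extra input for very large $s$ is the second assertion of Lemma \ref{flucL8}: on $\Phi_k([\tau_k,T_k))$ we have $|z|/\zeta_k \le c < 1$, so for $s \ge 1$ (say)
\[
\bigl|F_{k,s}(z)/F_{k,s}(\zeta_k)\bigr| \le \exp\bigl\{n_k(u_k(z)-u_k(\zeta_k)) + n_k^{1/3}\sigma_k p_k'(s)\log c\bigr\},
\]
producing the extra linear decay $\exp\{-c' s\, n_k^{1/3}\}$. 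Taking the minimum of this with the Airy-regime decay $e^{-c s^{3/2}}$ (from the local piece, which is valid up to $|s|\lesssim n_k^{1/3}$) yields exactly $\exp\{-c P_k(s)\}$. The main obstacle I expect is not any single estimate but the bookkeeping: splitting $\Gamma_k'$ into the local Airy region, the cubic-decay region controlled by Proposition \ref{flucL10}, and the linear-decay region controlled by Lemmas \ref{flucL9} and \ref{flucL8}, while keeping all error constants independent of $k$, $s$, and the realization of $(\bfa,\bfb)$. The uniform-in-$k$ control established in Section \ref{flucS5} is exactly what makes this possible.
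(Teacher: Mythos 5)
Your overall strategy — steepest descent along the deformed contour $\Gamma_k' + \Gamma_k''$, an Airy contribution from the order-two saddle at $\zeta_k$, and exponentially smaller tail contributions controlled by the uniform estimates of Section \ref{flucS5} and Appendix \ref{flucAp2} — is exactly the approach of the paper, and part~(a) of your sketch is sound modulo the bookkeeping you acknowledge. (One slight misstatement: Proposition~\ref{flucL10}(c) does not give a bound decaying like $-ct^3$ along the curve; once $\Phi_k$ exits $\Disc(\zeta_k,\epsilon)$ it gives the \emph{fixed} gap $|u_k(\Phi_k(t))-u_k(\zeta_k)|\ge \tfrac{|f_k'''(\zeta_k)|}{12}\epsilon^3$, which is all the argument uses.)

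The genuine gap is in your derivation of the $e^{-cs^{3/2}}$ branch of $P_k(s)$ for part~(b). You propose to get this from the estimate of part~(a) together with the asymptotic $\Ai(s)\sim e^{-\frac23 s^{3/2}}$. That does not close: part~(a) gives
\begin{align*}
\bigg|\frac{\sigma_k n_k^{1/3}I_{k,s}}{\zeta_k F_{k,s}(\zeta_k)}-\Ai(s)\bigg|\le\frac{C}{n_k^{1/3}},
\end{align*}
so you can only conclude $|I_{k,s}|\lesssim n_k^{-1/3}F_{k,s}(\zeta_k)(\Ai(s)+Cn_k^{-1/3})$, and the additive error $Cn_k^{-1/3}$ swamps $\Ai(s)$ already when $s\gtrsim(\log n_k)^{2/3}$. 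So for $s$ between $(\log n_k)^{2/3}$ and $n_k^{2/3}$ (where $P_k(s)=s^{3/2}$, and note the transition point in $P_k$ is $n_k^{2/3}$, not the $n_k^{1/3}$ you wrote), you have no bound at all. The paper circumvents this with a \emph{separate} steepest-descent estimate for the local integral that tracks the linear term $p_k'(s)\log z$ directly — this is Proposition~\ref{flucP6}, applied in the regime $s\ge 2\sigma_k^{-1}n_k^{-1/3}$; it produces $\exp\{-c\min\{n_k^{1/3}p_k'(s),\,p_k'(s)^{3/2}\}\}$ from the local integral alone, giving \emph{both} branches of $P_k(s)$ uniformly, with the tails of $\Gamma_k'$ and $\Gamma_k''$ contributing an even smaller factor $e^{-c(n_k+n_k^{1/3}s)}$ via Lemmas~\ref{flucL8} and~\ref{flucL9}. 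Your proposal correctly identifies the ingredients for the linear-in-$s n_k^{1/3}$ decay (the radial contraction $|\Phi_k|\le c\zeta_k$), but you would need to replace the Airy-asymptotics step with a quantitative local bound of the Proposition~\ref{flucP6} type to make the cubic-root regime work.
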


We begin with matching the notation in Appendix \ref{flucAp2} with the present setting. Recall from (\ref{flucE17}) and (\ref{flucEE4}) that, for $k \in \bbN$ and $s \in \bbR$,  
\begin{align}
\label{flucEE5}
p_k(s) = \lf n_k \gamma_k + n_k^{1/3} \sigma_k s \rf = n_k\gamma_k + n_k^{1/3}\sigma_k p_k'(s).  
\end{align}
\begin{longtable}[h!]{|c|c|}
\caption{Correspondence between Appendix \ref{flucAp2} and Section \ref{flucS6}} \label{flucTa1}\\
\hline
\textbf{Appendix \ref{flucAp2}} & \textbf{Section \ref{flucS6} } \\
\hline
\endfirsthead
\multicolumn{2}{c}%
{\tablename\ \thetable\ -- \textit{Continued from previous page}} \\
\hline
\textbf{Appendix \ref{flucAp2}} & \textbf{Section \ref{flucS6} }\\
\hline
\endhead
\hline \multicolumn{2}{c}{\textit{Continued on next page}} \\
\endfoot
\hline
\endlastfoot
$F$ & $F_{m_k, n_k, p_k(s)}$ \\
\hline 
$u$ & $n_k u_k$ \\
\hline
$\tilde{u}$ & $n_k^{1/3}\sigma_k p_k'(s)\log |\cdot |$ \\
\hline 
$a, r, n$ & $\zeta_k, \rho$ (see (\ref{flucE146})), $3$ \\
\hline 
$\Phi, T$ & $\Phi_k, \tau_k'(\delta)$ \\
\hline
$c, \xi, N$ & $-\dfrac{n_k \sigma_k^3}{3\zeta_k^3}$ (see (\ref{flucE18}) and (\ref{flucE137})), $e^{\ii \pi/3}$,  $\dfrac{n_k^{1/3} \sigma_k}{3^{1/3} \zeta_k}$\\
\hline
$k, d, M$ & $1$, $\dfrac{n_k^{1/3}\sigma_k p_k'(s)}{\zeta_k}$, $\dfrac{n_k^{1/3} \sigma_k |p_k'(s)|}{\zeta_k}$ if $p_k'(s) \neq 0$\\
\hline
$k, d, M$ & $0$, $0$, $0$ if $p_k'(s) = 0$\\
\hline 
$K_0, \epsilon_0$ & $K_{0, k}, \epsilon_{0, k}$, see (\ref{flucE173}) and (\ref{flucE174})\\
\hline
$\tilde{K}_0, \tilde{\epsilon}_0$ & $\tilde{K}_{0, k} = \dfrac{4\zeta_k}{\rho^{2}}\log|\zeta_k + \rho/2|$, $\tilde{\epsilon}_{0, k} = \dfrac{1}{4\tilde{K}_{0, k}}$ if $p_k'(s) \neq 0$\\
\hline 
$\tilde{K}_0, \tilde{\epsilon}_0$ & $0, \infty$ if $p_k'(s) = 0$\\
\hline
$\eta$ & $\eta_k$: the direction of $\Phi_k(\tau_k(\epsilon))-\zeta_k$ \\
\hline 
$\sI$ & $\sI_k = \displaystyle \int \limits_{[0, \infty \eta_k e^{\ii \pi/3}]} \exp \bigg\{u^3 + p_k'(s)e^{-\ii \pi/3}3^{1/3}u \bigg\} \dd u$ \\
\hline 
$I'$ & $I_k' = \displaystyle \int \limits_{\zeta_k}^{\Phi_k(\tau_k(\epsilon))} F_{m_k, n_k, p_k(s)}(z) \ \dd z$ \\
\hline 
$I''$ & $I_k'' = \displaystyle \int \limits_{\Phi_k(\tau_k(\epsilon))}^{\Phi_k(\tau_k'(\delta))} F_{m_k, n_k, p_k(s)}(z) \ \dd z$ \\
\hline
$x, p, u_1$ & $0$, $n_k \gamma_k + m_k$, $n_k u_k-(n_k \gamma_k + m_k) \log |\cdot|$\\ 
\hline 
$\tilde{p}, \tilde{u}_1$ & $n_k^{1/3}\sigma_k p_k'(s)$, $0$\\
\hline
$\Psi, S$ & $\Gamma_k''$, $\delta(\pi-\arg \Phi_k(\tau_k'))$\\
\hline
$I'''$ & $I_k''' = \displaystyle \int \limits_{\Gamma_k''(0)}^{\Gamma_k(\delta(\pi-\arg \Phi_k(\tau_k')))} F_{m_k, n_k, p_k(s)}(z) \ \dd z$
\end{longtable}
%\clearpage
Note from (\ref{flucE165}) that 
\begin{align}
\log |F_{m_k, n_k, p_k(s)}(z)| &= - \sum \limits_{i=1}^{m_k} \log |z-a_i| + \sum \limits_{j = 1}^{n_k} \log|1-b_j z| + (m_k + p_k(s)) \log |z| \\
&= n_k u_k(z) + n_k^{1/3}\sigma_k p_k'(s) \log |z| \qquad \text{ for } z \in \bbC \smallsetminus \sP_{k}. 
\end{align}
Hence, the choices of $F, u, \tilde{u}$ in Table \ref{flucTa1} are legitimate. By Lemma \ref{flucL5}, $\lim_{k \rightarrow \infty} \zeta_k \stackrel{\text{a.s.}}= \zeta$. Therefore, there exists a deterministic $\tilde{K}_0 > 0$ such that, after restricting to an a.s.\ event and working with larger $k_0^{\bfa, \bfb}$ if necessary, $\tilde{K}_0 > \sup_{k \ge k_0} \tilde{K}_{0, k}$. Also, set $\tilde{\epsilon}_0 = \dfrac{1}{\tilde{K}_0} < \inf_{k \ge k_0} \tilde{\epsilon}_{0, k}$. 
Let $\epsilon > 0$ satisfy 
\begin{align}
\epsilon < \min \bigg\{\frac{\epsilon_0}{1024}, \frac{\tilde{\epsilon_0}}{4}, \frac{1}{12}\bigg\}, 
\end{align}
where $\epsilon_0$ is given by (\ref{flucE158}). 

%where $\epsilon_1, \epsilon_2$ are as defined in (\ref{flucE157}) and (\ref{flucE158}), and the last bound comes from the hypothesis of Proposition \ref{flucP6}. Then the bounds on $\epsilon$ in Table \ref{flucTa1} hold for each $k \in \bbN$. 

We next connect $\sI_k$ with the Airy function. Changing variables with $v = - e^{-\ii \pi/3} 3^{1/3} u$, we obtain 
\begin{align}
\sI_k &= e^{\ii \pi/3} 3^{-1/3} \int \limits_{[\infty (-\eta_k), 0]} \exp \bigg\{\frac{v^3}{3} - p_k'(s) v \bigg\} \dd v,  
\end{align}
where $\eta_k \in \bbS(0, 1)$ such that $\Phi_k(\tau_k(\epsilon)) = \zeta_k + \epsilon \eta_k$ for $k \in \bbN$. Hence, 
\begin{align}
\frac{\Im \{3^{1/3}e^{-\ii \pi/3}\sI_k\}}{\pi} = \frac{1}{2\pi \ii} \int \limits_{[\infty (-\eta_k), 0] + [0, \infty (-\conj{\eta_k})]} \exp \bigg\{\frac{v^3}{3} - p_k'(s) v \bigg\} \dd v = \Ai(p_k'(s)). \label{flucE166}
\end{align}
The last equality holds for $k \ge k_0$. To justify it, refer to (\ref{flucE9}) and note that, by Proposition \ref{flucL10}b, 
\begin{align}|(-\conj{\eta}_k)-e^{\ii \pi/3}| = |\eta_k-e^{\ii 2\pi/3}| \le \frac{256\epsilon}{\epsilon_{0}} < \frac{1}{4} < 2-\sqrt{3} = |1-e^{\ii \pi/6}| \quad \text{ for } k \ge k_0, \end{align}
which implies that $\pi/6 < \arg\{-\conj{\eta}_k\} < \pi/2$ for $k \ge k_0$. 
Note from (\ref{flucEE5}) that 
\begin{align}
|p_k'(s)-s| \le \frac{1}{n_k^{1/3}\sigma_k} \quad \text{ for } k \in \bbN \text{ and } s \in \bbR.  \label{flucEE6}
\end{align}
Hence, we have the mean value bound
\begin{align}
|\Ai(s)-\Ai(p_k'(s))| \le \frac{1}{\sigma_k n_k^{1/3}} \max \limits_{|t| \le s+\sigma_k^{-1}n_k^{-1/3}} |\Ai'(t)| \quad \text{ for } k \in \bbN \text{ and } s \in \bbR.\label{flucE168}
\end{align}
We can similarly relate $I_k', I_k''$ and $I_k'''$ to $I_{k, s}$ by 
\begin{align}
\frac{\Im \{I_k' + I_k'' + I_k'''\}}{\pi} = I_{k, s} \quad \text{ for } k \in \bbN \text{ and } s \in \bbR. \label{flucE167}
\end{align}

\begin{proof}[Proof of Lemma \ref{flucL7}]
We first prove (\ref{flucE163}). By Lemma \ref{flucL5}, we can restrict to an a.s.\ event on which the inequalities 
\begin{align}\frac{1}{2} < \frac{\sigma_k}{\sigma} < 2 \quad \text{ and } \quad \frac{1}{2} < \frac{\zeta_k}{\zeta} < 2 \end{align}
hold for $k \ge k_0$ possibly after choosing a larger $k_0$. Then it follows from the assumption $|s| \le s_0$ and (\ref{flucEE6}) that $|p_k'(s)| \le s_0 + 2\sigma$ for $k \ge k_0$. 
Applying Proposition \ref{flucP2} and using the preceding inequalities, we obtain
\begin{align}
\bigg|\frac{e^{\ii \pi/3}\sigma_k n_k^{1/3}I_k'}{3^{1/3}\zeta_k F_{k, s}(\zeta_k)}-\sI_k\bigg| &\le 3072 (K_0 + 2\tilde{K}_0 |p_k'(s)| + 1) \exp\{32(|p_k'(s)|+1)^{3/2}\} \\
&\cdot \bigg(\frac{2\zeta_k}{\sigma_k n_k^{1/3}} + \exp\bigg\{-\frac{\epsilon^3 \sigma_k^3 n_k}{12 \zeta_k^3} \bigg\}\bigg) \nonumber \\
&\le \frac{C_1}{n_k^{1/3}} \label{flucE162}
\end{align}
for $k \ge k_0$ for some constant $C_1 > 0$. We emphasize that $C_1$ (and various other constants introduced below) 
can be chosen independent of $(\bfa, \bfb)$. 

Next, combine Lemmas \ref{flucL5} and \ref{flucL8} with $|\Phi_k(t)| \ge \delta$ for $t \in [0, \tau_k'(\delta)]$ (see (\ref{flucE29})). Then there are constants $c_1, c_2 > 0$ such that   
\begin{align}
p_k'(s) \log\bigg(\frac{|\Phi_k(t)|}{\zeta_k}\bigg) &\le \begin{cases}p_k'(s) \log \bigg(\sup \limits_{\tau_k(\epsilon) \le t \le T_k} \dfrac{|\Phi_k(t)|}{\zeta_k}\bigg) &\text{ if } p_k'(s) \ge 0 \\
p_k'(s) \log(\delta/\zeta_k) \quad &\text{ otherwise }  \end{cases} \\
&\le \begin{cases}-p_k'(s) c_1 &\text{ if } s_k \ge 0 \\
-p_k'(s) c_2 \quad &\text{ otherwise }  \end{cases} \label{flucE171}
\end{align}
for $t \in [\tau_k(\epsilon), \tau_k'(\delta)]$ and $k \ge k_0$. Also, by Lemma \ref{flucL9}, $\tau_k'(\delta) < C_2$ for $k \ge k_0$ for some constant $C_2 > 0$. Therefore, by Proposition \ref{flucP4},   
\begin{align}
|I_k''| &\le \tau_k'(\delta) F_{k, s}(\zeta_k) \exp\bigg\{-\frac{\epsilon^3 \sigma_k^3 n_k}{6\zeta_k^3} + \sigma_k n_k^{1/3} \sup_{t \in [\tau_k(\epsilon), \tau_k'(\delta)]} p_k'(s) \log \bigg(\frac{|\Phi_k(t)|}{\zeta_k}\bigg)\bigg\} \label{flucE170}\\
&\le C_2 F_{k, s}(\zeta_k)\exp\bigg\{-\frac{\epsilon^3 \sigma_k^3 n_k}{6\zeta_k^3} + (c_1 \vee c_2)(s_0 \sigma_k n_k^{1/3} + 1)\bigg\} \\
&\le C_3 F_{k, s}(\zeta_k) \exp(-c_3 n_k) \label{flucE161}
\end{align}
for $k \ge k_0$ for some constants $C_3, c_3 > 0$.  

Note that $|\Gamma_k''(t)-a_i| \ge |\Gamma_k''(0)-a_i| = |\Phi_k(\tau_k'(\delta))-a_i|$ and $1-\delta \le |1-b_j\Gamma_k''(t)| \le 1+\delta$ for $i \in [m_k]$, $j \in [n_k]$ and $t \in [0, \pi-\arg \Phi_k(\tau_k'(\delta))]$. Also, choose $\delta > 0$ sufficiently small such that 
\begin{align}
\log \bigg(\frac{1+\delta}{1-\delta}\bigg) < \frac{64}{6}\frac{\epsilon^3 \sigma^3}{\zeta^3}. 
\end{align}
Using these inequalities and Proposition \ref{flucP5}, we obtain  
\begin{align}
|I_k'''| &\le 2\pi\delta F_{k, s}(\zeta_k) \exp\bigg\{-\frac{\epsilon^3 \sigma_k^3 n_k}{6\zeta_k^3} + \sigma_k n_k^{1/3} \sup_{t \in [\tau_k(\epsilon), \tau_k'(\delta)]} p_k'(s) \log \bigg(\frac{|\Phi_k(t)|}{\zeta_k}\bigg)\bigg\} \nonumber\\ 
&\cdot \exp\bigg\{\sup_{t \in [0, \pi-\Phi_k(\tau_k'(\delta))]} \bigg\{- \sum \limits_{i=1}^{m_k} \log \bigg(\frac{|\Gamma_k''(t)-a_i|}{|\Gamma_k''(0)-a_i|}\bigg) + \sum \limits_{j=1}^{n_k} \log \bigg(\frac{|1-b_j\Gamma_k''(t)|}{|1-b_j\Gamma_k''(0)|}\bigg) \bigg\}\bigg\} \nonumber \\
&\le 2\pi\delta F_{k, s}(\zeta_k)\exp\bigg\{-\bigg(\frac{\epsilon^3 \sigma_k^3}{6\zeta_k^3}-\log \bigg(\frac{1+\delta}{1-\delta}\bigg)\bigg)n_k + (c_1 \vee c_2)(s_0 \sigma_k n_k^{1/3} + 1)\bigg\} \nonumber \\
&\le C_4 F_{k, s}(\zeta_k)\exp\{-c_4 n_k\} \label{flucE160}
\end{align}
for $k \ge k_0$ for some constants $C_4, c_4 > 0$. 

Now, putting together (\ref{flucE166}), (\ref{flucE167}), (\ref{flucE162}), (\ref{flucE161}) and (\ref{flucE160}), we obtain 
\begin{align}
\bigg|\frac{\sigma_k n_k^{1/3}I_{k, s}}{\zeta_k F_{k, s}(\zeta_k)}-\Ai(s_k)\bigg| &= \frac{1}{\pi}\bigg|\Im \bigg\{\frac{\sigma_k n_k^{1/3}}{\zeta_k F_{k, s}(\zeta_k)}(I_k'+I_k''+I_k''')-3^{1/3}e^{-\ii \pi/3}\sI_k\bigg\}\bigg| \\ 
&\le  \bigg|\frac{e^{\ii \pi/3}\sigma_k n_k^{1/3}}{3^{1/3}\zeta_k F_{k, s}(\zeta_k)}(I_k'+I_k''+I_k''')-\sI_k\bigg| \le \frac{C}{n_k^{1/3}}
\end{align}
for $k \ge k_0$ for some constant $C > 0$. This and (\ref{flucE168}) imply (\ref{flucE163}).  

We now turn to proving (\ref{flucE164}). Observe that, in the case $s < 2\sigma_k^{-1} n_k^{-1/3}$, (\ref{flucE164}) holds due to (\ref{flucE163}). Hence, we assume that $s \ge 2\sigma_k^{-1}n_k^{-1/3}$ here on. Then $p_k'(s) \ge \sigma_k^{-1}n_k^{-1/3}$ and the hypotheses of Proposition \ref{flucP6} hold. (The condition $d\Re\{e^{\ii \pi j/n} \conj{\xi}\} < 0$ in our case reduces to $p_k'(s) > 0$). Also, $p_k'(s)/s \ge 1-\sigma_k^{-1}n_k^{-1/3}s^{-1} \ge 1/2$. Hence, we obtain  
\begin{align}
|\Im I'_k| &\le \frac{64\zeta_k F_{k, s}(\zeta_k)}{\sigma_k n_k^{1/3}}\exp\bigg\{-\min \bigg\{\frac{\epsilon p_k'(s) n_k^{1/3}\sigma_k}{8 \zeta_k}, \frac{[p_k'(s)]^{3/2}}{8}\bigg\}\bigg\} \nonumber \\
&\le C_5 F_{k, s}(\zeta_k) \exp \{-c_5 \min \{sn_k^{1/3}, s^{3/2}\}\} \label{flucE169}
\end{align}
for $k \ge k_0$ and some constants $C_5, c_5 > 0$. Moreover, utilizing $p_k'(s) > 0$ and $p_k'(s)/s > 1/2$ again, we have 
\begin{align}
\sup_{t \in [\tau_k(\epsilon), \tau_k'(\delta)]} p_k'(s) \log \bigg(\frac{|\Phi_k(t)|}{\zeta_k}\bigg) \le -\frac{c_1}{2}s, 
\end{align}
which leads to the following refinement of the bounds (\ref{flucE170}) and (\ref{flucE171})
\begin{align}
|I_k''|+|I_k'''| \le C_6 F_{k, s}(\zeta_k) \exp\{-c_6(n_k + n_k^{1/3}s)\} \label{flucE172}
\end{align}
for $k \in k_0$ for some constants $C_6, c_6 > 0$. Combining (\ref{flucE169}) and (\ref{flucE172}) gives (\ref{flucE164}). 
\end{proof}

\section{Right tail bound} \label{flucSe6}

\begin{lem}
\label{flucL16}
Let $m, n \in \bbN$, $x, y \in \bbZ_+$, $s \in \bbR$ and  $z \in \bbC \smallsetminus \{a_i: i \in [m]\} \smallsetminus \{1/b_j: j \in [n]\}$. Then 
\begin{align}
g_{m, n}^{\bfa, \bfb}(z) &= \frac{m}{n} g_{n, m}^{\bfb, \bfa}(z^{-1}) \\
\gamma_{m, n}^{\bfa, \bfb} &= \frac{m}{n} \gamma_{n, m}^{\bfb, \bfa} \label{flucEq104}\\
\sigma_{m, n}^{\bfa, \bfb} &= \frac{m^{1/3}}{n^{1/3}} \sigma_{n, m}^{\bfb, \bfa} \label{flucEq103}\\
\zeta_{m, n}^{\bfa, \bfb}\zeta_{n, m}^{\bfb, \bfa} &= 1 \label{flucEq102}\\
%\zeta_{m, n, x}^{+, \bfa, \bfb} \zeta_{n, m, \frac{n}{m} x}^{-, \bfb, \bfa} &= 1 \\
p_{m, n}^{\bfa, \bfb}(s) &= p_{n, m}^{\bfb, \bfa}(s) \\
F_{m, n, x}^{\bfa, \bfb}(z)F_{n, m, y}^{\bfb, \bfa}(z^{-1}) &= z^{x-y}. \label{flucEq105}
\end{align}
\end{lem}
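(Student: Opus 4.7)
My plan is to prove the six identities in order, since each one (except the first and last) builds on the previous. All six are ultimately algebraic manipulations that stem from the single observation that the substitution $z \mapsto z^{-1}$ swaps the roles of the $a_i$ and $b_j$ in the rational functions involved.

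For (1), I would start from the defining expression (\ref{flucE7}) for $g_{n,m}^{\bfb,\bfa}(z^{-1})$ and apply the two elementary identities
\begin{align*}
\frac{b_j}{z^{-1}-b_j} = \frac{b_j z}{1-b_j z}, \qquad \frac{a_i z^{-1}}{1-a_i z^{-1}} = \frac{a_i}{z-a_i}.
\end{align*}
This immediately gives $g_{n,m}^{\bfb,\bfa}(z^{-1}) = (n/m)\,g_{m,n}^{\bfa,\bfb}(z)$. Now (4) follows because $z \mapsto z^{-1}$ is a decreasing bijection between the relevant intervals and the infimum characterization (\ref{flucE5}) forces the minimizers to be reciprocals of each other; and (2) follows by evaluating (1) at $z = \zeta_{m,n}^{\bfa,\bfb}$.

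For (3), I would differentiate the identity in (1) twice in $z$. The first derivative gives $g'_{m,n}(z) = -(m/n)\,z^{-2}\,g'_{n,m}(z^{-1})$, and differentiating once more yields $2zg'_{m,n}(z) + z^2 g''_{m,n}(z) = (m/n)\,z^{-2}\, g''_{n,m}(z^{-1})$. Evaluating at $z=\zeta_{m,n}^{\bfa,\bfb}$, using $g'_{m,n}(\zeta_{m,n}^{\bfa,\bfb})=0$ and the reciprocal identity (4), and then plugging into the formula $\sigma^3 = \tfrac{1}{2}\zeta^2 g''(\zeta)$ from (\ref{flucE131}) delivers the relation $\sigma_{m,n}^3 = (m/n)\,\sigma_{n,m}^3$. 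Identity (5) then follows at once from the definition (\ref{flucE17}): the quantities $n\gamma_{m,n}^{\bfa,\bfb}$ and $n^{1/3}\sigma_{m,n}^{\bfa,\bfb}$ match $m\gamma_{n,m}^{\bfb,\bfa}$ and $m^{1/3}\sigma_{n,m}^{\bfb,\bfa}$ respectively by (\ref{flucEq104}) and (\ref{flucEq103}), so the floor operations produce the same integer.

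Finally, identity (6) is a direct computation from the definition (\ref{flucE165}). Using $(1-z^{-1}a_i)=z^{-1}(z-a_i)$ and $(z^{-1}-b_j)=z^{-1}(1-zb_j)$, one finds
\begin{align*}
F_{n,m,y}^{\bfb,\bfa}(z^{-1}) = \frac{\prod_{i=1}^m (1-z^{-1}a_i)}{\prod_{j=1}^n (z^{-1}-b_j)}\,z^{-(n+y)} = \frac{\prod_{i=1}^m (z-a_i)}{\prod_{j=1}^n (1-zb_j)}\,z^{-m-y},
\end{align*}
and multiplying by $F_{m,n,x}^{\bfa,\bfb}(z)$ collapses the product to $z^{x-y}$. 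Since each step of the proof is bookkeeping with rational functions, I do not anticipate any real obstacles; the only mild subtlety is being careful that the substitution $z \mapsto z^{-1}$ in (1) is valid on the common extended domain $\bbC\smallsetminus\{a_i\}\smallsetminus\{1/b_j\}\smallsetminus\{0\}$, and that (4) justifies applying it to minimizers rather than to a generic point.
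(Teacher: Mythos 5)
Your proposal is correct and follows essentially the same approach as the paper: identity (1) comes from the same two elementary algebraic identities applied to (\ref{flucE7}), (2)--(4) follow from matching infima/minimizers of the two sides, (3) is obtained by differentiating (1) twice, and (5)--(6) are routine. The only cosmetic difference is ordering ((4) before (2) instead of after) and a slightly different algebraic arrangement in the double-differentiation step for (3), neither of which changes the substance.
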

\begin{proof}
By (\ref{flucE7}),  
\begin{align}
g_{m, n}^{\bfa, \bfb}(z) &= \frac{m}{n} \bigg(\frac{1}{m} \sum \limits_{i=1}^m \frac{a_i z^{-1}}{1-a_i z^{-1}} + \frac{1}{m} \sum \limits_{j=1}^n \frac{b_j}{z^{-1}-b_j}\bigg) = \frac{m}{n} g_{n, m}^{\bfb, \bfa}(z^{-1}). \label{flucE187}
\end{align}
Taking infimum over $z \in [\bar{\alpha}, 1/\bar{\beta}]$ gives 
\begin{align}
\gamma_{m, n}^{\bfa, \bfb} = \frac{m}{n}  \inf \limits_{z \in [\bar{\alpha}, 1/\bar{\beta}]} g_{n, m}^{\bfa, \bfb}(z^{-1}) = \frac{m}{n} \inf \limits_{z \in [\bar{\beta}, 1/\bar{\alpha}]} g_{n, m}^{\bfa, \bfb}(z) = \frac{m}{n} \gamma_{n, m}^{\bfb, \bfa}, \label{flucE188}
\end{align}
by (\ref{flucE5}). Then (\ref{flucEq102}) follows from (\ref{flucE187}) and (\ref{flucE188}). Differentiating (\ref{flucE187}) with respect to $z$ twice gives 
\begin{align}
\partial_z^2 g_{m, n}^{\bfa, \bfb}(z) = \frac{m}{n} \bigg(\frac{\partial_z^2 g_{n, m}^{\bfb, \bfa}(z^{-1})}{z^4} + \frac{2 \partial_z g_{n, m}^{\bfa, \bfb}(z^{-1})}{z^3}\bigg). 
\end{align}
Set $z = \zeta_{m, n}^{\bfa, \bfb}$, and use (\ref{flucEq102}), $\partial_{z} g_{n, m}^{\bfb, \bfa}(\zeta_{n, m}^{\bfb, \bfa}) = 0$ and (\ref{flucE128}). Then, rearranging terms, we obtain (\ref{flucEq103}). 
Now it is immediate from (\ref{flucE17}), (\ref{flucEq104}) and (\ref{flucEq103}) that 
\begin{align}
p_{m, n}^{\bfa, \bfb}(s) = \lf n\gamma_{m, n}^{\bfa, \bfb} + n^{1/3} \sigma_{m, n}^{\bfa, \bfb}s \rf = \lf m\gamma_{n, n}^{\bfb, \bfa} + m^{1/3} \sigma_{n, m}^{\bfa, \bfb}s \rf = p_{n, m}^{\bfb, \bfa}(s).   
\end{align} 
Finally, (\ref{flucEq105}) can be seen from (\ref{flucE165}). 
\end{proof}

\begin{lem}
\label{flucL18}
Let $F: [0, \infty) \rightarrow [0, \infty)$ be continuous and nondecreasing. Furthermore, suppose that there exists $x_0 \in [0, \infty)$ such that $F$ is piecewise $\sC^1$ on $[x_0, \infty)$ and $F'(x) \ge 1$ for a.e. $x \ge x_0$. Then 
\begin{align*}
h \sum \limits_{l = 0}^\infty e^{-F(lh)} \le (1+h+x_0)F(0) \qquad \text{ for } h > 0. 
\end{align*} 
\end{lem}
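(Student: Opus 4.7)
The natural strategy is to compare the Riemann sum $h \sum_{l \ge 0} e^{-F(lh)}$ with the integral $\int_0^\infty e^{-F(x)}\,\dd x$, then split the integral at $x_0$ and use the hypothesis $F' \ge 1$ on $[x_0,\infty)$ to obtain exponential decay. (I believe the right-hand side of the inequality is meant to be $(1+h+x_0)\,e^{-F(0)}$, which is what the argument below yields.)

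The plan is as follows. First I would isolate the $l = 0$ term, contributing at most $h\, e^{-F(0)}$. For $l \ge 1$, monotonicity of $F$ gives $e^{-F(lh)} \le e^{-F(x)}$ for every $x \in [(l-1)h, lh]$, so that
\begin{align*}
h\, e^{-F(lh)} \le \int_{(l-1)h}^{lh} e^{-F(x)}\,\dd x,
\end{align*}
and summing over $l \ge 1$ telescopes to
\begin{align*}
h \sum_{l=1}^\infty e^{-F(lh)} \le \int_0^\infty e^{-F(x)}\,\dd x.
\end{align*}

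Next I would split this integral at $x_0$. On $[0,x_0]$ monotonicity alone gives $\int_0^{x_0} e^{-F(x)}\,\dd x \le x_0\, e^{-F(0)}$. On $[x_0,\infty)$ the piecewise $\sC^1$ hypothesis together with $F'(x) \ge 1$ a.e.\ and the fundamental theorem of calculus yield $F(x) \ge F(x_0) + (x - x_0) \ge F(0) + (x-x_0)$ for $x \ge x_0$, so
\begin{align*}
\int_{x_0}^\infty e^{-F(x)}\,\dd x \le e^{-F(0)} \int_{x_0}^\infty e^{-(x-x_0)}\,\dd x = e^{-F(0)}.
\end{align*}

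Combining the three contributions gives
\begin{align*}
h\sum_{l=0}^\infty e^{-F(lh)} \le h\, e^{-F(0)} + x_0\, e^{-F(0)} + e^{-F(0)} = (1 + h + x_0)\, e^{-F(0)},
\end{align*}
which is the claimed bound. There is no real obstacle here: the only subtle point is that the integral comparison works because $F$ is nondecreasing, and I do not need any regularity of $F$ on $[0, x_0)$ beyond continuity. If one insists on the literal right-hand side $(1+h+x_0) F(0)$ printed in the statement, then this is recovered from the bound above only when $F(0) \ge 1$ (using $e^{-F(0)} \le 1 \le F(0)$ in the final step), so the argument still applies in the regime where the inequality is of interest.
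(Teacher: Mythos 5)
Your proof is correct and follows essentially the same route as the paper's: isolate the $l=0$ term, compare the remaining Riemann sum with $\int_0^\infty e^{-F(x)}\,\dd x$ via monotonicity, split at $x_0$, and use $F'\ge 1$ to control the tail (the paper substitutes $1\le F'(x)$ into the integrand and evaluates $\int_{x_0}^\infty F'e^{-F}=e^{-F(x_0)}$, while you integrate the hypothesis first to get $F(x)\ge F(0)+(x-x_0)$; these are equivalent). You are also right that the printed right-hand side $(1+h+x_0)F(0)$ is a typo for $(1+h+x_0)e^{-F(0)}$, which is exactly what the paper's own proof delivers.
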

\begin{proof}
\begin{align*}
h \sum \limits_{l = 0}^\infty e^{-F(lh)} &\le he^{-F(0)} + \int \limits_0^\infty e^{-F(x)} dx = he^{-F(0)} + \int \limits_0^{x_0} e^{-F(x)}\ dx + \int \limits_{x_0}^\infty e^{-F(x)} dx \\
&\le (h+x_0)e^{-F(0)} + \int \limits_{x_0}^\infty F'(x)e^{-F(x)}\ dx = (h+x_0)e^{-F(0)} + e^{-F(x_0)} \\ 
&\le (1+h+x_0) e^{-F(0)}\qedhere
\end{align*}
\end{proof}
%
%\begin{lem}
%\label{flucL18}
%Let $p \ge 1$, $c > 0$ and $s_0 \ge 0$. There exists $C > 0$ such that 
%\begin{align*}
%\int \limits_{s}^\infty \exp(-c x_+^p)\ dx \le C\exp(-cs_+^p) \qquad \text{ for } s \ge -s_0. 
%\end{align*} 
%\end{lem} 
%\begin{proof}
%We have  
%\begin{align*}
%\int \limits_{s}^\infty \exp(-c x^p)\ dx \le \int \limits_{s}^\infty x^{p-1} \exp(-cx^p) \ dx = \frac{1}{pc} \exp(-cs^p) \qquad \text{ for } s \ge 1. 
%\end{align*}
%Using this, we also obtain 
%\begin{align*}
%\int \limits_{s}^\infty \exp(-c x_+^p)\ dx \le (1+s_0)+\frac{e^{-c}}{pc} = Ce^{-c} \le C\exp(-cs_+^{p}) \qquad \text{ for } -s_0 \le s \le 1, 
%\end{align*}
%where $C = e^c(1+s_0) + \dfrac{1}{pc}$. With this value of $C$, the desired bound holds. 
%\end{proof}

\begin{proof}[Proof of Theorem \ref{flucT2}]
Note from (\ref{flucE17}) that 
\begin{align}
\label{flucE74}
p_{m_k, n_k}^{\bfa, \bfb}(s)+l = p_{m_k, n_k}^{\bfa, \bfb}\bigg(s+\frac{l}{n_k^{1/3} \sigma_{m_k, n_k}^{\bfa, \bfb}}\bigg) \text{ for } s \in \bbR \text{ and } l \in \bbZ. 
\end{align}
Hence, by Lemma \ref{flucL7}b, 
\begin{align}
\bigg|I_{m_k, n_k, p_{m_k, n_k}^{\bfa, \bfb}(s) + l}^{\bfa, \bfb}\bigg| \le \frac{C}{n_k^{1/3}}\exp\bigg\{-cP_k\bigg(s+\frac{l}{n_k^{1/3}\sigma_{m_k, n_k}^{\bfa, \bfb}}\bigg)\bigg\} F_{m_k, n_k, p_{m_k, n_k}^{\bfa, \bfb}(s)+l}^{\bfa, \bfb}(\zeta_{m_k, n_k}^{\bfa, \bfb}) 
\label{flucE70}
\end{align}
for $l \in \bbZ_+, s \ge -s_0$, sufficiently large $k \in \bbN$ a.s.\ for some constants $C, c > 0$. Since
\begin{align*}
c_1^{\beta, \alpha} = \frac{1}{c_2^{\alpha, \beta}} < \frac{1}{r} <\frac{1}{c_1^{\alpha, \beta}} = c_2^{\beta, \alpha}, 
\end{align*}
we can also apply Lemma \ref{flucL7}b after interchanging $\bfa$ with $\bfb$ and $m_k$ with $n_k$. Then, utilizing the identities in Lemma \ref{flucL16}, we obtain 
\begin{align}
\bigg|I_{n_k, m_k, p_{m_k, n_k}^{\bfa, \bfb}(t) + l}^{\bfb, \bfa}\bigg| &= \bigg|I_{n_k, m_k, p_{n_k, m_k}^{\bfb, \bfa}(t) + l}^{\bfb, \bfa}\bigg| \nonumber\\
&\le \frac{C}{m_k^{1/3}}\exp\bigg\{-cP_k\bigg(t+\frac{l}{\sigma_{n_k, m_k}^{\bfb, \bfa}m_k^{1/3}}\bigg)\bigg\} F_{n_k, m_k, p_{n_k, m_k}^{\bfb, \bfa}(t)+l}^{\bfb, \bfa}(\zeta_{n_k, m_k}^{\bfb, \bfa}) \nonumber\\
&= C\frac{n_k^{1/3}}{m_k^{1/3}}\frac{1}{n_k^{1/3}} \frac{\exp\bigg\{-cP_k\bigg(t+\dfrac{l}{\sigma_{m_k, n_k}^{\bfa, \bfb}n_k^{1/3}}\bigg)\bigg\}} {F_{m_k, n_k, p_{n_k, m_k}^{\bfa, \bfb}(t)+l}^{\bfa, \bfb}(\zeta_{m_k, n_k}^{\bfa, \bfb})} \label{flucE69}
\end{align} 
for $l \in \bbZ_+, t \ge -s_0$, sufficiently large $k$ a.s.\  for some constants $C, c > 0$. By (\ref{flucE70}) and (\ref{flucE69}) (and using $\lim_{k \rightarrow \infty} n_k/m_k = 1/r$),   
\begin{align}
\bigg|I_{m_k, n_k, p_k(s) + l}^{\bfa, \bfb} I_{n_k, m_k, p_k(t) + l}^{\bfb, \bfa}\bigg| \le \frac{C\zeta_{k}^{p_k(s)-p_k(t)}}{n_k^{2/3}} \exp\bigg\{-c\bigg[P_k\bigg(s+\frac{l}{\sigma_k n_k^{1/3}}\bigg)+P_k\bigg(t+\frac{l}{\sigma_{k}n_k^{1/3}}\bigg)\bigg]\bigg\} \label{flucE191}
\end{align}
for sufficiently large $k$ a.s.\ for some constants $C, c > 0$, where we switched to the abbreviated notation on the right-hand side. Summing over $l \in \bbZ_+$, and using (\ref{flucE71}), Lemma \ref{flucL5} and Lemma \ref{flucL18} (with $F(x) = cP_k(s+x)+cP_k(t+x)$, $x_0 = s_0+c^{-2}+1$ and $h = \sigma_k^{-1}n_k^{-1/3}$) lead to 
\begin{align}
|\KK_k(p_k(s), p_k(t))| \le \frac{C \zeta_{k}^{p_k(s)-p_k(t)}}{n_k^{1/3}} \exp\{-cP_k(s)-cP_k(t)\} \label{flucE73}
\end{align}
for sufficiently large $k$ a.s.\ for some constants $C, c > 0$. Now Hadamard's inequality 
\begin{align*}|\det_{i, j \in [l]} A| \le \prod_{i \in [l]} \bigg(\sum_{j \in [l]} A(i, j)^2\bigg)^{1/2}\end{align*} and (\ref{flucE73}) imply 
\begin{align}
\det \limits_{i, j \in [l]}  \left[K_{k}(p_k(s_i), p_k(s_j))\right] &= \det \limits_{i, j \in [l]}\left[\zeta_k^{p_k(s_j)-p_k(s_i)}K_{k}(p_k(s_i), p_k(s_j))\right] \label{flucE87}\\
&\le \frac{C^l} {n_k^{l/3}} \prod_{i=1}^l e^{-c P_k(s_i)} \left(\sum_{j=1}^l e^{-2cP_k(s_j)}\right)^{1/2} \nonumber\\
&\le \frac{C^l l^{l/2}}{n_k^{l/3}} \prod_{i=1}^l e^{-c P_k(s_i)} \nonumber
\end{align}
for $s_1, \dots, s_l \ge -s_0$ and sufficiently large $k$ a.s.
\end{proof}

\begin{lem}
\label{flucL17}
Let $s_0 \ge 0$. There exist deterministic constants $C, c > 0$ such that, for a.e $(\bfa, \bfb)$, there exist $k_0=k_0^{\bfa, \bfb} \in \bbN$ such that
\begin{align*}
\sum \limits_{\substack{q_1, \dotsc, q_l \in \bbN \\ q_i \ge p_k(s_i)}} \det \limits_{i, j \in [l]} \left[K_{k}(q_i, q_j)\right] &\le C^l l^{l/2} \exp\bigg(-c\sum_{i=1}^l P_k(s_i)\bigg) \\
&= C^l l^{l/2} \exp\bigg(-c\sum_{i=1}^l \bigg((s_i)_+^{3/2} \wedge \{(s_i)_+ n_k^{1/3}\}\bigg)\bigg)
\end{align*}
for $l \in \bbN$, $k \ge k_0$ and $s_1, \dotsc, s_l \ge -s_0$. 
\end{lem}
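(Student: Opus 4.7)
The plan is to reduce the sum in Lemma \ref{flucL17} to the already-proved determinantal bound of Theorem \ref{flucT2} by reparametrizing the summation index, and then to exploit the summability of $e^{-cP_k(\cdot)}$ via Lemma \ref{flucL18}. The key observation is the shift identity (\ref{flucE74}): every integer $q \ge p_k(s)$ can be written as $q = p_k(s) + j = p_k(s + jh)$ for a unique $j \in \bbZ_+$, where $h := (n_k^{1/3}\sigma_{m_k,n_k}^{\bfa,\bfb})^{-1}$. Thus, after the substitution $q_i = p_k(t_i)$ with $t_i = s_i + j_i h$ and $j_i \in \bbZ_+$, one has $t_i \ge s_i \ge -s_0$, and Theorem \ref{flucT2} applies to give
\begin{align*}
\det\limits_{i,j \in [l]}[K_k(q_i, q_j)] \le C^l l^{l/2} n_k^{-l/3} \exp\Bigl(-c\sum_{i=1}^l P_k(s_i + j_i h)\Bigr)
\end{align*}
for $k \ge k_0$.

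Next, I would factor the resulting multi-sum as a product of one-dimensional sums and bound each factor separately:
\begin{align*}
\sum_{j_1,\dotsc,j_l \in \bbZ_+} \prod_{i=1}^l e^{-cP_k(s_i + j_i h)} = \prod_{i=1}^l \sum_{j \in \bbZ_+} e^{-cP_k(s_i + jh)}.
\end{align*}
Each inner sum I would estimate by applying Lemma \ref{flucL18} to $F(x) = cP_k(s_i + x)$, which is nondecreasing and piecewise $\sC^1$ on $[0,\infty)$. On the region where $P_k(y) = y^{3/2}$ (i.e.\ $0 \le y \le n_k^{2/3}$), one has $P_k'(y) = \tfrac{3}{2}y^{1/2}$, so $F'(x) \ge 1$ as soon as $s_i + x \ge 4/(9c^2)$; on the region $y > n_k^{2/3}$, $F'(x) = cn_k^{1/3} \ge 1$ for $k$ large. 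Hence the choice $x_0 := s_0 + 4/(9c^2)$ (independent of $i$, $l$, and $k$) works uniformly, yielding
\begin{align*}
\sum_{j \in \bbZ_+} e^{-cP_k(s_i + jh)} \le h^{-1}(1 + h + x_0)\,e^{-cP_k(s_i)} \le C' n_k^{1/3} e^{-cP_k(s_i)},
\end{align*}
where $C'$ absorbs the bound on $\sigma_{m_k,n_k}^{\bfa,\bfb}$ guaranteed by Lemma \ref{flucL5}.

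Multiplying the $l$ factors together contributes $C'^l n_k^{l/3}$, which exactly cancels the $n_k^{-l/3}$ from Theorem \ref{flucT2}, giving the claimed bound with constant $(CC')^l l^{l/2}$. The main subtlety is simply verifying that the threshold $x_0$ in Lemma \ref{flucL18} can be taken uniformly in all three parameters $i$, $k$, $l$; this is straightforward because the lower bound $s_i \ge -s_0$ is the same for each coordinate, and because the derivative of $P_k$ in the "linear" regime $y > n_k^{2/3}$ only improves with $k$. All other steps are algebraic, so I expect no essential obstacle beyond keeping track of the $n_k^{1/3}$ factors.
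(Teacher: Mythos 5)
Your proposal follows the same route as the paper: apply the shift identity (\ref{flucE74}) to rewrite $q_i = p_k(s_i + j_i h)$, invoke Theorem~\ref{flucT2} to bound the determinant, factor the multi-sum as a product of one-dimensional geometric-type sums, and control each via Lemma~\ref{flucL18} so that the $n_k^{1/3}$ factors cancel the $n_k^{-l/3}$ from Theorem~\ref{flucT2}. The uniform choice of $x_0$ you verify is the same consideration the paper handles by taking $x_0 = s_0 + c^{-2} + 1$ in (\ref{flucE76}), so the argument is correct and essentially identical.
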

\begin{proof}
By (\ref{flucE74}) and Theorem \ref{flucT2}, there exist $C, c > 0$ such that, for a.e. $\bfa$ and $\bfb$, there exist $k_0 = k_0^{\bfa, \bfb} \in \bbN$ such that 
\begin{align}
\label{flucE75}
\det \limits_{i, j \in [l]} [K_{k}(q_i, q_j)] \le \frac{C^l l^{l/2}}{n_k^{l/3}} \exp\bigg\{-c\sum_{i=1}^l P_k\bigg(s_i + \frac{q_i-p_k(s_i)}{n_k^{1/3}\sigma_k}\bigg)\bigg\} 
\end{align}
whenever $k \ge k_0$, $s_1, \dotsc, s_l \ge -s_0$ and $q_i \ge p_k(s_i)$ for $i \in [l]$. For each $i \in [l]$, by Lemma \ref{flucL18} (set $F(x) = cP_k(s_i+x)$, $x_0 = s_0 + c^{-2}+1$ and $h = n_k^{-1/3}\sigma_k$), 
\begin{align}
\label{flucE76}
\frac{1}{n_k^{1/3}\sigma_k} \sum \limits_{q \ge p_k(s_i)} \exp \bigg\{- cP_k\bigg(s_i + \frac{q-p_k(s_i)}{n_k^{1/3}\sigma_k}\bigg)\bigg\} &\le C'e^{-c P_k(s_i)}
\end{align} 
for some constant $C' > 0$. Summing (\ref{flucE75}) over $q_1, \dotsc, q_l$ and using (\ref{flucE76}) yield the result. 
\end{proof}

\begin{proof}[Proof of Theorem \ref{flucT6}]
By (\ref{flucE105}) and Lemma \ref{flucL17} (also recall that the determinants below are nonnegative, see (\ref{flucEq98})), for a.e. $\bfa$ and $\bfb$, 
\begin{align}
\bfP(G(m_k, n_k) > n_k \gamma_k + n_k s) &\le \sum \limits_{l=1}^\infty \frac{1}{l!} \sum \limits_{\substack{q_1, \dotsc, q_l \in \bbN \\ q_i \ge p_k(n_k^{2/3} \sigma_k^{-1}s)}} \det \limits_{i, j \in [l]} \left[\KK_{m_k, n_k}(q_i, q_j)\right] \nonumber\\
&\le \sum \limits_{l=1}^\infty \frac{C^l l^{l/2}e^{-clP_k(n_k^{2/3}\sigma_k^{-1}s)}}{l!} \nonumber \\ 
&\le e^{-cP_k(n_k^{2/3}\sigma_k^{-1}s)}  \sum \limits_{l=1}^\infty \frac{C^l l^{l/2}}{l!},  \nonumber 
\end{align}
which completes the proof. 
%for $s \ge 0$, sufficiently large $k \in \bbN$ and some constants $C, c > 0$. Redefine $C$ as the value of the last series, which  converges by the root test. {\color{red}Since $\lim_{k \rightarrow \infty} \gamma_k = \gamma$, we have $|\gamma-\gamma_k| < s$ and 
%\begin{align*}
%|(s+\gamma-\gamma_k)^{3/2}-s^{3/2}| \le |\gamma-\gamma_k|\frac{3\sqrt{2s}}{2} \le 1
%\end{align*}
%for sufficiently large $k$. Then, 
%\begin{align*}
%\bfP(G(m_k, n_k) > n_k \gamma + n_k s) &\le Ce^{-c(s+\gamma-\gamma_k)^{3/2}} \le Ce^c e^{-cs^{3/2}}
%\end{align*} 
%for $s \ge 0$ and sufficiently large $k \in \bbN$. }
\end{proof}

\section{Scaling limit for the correlation kernel}

For the results proved in this and the next section, it suffices to work with weaker versions of the bounds established in Section \ref{flucSe6}. Note that, given $s_0 \ge 0$, there exist constants $C, c > 0$ such that 
\begin{align}
\exp\{-P_k(s)\} = \exp(-\{s_+^{3/2} \wedge (s_+n_k^{1/3})\}) \le C \exp\{-cs\} \quad \text{ for } k \in \bbN \text{ and } s \ge s_0. 
\end{align}
In fact, we can take $c = 1$ and $C = \exp\{1+s_0\}$. 

\begin{proof}[Proof of Theorem \ref{flucT5}]
Each entry of the matrix on the right-hand side of (\ref{flucE87}) is bounded uniformly in $s_1, \dotsc, s_l \in [-s_0, s_0]$ a.s.\ by (\ref{flucE73}). Therefore, it suffices to show that  
\begin{align}
\label{flucE36}
\lim \limits_{k \rightarrow \infty} \sigma_k n_k^{1/3} \zeta_k^{p_k(t)-p_k(s)} \KK_{m_k, n_k}(p_k(s), p_k(t)) = \AiK(s, t)
\end{align}
uniformly in $s, t \in [-s_0, s_0]$ a.s. Let $L > 0$ and $\epsilon > 0$. By Lemma \ref{flucL7}a, there exists $k_0 \in \bbN$ such that 
\begin{align}
\left|\frac{\sigma_k {n_k}^{1/3} I_{m_k, n_k, p_k(s)+l}^{\bfa, \bfb}}{\zeta_k F_{m_k, n_k, p_k(s)+l}^{\bfa, \bfb}(\zeta_k)} - \Ai(s+n_k^{-1/3}\sigma_k^{-1}l)\right| &< \frac{\epsilon}{L} \label{flucE33.1}\\
\left|\frac{\sigma_{k} n_k^{1/3} I_{n_k, m_k, p_k(t)+l}^{\bfb, \bfa}}{\zeta_{k}^{-1} F_{n_k, m_k, p_k(t)+l}^{\bfb, \bfa}(\zeta_{k}^{-1})} - \Ai(t+n_k^{-1/3}\sigma_{k}^{-1}l)\right| &< \frac{\epsilon}{L}, \label{flucE33.3}
\end{align}
for $s, t \in [-s_0, s_0]$, $0 \le l < L\sigma_k n_k^{1/3}$ and $k \ge k_0$. Using Lemma \ref{flucL7}b, boundedness of the Airy function on $[-s_0, L+s_0]$ and (\ref{flucEq105}), we can combine (\ref{flucE33.1}) and (\ref{flucE33.3}) via triangle inequality to obtain 
\begin{align}
&\left|\sigma_k n_k^{1/3} \zeta_k^{p_k(t)-p_k(s)} I_{m_k, n_k, p_k(s)+l}^{\bfa, \bfb} I_{n_k, m_k, p_k(t)+l}^{\bfb, \bfa} - \sigma_k^{-1} n_k^{-1/3} \Ai(s+n_k^{-1/3}\sigma_k^{-1}l) \Ai(t+n_k^{-1/3}\sigma_k^{-1}l)\right| \nonumber\\
&< \frac{C\epsilon}{Ln_k^{1/3}} \label{flucE86}
\end{align} 
a.s.\ for some constant $C > 0$. By uniform continuity of the Airy function on $[-s_0, L+s_0]$, we also have  
\begin{align}
|\Ai(s+n_k^{-1/3}\sigma_k^{-1}l) \Ai(t+n_k^{-1/3}\sigma_k^{-1}l)-\Ai(s+x)\Ai(t+x)| < \frac{\epsilon}{L}
\label{flucE85}
\end{align}
whenever $s, t \in [-s_0, s_0]$, $0 < l \le L\sigma_k n_k^{1/3}$, $x \in [n_k^{-1/3}\sigma_k^{-1}(l-1), n_k^{-1/3}\sigma_k^{-1}l]$ and $k \ge k_0$, by choosing $k_0$ larger if necessary. It follows from (\ref{flucE86}) and (\ref{flucE85}) that 
\begin{align*}
\left|\sigma_k n_k^{1/3} \zeta_k^{p_k(t)-p_k(s)} I_{m_k, n_k, p_k(s)+l}^{\bfa, \bfb} I_{n_k, m_k, p_k(t)+l}^{\bfb, \bfa}- \int \limits_{n_k^{-1/3}\sigma_k^{-1}(l-1)}^{n_k^{-1/3}\sigma_k^{-1}l}\Ai(s+x)\Ai(t+x) \dd x\right| < \frac{C\epsilon}{Ln_k^{1/3}}
\end{align*}
a.s.\ for some constant $C > 0$. We now sum over $0 < l \le L\sigma_k \eta_k^{1/3}$ and obtain  
\begin{align}
\label{flucE35.1}
\left|\sigma_k n_k^{1/3} \zeta_k^{p_k(t)-p_k(s)} \sum \limits_{0 < l \le L\sigma_k n_k^{1/3}}I_{m_k, n_k, p_k(s)+l}^{\bfa, \bfb} I_{n_k, m_k, p_k(t)+l}^{\bfb, \bfa}- \int \limits_{0}^{L}\Ai(s+x)\Ai(t+x) \dd x\right| < C\epsilon 
\end{align} 
a.s.\ for some constant $C > 0$. Moreover, choosing $L$ large enough, we have 
\begin{align}
\label{flucE35.2}
\left |\int_{L}^{\infty} \Ai(s+x)\Ai(t+x) \dd x \right| < \epsilon
\end{align}
Finally, summing (\ref{flucE191}) over $l > L\sigma_k n_k^{1/3}$ gives  
\begin{align}
\label{flucE35.3}
\left|\sigma_k n_k^{1/3} \zeta_k^{p_k(t)-p_k(s)} \sum \limits_{l > L\sigma_k n_k^{1/3}}I_{m_k, n_k, p_k(s)+l}^{\bfa, \bfb} I_{n_k, m_k, p_k(t)+l}^{\bfb, \bfa} \right| \le Ce^{-c(s+t+2L)}< \epsilon 
\end{align}
for $s, t \in [-s_0, s_0]$, $k \ge k_0$ a.s.\ for some constants $C, c > 0$ provided that $L$ is sufficiently large. 
Then, we conclude (\ref{flucE36}) from (\ref{flucE35.1}), (\ref{flucE35.2}) and (\ref{flucE35.3}). 
\end{proof}

\section{Tracy-Widom fluctuations}

In this section, we combine Theorem \ref{flucT5} with standard estimates on the Airy kernel to establish the second equality in 
\begin{align}
\lim_{k \rightarrow \infty} \bfP^{\bfa, \bfb}(G(m_k, n_k) \le n_k \gamma_k + n_k^{1/3}\sigma_k s) &= \lim_{k \rightarrow \infty} \sum \limits_{l=1}^\infty \frac{(-1)^l}{l!} \sum \limits_{\substack{q_1, \dotsc, q_l\in \bbN \\ q_i \ge p_k(s)}} \det \limits_{i, j \in [l]}\left[K_{m_k, n_k}^{\bfa, \bfb}(q_i, q_j)\right] \nonumber \\ 
&= \sum \limits_{l=1}^\infty \frac{(-1)^l}{l!} \int \limits_{[s, \infty)^l} \det \limits_{i, j \in [l]} [\AiK(x_i, x_j)] \dd x_1 \dotsc \dd x_l \nonumber \\ %\label{flucEq54}\\
&= F_{\GUE}(s) \nonumber
\end{align}
for $s \in \bbR$ a.s. The first and last equalities come from (\ref{flucE105}) and (\ref{flucEq91}), respectively. 

\begin{lem}
\label{flucLe8}
For any $s_0 > 0$, there exists a constant $C > 0$ such that
\begin{align*}
\int \limits_{\substack{x_1, \dotsc, x_l \in \bbR \\ x_i \ge s_i}} \det \limits_{i, j \in [l]} \left[\AiK(x_i, x_j)\right] \dd x_1 \dotsc \dd x_l 
\le C^l e^{-\sum_{i=1}^l s_i}
\end{align*}
for $l \in \bbN$ and $s_1, \dotsc, s_l \ge -s_0$. 
\end{lem}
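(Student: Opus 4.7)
The plan is to exploit that $\AiK$ is positive semi-definite, which allows Hadamard's inequality to eliminate the alternating structure of the determinant, and then to reduce everything to a one-dimensional diagonal estimate.

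First, I would recall the integral representation $\AiK(x,y) = \int_0^\infty \Ai(x+t)\Ai(y+t)\,\dd t$ (equivalent to the Christoffel–Darboux form used earlier in Section \ref{flucSe2}). This representation makes $\AiK$ manifestly a positive semi-definite kernel: for any $x_1,\dotsc,x_l$, the matrix $[\AiK(x_i,x_j)]$ is the Gram matrix of $t \mapsto \Ai(x_i+t)$ in $L^2(0,\infty)$. Fischer's inequality for PSD matrices therefore gives
\begin{equation*}
0 \le \det_{i,j \in [l]}[\AiK(x_i,x_j)] \le \prod_{i=1}^l \AiK(x_i,x_i).
\end{equation*}
Since the integrand is non-negative, Tonelli's theorem reduces the multidimensional integral to a product of one-dimensional integrals:
\begin{equation*}
\int_{\{x_i \ge s_i\}} \det_{i,j \in [l]}[\AiK(x_i,x_j)]\,\dd x_1\dotsb \dd x_l \le \prod_{i=1}^l \int_{s_i}^{\infty} \AiK(x,x)\,\dd x.
\end{equation*}

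The second step is a pointwise bound $\AiK(x,x) \le C_0\, e^{-x}$ for all $x \ge -s_0$, with $C_0 = C_0(s_0)$. Using the identity $\AiK(x,x) = [\Ai'(x)]^2 - x [\Ai(x)]^2$ together with the classical asymptotics $\Ai(x) \sim \tfrac{1}{2\sqrt{\pi}}\,x^{-1/4} e^{-\frac{2}{3}x^{3/2}}$ and $\Ai'(x) \sim -\tfrac{1}{2\sqrt{\pi}}\,x^{1/4} e^{-\frac{2}{3}x^{3/2}}$ as $x \to \infty$, one sees that $\AiK(x,x)$ decays like $e^{-\frac{4}{3}x^{3/2}}$ times a polynomial factor, which is much faster than $e^{-x}$ for large $x$. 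On the compact interval $[-s_0,M]$, $\AiK(\cdot,\cdot)$ is continuous hence bounded. Taking $M$ large and $C_0$ correspondingly, the bound holds uniformly on $[-s_0,\infty)$.

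With these two ingredients in hand, the conclusion is immediate: for each $i$,
\begin{equation*}
\int_{s_i}^{\infty} \AiK(x,x)\,\dd x \le C_0 \int_{s_i}^{\infty} e^{-x}\,\dd x = C_0\, e^{-s_i},
\end{equation*}
so the product is bounded by $C_0^{\,l}\,e^{-\sum_{i=1}^l s_i}$, proving the lemma with $C = C_0$.

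The only subtle point is the use of Fischer's inequality, which crucially relies on positive semi-definiteness; the naive Hadamard bound via Cauchy–Schwarz would introduce an unwanted combinatorial factor $l^{l/2}$ (as indeed appeared in Theorem \ref{flucT2} at the prelimit level). The exponential rate in the diagonal bound is far from optimal—$\AiK(x,x)$ in fact decays super-exponentially—but $e^{-x}$ is all that is needed here and has the convenient feature of remaining integrable while matching the form of the desired right-hand side.
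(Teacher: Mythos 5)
Your proposal is correct and matches the paper's own argument: both establish that $[\AiK(x_i,x_j)]$ is a nonnegative-definite Gram matrix via the integral representation, bound the determinant by the product of diagonal entries (the paper calls this Hadamard's inequality for PSD matrices; you call it Fischer's inequality — same result), and integrate the diagonal bound $\AiK(x,x) \le C e^{-x}$, which the paper obtains directly from the Airy-function estimate (\ref{flucEq89}) rather than via asymptotics plus compactness as you do. The two proofs differ only in these cosmetic details.
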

\begin{proof}
It follows from (\ref{flucEq106}) that $A$ is symmetric and 
\begin{align*}
\sum_{i, j \in [l]} v_i \AiK(x_i, x_j) v_j &= \sum_{i, j \in [l]} \int_{0}^\infty v_i \Ai(x_i+t) \Ai(x_j+t) v_j dt \\
&= \int_{0}^\infty \sum_{i, j \in [l]} v_i \Ai(x_i+t) \Ai(x_j+t) v_j dt \\
&= \int_{0}^\infty \left(\sum_{i=1}^l \Ai(x_i+t) v_i\right)^2 dt \ge 0
\end{align*}
for any $v_1, \dotsc, v_l \in \bbR$. That is, $[\AiK(x_i, x_j)]_{i, j \in [l]}$ is a nonnegative-definite matrix. Therefore, by (\ref{flucEq89}) and Hadamard's inequality, there exists a constant $C > 0$ such that 
\begin{align*}
\det_{i, j \in [l]} [\AiK(x_i, x_j)] \le \prod_{i=1}^l \AiK(x_i, x_i) \le C^l e^{-\sum_{i=1}^l x_i}
\end{align*}
whenever $x_i \ge -s_0$ for $i \in [l]$. Integrating over $x_i \ge s_i$ for $i \in [l]$, where $s_i \ge -s_0$, completes the proof.  
\end{proof}

\begin{proof}[Proof of Theorem \ref{flucT1}]
Introduce $\epsilon > 0$ and $S > s$. By Lemma \ref{flucL17}, 
\begin{align*}
\sum \limits_{l=1}^\infty \frac{1}{l!} \sum \limits_{\substack{q_1, \dotsc, q_l \in \bbN \\ q_i \ge p_k(s) \\ \max q_i \ge p_k(S)}} \det\limits_{i, j \in [l]} \left[\KK_{m_k, n_k}(q_i, q_j)\right] 
&\le Ce^{-cS} \sum \limits_{l=1}^\infty \frac{l^{l/2} e^{-cs(l-1)}C^{l-1}}{(l-1)!} 
\end{align*}
for $k \ge k_0$ for some $k_0 \in \bbN$ a.s.\ for some constants $C, c > 0$. The right-hand side is finite by the root test and can be made less than $\epsilon$ choosing $S$ sufficiently large. For such $S$, we similarly obtain from Lemma \ref{flucLe8} that
\begin{align*}
\sum \limits_{l=1}^\infty \frac{1}{l!} \int \limits_{\substack{x_1, \dotsc, x_l \ge s \\ \max x_i \ge S}} \det \limits_{i, j \in [l]} [\AiK(x_i, x_j)] \dd x_1 \dotsc \dd x_l < \epsilon. 
\end{align*}
Hence, it suffices to prove
\begin{align}
&\lim_{k \rightarrow \infty} \sum \limits_{l=1}^\infty \frac{(-1)^l}{l!} \sum \limits_{\substack{q_1, \dotsc, q_l \in \bbN \\ p_k(s) \le q_i < p_k(S)}} \det \limits_{i, j \in [l]}\left[\KK_{m_k, n_k}(q_i, q_j)\right] \nonumber\\
&= \sum \limits_{l=1}^\infty \frac{(-1)^l}{l!} \int \limits_{[s, S)^l} \det_{i, j \in [l]} \limits [\AiK(x_i, x_j)] \dd x_1 \dotsc \dd x_l. 
\label{flucEq106}
\end{align}
for $s \in \bbR$ a.s. By Lemma \ref{flucL17} and finiteness of $\sum_{l=1}^\infty l^{l/2}C^le^{-csl}/l!$, we can conclude the last statement from dominated convergence if we show that  
\begin{align}
\label{flucEq37}
\lim \limits_{k \rightarrow \infty} \sum \limits_{\substack{q_1, \dotsc, q_l \in \bbN \\ p_k(s) \le q_i < p_k(S)}} \det \limits_{i, j \in [l]} \left[\KK_{m_k, n_k}(q_i, q_j)\right] = \int \limits_{[s, S]^l} \det \limits_{i, j \in [l]} \left[\AiK(x_i, x_j)\right] \dd x_1 \dotsc \dd x_l. 
\end{align}
for each $l \in \bbN$ for $s \in \bbR$ a.s. 

Fix $l \in \bbN$. For $k \in \bbN$, consider the partition of the interval $[s, \infty)$ into intervals of length $\sigma_k^{-1} n_k^{-1/3}$ with endpoints at 
\begin{align*}t(q, k) = s+(q-p_k(s))\sigma_k^{-1}n_k^{-1/3}\end{align*}
for $q \ge p_k(s)$. Observe that 
\begin{align}p_k(t(q, k)) = \lf n_k \gamma_k + n_k^{1/3}\sigma_k t(q, k) \rf = \lf n_k \gamma_k+n_k^{1/3}\sigma_k s\rf + q-p_k(s) = q.\end{align} 
Also, for $p_k(s) \le q_1, \dotsc, q_l < p_k(S)$, we have 
$s \le t(q_i, k) < s + (p_k(S)-p_k(s)) \sigma_k^{-1} n_k^{-1/3} \le S+1$ for each $i \in [l]$.  
Therefore, by Theorem \ref{flucT5}, there exists $k_0 \in \bbN$ such that
\begin{align}
\label{flucE38}
\left| \det\limits_{i, j \in [l]} \left[\KK_{m_k, n_k}(q_i, q_j)\right]  - \sigma_k^{-l} n_k^{-l/3}\det_{i, j \in [l]} \left[\AiK(t(q_i, k), t(q_j, k))\right]\right| < \epsilon \sigma_k^{-l} n_k^{-l/3}
\end{align}  
whenever $k \ge k_0$ and $p_k(s) \le q_i < p_k(S)$ for $i \in [l]$. By uniform continuity of $\det[\AiK(x_i, x_j)]_{i, j \in [l]}$ on $[s, S+1]^{l}$, choosing $k_0$ larger if necessary, we obtain   
\begin{align}
\label{flucE39.1}
\left| \det\limits_{i, j \in [l]} \left[\KK_{m_k, n_k}(q_i, q_j)\right]  - \int_{R_{q_1, \cdots, q_l, k}} \det \limits_{i, j \in [l]}[\AiK(x_i, x_j)]\  \dd x_1 \cdots \dd x_l  \right| < 2\epsilon \sigma_k^{-l} n_k^{-l/3}, 
\end{align}
where $R_{q_1, \dotsc, q_l, k}$ denotes the product of the intervals $[t(q_i, k), t(q_i+1, k)]$ for $i \in [l]$. The pairwise intersections of $\{R_{q_1, \dotsc, q_l, k}: p_k(s) \le q_i < p_k(S)\}$ are Lebesgue null-sets and their union is 
\begin{align}\label{flucE91}[t(p_k(s), k), t(p_k(S), k)]^l = [s, t(p_k(S), k)]^l. \end{align}
Hence, by the triangle inequality and (\ref{flucE39.1}), 
\begin{align}
\label{flucEq40}
&\left|\sum \limits_{\substack{q_1, \dotsc, q_l \in \bbN \\ p_k(s) \le q_i < p_k(S)}} \det\limits_{i, j \in [l]} \left[\KK_{m_k, n_k}(q_i, q_j)\right]  - \int_{[s, t(p_k(S), k)]^l} \det \limits_{i, j \in [l]} [\AiK(x_i, x_j)] \dd x_1 \cdots \dd x_l  \right| \\
&< 2\epsilon\sigma_k^{-l}n_k^{-l/3}(p_k(S)-p_k(s))^l \le 2\epsilon (S-s+1)^l \nonumber
\end{align}
The set in (\ref{flucE91}) differs from $[s, S]^l$ by a set of measure 
\begin{align}\label{flucE92}|t(p_k(S), k)^l-S^l| \le \sigma_k^{-1}n_k^{-1/3} l (S+1)^{l-1}, \end{align}
where the inequality follows from $|t(p_k(S), k)-S| \le \sigma_k^{-1} n_k^{-1/3}$ and the mean value theorem.  Because (\ref{flucE92}) can be made arbitrarily small and the Airy kernel is bounded on $[s, S+1]$,  we have
\begin{align}
\label{flucEq41}
\left|\int_{[s, t(p_k(S), k)]^l} \det \limits_{i, j \in [l]}[\AiK(x_i, x_j)] \dd x_1 \cdots \dd x_l   - \int_{[s, S]^l} \det \limits_{i, j \in [l]}[\AiK(x_i, x_j)] \dd x_1 \cdots \dd x_l  \right| < \epsilon, 
\end{align}
for $k \ge k_0$ by choosing $k_0$ large enough. Now (\ref{flucEq37}) follows from combining (\ref{flucEq40}) and (\ref{flucEq41}).  
\end{proof}

%% Start the appendices:
\appendix       % Chapters, sections are now appendix style
\chapter{Appendix}\label{flucappendix}
%\fixchapterheading
\setcounter{theorem}{0}

\section{Tracy-Widom GUE distribution}
\label{flucSe2}

%We recall the definitions of the Airy function, the Airy kernel and the Tracy-Widom GUE distribution, and record some standard facts about them that will be needed later. For brevity, we omit the proofs, which can be found, for example, in \cite[Chapter~4]{Seppalainen}. 

The Airy function can be defined as the contour integral   
\begin{equation}
\label{flucE9}
\Ai(s) = \frac{1}{2\pi\ii} \int \limits_{\sC} e^{z^3/3 - sz} \dd z \quad \text{ for } s \in \bbR, 
\end{equation}
where contour $\sC$ consists of the rays from $\infty e^{-{\ii}\theta}$ to $0$ and from $0$ to $\infty e^{\ii \theta}$ for any $\theta \in (\pi/6, \pi/2)$. This integral is absolutely and uniformly convergent on compact subsets of $\bbR$. Up to a constant factor, the Airy function is the unique solution of the ODE  
\begin{align*}
\frac{d^2 u}{ds^2} = su, \quad s \in \bbR, 
\end{align*}
known as the Airy equation, subject to the condition $u(s) \rightarrow 0$ as $s \rightarrow \infty$ \cite[Chapter~9]{Olver}. In the sequel, we will use continuity of the Airy function and the following bound. Given $s_0 > 0$, there exists a constant $C > 0$ such that 
\begin{align}
\label{flucEq89}
|\Ai(s)| \le Ce^{-s} \quad \text{ for } s \ge -s_0. 
\end{align}
These properties can be derived from (\ref{flucE9}); we suggest \cite[Chapter~4]{Seppalainen09} for details. 

One way to define the Airy kernel is by 
\begin{equation}
\AiK(s, t) = \int \limits_{0}^\infty \Ai(s+x) \Ai(t + x) \dd x \quad \text{ for } s, t \in \bbR, 
\end{equation}
where the absolute and the uniform convergence of the integral over compact subsets of $\bbR^2$ are ensured by (\ref{flucEq89}). Moreover, the Airy kernel is continuous and for each $s_0 > 0$ there is a constant $C > 0$ such that
\begin{align}
\label{flucEq90}
|\AiK(s, t)| \le Ce^{-s-t} \quad \text{ for } s, t \ge -s_0. 
\end{align}
For any $a \in \bbR$, we can view the Airy kernel as the kernel of the integral operator on $L^2((a, \infty))$ that maps $f$ to the function 
\begin{align*}s \mapsto \int_a^\infty \AiK(s, t) f(t) \dd t. \end{align*}
That this image is in $L^2((a, \infty))$ comes from (\ref{flucEq90}) and an application of the Cauchy-Schwarz inequality. 

The $n \times n$ Gaussian Unitary Ensemble (GUE) is the distribution of the random matrix $X = [X(i, j)]_{i, j \in [n]}$ with the following properties. 
\begin{enumerate}[(\romannumeral1)]
\item $X(i, i)$ is distributed as the real normal distribution with mean $0$ and variance $1$ for $i \in [n]$.  
\item $X(i, j)$ is distributed as the complex normal distribution with mean $0$ and variance $1$ for distinct $i, j \in [n]$. That is, $\Re \{X(i, j)\}$ and $\Im \{X(i, j)\}$ are distributed as independent real normal distributions with mean $0$ and variance $1/2$ for distinct $i, j \in [n]$. 
\item The entries $\{X(i, j): 1 \le i \le j \le n\}$ are independent. 
\item $X$ is Hermitian, that is, $X(i, j) = \conj{X(j, i)}$ for $i, j \in [n]$. 
\end{enumerate}
Being a Hermitian matrix, $X$ has $n$ real eigenvalues $\lambda_1 \ge \dotsc \ge \lambda_n$. The Tracy-Widom GUE distribution arises as the distributional limit of the rescaled largest eigenvalue
\begin{align*}n^{1/6}(\lambda_1-2n^{1/2})\end{align*}
as $n \rightarrow \infty$.  
Its cumulative distribution function is given by the following Fredholm determinant of the integral operator whose kernel is the Airy kernel \cite{Forrester}:
\begin{equation}
\label{flucEq91}
F_{\GUE}(s) = 1 + \sum \limits_{l=1}^\infty \frac{(-1)^l}{l!} \int \limits_{[s, \infty)^l} \det_{i, j \in [l]} [\AiK(x_i, x_j)] \dd x_1 \dotsc \dd x_l \quad \text{ for } s \in \bbR.
\end{equation}
The absolute convergence of the series above follows from (\ref{flucEq90}) and Hadamard's inequality, see Lemma \ref{flucLe8} below. 
Another characterization of the Tracy-Widom distribution is 
\begin{align*}
F_{\GUE}(s) = \exp\left(-\int_{s}^\infty (t-s) q(t)^2 \dd t \right) \quad \text{ for } s \in \bbR, 
\end{align*}
where $q$ is the unique solution of the Painlev\'{e} \RNum{2} equation
\begin{align*}
\frac{d^2u}{ds^2} = 2u^3 + su \quad \text{ for } s \in \bbR
\end{align*}
subject to the condition $u(s)/\Ai(s) \rightarrow 1$ as $s \rightarrow \infty$, see \cite{TracyWidom}. 

\section{Steepest-descent curves of harmonic functions}
\label{flucAp1}

In this section, we define precisely and discuss general properties of steepest-descent curves of harmonic functions. Our primary interest is in the class of harmonic functions that are real linear combinations of translated and dilated versions of $z \mapsto \log |z|$. A motivating example is given by (\ref{flucE4}). We develop lemmas that describe the global nature of and provide sufficiently strong control over steepest-descent curves. Some of the facts presented below 
%(especially until {\color{red} Lemma ...}) 
are routinely used in steepest-descent analysis. We phrase these as formal statements for ease of reference and include their proofs to keep the discussion more self-contained. 

\subsection{Definition and basic properties}
\label{S2Sub1}

We will appeal to the standard theory of ODEs through Theorem \ref{flucT4} below. 
Let $U$ be an open subset of $\bbR^d$ and $F: U \rightarrow \bbR^d$ be $\sC^1$. Let $t_0 \in \bbR$ and $x_0 \in U$. Consider the initial value problem 
\begin{align}
\label{flucE54}
x(t_0) = x_0 \qquad x'(t) = F(x(t)). 
\end{align}
Let $I$ be an open interval containing $t_0$ and $\varphi: I \rightarrow U$ be $\sC^1$. We call $\varphi$ a solution of (\ref{flucE54}) if $\varphi(t_0) = x_0$ and $\varphi'(t) = F(\varphi(t))$ for $t \in I$. 
\begin{thm}
\label{flucT4}\ 
\begin{enumerate}[(a)]
\item There exists a unique solution $\Phi: J \rightarrow U$ of (\ref{flucE54}) such that if $\varphi: I \rightarrow U$ is any other solution of (\ref{flucE54}) then $I \subset J$ and $\varphi$ is the restriction of $\Phi$ to $I$.  
\item Let $K \subset U$ be compact. Write $t^-$ and $t^+$ for the left and right endpoints of $J$, respectively. Then either $t^- = -\infty$ or $\Phi(t) \not \in K$ for some $t \in (t^-, t_0]$. Similarly, either $t^+= \infty$ or $\Phi(t) \not \in K$ for some $t \in [t_0, t^+)$. 
\end{enumerate}
\end{thm}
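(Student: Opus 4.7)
The plan is to treat Theorem \ref{flucT4} as the standard existence/uniqueness and continuation theorem for ODEs and to give a proof along textbook lines, first building the maximal solution by gluing local solutions, then ruling out the possibility that a maximal solution with finite endpoint stays in a compact set.

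For part (a), I would start from the Picard--Lindelöf theorem: since $F$ is $\mathcal{C}^1$ on $U$, it is locally Lipschitz, so through every $(t_0, x_0)$ there is some open interval $I_0 \ni t_0$ on which (\ref{flucE54}) has a unique $\mathcal{C}^1$ solution. The key local uniqueness statement is that if $\varphi_1: I_1 \to U$ and $\varphi_2: I_2 \to U$ are two solutions of (\ref{flucE54}) with $t_0 \in I_1 \cap I_2$, then $\varphi_1 = \varphi_2$ on the connected component of $I_1 \cap I_2$ containing $t_0$; this follows from Grönwall applied to $|\varphi_1(t)-\varphi_2(t)|$ using the local Lipschitz bound on $F$. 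Let $J$ be the union of all open intervals $I$ containing $t_0$ on which a solution of (\ref{flucE54}) is defined, and define $\Phi(t)$ on $J$ by picking any such solution at $t$; local uniqueness makes this well-defined and $\mathcal{C}^1$, and by construction $\Phi$ is the unique maximal solution required in (a).

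For part (b), I would argue by contradiction, considering only the right endpoint $t^+$; the left endpoint is symmetric. Suppose $t^+ < \infty$ and $\Phi([t_0, t^+)) \subset K$ for some compact $K \subset U$. Since $F$ is continuous, $M := \sup_K |F| < \infty$, so $|\Phi'(t)| \le M$ on $[t_0, t^+)$, which makes $\Phi$ Lipschitz with constant $M$. Hence $\Phi(t)$ is uniformly Cauchy as $t \uparrow t^+$ and admits a limit $x_* \in K \subset U$. Extend $\Phi$ to $[t_0, t^+]$ by $\Phi(t^+) = x_*$; this extension is continuous, and since $\Phi'(t) = F(\Phi(t)) \to F(x_*)$ as $t \uparrow t^+$, the extended map is $\mathcal{C}^1$ at $t^+$ with $\Phi'(t^+) = F(x_*)$. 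Now apply the local existence theorem at $(t^+, x_*)$ to get a solution on some $(t^+ - \delta, t^+ + \delta)$; gluing via the local uniqueness statement from part (a) produces a solution on $J \cup (t^+ - \delta, t^+ + \delta)$, contradicting the maximality of $J$. Therefore either $t^+ = \infty$ or $\Phi$ must leave every compact $K \subset U$ on $[t_0, t^+)$.

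The only step I expect to require care is ensuring that the limit $x_* = \lim_{t \uparrow t^+} \Phi(t)$ actually lies in $U$ and not merely in $\overline{U}$, but this is immediate from $x_* \in K \subset U$, which is precisely where compactness of $K$ (as opposed to boundedness of $\Phi$) enters. Everything else is a routine packaging of Picard--Lindelöf plus Grönwall, so the proof should be short.
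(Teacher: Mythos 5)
The paper does not prove Theorem \ref{flucT4} itself; it cites \cite[Chapter~1]{Arnold} and \cite[Section~7.2]{HirschSmaleDevaney} as standard references. Your proposal is a correct and complete proof along exactly those standard lines: Picard--Lindel\"{o}f plus Gr\"{o}nwall for local existence and uniqueness, gluing over the union of domains to produce the maximal solution $\Phi$ on $J$, and a continuation/escape argument for (b) in which the $C^1$ bound $|F| \le M$ on $K$ makes $\Phi$ Lipschitz and hence Cauchy as $t \uparrow t^+$, forcing the limit $x_*$ into $K \subset U$ so that the local existence theorem at $(t^+, x_*)$ contradicts maximality. The one point you flag yourself (that $x_*$ lands in $U$ and not just $\overline{U}$) is handled correctly via compactness of $K$. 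In short, your proof matches the standard treatment the paper defers to, and there is nothing missing.
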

See, for example, \cite[Chapter~1]{Arnold} and \cite[Section~7.2]{HirschSmaleDevaney} for a proof. 

Let $u \in \sC^2(U)$ be real-valued. Let $I$ be an open interval and $\varphi: I \rightarrow U$ be a $\sC^1$ curve with unit speed i.e. $|\varphi'(t)| = 1$ for $t \in I$. 
Since $|\varphi'| = 1$, we have
\begin{align}
\label{flucE59}
t-s = \int \limits_s^t |\varphi'(\tau)| \dd \tau \ge \bigg|\int \limits_s^t \varphi'(\tau) \dd \tau\bigg| = |\varphi(t)-\varphi(s)| \ge ||\varphi(t)|-|\varphi(s)|| 
\end{align} 
for $s, t \in I$ with $s < t$.
 
\begin{defn}\label{flucD1} We call $\varphi$
a steepest-descent curve of $u$ if
\begin{align}\frac{d}{dt}u(\varphi(t)) = - |\nabla u(\varphi(t))| \quad \text{ for } t \in I, \label{flucE40}\end{align}
a steepest-ascent curve of $u$ if 
\begin{align}\frac{d}{dt}u(\varphi(t)) = |\nabla u(\varphi(t))| \quad \text{ for } t \in I, \label{flucE41}\end{align}
a stationary curve of $u$ if 
\begin{align}\frac{d}{dt}u(\varphi(t)) = 0 \quad \text{ for } t \in I. \label{flucE42}\end{align}
\end{defn}
It is clear from (\ref{flucE42}) that if $\varphi$ is a stationary curve of $u$ then the reversed curve $\tilde{\varphi}: t \mapsto \varphi(-t)$ defined for $-t \in I$ is also a stationary curve of $u$. Similarly, by (\ref{flucE40}) and (\ref{flucE41}), $\varphi$ is a steepest-descent curve of $u$ if and only if $\tilde{\varphi}$ is a steepest-ascent curve of $u$. 

Here on $d = 2$. Identify $(x, y) \in \bbR^2$ with $z = x + \bfi y \in \bbC$. Let $V = \{z \in U: \nabla u(z) \neq 0\}$. Assume that $u$ is nonconstant and, thus, $V$ is nonempty. Furthermore, assume that $\varphi(I) \subset V$. 
Then the chain rule $\frac{d}{dt}u(\varphi(t)) = \nabla u(\varphi(t)) \cdot \varphi'(t)$ implies that (\ref{flucE40}), (\ref{flucE41}) and (\ref{flucE42}) are equivalent to
\begin{align}
\varphi'(t) &= -\frac{\nabla u(\varphi(t))}{|\nabla u(\varphi(t)|} \quad \text{ for } t \in I, \label{flucE79}\\ 
\varphi'(t) &= \frac{\nabla u(\varphi(t))}{|\nabla u(\varphi(t)|} \quad \ \ \ \text{ for } t \in I, \label{flucE80}\\
\varphi'(t) &= \pm \ii \frac{\nabla u(\varphi(t))}{|\nabla u(\varphi(t)|} \quad \text{ for } t \in I, \label{flucE43}
\end{align} 
respectively. Since $\dfrac{\nabla u}{|\nabla u|} \in \sC^1(V)$, if $\varphi$ satisfies one of (\ref{flucE79}), (\ref{flucE80}) and (\ref{flucE43}) then, by Theorem \ref{flucT4}a, we can extend $\varphi$ as a solution of the ODE uniquely to a maximal domain.   

From this point on, when we refer to a steepest-descent or -ascent curve of $u$, it will be understood that the image of the curve is contained in $V$. There are two reasons for the special treatment of the zeros of $\nabla u$. First, there are multiple steepest-descent and -ascent curves emanating from a zero, which require some work to describe them precisely, see the paragraph of (\ref{flucE110}) below. Second,  as we will observe in Lemma \ref{flucL6}, when a steepest-descent curve $u$ goes through a zero of $\nabla u$ it can turn into a steepest-ascent curve, and vice versa. Such behavior naturally leads us to separate the analysis of the curve into two cases, namely, before and after the curve goes through the zero. 

Now suppose that $u$ is harmonic i.e. $\Delta u= 0$ where $\Delta = \partial_x^2 + \partial_y^2$ is the Laplacian. On any open set $U' \subset U$ that is simply connected, $u$ equals the real part of a holomorphic function, which is unique up to a purely imaginary constant. Let $a \in U$ and $r > 0$ such that $\Disc(a, r) \subset U$. Since disks are simply-connected, there exists $f \in \Hol(\Disc(a, r))$ such that $u = \Re f$ on $\Disc(a, r)$. Let $v = \Im f$. %Along a steepest-descent curve of The utility of the next lemma is that $f \circ \varphi$ is a strictly monotone real-valued plus a pure imaginary constant. 
\begin{lem}
\label{flucL1}
$\varphi$ is a steepest-descent or -ascent curve of $u$ if and only if $\varphi$ is a stationary curve of $v$. 
\end{lem}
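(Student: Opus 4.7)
The plan is to reduce the statement to an elementary consequence of the Cauchy-Riemann equations, since both $u$ and $v$ are real parts/imaginary parts of the same holomorphic function $f$ on $\Disc(a,r)$. First I would record the key identity: writing $\nabla u = (u_x, u_y)$ and $\nabla v = (v_x, v_y)$, the Cauchy-Riemann equations $u_x = v_y$ and $u_y = -v_x$ yield $\nabla v = (-u_y, u_x)$, i.e.\ $\nabla v$ is $\nabla u$ rotated counterclockwise by $\pi/2$. Under the identification $\bbR^2 \cong \bbC$, this is simply $\nabla v = \ii \, \nabla u$, and in particular $|\nabla v| = |\nabla u|$ and $\nabla v \perp \nabla u$ at every point of $\Disc(a,r)$.

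Next I would translate both sides of the equivalence into the common language of ODEs on the unit-speed curve $\varphi$. Assume $\varphi(I) \subset V$, so $\nabla u \circ \varphi$ (and hence $\nabla v \circ \varphi$) never vanishes. By definition and the chain rule, $\varphi$ is a stationary curve of $v$ iff
\begin{align*}
0 = \tfrac{d}{dt}v(\varphi(t)) = \nabla v(\varphi(t)) \cdot \varphi'(t) \quad \text{for } t \in I.
\end{align*}
Since $\nabla v(\varphi(t)) \perp \nabla u(\varphi(t))$ and both have the same nonzero length, this orthogonality is equivalent to $\varphi'(t)$ being parallel to $\nabla u(\varphi(t))$. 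Combined with $|\varphi'(t)| = 1$, this forces
\begin{align*}
\varphi'(t) = \pm \frac{\nabla u(\varphi(t))}{|\nabla u(\varphi(t))|} \quad \text{for } t \in I,
\end{align*}
which, by (\ref{flucE79})--(\ref{flucE80}), is exactly the statement that $\varphi$ is either a steepest-descent or a steepest-ascent curve of $u$. One direction of the equivalence thus becomes immediate; the other direction runs the same argument in reverse, noting that both $+$ and $-$ sign choices produce a curve whose velocity is orthogonal to $\nabla v$.

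There is no real obstacle: the entire content is the $\pi/2$-rotation relation $\nabla v = \ii\,\nabla u$ coming from Cauchy-Riemann. The only mild point of care is that the equivalence is formulated pointwise and requires $\nabla u \neq 0$ on $\varphi(I)$ for the normalization $\nabla u / |\nabla u|$ to make sense, but this is precisely the standing convention after Definition \ref{flucD1} that curves in question lie in $V$. No compactness, existence, or uniqueness results from Theorem \ref{flucT4} are needed here; those will be invoked later when we use Lemma \ref{flucL1} to construct and extend the concrete curves $\Phi_{m,n}$ and $\Psi_{m,n}$.
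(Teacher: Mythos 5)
Your proof is correct and takes essentially the same approach as the paper's: both hinge on the Cauchy-Riemann rotation $\nabla v = \ii\,\nabla u$ (the paper writes this as $\nabla u = \conj{f'}$, $\nabla v = \ii\conj{f'}$) and then read off the equivalence from the ODE characterizations (\ref{flucE79}), (\ref{flucE80}), (\ref{flucE43}). Your phrasing in terms of orthogonality/parallelism plus unit speed is a slightly more verbose unpacking of the same step the paper calls ``immediate,'' but the argument is the same.
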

\begin{proof}
The Cauchy-Riemann equations $\partial_x u = \partial_y v$ and $\partial_y u = - \partial_x v$ imply that, for $z \in \Disc(a, r)$,  
\begin{align}
\label{flucE44}
\nabla u(z) = (\partial_x u(z), -\partial_x v(z)) = \conj{f'(z)} \qquad \nabla v(z) = (\partial_x v(z), \partial_x u(z)) = \bfi \conj{f'(z)}. 
\end{align}
Now the conclusion is immediate from (\ref{flucE79}), (\ref{flucE80}) and (\ref{flucE43}). 
\end{proof}

\subsection{Steepest-descent and -ascent curves that emanate from a point}
\label{S2Sub2}

We will utilize the following basic estimate.  
\begin{lem}[Cauchy estimate]
\label{flucL20}
Let $m \in \bbZ_+$, $z_0 \in \bbC$, $0 < \rho_1 < \rho_2 < \rho$, and $F \in \Hol(\Disc(z_0, \rho))$. Then 
\begin{align}
\bigg|F(z)-\sum \limits_{i = 0}^m \frac{F^{(i)}(z_0)}{i!}(z-z_0)^i\bigg| \le \frac{\rho_1^{m+1}}{\rho_2^{m+1}} \frac{\sup \limits_{\Circle(z_0, \rho_2)} |F|}{1-\dfrac{\rho_1}{\rho_2}} \quad \text{ for } z \in \conj{\Disc}(z_0, \rho_1). \label{appE1}
\end{align}
\end{lem}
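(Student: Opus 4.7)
The plan is to derive the bound directly from the Taylor series expansion of $F$ centered at $z_0$ and Cauchy's estimate for the derivatives. Since $F \in \Hol(\Disc(z_0, \rho))$ and $\rho_2 < \rho$, the Taylor series
\begin{equation*}
F(z) = \sum_{i=0}^\infty \frac{F^{(i)}(z_0)}{i!} (z-z_0)^i
\end{equation*}
converges absolutely and uniformly on $\conj{\Disc}(z_0, \rho_2)$. Subtracting the partial sum of order $m$ yields the remainder
\begin{equation*}
F(z) - \sum_{i=0}^m \frac{F^{(i)}(z_0)}{i!}(z-z_0)^i = \sum_{i=m+1}^\infty \frac{F^{(i)}(z_0)}{i!} (z-z_0)^i,
\end{equation*}
so the main task is to bound this tail uniformly in $z \in \conj{\Disc}(z_0, \rho_1)$.

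The next step is to apply Cauchy's integral formula for derivatives on the circle $\Circle(z_0, \rho_2)$ to obtain
\begin{equation*}
\bigg|\frac{F^{(i)}(z_0)}{i!}\bigg| = \bigg|\frac{1}{2\pi \ii} \oint \limits_{\Circle(z_0, \rho_2)} \frac{F(w)}{(w-z_0)^{i+1}} \dd w\bigg| \le \frac{\sup_{\Circle(z_0, \rho_2)} |F|}{\rho_2^i}
\end{equation*}
for each $i \in \bbZ_+$. Substituting this bound into the tail sum and using $|z-z_0| \le \rho_1$ gives
\begin{equation*}
\bigg|\sum_{i=m+1}^\infty \frac{F^{(i)}(z_0)}{i!} (z-z_0)^i \bigg| \le \sup \limits_{\Circle(z_0, \rho_2)} |F| \sum_{i=m+1}^\infty \bigg(\frac{\rho_1}{\rho_2}\bigg)^i.
\end{equation*}

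Finally, summing the geometric series (which converges since $\rho_1/\rho_2 < 1$) produces
\begin{equation*}
\sum_{i=m+1}^\infty \bigg(\frac{\rho_1}{\rho_2}\bigg)^i = \frac{(\rho_1/\rho_2)^{m+1}}{1-\rho_1/\rho_2},
\end{equation*}
which, combined with the previous inequality, yields exactly (\ref{appE1}). There is no serious obstacle here: the result is essentially a textbook consequence of Cauchy's formula combined with a geometric-series tail estimate, and the whole proof should fit in a few lines.
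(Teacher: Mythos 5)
Your proof is correct and follows essentially the same route as the paper's: expand $F$ in its Taylor series about $z_0$, bound the tail coefficients via Cauchy's integral formula on $\Circle(z_0,\rho_2)$, and sum the resulting geometric series. The two arguments are substantively identical.
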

\begin{proof}
By Cauchy's integral formula, 
\begin{align}
F^{(i)}(z_0) = \frac{i!}{2\pi \ii} \oint \limits_{|z-z_0| = \rho_2} \frac{F(z) \dd z}{(z-z_0)^{i+1}}  \qquad \text{ for } i \in \bbZ_+. 
\end{align}
Use this with the power series representation of $F$ around $z_0$ to obtain that, for $z \in \conj{\Disc}(z_0, \rho_1)$, the left-hand side of (\ref{appE1}) does not exceed
\begin{align}
\sum \limits_{i = m+1}^\infty \frac{|F^{(i)}(z_0)|}{i!}|z-z_0|^i &\le \frac{1}{2\pi} \sum \limits_{i = m+1}^\infty \bigg| \oint \limits_{|z-z_0| = \rho_2} \frac{F(z) \dd z}{(z-z_0)^{i+1}} \bigg| |z-z_0|^i \\
&\le \sup \limits_{\Circle(z_0, \rho_2)} |F| \sum \limits_{i=m+1}^\infty \frac{\rho_1^i}{\rho_2^{i}}, 
\end{align}
which is the right-hand side of (\ref{appE1}). 
\end{proof}

We next describe locally the steepest-descent and -ascent curves of $u$. Since $u$ is assumed nonconstant, $f$ is nonconstant as well. Let $n$ be the smallest positive integer such that $f^{(n)}(a) \neq 0$. Since $f$ is uniquely determined up to constant, $n$ is well-defined given $u$ and $a$. Fix an $n$th root of $f^{(n)}(a)$ and let $\xi \in \Circle(0, 1)$ be its direction. 

Introduce constants 
\begin{align}
K_0 = \frac{2^{n+1}}{r^{n+1}} \frac{n!}{|f^{(n)}(a)|} \sup \limits_{\conj{\Disc}(a, r/2)}\{|f| + r|f'|\} \qquad \epsilon_0 = \frac{1}{16} \min \bigg\{r, \frac{1}{K_0}, 1\bigg\} \label{flucE112}
\end{align}
Also, define $\tilde{f} \in \Hol(\Disc(a, r))$ by 
\begin{align}
\label{flucE82}
f(z)  = f(a) + (z-a)^n \tilde{f}(z) \text{ for } z \in \Disc(a, r) \qquad \tilde{f}(a) = \dfrac{f^{(n)}(a)}{n!}. 
\end{align}
It follows from Lemma \ref{flucL20} that if $|z-a| < \epsilon_0$ then 
\begin{align*}
|\tilde{f}(z)-\tilde{f}(a)| &= \frac{1}{|z-a|^n} \bigg|f(z)-f(a)-\dfrac{f^{(n)}(a)}{n!}(z-a)^n\bigg| \le \frac{\epsilon_0}{(r/2)^{n+1}} \frac{1}{1- 2\epsilon_0/r}\sup_{\Circle(a, r/2)} |f| \\
&\le 2\epsilon_0 K_0 \dfrac{|f^{(n)}(a)|}{n!} \le \frac{|\tilde{f}(a)|}{2}. 
\end{align*}
Therefore, $|\tilde{f}(z)| \ge |\tilde{f}(a)|/2 > 0$ for $z \in \Disc(a, \epsilon_0)$. Hence, we can define $h \in \Hol(\Disc(a, \epsilon_0))$ by 
\begin{align}
\label{flucE23}
h(z) = \xi \bigg(\dfrac{|f^{(n)}(a)|}{n!}\bigg)^{1/n} (z-a) \exp\bigg(\frac{1}{n}\int_{a}^z \frac{\tilde{f}'(w)}{\tilde{f}(w)}\dd w\bigg) \qquad \text{ for } z \in \Disc(a, \epsilon_0), 
\end{align}
where the contour can be chosen as the line segment $[a, z]$. Note from (\ref{flucE82}) that 
\begin{align}
h(z)^n &= \dfrac{\xi^n|f^{(n)}(a)|}{n!}(z-a)^n \exp\bigg(\int_{a}^z \frac{\tilde{f}'(w)}{\tilde{f}(w)}\dd w\bigg) = \dfrac{f^{(n)}(a)}{n!}(z-a)^n \frac{\tilde{f}(z)}{\tilde{f}(a)} \nonumber \\
&= f(z)-f(a). \label{flucE81}
\end{align}
For the second equality above, observe that all derivatives of $\dfrac{\tilde{f}(z)}{\tilde{f}(a)}$ and $\exp\bigg(\displaystyle \int_{a}^z \dfrac{\tilde{f}'(w)}{\tilde{f}(w)}\dd w\bigg)$ are equal at $z = a$. 
Also, by (\ref{flucE23}), $h$ is nonzero on $\Disc(a, \epsilon_0) \smallsetminus \{a\}$ and 
\begin{align}
\label{flucE83}
h'(a) = \xi \bigg(\dfrac{|f^{(n)}(a)|}{n!}\bigg)^{1/n}. 
\end{align}
\begin{lem}
\label{flucL37}
$|h'(z)-h'(a)| \le \dfrac{16|h'(a)|}{\epsilon_0}|z-a|$ for $z \in \Disc(a, \epsilon_0/4)$. 
\end{lem}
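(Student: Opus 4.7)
The plan is to reduce the claim to a sup-norm bound on $h'$ over the larger circle $\Circle(a,\epsilon_0/2)$ via a Cauchy estimate, and then to bound that sup-norm by a constant multiple of $|h'(a)|$ using the explicit formula (\ref{flucE23}).

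First I would apply Lemma \ref{flucL20} to $F=h'$ (which is holomorphic on $\Disc(a,\epsilon_0)$) with $m=0$, $z_0=a$, $\rho_2=\epsilon_0/2$, and $\rho_1=|z-a|\le \epsilon_0/4$. The factor $(1-\rho_1/\rho_2)^{-1}$ is then $\le 2$, so
\begin{align*}
|h'(z)-h'(a)| \le \frac{|z-a|}{\epsilon_0/2}\cdot\frac{\sup_{\Circle(a,\epsilon_0/2)}|h'|}{1/2} = \frac{4|z-a|}{\epsilon_0}\,\sup_{\Circle(a,\epsilon_0/2)}|h'|.
\end{align*}
Thus it suffices to show $\sup_{\Circle(a,\epsilon_0/2)}|h'|\le 4|h'(a)|$.

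Next I would differentiate the defining formula (\ref{flucE23}). Writing $E(z)=\exp\!\big(\tfrac{1}{n}\int_a^z\tilde f'(w)/\tilde f(w)\,\dd w\big)$, we have $h(z)=h'(a)(z-a)E(z)$ and hence
\begin{align*}
h'(z) = h'(a)\,E(z)\Big(1+\tfrac{z-a}{n}\tfrac{\tilde f'(z)}{\tilde f(z)}\Big).
\end{align*}
The factor $E(z)$ is controlled by the estimate already established just before (\ref{flucE23}): on $\Disc(a,\epsilon_0)$ we have $|\tilde f(z)/\tilde f(a)-1|\le 1/2$, so $E(z)$, being an $n$-th root of $\tilde f(z)/\tilde f(a)$, satisfies $|E(z)|\le(3/2)^{1/n}\le 3/2$. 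For the other factor I would bound $\tilde f'/\tilde f$ on $\Circle(a,\epsilon_0/2)$: the denominator is $\ge |\tilde f(a)|/2$ by the same estimate, and for the numerator, a Cauchy estimate applied to $\tilde f$ on the disk $\Disc(z,\epsilon_0/2)\subset\Disc(a,\epsilon_0)$ (where $|\tilde f|\le 3|\tilde f(a)|/2$) yields $|\tilde f'(z)|\le 3|\tilde f(a)|/\epsilon_0$. Hence $|\tfrac{z-a}{n}\tilde f'(z)/\tilde f(z)|\le \tfrac{\epsilon_0/2}{n}\cdot\tfrac{6}{\epsilon_0}=3/n\le 3$, and the bracketed term has modulus at most $4$.

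Combining these, $\sup_{\Circle(a,\epsilon_0/2)}|h'|\le \tfrac{3}{2}\cdot 4\,|h'(a)|=6|h'(a)|$, which plugged into the first display gives $|h'(z)-h'(a)|\le \tfrac{24|h'(a)|}{\epsilon_0}|z-a|$; a slightly sharper bookkeeping (e.g.\ using $(3/2)^{1/n}\le 3/2$ together with a tighter Cauchy estimate on a disk of radius slightly larger than $\epsilon_0/2$, or noting that $|E(z)|$ is closer to $1$) brings the constant down to $16$. The only real obstacle is this constant chase; there is no conceptual difficulty beyond deciding which intermediate radius to use in the Cauchy estimates so that all factors multiply out to the stated bound.
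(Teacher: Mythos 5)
Your proposal follows the same skeleton as the paper's proof (Cauchy estimate on $h'$ over $\Disc(a,\epsilon_0/4)$ with $\rho_2=\epsilon_0/2$, then bound $\sup|h'|$ by differentiating (\ref{flucE23})), but the middle step diverges: the paper bounds $\tilde f'/\tilde f$ by applying Lemma \ref{flucL20} to the auxiliary function $g(z)=-n(f(z)-f(a))+f'(z)(z-a)$ (which vanishes to order $n+1$ at $a$), yielding $|\tilde f'|/|\tilde f|\le 8nK_0$ and hence $|(z-a)\tilde f'/(n\tilde f)|\le 8K_0\epsilon_0\le 1/2$ uniformly in $n$. You instead run a direct first-derivative Cauchy estimate on $\tilde f$ over $\Disc(z,\epsilon_0/2)$; this is simpler but loses a factor of roughly $n$ compared to the paper's bound, which is why at $n=1$ you land at $1+3/n=4$ for the bracketed factor rather than the paper's $3/2$. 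Your $(3/2)^{1/n}$ bound on the factor $E(z)$ is a neat observation and is if anything tighter than the paper's $\exp(8K_0\epsilon_0)$.

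The constant gap you flag is real given the numbers you quote, but it closes with one sharper input rather than any new idea: the display just before (\ref{flucE23}) actually establishes $|\tilde f(z)-\tilde f(a)|\le 2\epsilon_0 K_0\,|\tilde f(a)|\le |\tilde f(a)|/8$ (since $16\epsilon_0 K_0\le 1$), not merely $\le|\tilde f(a)|/2$. Feeding $1/8$ instead of $1/2$ through your argument gives $|E(z)|\le(9/8)^{1/n}$, $|\tilde f|\ge \tfrac78|\tilde f(a)|$, $|\tilde f'(z)|\le \tfrac{9}{4\epsilon_0}|\tilde f(a)|$ on $\conj\Disc(a,\epsilon_0/2)$, hence $|(z-a)\tilde f'/(n\tilde f)|\le 9/(7n)$ and $\sup_{\conj\Disc(a,\epsilon_0/2)}|h'|\le(9/8)(1+9/7)|h'(a)|=\tfrac{18}{7}|h'(a)|<4|h'(a)|$, so your route does reach the stated $16|h'(a)|/\epsilon_0$. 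So: a correct proof, genuinely different in how $\tilde f'$ is controlled (direct Cauchy on $\tilde f$ versus the paper's $g$-and-$K_0$ route), with the paper's version being uniformly sharper in $n$ and yours being shorter to state.
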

\begin{proof}
Differentiating (\ref{flucE23}) and using (\ref{flucE83}) yield    
\begin{align}
h'(z) = h'(a) \exp\bigg(\frac{1}{n}\int_{a}^z \frac{\tilde{f}'(w)}{\tilde{f}(w)}\dd w\bigg) \bigg(1 + \frac{1}{n}\frac{\tilde{f}'(z)}{\tilde{f}(z)}(z-a) \bigg) \quad \text{ for } z \in \Disc(a, \epsilon_0). \label{flucE182}
\end{align}
Define $g \in \Hol(\Disc(a, r))$ by $g(z) = -n(f(z)-f(a))+f'(z)(z-a)$. Note from (\ref{flucE82}) that $\tilde{f}'(z) = (z-a)^{-n-1} g(z)$ for $z \in \Disc(a, r) \smallsetminus \{a\}$, and $g^{(k)}(a) = 0$ for $0 \le k \le n$. Therefore, by Lemma \ref{flucL20}, 
\begin{align*}
|\tilde{f}'(z)| &= \frac{|g(z)|}{|z-a|^{n+1}} \le \frac{2^{n+2}}{r^{n+1}} \sup \limits_{\Circle(a, r/2)} |g| \le \frac{2^{n+3}n}{r^{n+1}} \sup \limits_{\conj{\Disc}(a, r/2)} \{|f|+r|f'|\} = 4n K_0 |\tilde{f}(a)|
\end{align*}
for $z \in \Disc(a, r/4) \smallsetminus \{a\}$. It follows that $\dfrac{|\tilde{f}'(z)|}{|\tilde{f}(z)|} \le 8nK_0$ for $z \in \Disc(a, r/4)$ and, by (\ref{flucE182}), 
\begin{align*}
|h'(z)| \le |h'(a)| \exp\{8\epsilon_0K_0\} (1+8K_0) \le 4|h'(a)| \quad \text{ for } z \in \Disc(a, \epsilon_0). 
\end{align*}
Then another application of Lemma \ref{flucL20} gives 
\begin{align}
|h'(z)-h'(a)| \le \frac{4|z-a|}{\epsilon_0} \sup \limits_{\conj{\Disc}(a, \epsilon_0/2)}|h'| \le \dfrac{16|h'(a)|}{\epsilon_0}|z-a| \quad \text{ for } z \in \Disc(a, \epsilon_0/4). \qquad \qedhere
\end{align} 
\end{proof}

Choose $0 < \epsilon \le \epsilon_0/32$. Then, for $z \in \conj{\Disc}(a, \epsilon)$, we have $|h'(z)-h'(a)| \le |h'(a)|/2$, by Lemma \ref{flucL37}. In particular, $h'$ is nonzero on $\Disc(a, \epsilon)$.
 
For $1 \le j \le n$, let $\varphi_j$ denote the unique steepest-ascent curve $\varphi_j: I_j \rightarrow \Disc(a, \epsilon)$ of $\Re(e^{-\bfi \pi j/n} h)$ that passes through $a$, that is, 
\begin{align}
\label{flucE64}
\varphi_j(0) = a \qquad \varphi_j'(t) = e^{\bfi \pi j/n} \frac{\conj{h'}(\varphi_j(t))}{|h'(\varphi_j(t))|} \quad \text{ for } t \in I_j. 
\end{align}
In particular, note from (\ref{flucE183}) that 
\begin{align}
\label{appE4}
\varphi_j'(0) = \conj{\xi}e^{\ii \pi j/n} \quad \text{ for } j \in [n]. 
\end{align}
These curves can be related to the steepest-descent and -ascent curves of $u$ via the following proposition. 
\begin{prop}
\label{flucL6}For $1 \le j \le n$, 
\begin{align}
\label{flucE45}
\varphi_j'(t) = (-1)^j \sgn(t)^{n-1} \frac{\nabla u(\varphi_j(t))}{|\nabla u(\varphi_j(t))|} \quad \text{ for } t \in I_j \smallsetminus \{0\}. 
\end{align}
\end{prop}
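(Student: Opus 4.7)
The plan is to exploit the factorization $f(z) - f(a) = h(z)^n$ from (\ref{flucE81}) to translate the definition of $\varphi_j$ as a steepest-ascent curve of $\Re(e^{-\ii \pi j/n} h)$ into a statement about $f'$, and then use (\ref{flucE44}) to reach $\nabla u$. The key observation is that $\varphi_j$ traces a line segment in the $h$-coordinate through the origin.

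First, since $\varphi_j$ is a steepest-ascent curve of $\Re(e^{-\ii \pi j/n} h)$, by Lemma \ref{flucL1} it is also a stationary curve of the conjugate harmonic function $\Im(e^{-\ii \pi j/n} h)$. Because $\varphi_j(0) = a$ and $h(a) = 0$ (see (\ref{flucE81})), the constant value of $\Im(e^{-\ii \pi j/n} h \circ \varphi_j)$ is zero, so $e^{-\ii \pi j/n} h(\varphi_j(t)) \in \bbR$ for every $t \in I_j$. Meanwhile, differentiating using (\ref{flucE64}) gives
\begin{equation*}
\frac{\dd}{\dd t}\Re(e^{-\ii \pi j/n} h(\varphi_j(t))) = \Re\Bigl(e^{-\ii \pi j/n} h'(\varphi_j(t)) \cdot e^{\ii \pi j/n}\tfrac{\conj{h'}(\varphi_j(t))}{|h'(\varphi_j(t))|}\Bigr) = |h'(\varphi_j(t))| > 0,
\end{equation*}
since $h'$ is nonzero on $\Disc(a, \epsilon)$. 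Hence $t \mapsto e^{-\ii \pi j/n} h(\varphi_j(t))$ is strictly increasing through $0$, and we may write $h(\varphi_j(t)) = e^{\ii \pi j/n}\rho(t)$ with $\rho(t) \in \bbR$ and $\sgn(\rho(t)) = \sgn(t)$ for $t \in I_j$.

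Next, differentiating (\ref{flucE81}) yields $f'(z) = n h(z)^{n-1} h'(z)$ on $\Disc(a, \epsilon_0)$. For $t \in I_j \smallsetminus \{0\}$ we therefore obtain
\begin{equation*}
h(\varphi_j(t))^{n-1} = e^{\ii \pi j(n-1)/n}\,\sgn(t)^{n-1}\,|\rho(t)|^{n-1},
\end{equation*}
and consequently
\begin{equation*}
\conj{f'(\varphi_j(t))} = n\, e^{-\ii \pi j(n-1)/n}\,\sgn(t)^{n-1}\,|\rho(t)|^{n-1}\,\conj{h'(\varphi_j(t))},
\qquad
|f'(\varphi_j(t))| = n|\rho(t)|^{n-1}|h'(\varphi_j(t))|.
\end{equation*}
Dividing, and then multiplying by $\varphi_j'(t) = e^{\ii \pi j/n} \conj{h'(\varphi_j(t))}/|h'(\varphi_j(t))|$, the $\conj{h'}/|h'|$ factors agree and the phase becomes $e^{\ii \pi j/n} \cdot e^{\ii \pi j(n-1)/n} = e^{\ii \pi j} = (-1)^j$. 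Combined with $\nabla u = \conj{f'}$ from (\ref{flucE44}), this gives exactly
\begin{equation*}
\varphi_j'(t) = (-1)^j \sgn(t)^{n-1}\,\frac{\nabla u(\varphi_j(t))}{|\nabla u(\varphi_j(t))|}.
\end{equation*}

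There is no serious obstacle: the only delicate point is ensuring $\rho(t) \ne 0$ for $t \ne 0$ (so that the division by $|\rho(t)|^{n-1}$ is legitimate and the sign factor is well defined), which is handled by the strict monotonicity of $t \mapsto e^{-\ii \pi j/n} h(\varphi_j(t))$ established above. The restriction to $t \ne 0$ in the statement is precisely what avoids the vanishing of $\nabla u$ at $a$.
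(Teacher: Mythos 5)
Your proof is correct and follows essentially the same route as the paper's: both invoke Lemma \ref{flucL1} to recognize $\varphi_j$ as a stationary curve of $\Im(e^{-\ii\pi j/n}h)$, use $h(a)=0$ and monotonicity to conclude $e^{-\ii\pi j/n}h(\varphi_j(t))$ is real with sign $\sgn(t)$, and then apply $f' = nh^{n-1}h'$ together with $\nabla u = \conj{f'}$ to extract the phase factor $(-1)^j\sgn(t)^{n-1}$. The only difference is cosmetic: you carry out the monotonicity computation explicitly via the chain rule, whereas the paper simply asserts it from the defining ODE of a steepest-ascent curve.
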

\begin{proof}
By Lemma \ref{flucL1}, $\varphi_j$ is a stationary curve of $\Im(e^{-\bfi \pi j/n} h)$. This and $h(\varphi_j(0)) = h(a) = 0$ imply that $e^{-\bfi \pi j/n}h(\varphi_j(t))$ is real-valued, increasing for $t \in I_j$ and, therefore, has sign $\sgn(t)$. Also using $f' = nh^{n-1}h'$, we compute for $t \in I_j \smallsetminus \{0\}$
\begin{align*}
\frac{f'(\varphi(t))}{|f'(\varphi(t))|} &= (-1)^{j} \frac{[e^{-\bfi \pi j/n}h(\varphi_j(t))]^{n-1}}{|h(\varphi_j(t))|^{n-1}} e^{-\bfi \pi j/n}\frac{h'(\varphi(t))}{|h'(\varphi(t))|}\\
&= (-1)^{j} \sgn(t)^{n-1} e^{-\bfi \pi j/n} \frac{h'(\varphi(t))}{|h'(\varphi(t))|}\\ 
&= (-1)^j \sgn(t)^{n-1}\conj{\varphi_j'}(t), 
\end{align*}
which gives (\ref{flucE45}). 
%
%\begin{align*}
%\frac{d}{dt}(u(\varphi_j(t))) &= \nabla u(\varphi_j(t)) \cdot \varphi_j'(t) = \Re\{f'(\varphi_j(t))\varphi_j(t)\}\\
%&= \Re\bigg\{nh(\varphi_j(t))^{n-1}h'(\varphi_j(t)) e^{\bfi \pi j/n} \frac{\conj{h'}(\varphi_j(t))}{|h'(\varphi_j(t))|}\bigg\}\\
%&= (-1)^j n |h'(\varphi_j(t))| \Re\{[e^{-\bfi \pi j/n}h(\varphi_j(t))]^{n-1}\} \\
%&= (-1)^j \sgn(t)^{n-1} |nh'(\varphi_j(t))h(\varphi_j(t))^{n-1}| \\
%&= (-1)^j \sgn(t)^{n-1}|f'(\varphi_j(t))|, 
%\end{align*}
%which is (\ref{flucE45}).  
\end{proof}

For each $j \in [n]$, extend $\varphi_j$ through (\ref{flucE45}) to its maximal domain $(T_j^-, T_j^+) \supset I_j$, where possibly $T_j^+ = \infty$ and $T_j^- = -\infty$. To be clear, $\varphi_j$ is now defined on $(T_j^-, T_j^+) \smallsetminus \{0\}$ by (\ref{flucE45}) and on $I_j$ by (\ref{flucE64}). Proposition \ref{flucL6} shows that these ODEs coincide on $I_j \smallsetminus \{0\}$, hence, the extension is well-defined. Define curves $\varphi_j^+: [0, T_j^+) \rightarrow V \cup \{a\}$ and $\varphi_j^-: [0, -T_j^-) \rightarrow V \cup \{a\}$ by 
\begin{align}
\varphi_j^+(t) &= \varphi_j(t) \qquad \ \ \text{ for } 0 \le t < T_j^+ \label{flucE110}\\
\varphi_j^-(t) &= \varphi_j(-t) \qquad \text{ for } 0 \le t < -T_j^-, \label{appE5}
\end{align}
respectively. By Lemma \ref{flucL6}, $\varphi_j^+ \big|_{(0, T_j^+)}$ is a steepest-descent curve of $u$ for odd $j$ and a steepest-ascent curve of $u$ for even $j$. Similarly, $\varphi_j^- \big|_{(0, -T_j^-)}$ is a steepest-descent curve of $u$ for odd $j+n$ and a steepest-ascent curve of $u$ for even $j+n$. 

Our next purpose is to show that the curves $\varphi_j^{\pm}$ are, in fact, all steepest-descent and -ascent curves of $u$ \emph{emanating from} $a$. By Lemma \ref{flucL1},  $\varphi_j$ is a stationary curve of $v$ for $j \in [n]$; therefore, 
\begin{align}
\label{flucE88}
\bigcup \limits_{j \in [n]} \{\varphi_j(t): t \in (T_j^-, T_j^+)\} \cap \Disc(a, \delta) \subset \{z \in \Disc(a, \delta): v(z) = v(a)\} 
\end{align} 
for $0 < \delta < r$.  
\begin{lem}
\label{flucL12}
Let $0 < \delta < \epsilon$. If $\delta$ is sufficiently small then the inclusion in (\ref{flucE88}) is, in fact, equality. That is, 
\begin{align}
\bigcup \limits_{j \in [n]} \{\varphi_j(t): t \in (T_j^-, T_j^+)\} \cap \Disc(a, \delta) = \{z \in \Disc(a, \delta): v(z) = v(a)\} \label{appE2}
\end{align} 
\end{lem}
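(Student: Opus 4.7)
The plan is to work in the local coordinate $w = h(z)$ supplied by (\ref{flucE23}) and to show that in this coordinate the level set $\{v = v(a)\}$ is the $h$-image of $n$ lines through the origin, each swept out by one of the curves $\varphi_j$.

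First I would sharpen the local behaviour of $h$. Lemma \ref{flucL37} together with $\epsilon \le \epsilon_0/32$ gives $|h'(z)-h'(a)| \le |h'(a)|/2$ on $\conj{\Disc}(a,\epsilon)$, and integrating along line segments then yields
\[
\tfrac{1}{2}|h'(a)|\,|z_1-z_2| \le |h(z_1)-h(z_2)| \le \tfrac{3}{2}|h'(a)|\,|z_1-z_2|
\]
for $z_1,z_2 \in \Disc(a,\epsilon)$. In particular $h$ is injective on $\Disc(a,\epsilon)$, and $h(\Disc(a,\delta)) \subseteq \Disc(0,\tfrac{3}{2}|h'(a)|\delta)$ for every $\delta \in (0,\epsilon)$. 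Because (\ref{flucE81}) gives $h(z)^n = f(z)-f(a)$, the condition $v(z) = v(a)$ is equivalent to $\Im\{h(z)^n\}=0$, i.e.\ $h(z) \in L := \{w \in \bbC : w^n \in \bbR\}$, the union of the $2n$ rays from $0$ in the directions $e^{\ii \pi k/n}$, $k = 0,1,\ldots,2n-1$. Thus the right-hand side of (\ref{appE2}) is precisely $h^{-1}(L \cap h(\Disc(a,\delta)))$.

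Next I would transfer each $\varphi_j$ into the $w$-coordinate. By Theorem \ref{flucT4}b, the unit-speed bound $|\varphi_j(t)-a| \le |t|$ from (\ref{flucE59}) forces $I_j \supseteq (-\epsilon,\epsilon)$: were the right endpoint of $I_j$ some $\tau < \epsilon$, the compact set $\conj{\Disc}(a,\tau) \subset \Disc(a,\epsilon)$ would contain the entire curve on $[0,\tau)$, contradicting maximality. On $(-\epsilon,\epsilon)$ the composition $\psi_j := h \circ \varphi_j$ is well defined, and differentiating (\ref{flucE64}) yields $\psi_j(0) = 0$ and
\[
\psi_j'(t) = h'(\varphi_j(t))\,\varphi_j'(t) = |h'(\varphi_j(t))|\,e^{\ii \pi j/n},
\]
so $\psi_j$ traces the line $\bbR\,e^{\ii \pi j/n}$ strictly monotonically with signed speed at least $|h'(a)|/2$. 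The image therefore contains the open segment $(-\tfrac{1}{2}|h'(a)|\epsilon,\,\tfrac{1}{2}|h'(a)|\epsilon)\,e^{\ii \pi j/n}$, and as $j$ ranges over $[n]$ these $n$ segments collectively exhaust $L \cap \Disc(0,\tfrac{1}{2}|h'(a)|\epsilon)$.

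Finally, choosing $\delta \le \epsilon/3$ forces $h(\Disc(a,\delta)) \subseteq \Disc(0,\tfrac{1}{2}|h'(a)|\epsilon)$, so every $w \in L \cap h(\Disc(a,\delta))$ equals $\psi_j(t)$ for some $j \in [n]$ and some $t \in (-\epsilon,\epsilon) \subseteq (T_j^-,T_j^+)$. Pulling back by $h$, injective on $\Disc(a,\epsilon)$, gives $h^{-1}(w) = \varphi_j(t)$, which supplies the $\supseteq$ inclusion in (\ref{appE2}); the $\subseteq$ inclusion is (\ref{flucE88}). The only delicate point I anticipate is the passage between the intrinsic parameter domains $(T_j^-,T_j^+)$ and the $h$-coordinate picture, but it is handled in a single line by the unit-speed estimate $|\varphi_j(t)-a| \le |t|$, which keeps the whole analysis inside the biholomorphism region $\Disc(a,\epsilon)$.
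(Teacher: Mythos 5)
Your proof is correct, and it takes a genuinely different route from the paper's. The paper's argument constructs, for each $j$, a local parametrization $\psi_j$ of the level set $\{\Im(e^{-\ii\pi j/n}h)=0\}$ near $a$ by the implicit function theorem, observes via Lemma~\ref{flucL1} that $\psi_j$ satisfies the ODE~(\ref{flucE63}), and then invokes the uniqueness part of Theorem~\ref{flucT4}a to identify $\psi_j$ with $\varphi_j$; the radius $\delta$ is then taken small enough that $\Disc(a,\delta)\subset\bigcap_j U_j$. Your proof goes the other way: it starts from the already-defined $\varphi_j$, computes $\psi_j = h\circ\varphi_j$ directly from (\ref{flucE64}) to see $\psi_j'(t)=|h'(\varphi_j(t))|e^{\ii\pi j/n}$, and then uses the bi-Lipschitz estimates on $h$ (a consequence of Lemma~\ref{flucL37}, since $\epsilon\le\epsilon_0/32$) to show that $h$ is injective on $\Disc(a,\epsilon)$ and that the segments traced by $\psi_1,\dotsc,\psi_n$ cover all of $L\cap h(\Disc(a,\delta))$ once $\delta\le\epsilon/3$. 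Your approach buys an explicit and quantitative choice of $\delta$ (any $\delta\le\epsilon/3$ works), whereas the paper's $\delta$ is nonconstructive (it depends on the neighborhoods $U_j$ produced by the implicit function theorem); the paper's approach is slightly lighter in that it never needs the full bi-Lipschitz control on $h$, only the nonvanishing $h'(a)\neq 0$ required by the implicit function theorem. The one step in your argument worth flagging as subtle — showing $(-\epsilon,\epsilon)\subseteq I_j$ via Theorem~\ref{flucT4}b and the unit-speed bound $|\varphi_j(t)-a|\le|t|$ — is handled correctly.
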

\begin{proof}
For $z \in \Disc(a, \epsilon)$, $v(z) = v(a)$ is equivalent to $\Im(h(z)^n) = 0$. The latter holds if and only if 
\begin{align}
\label{flucE62}
\Im \{e^{-\bfi \pi j/n}h(z)\} = 0
\end{align}
for some $j \in [n]$. Since $h'(a) \neq 0$, it follows from the implicit function theorem that there exist an open set 
$U_j$ containing $a$, an open interval $J_j$ containing $0$ and $\sC^1$ curve $\psi_j: J_j \rightarrow U_j$ with $\psi_j(0) = a$ and $|\psi'| = 1$ such that (\ref{flucE62}) holds with $z \in U_j$ if and only if $z = \psi_j(t)$ for some $t \in J_j$. 
Because $\psi_j$ is a stationary curve of $\Im\{e^{-\bfi \pi j/n}h(z)\}$, by Lemma \ref{flucL1}, 
\begin{align}
\label{flucE63}
\psi_j'(t) = \pm e^{\bfi \pi j/n} \frac{\conj{h'}(\psi_j(t))}{|h'(\psi_j(t))|} \quad \text{ for } t \in J_j. 
\end{align}
Working with the curve $t \mapsto \psi_j(-t)$ instead if necessary, we can fix the sign in (\ref{flucE63}) as plus. Then $\psi_j$ is also a solution of (\ref{flucE64}). Hence, by Theorem \ref{flucT4}a, $J_j \subset I_j$ and $\psi_j = \varphi_j$ on $J_j$.  To complete the proof, choose $\delta$ small so that $\Disc(a, \delta) \subset \bigcap_{j \in [n]} U_j$.
\end{proof}

%The next lemma shows that the left-hand side of (\ref{appE2}) is almost a disjoint union. 
\begin{lem}
\label{appL1}
For $j, k \in [n]$ with $k \neq j$, 
$\varphi_j((T_j^-, T_j^+)) \cap \varphi_{k}((T_k^-, T_k^+)) = \{a\}$. 
\end{lem}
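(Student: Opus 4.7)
I will argue by contradiction. Suppose there exist distinct $j, k \in [n]$ and $s_0 \in (T_j^-, T_j^+)$, $t_0 \in (T_k^-, T_k^+)$ with $z_0 := \varphi_j(s_0) = \varphi_k(t_0) \neq a$. The goal is to derive a constraint incompatible with the explicit formula $\varphi_l'(0) = \conj{\xi}\, e^{\ii \pi l/n}$ from (\ref{appE4}).

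As a preliminary, I would establish $\varphi_l^{-1}(\{a\}) = \{0\}$ for every $l \in [n]$. On each of the halves $(T_l^-, 0)$ and $(0, T_l^+)$, $\varphi_l$ is a steepest-descent or steepest-ascent curve, so $u \circ \varphi_l$ is strictly monotone there with common one-sided limit $u(a)$ at $0$; thus $u(\varphi_l(\tau)) = u(a)$ forces $\tau = 0$. In particular $s_0, t_0 \neq 0$, and $z_0 \in V$.

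Let $\gamma_j$ be the connected component of $\varphi_j((T_j^-, T_j^+)) \setminus \{a\}$ containing $z_0$, i.e.\ the image under $\varphi_j$ of whichever open half of $(T_j^-, T_j^+)$ contains $s_0$, and define $\gamma_k$ analogously. I would then show $\gamma_j = \gamma_k$ as subsets of $V$. By (\ref{flucE45}), the restriction of $\varphi_l$ to each half of its domain is a maximal solution on $V$ of an ODE of the form $\eta'(\tau) = c\, \nabla u(\eta)/|\nabla u(\eta)|$ with fixed sign $c \in \{\pm 1\}$. By Theorem \ref{flucT4}(b) this solution exits every compact subset of $V$ at each end of its time interval; at one end $\varphi_l(\tau) \to a \in \partial V$, and at the other end $\varphi_l(\tau)$ exits $V$ through $\partial U$, through another zero of $\nabla u$, or toward infinity. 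Consequently $\gamma_l$ is a maximal integral curve, in $V$, of the unoriented line field $z \mapsto [\nabla u(z)/|\nabla u(z)|]$ through $z_0$. Since maximal integral curves of a smooth line field through a common point agree as sets---a standard consequence of ODE uniqueness applied to either chosen orientation---we obtain $\gamma_j = \gamma_k$.

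To finish, by (\ref{flucE64}) and the computation following (\ref{flucE83}), $\varphi_l$ is $\sC^1$ at $0$ with $\varphi_l'(0) = \conj{\xi}\, e^{\ii \pi l/n}$, so the arc $\gamma_l$ is tangent at $a$ to the unoriented line $L_l$ through $a$ with direction $\conj{\xi}\, e^{\ii \pi l/n}$. The set equality $\gamma_j = \gamma_k$ forces $L_j = L_k$, hence $\conj{\xi}\, e^{\ii \pi j/n} = \pm \conj{\xi}\, e^{\ii \pi k/n}$, equivalently $e^{\ii \pi(j-k)/n} = \pm 1$, equivalently $n \mid (j - k)$. For distinct $j, k \in [n]$ we have $|j - k| \in \{1, \dots, n-1\}$, contradicting divisibility by $n$.

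The main technical obstacle lies in the middle step, the set equality $\gamma_j = \gamma_k$. The subtle point is that the orientations induced on these arcs by $\varphi_j$ and $\varphi_k$ may differ in sign, depending on the parities of $j, k$ and on the signs of $s_0$ and $t_0$, so one cannot directly invoke uniqueness for a single ODE. Passing to the unoriented line field $[\nabla u/|\nabla u|]$ bypasses this issue cleanly: locally, one parametrizes the line field by either of the smooth unit vector fields $\pm \nabla u/|\nabla u|$ and invokes standard ODE uniqueness to conclude that the two maximal line-field trajectories through $z_0$ in $V$ coincide.
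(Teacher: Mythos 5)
Your proof is correct and follows essentially the same strategy as the paper: identify the two arcs through $z_0 \in V$ via ODE uniqueness, then derive a contradiction from the tangent direction $\conj{\xi}e^{\ii \pi l/n}$ at $a$, which forces $e^{\ii\pi(j-k)/n} \in \{\pm 1\}$, impossible for $0 < |j-k| < n$. The only difference is cosmetic: you phrase the coincidence via the unoriented line field $[\nabla u/|\nabla u|]$, whereas the paper matches the oriented reparametrizations explicitly by showing both reversed curves reach $a$ at the same time $T = |t_0| = |s_0|$ and then differentiates at $t = T$ to read off the tangent constraint.
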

\begin{proof}
Arguing by contradiction, suppose that $\varphi_j(t_0) = \varphi_k(s_0) = b$ for some $k \neq j$, $t_0 \in (T_j^-, T_j^+) \smallsetminus \{0\}$ and $s_0 \in (T_k^{-}, T_k^+) \smallsetminus \{0\}$. Assume that $u(b) < u(a)$; the case $u(b) > u(a)$ is treated similarly. Then both $t \mapsto \varphi_j(t_0-\sgn(t_0)t)$ and $t \mapsto \varphi_k(s_0 -\sgn(s_0)t)$ are steepest-descent ascent curves passing through $b$ at time $0$. Since $\nabla u(b) \neq 0$, these curves are restrictions of the unique maximal steepest-ascent curve $\Phi$ passing through $b$ at time $0$. Also, since $\nabla u(a) = 0$ (having distinct of $j$ and $k$ implies $n > 1$ and $\nabla u(a) = 0$) and $\varphi_j(0) = \varphi_k(0) = a$, the right endpoint $T$ of the domain of $\Phi$ must be equal to both $|t_0|$ and $|s_0|$. Now $t_0 = \sgn(t_0) T$ and $s_0 = \sgn(s_0) T$, and we have $\varphi_j(\sgn(t_0) (T-t)) = \varphi_k(\sgn(s_0) (T-t))$ for $0 \le t \le T$. Differentiate both sides, use (\ref{flucE64}) and set $t = T$. We obtain $\sgn(s_0) \sgn(t_0) = e^{\ii \pi (j-k)/n}$, which is not possible as $0 < |j-k| < n$. 
\end{proof}

\begin{prop}
\label{flucL25}
Let $T > 0$ and $\psi: [0, T) \rightarrow U$ be a $\sC^1$ ($\psi'$ is continuous up to $0$) curve with unit speed such that $\psi(0) = a$ and $\psi \big|_{(0, T)}$ is a steepest-descent or -ascent curve of $u$ with $\psi((0, T)) \subset V$. Then $\psi$ is a restriction of $\varphi_j^{\pm}$ for some unique $j \in [n]$ and choice of sign $\pm$. 
\end{prop}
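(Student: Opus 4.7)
The plan is to reduce the question to the local picture of the level set $\{v = v(a)\}$ provided by Lemma \ref{flucL12} and Lemma \ref{appL1}, and then extend by ODE uniqueness (Theorem \ref{flucT4}a). First, by Lemma \ref{flucL1} the restriction $\psi\big|_{(0,T)}$ is a stationary curve of $v$, so $t \mapsto v(\psi(t))$ is constant on $(0,T)$; continuity up to $t=0$ forces this constant to equal $v(a)$. Choose $\delta > 0$ small enough that Lemma \ref{flucL12} holds on $\Disc(a,\delta)$, and pick $\delta' \in (0,T)$ with $\psi([0,\delta')) \subset \Disc(a,\delta)$. Since $\psi((0,T)) \subset V$ does not contain $a$, for $t \in (0,\delta')$ we have
\[
\psi(t) \in \bigcup_{k \in [n]} \bigl(\varphi_k((T_k^-,T_k^+)) \setminus \{a\}\bigr) \cap \Disc(a,\delta),
\]
and by Lemma \ref{appL1} this union is disjoint.

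The sets $W_k := \psi^{-1}(\varphi_k((T_k^-,T_k^+)) \setminus \{a\})$ are therefore pairwise disjoint open subsets of $(0,\delta')$ whose union is $(0,\delta')$. Connectedness of $(0,\delta')$ forces exactly one $W_k$ to be nonempty, and that $W_k$ equals $(0,\delta')$; call this index $k$. Because $\varphi_k'(0) \neq 0$ (it has unit modulus by (\ref{flucE64})), the inverse function theorem makes $\varphi_k$ a $C^1$ diffeomorphism between a small interval around $0$ and an arc around $a$. Writing $\tau = \varphi_k^{-1} \circ \psi$ on $[0,\delta')$ (after shrinking $\delta'$ if needed), $\tau$ is $C^1$ with $\tau(0) = 0$, and the unit-speed conditions $|\psi'| = |\varphi_k'| = 1$ combined with $\psi' = \varphi_k'(\tau)\,\tau'$ give $|\tau'| \equiv 1$. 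Continuity of $\tau'$ then forces $\tau(t) = \epsilon t$ with $\epsilon \in \{+1,-1\}$ constant, i.e.\ $\psi(t) = \varphi_k^{\epsilon}(t)$ for $t \in [0,\delta')$.

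It remains to propagate this local identity to all of $[0,T)$. On $(0,T)$, $\psi$ satisfies one of the first-order ODEs (\ref{flucE79}) or (\ref{flucE80}); by Proposition \ref{flucL6}, $\varphi_k^{\epsilon}$ on its open domain satisfies one of these two ODEs as well, and the agreement on $(0,\delta')$ forces the two ODEs to coincide. Since the common right-hand side $\pm \nabla u / |\nabla u|$ is $C^1$ on $V$, Theorem \ref{flucT4}a implies that $\psi$ and $\varphi_k^{\epsilon}$ are restrictions of the same maximal solution, hence $T \le T_k^{\epsilon}$ and $\psi = \varphi_k^{\epsilon}\big|_{[0,T)}$. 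Uniqueness of the pair $(k,\epsilon)$ follows from $\psi'(0) = \epsilon \,\bar{\xi}\, e^{\bfi \pi k/n}$ (see (\ref{appE4})), since the $2n$ candidate values $\pm \bar{\xi}\, e^{\bfi \pi j/n}$, $j \in [n]$, are distinct.

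The main obstacle in this plan is the step from "$\psi(t)$ lies in the union of the $\varphi_k$-arcs" to "$\psi(t)$ lies in a single $\varphi_k$-arc for all small $t > 0$"; this is exactly where we need the disjointness supplied by Lemma \ref{appL1} together with the connectedness of $(0,\delta')$. Once this single-arc conclusion is in hand, the remaining steps are routine inverse-function-theorem and ODE-uniqueness arguments.
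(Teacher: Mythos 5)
Your approach is correct and arrives at the right conclusion, but it is organized differently from the paper's proof and contains one step that is asserted but not justified.

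The paper's argument is shorter: after noting that $\psi$ is a stationary curve of $v$ (so $v(\psi(t)) = v(a)$), it picks a \emph{single} time $t_0 > 0$ with $z_0 = \psi(t_0) \in \Disc(a,\delta)$, invokes Lemma~\ref{flucL12} and Lemma~\ref{appL1} to write $z_0 = \varphi_j(s_0)$ for a unique $j$ and $s_0 \neq 0$, and then applies ODE uniqueness \emph{at the interior point} $z_0$ (where $\nabla u \neq 0$) to identify the two time-shifted curves $t \mapsto \psi(t_0 + t)$ and $t \mapsto \varphi_j(s_0 \pm t)$. Finally the time parameters are matched by observing that both curves reach the common endpoint $a$, forcing $s_0 = \pm t_0$. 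Your version instead establishes the pointwise local identity $\psi(t) = \varphi_k(\epsilon t)$ on all of a small interval first, via a level-set decomposition plus a connectedness argument, before invoking ODE uniqueness to propagate. Both routes rest on the same two pillars (Lemmas~\ref{flucL12}, \ref{appL1} and Theorem~\ref{flucT4}a), but the paper's version sidesteps the topological preliminaries that your version needs; what your version buys is a cleaner separation of "which arc?" from "how is it parametrized?", and also the explicit reparametrization argument via the inverse function theorem, which is a pleasant alternative to the paper's endpoint-matching.

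The unjustified step is the assertion that the sets $W_k = \psi^{-1}\bigl(\varphi_k((T_k^-,T_k^+)) \setminus \{a\}\bigr)$ are \emph{open} in $(0,\delta')$. The arcs $\varphi_k((T_k^-,T_k^+)) \setminus \{a\}$ are one-dimensional curves with empty interior in $\bbC$, so their preimages under a continuous map into $\bbC$ are not automatically open. What makes the claim true (and what you need to say) is that $\psi$ maps $(0,\delta')$ into the level set $L = \{z \in \Disc(a,\delta) : v(z) = v(a)\} \setminus \{a\}$, and that near $a$ this level set is (by Lemma~\ref{flucL12} and Lemma~\ref{appL1}) a \emph{finite disjoint union} of embedded arcs, each of which is relatively closed -- hence, being the complement of a finite union of closed sets, also relatively open -- in $L$. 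Only then is $W_k = \psi^{-1}(L_k)$ open in $(0,\delta')$, and the connectedness argument goes through. Since this is precisely the step you single out as "the main obstacle", it deserves an explicit justification rather than an assertion. The other small detail left implicit -- that $\tau = \varphi_k^{-1} \circ \psi$ is $\sC^1$ -- is routine once $\varphi_k'(0) \neq 0$ is noted, but should still be flagged for a careful reader.
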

\begin{proof}
For concreteness, suppose that $\psi$ is a steepest-descent curve. Choose $t_0 > 0$ small enough such that $z_0 = \psi(t_0) \in \Disc(a, \delta)$, where $\delta > 0$ is as in Lemma \ref{flucL12}. Therefore, there exists $j \in [n]$ and $s_0 \in (T_j^-, T_j^+) \smallsetminus \{0\}$ such that $z_0 = \varphi_j(s_0)$. In fact, $j$ and $s_0$ are unique by Lemma \ref{appL1} and the strict monotonicity of $u \circ \varphi_j$.  

Since $\nabla u(z_0) \neq 0$, the unique steepest-descent curve of $u$ that passes through $z_0$ (at time $0$) is $t \mapsto \psi(t_0+t)$ for time values $t \in (-t_0, -t_0+T)$. We can also generate other such curves from $\varphi_j$. Consider the case $s_0 > 0$. Then $t \mapsto \varphi_j(s_0+t)$ for $(-s_0, -s_0+T_j^{+})$ is also a steepest-descent curve of $u$ and passes through $z_0$ at time $0$. By uniqueness, these curves coincide on $(-(s_0 \wedge t_0), -t_0+T)$. Since $\varphi_j(0) = \psi(0) = a$, we have $a = \lim_{t \downarrow -(s_0 \wedge t_0)} \psi(t_0+t) = \psi((t_0-s_0)_+)$ and $a = \lim_{t \downarrow -(s_0 \wedge t_0)} \phi_j(s_0+t) = \phi_j((s_0-t_0)_+)$. It follows that $s_0 = t_0$, and $\psi = \varphi_j^+$. Next consider the case $s_0 < 0$. Then repeat the preceding argument with $t \mapsto \varphi_j(s_0-t)$ for $s_0 \le t \le T_j^{-}+s_0$, which is a steepest-descent curve and passes through $z_0$ at time $0$. We now obtain $a = \lim_{t \downarrow -((-s_0) \wedge t_0)} \psi(t_0+t) = \psi((t_0+s_0)_+)$ and $a = \lim_{t \downarrow -((-s_0) \wedge t_0)} \varphi_j(s_0-t) = \varphi_j(-(-s_0-t_0)_+)$. Hence, $-s_0 = t_0$ and $\psi = \varphi_j^-$.  
\end{proof}

\begin{prop}
\label{flucL19}
Suppose that $a \in \bbR$ and $\nabla u$ is real-valued on $U \cap \bbR$. Let 
\begin{align}
S^+ = \inf \{s > 0: s+a \not \in V\} \qquad S^- = \sup \{s < 0: s+a \not \in V\}. 
\end{align}
Then $S^+ > 0$ and $S^- < 0$. Define curves $\psi^+: [0, S^+) \rightarrow \bbR$ and $\psi^-: [0, -S^-) \rightarrow \bbR$ by 
\begin{align}
\psi^{+}(t) &= t+a \quad \ \ \text{ for } t \in [0, S^+) \\ 
\psi^{-}(t) &= -t+a \quad \text{ for } t \in [0, -S^-). 
\end{align}
$\psi^+$ is a steepest-descent curve of $u$ if $f^{(n)}(a) < 0$ and a steepest-ascent curve of $u$ if $f^{(n)}(a) > 0$. Similarly, $\psi^-$ is a steepest-descent curve of $u$ if $(-1)^{n} f^{(n)}(a) < 0$ and a steepest-ascent curve of $u$ if $(-1)^{n}f^{(n)}(a) > 0$. 
\end{prop}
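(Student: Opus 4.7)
The plan is to prove the two claims of the proposition in sequence: first that $S^{+}>0$ and $S^{-}<0$, and then that the real-axis curves $\psi^{\pm}$ match the steepest-descent/ascent ODE with the claimed signs.

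First, I would pin down the local shape of $f'$ near $a$. By the definition of $n$ in Subsection \ref{S2Sub2}, we have $f^{(k)}(a)=0$ for $1\le k<n$ and $f^{(n)}(a)\ne 0$, so the power series expansion around $a$ gives
\[
f'(z)=\frac{f^{(n)}(a)}{(n-1)!}(z-a)^{n-1}+O\bigl((z-a)^{n}\bigr),
\]
which shows that $a$ is an isolated zero of $f'$ in $\Disc(a,r)$. Combined with $\nabla u=\conj{f'}$ from (\ref{flucE44}), this means $a$ is an isolated zero of $\nabla u$ in $\Disc(a,r)$; hence the real segments $(a,a+s)$ and $(a-s,a)$ lie in $V$ for all sufficiently small $s>0$, which gives $S^{+}>0$ and $S^{-}<0$.

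Next I would exploit the hypothesis that $\nabla u$ is real-valued on $U\cap\bbR$: by (\ref{flucE44}) this says $\conj{f'}$ (and therefore $f'$) is real on $\Disc(a,r)\cap\bbR$. Consequently, for $t\in[0,S^{+})$, the derivative $f'(t+a)$ is a continuous, real-valued, nonvanishing function on $(0,S^{+})$, so it has a constant sign. From the Taylor expansion above, for $t>0$ sufficiently small,
\[
\sgn\bigl(f'(t+a)\bigr)=\sgn\bigl(f^{(n)}(a)\bigr),
\]
and by continuity this sign is the same throughout $(0,S^{+})$. Since $\psi^{+}{}'(t)=1$ and $\nabla u(\psi^{+}(t))/|\nabla u(\psi^{+}(t))|=\sgn(f'(t+a))$, we find that $\psi^{+}$ satisfies (\ref{flucE80}) when $f^{(n)}(a)>0$ (steepest-ascent) and (\ref{flucE79}) when $f^{(n)}(a)<0$ (steepest-descent), as claimed.

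The treatment of $\psi^{-}$ is symmetric. For $t\in(0,-S^{-})$ small,
\[
f'(-t+a)\approx \frac{f^{(n)}(a)}{(n-1)!}(-t)^{n-1}=\frac{(-1)^{n-1}f^{(n)}(a)}{(n-1)!}t^{n-1},
\]
so $\sgn(f'(-t+a))=-\sgn\bigl((-1)^{n}f^{(n)}(a)\bigr)$, and again this sign is constant on $(0,-S^{-})$ by continuity and nonvanishing on $V$. Since $\psi^{-}{}'(t)=-1$, comparing with $\nabla u(\psi^{-}(t))/|\nabla u(\psi^{-}(t))|$ yields $\psi^{-}$ is steepest-ascent when $(-1)^{n}f^{(n)}(a)>0$ and steepest-descent when $(-1)^{n}f^{(n)}(a)<0$.

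There is no real obstacle here; the entire argument rests on the two simple ingredients (i) the leading Taylor behavior of $f'$ at $a$ and (ii) the constancy of the sign of a continuous nonzero real function on a connected interval. The only point that deserves care is bookkeeping the parity of $n-1$ when computing the sign of $f'(-t+a)$ relative to $f^{(n)}(a)$, which is where the $(-1)^{n}$ in the statement for $\psi^{-}$ enters.
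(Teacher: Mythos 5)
Your proof is correct and follows essentially the same approach as the paper's. The only technical difference is that you read off the sign of $f'(a\pm t)$ for small $t$ directly from the Taylor expansion of $f'$, whereas the paper instead passes to $f(a+t)-f(a)=\int_a^{a+t}f'(s)\,ds$ and applies the Cauchy estimate (Lemma \ref{flucL20}) to $f$; both give $\sgn f'(a+t)=\sgn[f^{(n)}(a)]\sgn(t)^{n-1}$ for small $t$, and both then extend this to the full interval $(0,S^{+})$ (resp.\ $(0,-S^{-})$) via continuity and nonvanishing of $f'$ there — a step you spell out explicitly and the paper leaves implicit.
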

\begin{proof}
We have $S^- < 0 < S^+$ because $\Disc(a, r) \subset U$ and the zeros of $\nabla u = \conj{f'}$ are isolated. By the assumption on $\nabla u$, all derivatives of $f$ are real-valued on $(a-r, a+r)$. Hence, $f(a+t)-f(a) \in \bbR$ for $|t| < r$. 
Then, if $|t|$ is sufficiently small, we have 
\begin{align*}
\sgn(t) \sgn(f'(a+t)) = \sgn\bigg[\int \limits_{a}^{a+t} f'(s)\ ds \bigg] = \sgn[f(a+t)-f(a)] = \sgn[f^{(n)}(a)] \sgn(t)^n, 
\end{align*}
where the first equality holds because $f'$ does not change sign between $a$ and $a+t$ and the last equality follows from Lemma \ref{flucL20}. Hence, 
\begin{align*}
\frac{\nabla u(a+t)}{|\nabla u(a+t)|} = \sgn(\nabla u(a+t)) &= \sgn[f^{(n)}(a)] \sgn(t)^{n-1} \qquad \text{ for } t \in (S^-, S^+) \smallsetminus \{0\}, 
\end{align*}
and we have 
\begin{align*}
\frac{d}{dt}\psi^+(t) &= 1 = \sgn[f^{(n)}(a)] \frac{\nabla u(a+t)}{|\nabla u(a+t)|} \quad \text{ for }  t \in (0, S^+)\\
\frac{d}{dt}\psi^-(t) &= -1 = (-1)^n \sgn[f^{(n)}(a)] \frac{\nabla u(a-t)}{|\nabla u(a-t)|} \quad \text{ for } t \in (0, -S^-). 
\end{align*}
Now the conclusions of the lemma follow from (\ref{flucE79}) and (\ref{flucE80}). 
\end{proof}

%\begin{lem}
%\label{flucL13}
%Let $\delta > 0$ and $b \in \Disc(a, \delta)$ with $v(b) = v(a)$. Let $\psi: (0, S) \rightarrow \Disc(a, \delta) \smallsetminus \{a\}$ be a (with maximal $S$) steepest-descent curve of $u$ emanating from $b$. If $\delta$ is sufficiently small then the reversed curve $t \mapsto \psi(S-t)$ for $t \in (0, S)$ is a steepest-ascent curve of $u$ emanating from $a$. In particular, $\lim_{t \uparrow S} \psi(t) = a$. 
%\end{lem}
%\begin{proof}
%\end{proof} 

\subsection{The exit time of a small disk}\label{appS2Sub3}

We next obtain some bounds to control $\varphi$ in a small disk around $a$. Fix $j$ now and drop it from the notation. By the symmetry of positive and negative time values remarked after (\ref{flucD1}), we may restrict attention to $\varphi(t)$ for $t > 0$. Define 
\begin{align}
\label{flucE99}
\tau = \tau(\epsilon) = \inf \{0 < t < T^+: |\varphi(t)-a| = \epsilon\}. 
\end{align}
We will use the following bounds to control $\varphi$ and $u(\varphi)$. 
\begin{prop}
\label{flucL10} 
Suppose that $\epsilon < \epsilon_0/128$ and put $C = 64/\epsilon_0$. Then 
\begin{enumerate}[(a)]
\item $\epsilon (1-C\epsilon) \le \tau \le \epsilon (1+2C\epsilon)$. 
\item $|\epsilon^{-1}(\varphi(\tau)-a)-\conj{\xi}e^{\bfi \pi j/n}| \le 4C\epsilon$. 
\item $|u(\varphi(t))-u(a)| \ge \dfrac{|f^{(n)}(a)|}{2n!}\epsilon^n$ for $\tau \le t < T^+$. 
\item $|\varphi(t)-a| \ge \epsilon/2$ for $\tau \le t < T^+$. 
\end{enumerate}
\end{prop}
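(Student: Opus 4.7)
The plan is to derive all four estimates from two basic local approximations at $a$: the Lipschitz control on $h'$ from Lemma \ref{flucL37}, which governs the shape of $\varphi$, and the Taylor expansion of $f - f(a)$ from Lemma \ref{flucL20}, which governs the size of $u \circ \varphi - u(a)$. The bridge between the two is that $\varphi$ is a stationary curve of $v = \Im f$ by Lemma \ref{flucL1}, so $v(\varphi(t)) = v(a)$ throughout $[0, T^+)$ and hence $|u(\varphi(t)) - u(a)| = |f(\varphi(t)) - f(a)|$, which converts Taylor bounds on $|f - f(a)|$ directly into bounds on $|u \circ \varphi - u(a)|$. Also needed is the monotonicity of $t \mapsto |u(\varphi(t)) - u(a)|$ on $[0, T^+)$, which is immediate from (\ref{flucE40})--(\ref{flucE41}) and the nonvanishing of $\nabla u$ on $\varphi((0, T^+))$.

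For (a) and (b), I would start from $\varphi'(0) = \conj{\xi} e^{\ii \pi j/n}$ in (\ref{appE4}) and combine Lemma \ref{flucL37} with the elementary inequality $\big||w|^{-1}w - |u|^{-1}u\big| \le 2|u|^{-1}|w - u|$ (valid whenever $|w - u| \le |u|/2$) to get $|\varphi'(s) - \conj{\xi}e^{\ii\pi j/n}| \le (C/2)|\varphi(s) - a|$ whenever $\varphi(s) \in \Disc(a, \epsilon_0/4)$. Integrating and using the unit-speed bound $|\varphi(s) - a| \le s$ yields the quadratic estimate $|\varphi(t) - a - t\conj{\xi}e^{\ii\pi j/n}| \le Ct^2/4$ on the same regime, and the hypothesis $\epsilon < \epsilon_0/128$ makes this estimate persist throughout $[0, 2\epsilon] \subset [0, \epsilon_0/4]$. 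The triangle inequality then gives $t(1 - Ct/4) \le |\varphi(t) - a| \le t(1 + Ct/4)$, which places $\tau$ in $[\epsilon(1 - C\epsilon), \epsilon(1 + 2C\epsilon)]$ (and a fortiori $\tau < T^+$), proving (a). Evaluating the quadratic estimate at $t = \tau$ and dividing by $\epsilon$ then yields (b).

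For (c) and (d), I would apply Lemma \ref{flucL20} to $z \mapsto f(z) - f(a)$, whose Taylor series at $a$ begins with $\frac{f^{(n)}(a)}{n!}(z - a)^n$ by the choice of $n$, with $\rho_1 = |z - a|$ and $\rho_2 = r/2$; this produces
\[\Big|f(z) - f(a) - \frac{f^{(n)}(a)}{n!}(z - a)^n\Big| \le \frac{4 K_0 |f^{(n)}(a)|}{n!}|z - a|^{n+1}\]
for $|z - a| \le \epsilon$, in which the prefactor $4K_0$ is precisely what the definition (\ref{flucE112}) of $K_0$ is engineered to produce. At $|z - a| = \epsilon$ the triangle inequality gives $|u(\varphi(\tau)) - u(a)| \ge \frac{|f^{(n)}(a)|}{n!}\epsilon^n (1 - 4K_0\epsilon)$; since $\epsilon < \epsilon_0/128 \le (2048\,K_0)^{-1}$ forces $4K_0\epsilon \le 1/512$, this exceeds $\frac{|f^{(n)}(a)|}{2n!}\epsilon^n$, and monotonicity propagates the bound to all $t \ge \tau$, proving (c). For (d) I would argue by contradiction: a hypothetical $t_0 \ge \tau$ with $|\varphi(t_0) - a| = \epsilon/2$ would yield, via the same Taylor estimate at $|z - a| = \epsilon/2$, the upper bound $|u(\varphi(t_0)) - u(a)| \le \frac{|f^{(n)}(a)|}{n!}(\epsilon/2)^n (1 + 2K_0\epsilon)$; chaining with the lower bound at $\tau$ via monotonicity would force $2^n(1 - 4K_0\epsilon) \le 1 + 2K_0\epsilon$, which already at $n = 1$ demands $K_0\epsilon \ge 1/10$ and contradicts $K_0\epsilon < 1/2048$.

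The proof is essentially constant-tracking; the only delicate point will be confirming that the specific numerical choices of $K_0$, $\epsilon_0$, and $C$ in (\ref{flucE112}) are tight enough to close (a)--(d) simultaneously for all $n \ge 1$. The factor $2^{n+1}$ built into $K_0$ is precisely what produces the uniform-in-$n$ control of the Taylor remainder that (c)--(d) require, and the factor $16/\epsilon_0$ in the Lipschitz bound of Lemma \ref{flucL37} is what allows the choice $C = 64/\epsilon_0$ to carry (a)--(b).
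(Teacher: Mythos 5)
Your proof is correct and follows essentially the same route as the paper's: Lemma \ref{flucL37} controls the direction of $\varphi'$ near $a$ for (a)--(b), Lemma \ref{flucL20} controls the Taylor remainder of $f-f(a)$, and the stationary-curve identity $v\circ\varphi=v(a)$ (Lemma \ref{flucL1}) converts the Taylor bound into a bound on $|u\circ\varphi-u(a)|$, which then propagates past $\tau$ by the monotonicity implicit in (\ref{flucE40})--(\ref{flucE41}). The only deviations are cosmetic: you derive a pointwise quadratic error $Ct^2/4$ for the Euler approximation where the paper bounds by $C\epsilon t$ uniformly over the disk, and you prove (d) by contradiction where the paper directly compares the upper bound on $\conj{\Disc}(a,\epsilon/2)$ with the lower bound at $\tau$; both close with the same constants.
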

\begin{proof}
By Lemma \ref{flucL37}, $|h'(z)-h'(a)| \le \dfrac{C}{4}|h'(a)| \epsilon \le \dfrac{|h'(a)|}{2}$ for $z \in \conj{\Disc}(a, \epsilon)$. Using this, $h'(a) = \xi |h(a)|$ and the triangle inequality, we obtain that 
\begin{align*}
\bigg|\frac{h'(z)}{|h'(z)|}-\xi\bigg| &= \frac{|h'(z)-h'(a)+\xi|h'(a)|-\xi|h'(z)||}{|h'(z)|}\\ 
&\le \frac{2|h'(z)-h'(a)+\xi|h'(a)|-\xi|h'(z)||}{|h'(a)|} \\ 
&\le \frac{2|h'(z)-h'(a)|+2||h'(a)|-|h'(z)||}{|h'(a)|}
\le C\epsilon 
\end{align*}
Since $\varphi(t) \in \conj{\Disc}(a, \epsilon)$ for $0 \le t \le \tau$, we have  
\begin{align*}
|\varphi(t)-a-t\conj{\xi} e^{\bfi \pi j/n}| &= \bigg| \int \limits_0^t \varphi'(s)-\conj{\xi} e^{\bfi \pi j/n} ds \bigg| = \bigg| \int_0^t \frac{\conj{h'}(\varphi(s))}{|h'(\varphi(s))|}-\conj{\xi} ds \bigg| \le  \int_0^t \bigg|\frac{h'(\varphi(s))}{|h'(\varphi(s))|}-\xi\bigg| ds\\ &\le C\epsilon t. 
\end{align*}
Setting $t = \tau$ and using the triangle inequality yields 
\begin{align*}
|\epsilon-\tau| = ||\varphi(\tau)-a|-|\tau \conj{\xi} e^{\bfi \pi j/n}|| \le C\epsilon\tau,  
\end{align*}
which implies $\epsilon(1+C\epsilon)^{-1} \le \tau \le \epsilon(1-C\epsilon)^{-1}$ and (a) follows. Then, we also have (b) because $\tau \le 2\epsilon$ by (a) and, thus, 
\begin{align*}
|\varphi(\tau)-a-\epsilon \conj{\xi} e^{\bfi \pi j/n}| \le |\varphi(\tau)-a-\tau \conj{\xi} e^{\bfi \pi j/n}| + |\epsilon-\tau| \le 2C\epsilon\tau \le 4C\epsilon^2. 
\end{align*}

By Lemma \ref{flucL20}, for $z \in \conj{\Disc}(a, r/2)$,  
\begin{align}
\bigg|f(z)-f(a)-\frac{f^{(n)}(a)}{n!}(z-a)^n\bigg| &\le \frac{2^{n+2}|z-a|^{n+1}}{r^{n+1}}\sup \limits_{\conj{\Disc}(a, r/2)}|f| \nonumber\\ 
&\le \frac{2|f^{(n)}(a)|K_0}{n!} |z-a|^{n+1}. \label{appE6}
\end{align}
Then, by the triangle inequality,
\begin{align}
|f(z)-f(a)| &\ge \frac{|f^{(n)}(a)|}{n!}(1-2K_0\epsilon)\epsilon^n. 
\end{align}
for $z \in \conj{\Disc}(a, \epsilon)$. Using this, we derive the following lower bound for $\tau \le t < T^+$ 
\begin{align}
|u(\varphi(t))-u(a)| &\ge |u(\varphi(\tau))-u(a)| = |f(\varphi(\tau))-f(a)| \ge \frac{|f^{(n)}(a)|}{n!}\bigg(1-2K_0\epsilon\bigg) \epsilon^n \nonumber \\
&\ge \frac{7}{8}\frac{|f^{(n)}(a)|}{n!}\epsilon^n \label{flucE48}
\end{align}
claimed in (c). Here, the first inequality comes because $t \ge \tau$ and $u \circ \varphi$ is decreasing, and the equality is a consequence of $\varphi$ being a stationary curve of $v$, see Lemma \ref{flucL1}. Also, for $z \in \conj{\Disc}(a, \epsilon/2)$, using (\ref{appE6}) again, 
\begin{align}
\label{flucE49}
|u(z)-u(a)| \le |f(z)-f(a)| \le \frac{|f^{(n)}(a)|}{n!}\frac{\epsilon^n}{2^n}\bigg(1+K_0\epsilon\bigg) \le \frac{5}{8} \frac{|f^{(n)}(a)|}{n!}\epsilon^n.  
\end{align}
Now (d) follows from (\ref{flucE48}) and (\ref{flucE49}).  
\end{proof}

\subsection{The time spent in a compact set}

The next proposition bounds the time $\varphi$ can spend on a compact set on which $\nabla u$ is nonzero. 
\begin{prop}
\label{flucL14}
Let $K \subset V$ be nonempty and compact. Suppose that $\{t \in I: \varphi(t) \in K\}$ is nonempty and decompose it uniquely as the disjoint union of closed (relative to $I$) intervals $J_k$ for $0 \le k < N$, where $N \in \bbN \cup \{\infty\}$. Then $J_k = [t_{2k}, t_{2k+1}]$ for some $t_{2k}, t_{2k+1} \in I$ for $0 \le k < N$ and 
\begin{align}
\label{flucE67}
\sum \limits_{0 \le k < N} (t_{2k+1}-t_{2k}) \le \frac{2\max \limits_{K} |u|}{\min \limits_{K} |\nabla u|}. 
\end{align}
\end{prop}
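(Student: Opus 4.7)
The plan is to combine the defining ODE of Definition \ref{flucD1} satisfied by $\varphi$ on each $J_k$ with a telescoping argument that exploits the global monotonicity of $u\circ\varphi$. Without loss of generality I treat the case that $\varphi$ is a steepest-descent curve of $u$; the steepest-ascent case is identical after replacing $u$ by $-u$. Set
$$ \mu \;:=\; \min_K |\nabla u|, \qquad M \;:=\; \max_K |u|. $$
Note that $\mu > 0$ because $K$ is compact and $\nabla u$ is continuous and nonvanishing on $V\supset K$.

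First, on each component $J_k=[t_{2k},t_{2k+1}]$ the curve stays in $K$, so $|\nabla u(\varphi(t))|\ge\mu$ for $t\in J_k$. Integrating the descent identity (\ref{flucE40}) over $J_k$ yields
$$ u(\varphi(t_{2k})) - u(\varphi(t_{2k+1})) \;=\; \int_{t_{2k}}^{t_{2k+1}} |\nabla u(\varphi(t))|\,\mathrm{d}t \;\ge\; \mu\,(t_{2k+1}-t_{2k}), $$
with each summand on the left nonnegative. Next I use that $u\circ\varphi$ is nonincreasing on the entire domain $I$ and that the components are indexed in increasing order (so $t_{2k+1}\le t_{2k+2}$); hence $u(\varphi(t_{2k+1}))\ge u(\varphi(t_{2k+2}))$ between consecutive components. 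Thus for any finite $L\le N$ the partial sum telescopes:
$$ \sum_{k=0}^{L-1}\bigl(u(\varphi(t_{2k}))-u(\varphi(t_{2k+1}))\bigr) \;\le\; u(\varphi(t_0)) - u(\varphi(t_{2L-1})) \;\le\; 2M, $$
where the last inequality uses only that $\varphi(t_0),\varphi(t_{2L-1})\in K$. Combining the two displayed inequalities and letting $L\uparrow N$ gives (\ref{flucE67}).

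The only technicality I anticipate is an edge case: a component $J_k$, while closed \emph{relative to} $I$, may be half-open in $\bbR$ when it abuts an endpoint of the open interval $I$ (for instance a leftmost component of the form $(\inf I,t_1]$). In that case I will interpret $t_{2k}$ or $t_{2k+1}$ as the appropriate one-sided limit. The integrated descent identity survives this limit because $u\circ\varphi$ is monotone and bounded on $J_k$, and the telescoping step survives because the limiting value of $u\circ\varphi$ at the open endpoint still lies in $[-M,M]$ by continuity of $u$ together with $\varphi(J_k)\subset K$. Beyond this bookkeeping, I do not expect any substantive obstacle: the entire proposition is essentially a one-line consequence of the fundamental theorem of calculus combined with the monotonicity of $u\circ\varphi$.
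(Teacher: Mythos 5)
Your integral-plus-telescoping argument for the inequality (\ref{flucE67}) is correct and is essentially the paper's argument in a lightly repackaged form: the paper integrates the descent ODE from $t_{2l}$ to $t_{2m+1}$ and discards the contributions outside the $J_k$'s, while you integrate over each $J_k$ separately and telescope using the global monotonicity of $u\circ\varphi$; both reduce to the same pair of inequalities $\int_{J_k}|\nabla u(\varphi)|\ge \mu\,|J_k|$ and $u(\varphi(\cdot))\in[-M,M]$ on $K$.

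The genuine gap is the structural part of the statement, namely that each component is a genuine closed bounded interval $J_k=[t_{2k},t_{2k+1}]$ with $t_{2k},t_{2k+1}\in I$. You explicitly entertain the contrary possibility (a half-open component abutting $\inf I$ or $\sup I$) and declare that you will ``interpret the endpoint as a one-sided limit'' -- but the proposition asserts that this case does not occur, so it has to be ruled out, not bookkept around. Your own estimate $\mu\,|J_k|\le 2M$ already excludes a component of infinite length, hence excludes the case $T^\pm=\pm\infty$. What it does not exclude is a half-open component abutting a \emph{finite} endpoint of $I$. The paper closes this by appealing to Theorem \ref{flucT4}b (the escape lemma for maximal ODE solutions): if $J_k$ had $T^+$ as its right endpoint, then $\varphi([s,T^+))\subset K$ for some $s$, and since $K$ is compact and $I$ is the maximal domain, Theorem \ref{flucT4}b forces $T^+=\infty$; the descent integral then drives $u(\varphi(t))\to-\infty$, contradicting boundedness of $u$ on $K$. (Symmetrically for $T^-$.) This invocation of the escape lemma, or some equivalent use of maximality of $I$, is the missing step in your write-up.
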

\begin{proof}
We may assume that $\varphi$ is a steepest-descent curve of $u$. To obtain a contradiction, suppose that the right endpoint of $J_k$ is $T^+$ for some $k$. Then, by Theorem \ref{flucT4}b, $T^+ = \infty$. Choose $s \in J_k$. Since $\varphi([s, \infty)) \subset K$ and $|\varphi'| = 1$, we obtain  
\begin{align}
- 2\max \limits_{K} |u| &\le u(\varphi(t))-u(\varphi(s)) = -\int \limits_{s}^t |\nabla u(\varphi(\sigma))|\dd\sigma \label{flucE66}\\ 
&\le -(t-s) \min \limits_{K} |\nabla u| \label{flucE65}
\end{align}
for $t \ge s$. Letting $t \rightarrow \infty$ makes (\ref{flucE65}) arbitrarily large negative, which is a contradiction. Similarly, $J_k$ cannot have $T^-$ as its left endpoint. 

Now set $s = t_{2l}$ and $t = t_{2m+1}$, where $t_{2l} \le t_{2m+1}$ for some $0 \le l, m < N$. Then $\varphi(s), \varphi(t) \in K$ and repeating (\ref{flucE66}) yields 
\begin{align}
- 2\max \limits_{K} |u| \le -\sum \limits_{\substack{0 \le k < N \\ t_{2l} \le t_{2k} \\ t_{2k+1} \le t_{2m+1}}} \int_{t_{2k}}^{t_{2k+1}} |\nabla u(\varphi(\sigma))|\dd\sigma \le - \min \limits_{K} |\nabla u| \sum \limits_{\substack{0 \le k < N \\ t_{2l} \le t_{2k} \\ t_{2k+1} \le t_{2m+1}}} (t_{2k+1}-t_{2k}). 
\end{align} 
Taking the infimum of the far right-hand side over $l, m$ and rearranging terms leads to (\ref{flucE67}).  
\end{proof}

\subsection{Limit behavior}

We now turn to the behavior of $\varphi(t)$ as $t \rightarrow T^+$. As the next proposition shows, there are only a few possibilities if there are finitely many singularities of $u$ and $|u(z)| \rightarrow \infty$ as $z$ approaches any of these singularities. 
\begin{prop}
\label{flucP1}
Let $k, l \in \bbZ_+$ with $k+l > 0$ and $P = \{p_1, \dotsc, p_{k+l}\} \subset \bbC$. Suppose that $u$ is a harmonic function on $\bbC \smallsetminus P$ such that 
\begin{align}
\label{appE3}
\lim \limits_{z \rightarrow p_i} u(z) = \begin{cases}-\infty &\text{ if } 1 \le i \le k \\ +\infty &\text{ if } k < i \le k+l. \end{cases}
\end{align}
Let $Z$ denote the set of zeros of $\nabla u$. Then either $T^+ = \infty$ and $\lim_{t \rightarrow \infty} |\varphi(t)| = \infty$, or $T^+ < \infty$ and $\lim_{t \rightarrow T^+} \varphi(t) = b$ for some $b \in P \cup Z \smallsetminus \{a\}$.  
\end{prop}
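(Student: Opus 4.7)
The plan is to treat the cases $T^+ < \infty$ and $T^+ = \infty$ separately, relying on three tools: the unit-speed bound $|\varphi(t) - \varphi(s)| \le |t - s|$, strict monotonicity of $u \circ \varphi$, and the occupation-time bound of Proposition \ref{flucL14}.

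For $T^+ < \infty$, the unit-speed bound makes $\varphi$ Cauchy as $t \to T^+$, so $b := \lim_{t \to T^+} \varphi(t) \in \bbC$ exists. If $b \notin P \cup Z$, then $b \in V$ and the vector field $\pm \nabla u/|\nabla u|$ is $\sC^1$ on a neighborhood of $b$; standard ODE theory provides a local solution through $(T^+, b)$ that glues continuously with $\varphi$ at $t = T^+$ (matching of one-sided derivatives follows from $\varphi' = \pm \nabla u(\varphi)/|\nabla u(\varphi)|$ and continuity of $\nabla u/|\nabla u|$ at $b$). The resulting extension past $T^+$ contradicts the maximality of the domain in Theorem \ref{flucT4}a. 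Thus $b \in P \cup Z$. Strict monotonicity of $u \circ \varphi$ on $(0, T^+)$, combined with continuity of $u$ at zeros of $\nabla u$ and the $\pm\infty$ values at poles, gives $u(b) \neq u(a)$ and hence $b \neq a$.

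For $T^+ = \infty$, suppose toward contradiction that $\varphi$ has a bounded subsequence $\varphi(t_n) \to b$ along some $t_n \to \infty$. I will rule out every possibility for $b$. If $b \in V$, choose a closed disk $K = \conj{\Disc}(b, \epsilon) \subset V$. Unit speed forces $\varphi$ to remain in $K$ for an interval of length at least $\epsilon$ around every sufficiently large $t_n$; picking the $t_n$ sufficiently far apart exhibits infinitely many disjoint such intervals, contradicting the finite total occupation time afforded by Proposition \ref{flucL14}. If $b \in P$ is a pole of the wrong sign (a $+\infty$-pole for a descent curve, a $-\infty$-pole for an ascent curve), then $u(\varphi(t_n))$ diverges in the direction forbidden by the monotonicity of $u \circ \varphi$. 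In the remaining cases, an annulus argument applied to the shell $\{\epsilon/2 \le |z-b| \le \epsilon\}$ (which lies in $V$ and has $|\nabla u|$ bounded below and $|u|$ bounded above for $\epsilon$ small) bounds the number of entries into $\Disc(b, \epsilon/2)$ followed by exits from $\Disc(b, \epsilon)$, using Proposition \ref{flucL14}; combined with $\varphi(t_n) \to b$, this upgrades subsequential convergence to $\varphi(t) \to b$ as $t \to \infty$.

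The key remaining difficulty, which I expect to be the main obstacle, is showing that this asymptotic convergence must in fact be achieved in finite time, contradicting $T^+ = \infty$. For $b \in Z$, since $v \circ \varphi$ is constant by Lemma \ref{flucL1}, Lemma \ref{flucL12} shows the tail of $\varphi$ lies on $\{v = v(a)\} \cap \Disc(b, \delta)$, which is a finite union of $\varphi_j$-curves through $b$; connectedness pins the tail of $\varphi$ to a segment of some $\varphi_j^\pm$, and in the local coordinate $h$ from (\ref{flucE23}) we have $u - u(b) = \Re h^n$ and $\varphi$ travels a ray $\arg h = \mathrm{const}$ at unit speed, so reaches $b$ in finite time. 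For $b \in P$ a pole of the compatible sign, the local representation $u = c \log|z-b| + g$ (with $c > 0$ and $g$ bounded harmonic) gives $v = c \arg(z-b) + \Im \tilde g$ in a slit disk; constancy of $v \circ \varphi$ pins down the angular direction of approach, while $|\nabla u| \sim c/|z-b|$ together with $|\varphi'| = 1$ forces $|\varphi(t) - b|$ to decay at an approximately constant rate, again yielding finite arrival time. With every possibility excluded, $|\varphi(t)| \to \infty$.
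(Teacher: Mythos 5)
Your proof is correct and tracks the paper's argument: the same workhorses (unit speed, strict monotonicity of $u \circ \varphi$, and the occupation-time bound of Proposition \ref{flucL14}) do all the work, the case split on $T^+$ rather than on boundedness of $\varphi$ is equivalent in content since unit speed forces $\varphi$ bounded whenever $T^+ < \infty$, and you effectively rederive Corollaries \ref{flucC1}--\ref{flucC2} and Lemmas \ref{flucL22}--\ref{flucL23} inline. One point to tighten: in the finite-arrival step at a pole $b$, the magnitude estimate $|\nabla u| \sim c/|z-b|$ together with $|\varphi'| = 1$ does not by itself yield radial decay of $|\varphi(t)-b|$, since it controls only the speed of $\varphi$ and not its direction; you need either to finish the angular argument you start (constancy of $v \circ \varphi$ pins $\arg(\varphi(t)-b)$, so the motion is asymptotically radial and, because $\varphi(t) \to b$, has radial rate $\approx -1$), or to invoke the estimate corresponding to the paper's (\ref{flucE61}), which shows the inward radial component of $-\nabla u/|\nabla u|$ is bounded below by a positive constant because the singular term $c/\conj{(z-b)}$ dominates $\nabla u$.
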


As an example of a harmonic function satisfying (\ref{appE3}), consider 
\begin{align}
\label{flucE50}
u(z) = \sum \limits_{i=1}^{k+l} c_i \log |z-p_i| + u_0(z) \quad \text{ for } z \in \bbC \smallsetminus P, 
\end{align}
where $u_0$ is a harmonic function on $\bbC$, and coefficients $c_i > 0$ for $1 \le i \le k$ and $c_i < 0$ for $k < i \le k+l$. It follows from B\^{o}cher's theorem characterizing the positive harmonic functions on a punctured disk \cite[Theorem 3.9]{AxlerBourdonRamey01} that all examples are, in fact, of the form (\ref{flucE50}). 

We now derive a string of auxiliary results that will culminate in the proof of Proposition \ref{flucP1}. We begin with the observation that $\varphi$ eventually stays outside of any annulus in $V$. 

\begin{prop}
\label{flucC1}
Let $b \in \bbC$, $0 < r_1 < r_2$ and suppose that the closed annulus $\conj{\Ann}(b, r_1, r_2) \subset V$. Then there exists $T_1 \in [0, T^+)$ such that either $|\varphi(t)-b| < r_1$ for $t \in [T_1, T^+)$ or $|\varphi(t)-b| > r_2$ for $t \in [T_1, T^+)$. 
\end{prop}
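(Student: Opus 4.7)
The plan is to combine the measure bound from Proposition~\ref{flucL14} with the unit-speed property of $\varphi$ and the strict monotonicity of $u \circ \varphi$. Apply Proposition~\ref{flucL14} to the compact set $K := \conj{\Ann}(b, r_1, r_2) \subset V$: the set $E := \{t \in I : \varphi(t) \in K\}$ decomposes into disjoint closed intervals $J_k = [t_{2k}, t_{2k+1}]$ of total length at most $M := 2\max_K|u|/\min_K|\nabla u| < \infty$. The complement $\bbC \setminus K$ has exactly two open connected components, $A_1 := \Disc(b, r_1)$ and $A_2 := \bbC \setminus \conj{\Disc}(b, r_2)$; by continuity and connectedness, on each connected component of the open set $I \setminus E$ the curve $\varphi$ lies entirely in $A_1$ or entirely in $A_2$.

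Each ``transition'' of $\varphi$ between the two components requires traversing $K$, which has radial width $r_2 - r_1$. Because $|\varphi'| \equiv 1$, such a traversal occupies a disjoint subinterval of $E$ of length at least $r_2 - r_1$. The bound $\sum_k (t_{2k+1} - t_{2k}) \le M$ therefore forces the number of transitions to be at most $M/(r_2 - r_1)$, hence finite, and there exists $T_0 \in [0, T^+)$ past the last transition (if any) such that $\varphi([T_0, T^+)) \subset A_i \cup K$ for a single $i \in \{1, 2\}$; WLOG take $i = 1$.

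To upgrade from $\varphi(t) \in A_1 \cup K$ to the strict $\varphi(t) \in A_1$ past some time $T_1$, I invoke strict monotonicity of $u \circ \varphi$ (say strictly decreasing; the ascent case is analogous). Setting $m := \min_K u$, the preimage $(u \circ \varphi)^{-1}([m, \infty))$ is an initial interval $[0, \tau_+] \subseteq [0, T^+)$, and $E \subseteq [0, \tau_+]$ since $\varphi(t) \in K$ implies $u(\varphi(t)) \ge m$. In the principal case $\tau_+ < T^+$, set $T_1 := \max(T_0, \tau_+)$: for $t > T_1$, $u(\varphi(t)) < m$ forces $\varphi(t) \notin K$, and combined with $\varphi(t) \in A_1 \cup K$, this gives $\varphi(t) \in A_1$, yielding $|\varphi(t) - b| < r_1$.

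The main obstacle is the degenerate case $\tau_+ = T^+$, in which $u \circ \varphi$ stays above $m$ throughout and visits to $K$ may a priori accumulate near $T^+$. I plan to dispose of this by applying Proposition~\ref{flucL14} to a slightly thickened annulus $K_\eta := \conj{\Ann}(b, r_1 - \eta, r_2 + \eta) \subset V$ (available for small $\eta > 0$ since $V$ is open and $K$ is compact) and analyzing the behavior of $\varphi$ in the thin collar $\Ann(b, r_1 - \eta, r_1)$: the finite total time in $K_\eta$, together with the unit-speed constraint and the fact that after $T_0$ every visit to $K$ is entered and exited through $\Circle(b, r_1)$, should rule out infinitely many visits and force $E$ to be bounded above in $[0, T^+)$, from which the conclusion follows as in the principal case.
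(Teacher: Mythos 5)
Your first half --- bound the total time in $K = \conj{\Ann}(b, r_1, r_2)$ via Proposition \ref{flucL14}, note each full traversal of $K$ costs at least $r_2 - r_1$ by unit speed, conclude finitely many transitions between $A_1 = \Disc(b, r_1)$ and $A_2 = \bbC \smallsetminus \conj{\Disc}(b, r_2)$ --- is exactly the paper's first step. The divergence, and the gap, comes in upgrading from ``$\varphi$ eventually in $A_1 \cup K$'' to ``$\varphi$ eventually in $A_1$''. The paper does this by simply rerunning the same no-oscillation dichotomy on a second thin annulus (say $\conj{\Ann}(b, r_2, r_2+\delta) \subset V$): either $\varphi$ is eventually outside $\conj{\Disc}(b,r_2)$, or it is eventually inside $\Disc(b, r_2 + \delta)$ and hence trapped in the compact set $\conj{\Ann}(b, r_1, r_2+\delta) \subset V$ on a half-open interval ending at $T^+$. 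The latter is impossible precisely because Proposition \ref{flucL14} asserts that each $J_k$ is a compact interval $[t_{2k}, t_{2k+1}]$ with $t_{2k+1} \in I$, so no $J_k$ can abut $T^+$. That chaining finishes the proof in two sentences, with no appeal to monotonicity.

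Your monotonicity route is fine when $\tau_+ < T^+$, but the degenerate case $\tau_+ = T^+$ is only sketched, and the sketch does not yet close the gap. The bound from Proposition \ref{flucL14} controls the \emph{total duration} spent in the thickened annulus $K_\eta$, not the \emph{number} of visits, and in the degenerate scenario the visits to $K$ (grazing $\Circle(b, r_1)$) and the intervening excursions into the collar $\Ann(b, r_1-\eta, r_1)$ can all be arbitrarily short; arbitrarily many of them are consistent with finite total time. What actually rules this out is not the time bound alone but the structural fact that $J_k$ cannot reach $T^+$. Concretely: run the first-step dichotomy on $\conj{\Ann}(b, r_1-\eta, r_1)$ to get that either eventually $|\varphi - b| > r_1 - \eta$ (trapping $\varphi$ in the compact $\conj{\Ann}(b, r_1-\eta, r_2) \subset V$ up to $T^+$, forbidden), or eventually $|\varphi - b| < r_1$, which is what you want. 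Once you write this out, the monotonicity argument becomes redundant, and you recover the paper's proof.
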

\begin{proof}
We first show that there exists $T_0 \in [0, T^+)$ such that either $|\varphi(t)-b| > r_1$ for $t \in [T_0, T^+)$ or $|\varphi(t)-b| < r_2$ for $t \in [T_0, T^+)$. Arguing by contradiction, suppose that there exists an increasing sequence of time points $(s_k)_{k \in \bbN}$ such that $|\varphi(s_{2k})-b| \le r_1$ and $|\varphi(s_{2k+1})-b| \ge r_2$ for $k \in \bbN$. Apply Proposition \ref{flucL14} with the compact set $\conj{A}(b, r_1, r_2)$. By continuity of $\varphi$, at least one of the intervals $J_{l_k} = [t_{2l_k}, t_{2l_k+1}]$ described in the proposition must satisfy $s_{2k} < t_{2l_k} \le t_{2l_k+1} < s_{2k+1}$, $|\varphi(t_{2l_k})| = r_1$ and $|\varphi(t_{2l_k+1})| = r_2$ for $k \in \bbN$. Then, by (\ref{flucE59}), $t_{2{l_k}+1}-t_{2l_k} \ge r_2-r_1 > 0$ for $k \in \bbN$.  Then, since $J_{l_k}$ are distinct for various $k \in \bbN$, the left hand-side of (\ref{flucE67}) is infinite, a contradiction. Hence, there exists $T_0 \in [0, T^+)$ with the claimed property. 

Consider the case when $|\varphi(t)-b| > r_1$ for $t \in [T_0, T^+)$. We can choose $\delta > 0$ small such that $\conj{A}(b, r_2, r_2+\delta) \subset V$. Repeating the argument of the preceding paragraph, we find $T_1 \in [0, T^+)$ such that either 
$|\varphi(t)-b| < r_2+\delta$ for $t \in [T_1, T^+)$ or $|\varphi(t)-b| > r_2$ for $t \in [T_1, T^+)$. The former would imply $\varphi(t) \in \conj{A}(b, r_1, r_2+\delta)$ for $t \in [T_0 \vee T_1, T^+)$, which is ruled out by Proposition \ref{flucL14}. Similarly, $|\varphi(t)-b| < r_2$ for $t \in [T_0, T^+)$ implies that $|\varphi(t)-b| < r_1$ for $t \in [T_1, T^+)$ for some $T_1 \in [0, T^+)$. 
\end{proof}

\begin{cor}
\label{flucC2}
Suppose that $\conj{\Disc}(b, \delta) \smallsetminus \{b\} \subset V$. Then either there exists $0 \le T_1 < T^+$ such that $|\varphi(t)-b| > \delta$ for $t \in [T_1, T^+)$, or $\lim_{t \rightarrow T^+} \varphi(t) = b$.  
\end{cor}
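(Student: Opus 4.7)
The plan is to deduce this directly from Proposition \ref{flucC1} by fixing the outer radius at $\delta$ and letting the inner radius shrink to zero. Since $\conj{\Disc}(b,\delta)\smallsetminus\{b\} \subset V$, for every $r_1\in(0,\delta)$ the closed annulus $\conj{\Ann}(b,r_1,\delta)$ is contained in $V$, so Proposition \ref{flucC1} is applicable with the choice $r_2 = \delta$.

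Concretely, for each $r_1\in(0,\delta)$ I would invoke Proposition \ref{flucC1} to obtain some $T_1(r_1)\in[0,T^+)$ such that either
\begin{equation*}
|\varphi(t)-b|<r_1 \text{ for all } t\in[T_1(r_1),T^+), \quad \text{or} \quad |\varphi(t)-b|>\delta \text{ for all } t\in[T_1(r_1),T^+).
\end{equation*}
Then I would split into two cases. If the second alternative occurs for at least one value $r_1^*\in(0,\delta)$, then simply take $T_1 = T_1(r_1^*)$ and the first conclusion of the corollary follows verbatim. Otherwise, the first alternative occurs for every $r_1\in(0,\delta)$, meaning that for each such $r_1$ one can find $T_1(r_1)\in[0,T^+)$ with $|\varphi(t)-b|<r_1$ for all $t\in[T_1(r_1),T^+)$. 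This gives $\limsup_{t\to T^+}|\varphi(t)-b|\le r_1$ for every $r_1>0$, hence $\lim_{t\to T^+}\varphi(t)=b$, which is the second conclusion.

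There is no real obstacle here; the statement is a straightforward logical reformulation of the dichotomy supplied by Proposition \ref{flucC1}. The only subtlety worth spelling out is that these two alternatives are mutually exclusive globally (if the second alternative ever holds for one $r_1$, it precludes the first alternative holding for any other $r_1'\in(0,\delta)$, since $|\varphi(t)-b|>\delta>r_1'$ eventually), so the case split above is exhaustive and unambiguous.
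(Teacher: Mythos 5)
Your argument is correct and essentially identical to the paper's: both apply Proposition \ref{flucC1} with outer radius $\delta$ and arbitrary inner radius, then deduce convergence to $b$ when the inner alternative holds for all small inner radii. The only (harmless) phrasing slip is claiming $\limsup_{t\to T^+}|\varphi(t)-b|\le r_1$ ``for every $r_1>0$'' when you have only established it for $r_1\in(0,\delta)$, but this suffices to conclude the limit is $b$.
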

\begin{proof}
Let $0 < \delta_1 < \delta$. By Proposition \ref{flucC1}, there exists $T_1$ such that either $|\varphi(t)-b| > \delta$ for $t \ge T_1$ or $|\varphi(t)-b| < \delta_1$ for $t \ge T_1$. In the latter case, repeating the preceding argument with any $0 < \delta_1' < \delta$ yields $|\varphi(t)-b| < \delta_1'$ for $t \ge T_1'$ for some $T_1' \in [0, T^+)$. Hence, $\lim_{t \rightarrow T^+} \varphi(t) = b$. 
\end{proof}

We next show that if $\varphi(t)$ converges as $t \rightarrow T^+$ to a point in $U$ then the limit point is a zero of $\nabla u$ and the convergence occurs in finite time. The finiteness of $T^+$ can be reasoned informally as follows. If $\lim_{t \rightarrow T^+} \varphi(t) = b$ for some $b \in U$ then, in a small neighborhood of $b$, $\varphi$ is given by the reversal of one of the steepest-ascent curves that emanate from $b$. Therefore, in the case $\nabla u(b) = 0$, $\varphi$ reaches the boundary of $V$ in finite time. The proof of the next lemma makes the foregoing argument more precise. 

\begin{lem}
\label{flucL22}
Suppose that $\lim_{t \rightarrow T^+} \varphi(t) = b$ for some $b \in U$. Then $\nabla u(b) = 0$ and $T^+ < \infty$. 
\end{lem}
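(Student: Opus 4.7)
The plan is to establish the two conclusions in succession. First, I show $\nabla u(b)=0$ by ruling out the two sub-cases ($T^+<\infty$ and $T^+=\infty$) in which $b$ could lie in $V$. Second, with $b$ known to be a critical point of $u$, I exploit the local description of the stationary set of $v=\Im f$ around $b$ (Lemma \ref{flucL12}) together with the unit-speed normalization of $\varphi$ to force $T^+<\infty$.

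For the first step, suppose for contradiction $\nabla u(b)\neq 0$, so $b\in V$, and fix $\delta>0$ with $\conj{\Disc}(b,\delta)\subset V$. In the case $T^+<\infty$, pick $t^*\in(0,T^+)$ close enough to $T^+$ that $\varphi(t^*)\in\Disc(b,\delta/2)$. The maximal solution $\tilde\Phi$ of $\dot z=\pm\nabla u(z)/|\nabla u(z)|$ through $\varphi(t^*)$ at time $t^*$ (Theorem \ref{flucT4}a) agrees with $\varphi$ on $[t^*,T^+)$ by uniqueness; since $\tilde\Phi(t)\to b\in V$ as $t\to T^+$, $\tilde\Phi$ extends continuously to $T^+$ and then strictly past $T^+$ via the local flow of the $C^1$ vector field at $b$, contradicting the maximality of $T^+$ for $\varphi$. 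In the case $T^+=\infty$, setting $K=\conj{\Disc}(b,\delta)$ gives $\varphi(t)\in K$ for all $t\geq T_0$ (some $T_0$), so the decomposition of $\{t\in I:\varphi(t)\in K\}$ in Proposition \ref{flucL14} must contain an interval with right endpoint $\infty\notin I$, contradicting the finiteness asserted by the proposition.

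For the second step, assume $\nabla u(b)=0$ and, for contradiction, $T^+=\infty$. Choose $\delta>0$ small enough for Lemma \ref{flucL12} to apply with $a=b$, and fix $T_0$ with $\varphi([T_0,\infty))\subset\Disc(b,\delta)\setminus\{b\}$. Because $\varphi$ is a stationary curve of $v$ (Lemma \ref{flucL1}), Lemma \ref{flucL12} places $\varphi([T_0,\infty))$ inside $\bigcup_{j\in[n]}\varphi_j((T_j^-,T_j^+))\setminus\{b\}$, a disjoint union of $2n$ simple arcs meeting only at $b$ (Lemma \ref{appL1}). Connectivity forces $\varphi([T_0,\infty))$ into a single arc; by symmetry, I treat the case $\varphi([T_0,\infty))\subset\varphi_{j_0}^+((0,T_{j_0}^+))$ for some $j_0\in[n]$ and write $\varphi(t)=\varphi_{j_0}^+(s(t))$ with $s(t)\in(0,T_{j_0}^+)$ and $s(t)\to 0$ as $t\to\infty$. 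The inverse function theorem applied to $\varphi_{j_0}^+$ on $(0,T_{j_0}^+)$ makes $s$ a $C^1$ function of $t$; differentiating $\varphi=\varphi_{j_0}^+\circ s$ and using unit speed of both factors yields $|s'(t)|\equiv 1$, so $s(t)=\pm(t-T_0)+s(T_0)$ cannot tend to $0$ as $t\to\infty$, a contradiction.

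The hardest step is the second: one has to combine the local normal-form description of $\{v=v(b)\}$ near $b$ (Lemma \ref{flucL12}), the separation of its branches (Lemma \ref{appL1}), and a connectivity argument to identify $\varphi$ near $b$ as a reparametrization of a single $\varphi_{j_0}^\pm$, after which unit speed rigidly fixes the parametrization and forbids $s(t)\to 0$.
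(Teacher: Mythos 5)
Your proof is correct and follows essentially the same plan as the paper: rule out $\nabla u(b)\neq 0$, then use Lemma~\ref{flucL12} to identify $\varphi$ near $b$ with one of the local stationary curves of $v$ and exploit the unit-speed normalization to bound $T^+$. The only variations are stylistic—the paper handles both cases of the first half at once, since Proposition~\ref{flucL14} already shows no interval $J_k$ in the decomposition of $\{t:\varphi(t)\in K\}$ can have right endpoint $T^+$ (finite or infinite), and in the second half it reads off $T^+=t_0+|s_0|$ directly by matching $\varphi$ with the time-reversal of a steepest-ascent curve emanating from $b$ rather than deriving a contradiction from the affine reparametrization $s(t)$.
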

\begin{proof}
To show that a contradiction arises, suppose that $\nabla u(b) \neq 0$ i.e. $b \in V$. Choose $\delta > 0$ such that $\conj{\Disc}(b, \delta) \subset V$. Then, by Proposition \ref{flucL14}, we have $|\varphi(t_k)-b| > \delta$ for a sequence $(t_k)_{k \in \bbN}$ in $[0, T^+)$ with $\lim_{k \rightarrow \infty} t_k = T^+$, which contradicts $\lim_{t \rightarrow T^+} \varphi(t) = b$. Hence, $\nabla u(b) = 0$. 

Now choose $t_0 \in [0, T^+)$ such that $\varphi(t) \in \Disc(b, \delta)$ for $t \in [t_0, T^+)$. Since $\varphi$ is a stationary curve of $v$ and $v(b) = \lim_{t \rightarrow T^+} v(\varphi(t))$, we have $v(b) = v(\varphi(t))$ for $t \in [t_0, T^+)$. Therefore, by Lemma \ref{flucL12}, $\varphi(t_0) = \psi(s_0) = z_0$ for some stationary curve $\psi$ of $v$ passing through $b$. Since $\lim_{t \rightarrow T^+}\varphi(t) = b$ and $\varphi$ is a steepest-descent curve of $u$, we have $u(b) < u(z_0)$. We consider two cases now. Suppose that $s_0 > 0$. Then $\psi_+$ (defined in as in (\ref{flucE110}) but for $b$) is a steepest-ascent curve of $u$ emanating from $b$. Then, since $\nabla u(z_0) \neq 0$, the unique steepest-descent curve passing through $z_0$ at time $t_0$ is given by $\psi(-t+t_0+s_0)$ for time values $t_0 \le t \le t_0+s_0$. Then, since $\nabla u(b) = 0$ and $\psi(0) = b$, we have $T^+ = t_0 + s_0 < \infty$. Now, suppose that $s_0 < 0$. Then $\psi_-$ is a steepest-ascent curve emanating from $b$, and the unique steepest-descent curve passing through $z_0$ at time $t_0$ is given by $\psi(t-t_0+s_0)$ for time values $t_0 \le t \le t_0-s_0$. Hence, $T^+ = t_0-s_0 < \infty$.  
\end{proof}

\begin{lem}
\label{flucL23}
Suppose that $\conj{\Disc}(b, \delta) \smallsetminus \{b\} \subset V$ and $\lim_{t \rightarrow T^+} \varphi(t) = b$ and $\lim_{z \rightarrow b} |u(z)| = \infty$. Then $T^+ < \infty$.  
\end{lem}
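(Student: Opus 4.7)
The plan is to exploit the logarithmic structure of $u$ near the isolated singularity $b$. First, since $u$ is harmonic on the punctured disk $\Disc(b,\delta)\smallsetminus\{b\}$ and $|u(z)|\to\infty$ as $z\to b$, the zero set of $u$ cannot accumulate at $b$, so $u$ is eventually of fixed sign near $b$. Replacing $u$ by $-u$ (which merely interchanges steepest-descent and -ascent curves and does not affect the statement) we may assume $u(z)\to+\infty$. B\^ocher's theorem then yields a decomposition $u(z)=c\log|z-b|+u_0(z)$ with $c<0$ and $u_0$ harmonic on the full disk $\Disc(b,\delta)$. Let $v_0$ be a harmonic conjugate of $u_0$ on $\Disc(b,\delta)$ and set $g=u_0+\ii v_0\in\Hol(\Disc(b,\delta))$. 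On any simply connected open subset of $\Disc(b,\delta)\smallsetminus\{b\}$, $f(z)=c\log(z-b)+g(z)$ is a holomorphic representative of $u+\ii v$, so locally $v(z)=c\arg(z-b)+v_0(z)+\mathrm{const}$.

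Since $\varphi(t)\to b$, pick $t_0<T^+$ with $\varphi([t_0,T^+))\subset\Disc(b,\delta/2)\smallsetminus\{b\}$ and continuously lift $\arg(\varphi(t)-b)$ to a function $\theta:[t_0,T^+)\to\bbR$. By Lemma~\ref{flucL1}, $\varphi$ is a stationary curve of $v$, so on each simply connected piece of the curve the relation $c\theta(t)+v_0(\varphi(t))=\mathrm{const}$ holds, and patching these pieces together by continuity shows that $c\theta(t)+v_0(\varphi(t))$ is constant on all of $[t_0,T^+)$. Since $v_0$ is continuous at $b$, $v_0(\varphi(t))\to v_0(b)$, and therefore $\theta(t)$ converges to a finite limit $\theta_0\in\bbR$: the curve approaches $b$ along a fixed direction and does not wind infinitely. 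Differentiating the identity yields $|\theta'(t)|\le|\nabla v_0(\varphi(t))|/|c|$, which stays bounded as $t\to T^+$ because $v_0$ is smooth at $b$.

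Finally, write $\varphi(t)=b+r(t)e^{\ii\theta(t)}$. The unit speed condition $|\varphi'|=1$ gives
\[
r'(t)^2+r(t)^2\theta'(t)^2=1,
\]
and since $r(t)\to0$ and $\theta'$ is bounded we deduce $r'(t)^2\to1$. By continuity $r'$ has fixed sign for $t$ close to $T^+$, and since $r>0$ with $r(t)\to0$ that sign must be negative, hence $r'(t)\to-1$. If $T^+=\infty$, then $r'(t)<-1/2$ for all sufficiently large $t$, forcing $r(t)\to-\infty$, which contradicts $r\ge0$; therefore $T^+<\infty$. The main obstacle is the argument in the second paragraph: one must verify that the local identity $c\theta+v_0\circ\varphi=\mathrm{const}$, obvious on simply connected pieces, extends along the full curve $\varphi|_{[t_0,T^+)}$, which a priori might wind many times around $b$. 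This analytic-continuation step is what converts the purely topological behavior of the curve into the quantitative control $\theta(t)\to\theta_0$ that rules out spiraling and forces arrival at $b$ in finite time.
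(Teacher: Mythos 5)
Your proof is correct but takes a genuinely different route from the paper. Both arguments invoke B\^ocher's theorem to decompose $u = c\log|z-b| + u_0$ near the singularity, but from there they diverge. The paper works directly on the gradient: from $\nabla u(z) = \frac{c}{\conj{z}-\conj{b}} + \conj{f_0'}(z)$ one checks that the unit radial component $\frac{(z-b)\cdot\nabla u(z)}{|z-b|\,|\nabla u(z)|}$ is bounded below by $1/4$ on a sufficiently small punctured disk, whence $\frac{d}{dt}|\varphi(t)-b| \le -1/4$ once $\varphi$ enters that disk, and a finite arrival time follows immediately. You instead exploit the harmonic-conjugate side: stationarity of $v$ along $\varphi$ (Lemma \ref{flucL1}) produces the conserved quantity $c\theta(t) + v_0(\varphi(t))$, so the lifted angle $\theta(t)$ converges and $\theta'$ is bounded, and the unit-speed relation $r'^2 + r^2\theta'^2 = 1$ then forces $r' \to -1$. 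Your worry about the ``analytic-continuation step'' is unfounded: the identity $c\theta'(t) + \nabla v_0(\varphi(t))\cdot\varphi'(t) = 0$ holds for every $t$ because it is simply the pointwise chain rule applied to $\frac{d}{dt}v(\varphi(t)) = 0$, using that $\nabla v = c\,\ii/(\conj{z}-\conj{b}) + \nabla v_0$ is a globally well-defined vector field on the punctured disk even though $v$ itself is not; integrating this derivative identity over $[t_0, t]$ gives the conservation law with no patching required. Your route yields slightly more structural information --- the curve arrives at $b$ along a fixed direction, so no infinite spiraling --- at the modest cost of introducing the conjugate $v_0$ and the argument lift. The paper's radial estimate is more elementary and bypasses the angular variable entirely.
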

\begin{proof}
If $\varphi$ is a steepest-descent curve then the hypotheses imply that $\lim_{z \rightarrow b} u(z) = -\infty$. It follows from B\^{o}cher's theorem characterizing the positive harmonic functions on a punctured disk \cite[Theorem 3.9]{AxlerBourdonRamey01} that 
\begin{align*}u(z) = c \log |z-b| + u_0(z) \quad \text{ for } z \in \Disc(b, \delta) \smallsetminus \{b\}\end{align*} 
for some $c > 0$ and harmonic function $u_0$ on $\Disc(b, \delta)$. Hence, 
\begin{align*}\nabla u(z) = \frac{c}{\conj{z}-\conj{b}} + \conj{f_0'}(z) \quad \text{ for } z \in \Disc(b, \delta) \smallsetminus \{b\},\end{align*}
where $f_0 \in \Hol(\Disc(b, \delta))$ with $\Re f_0 = u_0$. Then, for $z \in \Disc(b, \delta_1) \smallsetminus \{b\}$ with $0 < \delta_1 < \delta$,  
\begin{align}
\label{flucE61}
\frac{(z-b) \cdot \nabla u(z)}{|z-b| |\nabla u(z)|} = \frac{c + \Re\{(z-b) f_0'(z)\}}{|c + (z-b)f_0'(z)|} \ge \frac{1-\dfrac{\delta_1}{c}\max \limits_{\conj{\Disc}(b, \delta_1)}|f_0'|}{1+\dfrac{\delta_1}{c} \max \limits_{\conj{\Disc}(b, \delta_1)}|f_0'|} \ge \frac{1}{4}
\end{align}
provided that $\delta_1 > 0$ is chosen sufficiently small. It follows that if $\varphi(t) \in \Disc(b, \delta_1)$ then 
\begin{align}
\label{flucE58}
\frac{d}{dt}|\varphi(t)-b| = \frac{\varphi(t)-b}{|\varphi(t)-b|} \cdot \varphi'(t) = - \frac{(\varphi(t)-b) \cdot \nabla u(\varphi(t))}{|\varphi(t)-b| |\nabla u(\varphi(t))|} \le -\frac{1}{4}. 
\end{align}
Since $\lim_{t \rightarrow T^+} \varphi(t) = b$, we can pick $T_1 \in (0, T^+)$ with $\varphi(t) \in \Disc(b, \delta_1)$. Because the derivative of $|\varphi-b|$ is negative at $T_1$, there exists a maximal $T_2 \in (T_1, T^+]$ such that $|\varphi(t)-b|$ is decreasing for $t \in [T_1, T_2)$. If $T_2 < T^+$ then, by (\ref{flucE58}) and continuity, $|\varphi-b|$ has a negative derivative at $T_2$ as well, which contradicts the maximality of $T_2$. Hence, $T_2 = T^+$ and, consequently, $\varphi(t) \in \Disc(b, \delta_1)$ for $t \in [T_1, T^+)$. Therefore, by (\ref{flucE58}), 
\begin{align*}
|\varphi(t)-b| \le |\varphi(T_1)-b|-\frac{t-T_1}{4} < \delta_1-\frac{t-T_1}{4} \quad \text{ for } t \in [T_1, T^+), 
\end{align*}
which implies that $T^+ < 4\delta + T_1 < \infty$.  
\end{proof}

\begin{proof}[Proof of \ref{flucP1}]
In the present setting $\Disc(a, \epsilon) \subset \bbC \smallsetminus P$ and $\varphi: [0, T^+) \rightarrow \bbC \smallsetminus P$ is a steepest-descent or -ascent curve of $u$ with $\varphi(0) = a$ and $\varphi((0, T^+)) \subset \bbC \smallsetminus (P \cup Z)$. 

We may assume that $u$ satisfies (\ref{flucE50}). Note that 
\begin{align}
\label{flucE51}
\nabla u(z) = \sum \limits_{i=1}^{k+l} \frac{c_i}{\conj{z}-\conj{p_i}} + \nabla u_0(z) \qquad \text{ for } z \in \bbC \smallsetminus P.  
\end{align}
Let $Z$ denote the set of zeros of $\nabla u$. Since $Z$ is a set of isolated points (has no limit point in $\bbC$), we can enumerate its elements as $z_1, z_2, \dotsc, $ such that $|z_1| \le |z_2| \le \dotsc$. 

Suppose that $\sup_{0 \le t < T^+} |\varphi(t)| = \infty$. Then, by continuity, $T^+ = \infty$. There exists an increasing sequence of $(r_i)_{i \in \bbN}$ of positive real numbers such that $\conj{A_i}(0, r_{2i-1}, r_{2i})$ is disjoint from $Z \cup P$ for each $i \in \bbN$ and $\lim_{i \rightarrow \infty} r_i = \infty$. By Corollary \ref{flucC1} and unboundedness of $|\varphi|$, there exists $T_i \in [0, T^+)$ such that $|\varphi(t)| > r_{2i}$ for $t \in [T_i, T^+)$. Hence, $\lim_{t \rightarrow T^+} |\varphi(t)| = \infty$. 

Now the case $\sup_{0 \le t < T^+} |\varphi(t)| < \infty$. Choose $R > 0$ such that $\varphi(t) \in \Disc(0, R)$ for $t \in [0, T^+)$. Arguing by contradiction, suppose that $\varphi(t)$ does not converge to any point in $P \cup Z$ as $t \rightarrow T^+$. The intersection of $P \cup Z$ with $\conj{\Disc}(0, R)$ is finite. Hence, by Corollary \ref{flucC2} there exist $\delta > 0$ and $T_0 \in [0, T^+)$ such that $|\varphi(t)-b| \ge \delta$ for $t \ge T_0$ and $b \in P \cup Z$ with $|b| \le R$. It follows that $\varphi(t) \in K$ for $t \in [T_0, T^+)$, where 
\begin{align}K = \conj{\Disc}(0, R) \smallsetminus \bigcup \limits_{\substack{b \in P \cup Z \\ |b| \le R}} \Disc(b, \delta).\end{align} 
However, this is not possible by Proposition \ref{flucL14}. Hence, we must have $\lim_{t \rightarrow T^+} \varphi(t) = b$ for some $b \in P \cup Z$. Since $u \circ \varphi$ is decreasing, $b \neq a$. Also, $T^+ < \infty$ either by Lemma \ref{flucL22} or by Lemma \ref{flucL23} and (\ref{flucE50}).
\end{proof}

\section{Contour integrals over steepest-descent curves}
\label{flucAp2}

Utilizing the development in the preceding section, we now derive bounds for contour integrals of certain meromorphic functions on $\bbC$. Recall that a function $F$ is meromorphic on $\bbC$ if there exists a discrete set $P \subset \bbC$ such that $F$ is holomorphic on $\bbC \smallsetminus P$ and has poles at points of $P$ ($F$ has no essential singularities). 

We describe the integrand and the contour of integration precisely. Let $F$ be a nonconstant meromorphic function on $\bbC$. Write $P$ and $Z$ for the set of poles and zeros of $F$, respectively. Suppose that $u$ and $\tilde{u}$ are harmonic functions on $\bbC \smallsetminus (P \cup Z)$ such that  
\begin{align*}
\log |F(z)|  = u(z) + \tilde{u}(z) \quad \text{ for } z \in \bbC \smallsetminus (P \cup Z).
\end{align*}
Choose $a \in \bbC$ and $r > 0$ such that $\Disc(a, r) \subset \bbC \smallsetminus (P \cup Z)$. There exist $f, \tilde{f} \in \Hol(\Disc(a, r))$ such that $f = u + \ii v$ and $\tilde{f} = \tilde{u} + \ii \tilde{v}$ for some harmonic functions $v, \tilde{v}$ on $\Disc(a, r)$. Since $\Disc(a, r)$ is simply-connected and does not intersect $P \cup Z$, we can define $\log F(z)$ on $\Disc(a, r)$ by 
\begin{align}
\log F(z) = \int_a^z \frac{F'(w)}{F(w)} \dd w + z_0 \quad \text{ for } z \in \Disc(a, r),  
\end{align}
where the contour can be taken as the line segment $[a, z]$ and $z_0 \in \bbC$ satisfies $\exp(z_0) = F(a)$. 
Because the harmonic conjugate of $\log |F(z)|$ on $\Disc(a, r)$ is unique up to a constant,  the functions $v+\tilde{v}$ and $\Im \{\log F(z)\}$ differ by a constant. Hence, absorbing this constant into $v+\tilde{v}$, we have the identity
\begin{align}
F(z) = \exp\{f(z) + \tilde{f}(z)\} \quad \text{ for } z \in \Disc(a, r). \label{appE7}
\end{align}

Assume that $u$ is nonconstant. Hence, there exists a minimal $n \in \bbN$ such that $f^{(n)}(a) \neq 0$. Fix an odd $j \in [n]$ and let $\Phi: [0, T^+) \rightarrow \bbC$ denote the curve $\varphi_j^+$ defined in (\ref{flucE110}). By Propositions \ref{flucL6} and \ref{flucL25}, $\Phi$ is a (maximal) steepest-descent curve of $u$ emanating from $a$. Pick $T \in [0, T^+)$. We are interested in approximating the contour integral 
\begin{align}
\label{flucE95}
I = \int \limits_{a}^{\Phi(T)} F(z) \dd z, 
\end{align}
where the integration is from $a = \Phi(0)$ to $\Phi(T)$ along the curve $\Phi$. 

We need further notation to introduce the approximate value for (\ref{flucE95}).  
Put $c = \dfrac{f^{(n)}(a)}{n!}$, 
which is nonzero, and let $\xi$ and $N$ be the direction and absolute value of an $n$th root of $c$. The bounds below are expressed in terms of the parameter $N$ and improve as $N \rightarrow \infty$. If $\tilde{u}$ is nonconstant, let $k \in \bbN$ be minimal such that $\tilde{f}^{(k)}(a) \neq 0$ and set $d = \dfrac{\tilde{f}^{(k)}(a)}{k!}$ and $M = |d|^{1/k}$. Recall the constants from (\ref{flucE112}). 
In addition, define 
\begin{align}
\tilde{K}_0 = \frac{2^{k+1}}{r^{k+1}} \frac{\sup \limits_{z \in \conj{\Disc}(a, r/2)} |\tilde{f}|}{M^k} \qquad \tilde{\epsilon}_0 = \frac{1}{4\tilde{K}_0}. \label{flucE156}
\end{align}
If $\tilde{u}$ is constant, set $k = M = d = \tilde{K}_0 = 0$ and $\tilde{\epsilon}_0 = \infty$. We can interchange $u$ and $\tilde{u}$ if $k > n$ and work with $u+\tilde{u}, 0$ instead of $u, \tilde{u}$ if $k = n$. Hence, it suffices to consider the case $k < n$. 

Choose $\epsilon > 0$ such that
\begin{align}
\epsilon \le \min \bigg\{\frac{\epsilon_0}{128}, \frac{\tilde{\epsilon}_0}{4}, \frac{1}{4n}\bigg\}. \label{flucE149}
\end{align}
Let $\eta$ denote the direction of $\Phi(\tau(\epsilon))-a$, where $\tau(\epsilon)$ is the exit time of $\Phi$ from $\Disc(a, \epsilon)$, see (\ref{flucE99}). Define the contour integral 
\begin{align}
\label{flucE100}
\sI = \int \limits_{[0, \infty \eta \xi]} \exp\bigg\{u^n +dN^{-k}\conj{\xi}^ku^k\bigg\}\ du. 
\end{align}
The next lemma implies that the integral in (\ref{flucE100}) is absolutely convergent and, hence, $\sI$ is well-defined. 
\begin{lem}
\label{flucL29}
$\Re(\eta^n \xi^n) < -1/2$. 
\end{lem}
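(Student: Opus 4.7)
The proof runs via Proposition \ref{flucL10}(b). Applied to the curve $\Phi=\varphi_j^+$ at its exit time $\tau(\epsilon)$ from $\Disc(a,\epsilon)$, that proposition yields the chord estimate
\[
|\eta - \conj{\xi}\, e^{\ii\pi j/n}| \le 4C\epsilon, \qquad C = 64/\epsilon_0.
\]
Since $|\eta|=|\xi|=1$, the product $\eta\xi$ lies on the unit circle; write $\eta\xi = e^{\ii\alpha}$. Multiplying the chord estimate by $|\xi|=1$ converts it to $|e^{\ii\alpha} - e^{\ii\pi j/n}| \le 4C\epsilon$, i.e.\ $2|\sin((\alpha-\pi j/n)/2)| \le 4C\epsilon$. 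Combining with the elementary inequality $|\sin t|\ge (2/\pi)|t|$ on $|t|\le\pi/2$, valid here once $4C\epsilon \le \sqrt{2}$ (ensured by the clause $\epsilon\le\epsilon_0/128$ of (\ref{flucE149})), this yields the argument estimate
\[
|\alpha - \pi j/n| \le 2\pi C\epsilon \pmod{2\pi}.
\]

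Since $j$ is odd, $e^{\ii\pi j}=-1$, and so $(\eta\xi)^n = e^{\ii n\alpha} = -e^{\ii(n\alpha-\pi j)}$; therefore $\Re(\eta^n\xi^n) = -\cos(n\alpha - \pi j)$, and the lemma reduces to the bound $\cos(n\alpha-\pi j) > 1/2$, i.e.\ $|n\alpha-\pi j|<\pi/3$ modulo $2\pi$. Multiplying the argument estimate by $n$ gives $|n\alpha - \pi j|\le 2\pi n C\epsilon$; the bound on $\epsilon$ in (\ref{flucE149}) is then invoked to push this strictly below $\pi/3$, which completes the proof.

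The only delicate point is the constant bookkeeping in this last step. Since $C=64/\epsilon_0$, forcing $2\pi nC\epsilon<\pi/3$ requires an estimate of the form $\epsilon < \epsilon_0/(384\,n)$; if the three clauses of (\ref{flucE149}) as stated do not quite suffice (the clause $1/(4n)$ does not involve $\epsilon_0$), it is cleanest to strengthen the third clause to $\epsilon\le\epsilon_0/(768\,n)$, which preserves every other estimate in the appendix verbatim. No new ideas are needed beyond Proposition \ref{flucL10}(b) and the elementary bounds on $\sin$ recorded above.
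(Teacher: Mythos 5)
Your argument is sound, and it flags a real slip in the constant accounting. Proposition \ref{flucL10}b with $C=64/\epsilon_0$ gives $|\eta-\conj{\xi}e^{\ii\pi j/n}|\le 4C\epsilon=256\epsilon/\epsilon_0$, and the clause $\epsilon\le\epsilon_0/128$ of (\ref{flucE149}) only bounds this by $2$, which is vacuous for two unit vectors. The paper's own proof opens by asserting that Proposition \ref{flucL10}b and (\ref{flucE149}) yield $|\eta-e^{\ii\pi j/n}\conj{\xi}|<2\epsilon$; this would require $\epsilon_0>128$, contradicting $\epsilon_0\le 1/16$ from (\ref{flucE112}), so the published proof has exactly the gap you identified. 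The lemma is true, but (\ref{flucE149}) must be tightened; your proposed $\epsilon\le\epsilon_0/(768n)$ suffices and is harmless elsewhere in the appendix, and the working constraint $\epsilon<\epsilon_0/1024$ in Section \ref{flucS6} should then be reduced accordingly (e.g.\ to $\epsilon<\epsilon_0/2304$ for $n=3$).

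Where you diverge from the paper is the step from the chord bound to the real-part bound. The paper multiplies by $\conj{\xi}^n$ and factors, writing $|\eta^n\xi^n+1|=|\eta^n-e^{\ii\pi j}\conj{\xi}^n|\le n\,|\eta-e^{\ii\pi j/n}\conj{\xi}|$ (valid for unit modulus), then uses the exact equivalence $|e^{\ii\theta}+1|<1 \Leftrightarrow \cos\theta<-1/2$. You instead pass to arguments via $|\sin t|\ge (2/\pi)|t|$. Both routes work, but the trigonometric one loses a factor of $\pi/2$: the factoring route needs $\epsilon<\epsilon_0/(256n)$, while yours needs $\epsilon<\epsilon_0/(384n)$. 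The difference is not purely cosmetic, since Section \ref{flucS6} has $n=3$ and $\epsilon<\epsilon_0/1024$, which lies below $\epsilon_0/768$ but above $\epsilon_0/1152$; so the factoring route, once its chord-bound step is corrected, squeaks through at the paper's stated constant, whereas yours requires the extra tightening you already propose. One small further nit: the clause $\epsilon\le\epsilon_0/128$ does not force $4C\epsilon\le\sqrt{2}$ (it only gives $\le 2$); this is immaterial, because the argument of a unit complex number may always be taken in $(-\pi,\pi]$, which already places $|t|\le\pi/2$ in the range of validity of the sine bound.
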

\begin{proof}
It follows from Proposition \ref{flucL10}b and the choice of that $|\eta-e^{\bfi \pi j/n} \conj{\xi}| < 2\epsilon$. Then, since $j$ is odd, $|\eta^n \xi^n +1| = |\eta^n-e^{\ii \pi j}\conj{\xi}^n| < 2n \epsilon < 1/2$, which implies the conclusion. 
\end{proof}

We will use the following basic bound. 
\begin{lem}
\label{flucL28}
Let $p > 0$, $q \ge 0$ and $x \ge 0$. Let $n, k \in \bbZ_+$ with $n > k$. Then 
\begin{align}
\label{flucE103}
\int \limits_{x}^\infty \exp\bigg\{-p t^n +qt^k\bigg\}\ \dd t \le \dfrac{8}{p^{1/n}} \exp\bigg\{q^{n/(n-k)}\bigg(\frac{4k}{pn}\bigg)^{k/(n-k)}-\frac{px^n}{2}\bigg\}. 
\end{align}
\end{lem}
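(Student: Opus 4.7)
The plan is to decouple the two terms in the exponent by a scaled Young's inequality, reducing the integrand to an essentially Gaussian form, and then to estimate the resulting integral $\int_x^\infty e^{-pt^n/2}\,dt$ by a two-case argument.

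For the Young step I would apply $ab \le a^r/r + b^s/s$ with conjugate exponents $r = n/(n-k)$ and $s = n/k$ to the product $qt^k = (q\lambda)\cdot(t^k/\lambda)$, then optimize the scaling parameter $\lambda>0$ so that the surviving $t^n$ coefficient on the right is exactly $p/2$. This forces $\lambda = (2k/(pn))^{k/n}$ and yields
\begin{equation*}
qt^k \le \tfrac{n-k}{n}\, q^{n/(n-k)} \bigl(2k/(pn)\bigr)^{k/(n-k)} + \tfrac{p}{2}\,t^n \qquad \text{for } t \ge 0.
\end{equation*}
The elementary inequality $(n-k)/n \le 1 \le 2^{k/(n-k)}$ then weakens the prefactor to $q^{n/(n-k)}(4k/(pn))^{k/(n-k)}$, which matches the form stated in the lemma. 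The degenerate case $k=0$ reduces to the trivial identity $qt^k = q$ and satisfies the same bound under the convention $0^0 = 1$.

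It then remains to show $\int_x^\infty e^{-pt^n/2}\,dt \le 8 p^{-1/n}e^{-px^n/2}$, which I would split on the size of $px^n$. If $px^n \ge 2$, insert the factor $pnt^{n-1}/(pnx^{n-1}) \ge 1$ into the integrand and integrate explicitly to get $2/(pnx^{n-1})\cdot e^{-px^n/2}$; the case hypothesis $x \ge (2/p)^{1/n}$ converts the prefactor into a numerical multiple of $p^{-1/n}$. If $px^n < 2$, use the crude bound $\int_x^\infty \le \int_0^\infty e^{-pt^n/2}\,dt = (p/2)^{-1/n}\Gamma(1+1/n) \le 2p^{-1/n}$ and compensate the missing $e^{-px^n/2}$ factor via $e^{-px^n/2} \ge e^{-1}$. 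The constant comes out below $8$ in each case, and multiplying the two steps produces the lemma. The proof is almost entirely bookkeeping; the only real design decision is choosing $\lambda$ in Young's inequality so that precisely $p/2$ of the $t^n$ term survives for the integral estimate --- this is what preserves the sharper decay rate $e^{-px^n/2}$ rather than $e^{-px^n/4}$ --- after which the small factor $(n-k)/n$ is harmlessly absorbed into the slightly looser constant $(4k/(pn))^{k/(n-k)}$.
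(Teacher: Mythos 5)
Your proof is correct and takes essentially the paper's approach. The scaled Young step is exactly the paper's explicit maximization of $t \mapsto -pt^n/4 + qt^k$ at $t_0 = (4qk/(pn))^{1/(n-k)}$, just framed as a named inequality; the only organizational difference is that the paper peels off $e^{-px^n/2}$ at the outset via the cheap bound $t^n \ge x^n$ for $t \ge x$ and then integrates $e^{-pt^n/4}$ over $[0,\infty)$, whereas you optimize first with coefficient $p/2$ and therefore need the slightly heavier two-case argument to bound $\int_x^\infty e^{-pt^n/2}\,dt$ --- more bookkeeping to land on the same constant.
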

\begin{proof}
The integral above is less than $\exp\{-px^n/2\} \int_0^\infty \exp\{-p t^n/2 +qt^k\} \dd t$. The maximum over $t \in [0, \infty)$ of the function $t \mapsto -pt^n/4+qt^k$ occurs at $t_0 = \sqrt[n-k]{\dfrac{4qk}{pn}}$. Hence, 
the last integral does not exceed
\begin{align*}
\exp(-pt_0^n/4 + qt_0^k) \int \limits_0^\infty \exp(-pt^n/4) \dd t &= \exp\bigg\{q^{n/(n-k)}\bigg(1-\frac{k}{n}\bigg)\bigg(\frac{4k}{pn}\bigg)^{k/(n-k)}\bigg\}\frac{4^{1/n}}{p^{1/n}} \\ 
&\cdot \int \limits_0^\infty \exp(-t^n) \dd t, 
\end{align*}
which implies the result. 
\end{proof}

Split (\ref{flucE95}) into two parts
\begin{align}
I' &= \int \limits_{a}^{\Phi(\tau(\epsilon))} F(z) \dd z \label{flucE96}\\
I'' &= \int \limits_{\Phi(\tau(\epsilon))}^{\Phi(T)} F(z) \dd z \label{flucE97}, 
\end{align}

\begin{prop}
\label{flucP2}\ 
\begin{align*}
\bigg|\frac{\xi N I'}{F(a)} -\sI\bigg| \le C\bigg(\frac{1}{N} + \exp\bigg\{-\frac{\epsilon^nN^n}{4}\bigg\} \bigg), 
\end{align*}
where 
\begin{align}
\label{flucE119}
C = 128(n+1)! (K_0 + \tilde{K}_0M^kN^{-k} + 1)\exp\{(2M^k N^{-k}+1)^{n/(n-k)} (16k/n)^{k/(n-k)}\}. 
\end{align}
\end{prop}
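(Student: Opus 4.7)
The plan is to factor $F$ around $a$, change variables to bring the integral into the same shape as $\sI$, and decompose the error via Cauchy's theorem. On $\Disc(a,r)$, write the Taylor remainders $g_1(z):=f(z)-f(a)-c(z-a)^n$ and $g_2(z):=\tilde f(z)-\tilde f(a)-d(z-a)^k$; by (\ref{appE7}),
\[
F(z)=F(a)\exp\{c(z-a)^n+d(z-a)^k\}\exp\{g_1(z)+g_2(z)\}.
\]
The substitution $u=\xi N(z-a)$ turns $c(z-a)^n$ into $u^n$ and $d(z-a)^k$ into $d\conj\xi^kN^{-k}u^k$. Setting
\[
P(u):=\exp\{u^n+d\conj\xi^kN^{-k}u^k\},\qquad Q(u):=\exp\{g_1(a+u/(\xi N))+g_2(a+u/(\xi N))\},
\]
where $P$ is entire and $Q$ is holomorphic on $\Disc(0,rN)$, definition (\ref{flucE96}) becomes $\xi NI'/F(a)=\int_{\tilde\Phi}PQ\,du$, with $\tilde\Phi\subset\conj\Disc(0,N\epsilon)$ the image of $\Phi|_{[0,\tau(\epsilon)]}$ under $z\mapsto\xi N(z-a)$; its endpoints are $0$ and $\xi N\epsilon\eta$.

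Now split $\xi NI'/F(a)-\sI=\text{(A)}+\text{(B)}+\text{(C)}$ with
\[
\text{(A)}=\!\int_{\tilde\Phi}\!P(Q{-}1)\,du,\qquad \text{(B)}=\!\int_{\tilde\Phi}\!P\,du-\!\!\int_{[0,\xi N\epsilon\eta]}\!\!\!P\,du,\qquad \text{(C)}=\!\!\int_{[0,\xi N\epsilon\eta]}\!\!\!P\,du-\sI.
\]
Since $P$ is entire, Cauchy's theorem forces $\text{(B)}=0$; since $P(Q-1)$ is holomorphic on $\Disc(0,rN)$, the contour $\tilde\Phi$ in (A) can likewise be deformed to the straight segment. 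For (C), parameterize $u=s\eta\xi$ with $s\ge\epsilon N$; by Lemma \ref{flucL29}, $\Re\{(\eta\xi)^n\}<-1/2$, so $|P(u)|\le\exp\{-s^n/2+M^kN^{-k}s^k\}$, and Lemma \ref{flucL28} with $p=1/2$, $q=M^kN^{-k}$, $x=\epsilon N$ yields $|\text{(C)}|\le 16\exp\{(M^kN^{-k})^{n/(n-k)}(8k/n)^{k/(n-k)}\}\exp\{-\epsilon^nN^n/4\}$.

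For (A) on the straight segment $u=s\xi\eta$, $s\in[0,\epsilon N]$, we have $|z-a|=s/N\le\epsilon$, so Lemma \ref{flucL20} (applied to $f$ at order $n$ and $\tilde f$ at order $k$) together with (\ref{flucE112}) and (\ref{flucE156}) gives $|g_1|\le 2K_0s^{n+1}/N$ and $|g_2|\le 2\tilde K_0M^ks^{k+1}/N^{k+1}$, hence $|g_1+g_2|\le 2K_0\epsilon s^n+2\tilde K_0M^kN^{-k}\epsilon s^k$. By (\ref{flucE149}), $2\epsilon K_0\le 1/64$ and $2\epsilon\tilde K_0\le 1/8$, so $|P|\,e^{|g_1+g_2|}\le\exp\{-s^n/4+2M^kN^{-k}s^k\}$. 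Combined with $|Q-1|\le|g_1+g_2|e^{|g_1+g_2|}$, this reduces (A) to a sum of integrals $N^{-1}\int_0^\infty s^{j}\exp\{-s^n/4+2M^kN^{-k}s^k\}\,ds$ with $j\in\{n+1,k+1\}$. Bounding $s^j\le j!\,e^s$, splitting the integral at $s=1$ so the linear $e^s$ factor can be absorbed into the $s^k$ term, and invoking Lemma \ref{flucL28} with $p=1/4$, $q=1+2M^kN^{-k}$ produces $|\text{(A)}|\le C'(K_0+\tilde K_0M^kN^{-k})N^{-1}\exp\{(1+2M^kN^{-k})^{n/(n-k)}(16k/n)^{k/(n-k)}\}$. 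Adding the bounds on (A) and (C) delivers the claim with $C$ as in (\ref{flucE119}).

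The chief obstacle is controlling $e^{|g_1+g_2|}$: pointwise on $\Disc(a,\epsilon)$, the remainder $|g_1|$ may be as large as $2K_0N^n\epsilon^{n+1}$, which in isolation diverges as $N\to\infty$. The resolution is twofold. First, after the rescaling, $g_1$ and $g_2$ acquire one compensating factor $N^{-1}$ per unit of excess vanishing order, leaving the rescaled remainders bounded by $s^{n+1}/N$ and $s^{k+1}/N^{k+1}$. Second, the smallness choice (\ref{flucE149}) forces $|g_1+g_2|$ to be a strict fraction of the dominant part of the exponent of $P$, so that $e^{|g_1+g_2|}$ perturbs the exponent coefficients $(1/2,1)$ in $P$ only to $(1/4,2)$, preserving the structural shape $-\alpha s^n+\beta s^k$ required for Lemma \ref{flucL28} to yield a finite, $N$-uniform prefactor.
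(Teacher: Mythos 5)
Your proposal is correct and follows essentially the same approach as the paper's proof: the same change of variables $u = \xi N(z-a)$, the same factorization of $F$ into the leading $\exp\{u^n + d\conj{\xi}^kN^{-k}u^k\}$ part and the Taylor-remainder part, the same Cauchy estimates from Lemma \ref{flucL20} giving $|g_1|+|g_2|\le \frac{2}{N}(K_0|u|^{n+1}+\tilde K_0 M^kN^{-k}|u|^{k+1})$, the same tail estimate via Lemmas \ref{flucL29} and \ref{flucL28}, and the same treatment of $|e^E-1|\le |E|e^{|E|}$ for the remainder contribution. The only presentational difference is that you make the contour-flattening step explicit as a separate term (B) annihilated by Cauchy's theorem, whereas the paper deforms $[a,\Phi(\tau(\epsilon))]$ to the straight segment at the outset; and you bound the polynomial prefactor via $s^j\le j!\,e^s$ with a split at $s=1$ rather than the paper's inequality $t^{n+1}+t^{k+1}\le 2(n+1)!\exp(t^k)$ — both devices achieve the same absorption of the polynomial into the exponent.
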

To see the utility of the preceding bound, imagine that $u$ and $\tilde{u}$ are rescaled such that $M, N \rightarrow \infty$ with bounded ratio $M/N$. Note also that $K_0, \tilde{K}_0$ and $\epsilon_0, \tilde{\epsilon}_0$ are invariant under rescaling of $u$ and $\tilde{u}$. Then the constant $C$ above remains bounded as $N \rightarrow \infty$ and the right-hand side decays like $1/N$. 
\begin{proof}[Proof of Proposition \ref{flucP2}]
Since $\Disc(a, r)$ is disjoint from $P \cup Z$, we can deform the contour in (\ref{flucE96}) into the line segment $[a, \Phi(\tau(\epsilon))] = [a, a+\epsilon \eta]$. Also, for $z \in \Disc(a, r)$, we have 
\begin{align*}
f(z) = f(a) + c(z-a)^n + f_1(z) \qquad \tilde{f}(z) = \tilde{f}(a) + d(z-a)^k + \tilde{f}_1(z), 
\end{align*}
where $f_1, \tilde{f}_1 \in \Hol(\Disc(a, r))$. By Lemma \ref{flucL20},  
\begin{align}
|f_1(z)| \le 2K_0N^n |z-a|^{n+1} \qquad |\tilde{f}_1(z)| \le 2\tilde{K}_0 M^k |z-a|^{k+1} \label{flucE101}
\end{align}
for $z \in \Disc(a, \epsilon)$. Change the variables in (\ref{flucE96}) by setting $z = a + \conj{\xi}N^{-1}u$. We obtain 
\begin{align}
I' &= \int \limits_{[a, a+\epsilon \eta]} F(z) \dd z \nonumber \\
&= F(a) \int \limits_{[a, a+\epsilon \eta]} \exp\bigg\{c(z-a)^n + f_1(z)+d(z-a)^k + \tilde{f}_1(z)\bigg\} \dd z \nonumber \\
&= \frac{F(a) \conj{\xi}}{N} \int \limits_{[0, \epsilon N \eta \xi]} \exp \bigg\{u^n +dN^{-k}\conj{\xi}^ku^k + E(u)\bigg\}\ \dd u, \label{flucE98}
\end{align}
where $E \in \Hol(\Disc(0, rN))$ is given by 
\begin{align}
E(u) = f_1\bigg(a + \conj{\xi}N^{-1}u\bigg) + \tilde{f}_1\bigg(a + \conj{\xi}N^{-1}u\bigg) \label{flucE102}
\end{align}
Bounds in (\ref{flucE101}) imply that 
\begin{align}
\label{flucE104}
|E(u)| \le \frac{2}{N} \bigg(K_0|u|^{n+1} + \tilde{K}_0M^{k}N^{-k}|u|^{k+1}\bigg) \qquad \text{ for } u \in \Disc(0, \epsilon |c|^{1/n}).  
\end{align}

To complete the proof, it suffices to bound suitably the difference of $\sI$ and the integral in (\ref{flucE98}). We accomplish this in two steps. First, by Lemmas \ref{flucL29} and \ref{flucL28}, 
\begin{align}
\bigg |\int \limits_{[\epsilon N \eta \xi, \infty \eta \xi]} \exp\bigg\{u^n +dN^{-k}\conj{\xi}^ku^k\bigg\}\ \dd u \bigg| &= \int \limits_{\epsilon N}^\infty \exp \{-t^n/2 + M^k N^{-k}t^k\} \dd t\\
&\le 16\exp\bigg\{\bigg(\frac{M}{N}\bigg)^{nk/(n-k)} \bigg(\frac{8k}{n}\bigg)^{k/(n-k)}-\frac{\epsilon^n N^n}{4}\bigg\}.\label{flucE108}
\end{align}
Second, using the bound $|e^z-1| \le |z|e^{|z|}$ for $z \in \bbC$, we have 
\begin{align}
&\bigg| \int \limits_{[0, \epsilon N \eta \xi]} \exp \bigg\{u^n +dN^{-k}\conj{\xi}^ku^k\bigg\} (\exp\{E(u)\}-1)\ \dd u\bigg| \label{flucE109} \\ 
&\le \int \limits_{0}^{\epsilon N} \exp \{\Re[\eta^n \xi^n]t^n +M^k N^{-k}t^k + |E(\eta \xi t)|\} |E(\eta \xi t)| \dd t. \label{flucE107}
\end{align}
For $0 \le t \le \epsilon N$, by (\ref{flucE149}) and (\ref{flucE104}), 
\begin{align}
|E(\eta \xi t)| &\le \frac{2}{N} \bigg(K_0t^{n+1} + \tilde{K}_0M^{k}N^{-k}t^{k+1}\bigg) \le  2\epsilon \bigg(K_0 t^n + \tilde{K}_0 M^k N^{-k} t^k\bigg) \nonumber\\ 
&\le \frac{1}{8}(t^n + M^k N^{-k} t^k). \label{flucE115}
\end{align}
From $t^{n+1}+t^{k+1} \le 2(n+1)!\exp(t^k)$ and (\ref{flucE115}), we also have 
\begin{align}
|E(\eta \xi t)| \le \frac{4(n+1)!}{N} (K_0+\tilde{K}_0M^{k}N^{-k}) \exp\{t^k\}. \label{flucE150}
\end{align}
Now combine (\ref{flucE115}), (\ref{flucE150}) with Lemmas \ref{flucL29} and \ref{flucL28} to bound (\ref{flucE107}) by  
\begin{align}
&\frac{4(n+1)!}{N} (K_0+\tilde{K}_0M^{k}N^{-k}) \int \limits_{0}^{\epsilon N} \exp \{-t^n/4 +(2M^k N^{-k}+1)t^k\} \dd t \\
&\le \frac{1}{N}2^7(n+1)! (K_0+\tilde{K}_0M^{k}N^{-k}) \exp \{(2M^k N^{-k}+1)^{n/(n-k)} (16k/n)^{k/(n-k)}\}\label{flucE118}
%&\le \frac{C}{|c|^{1/n}}\label{flucE109}. 
\end{align}
Observe that the claimed result is a consequence of (\ref{flucE108}) and (\ref{flucE118}).  
\end{proof}

\begin{prop}
\label{flucP6}
Let $k = 1$ (then $n > 1$) and $a \in \bbR$. Suppose that 
\begin{align}
\label{flucE116}
\conj{f}(z) = f(\conj{z}) \text{ and } \conj{\tilde{f}}(z) = \tilde{f}(\conj{z}) \quad \text{ for } z \in \Disc(a, r). 
\end{align} 
Furthermore, suppose that $d\Re\{e^{\ii \pi j/n}\conj{\xi}\} < 0$ and $\epsilon  < \dfrac{|\Re\{e^{\ii \pi j/n}\conj{\xi}\}|}{4(1+\tilde{K}_0)}$. Then 
\begin{align*}
|\Im I'| \le 32\frac{|F(a)|}{N} \exp \bigg[-\min \bigg\{\frac{\epsilon M |\Re\{e^{\ii \pi j/n}\conj{\xi}\}|}{4}, \frac{1}{12}\bigg(\frac{M |\Re\{e^{\ii \pi j/n}\conj{\xi}\}|}{N}\bigg)^{n/(n-1)}\bigg\}\bigg]. 
\end{align*}
\end{prop}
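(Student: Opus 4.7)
The plan is to exploit the Schwarz-reflection symmetry that (\ref{flucE116}) forces on $F$ to rewrite $|\Im I'|$ as an integral along a short chord where both the $1/N$ prefactor and an exponentially small factor become visible. Since (\ref{flucE116}) together with the identity $F=\exp(f+\tilde f)$ on $\Disc(a,r)$ gives $\conj{F(z)}=F(\conj z)$, the complex-conjugate curve $\conj\Phi(t):=\conj{\Phi(t)}$ satisfies $\int_{\conj\Phi}F\,\dd z=\conj{I'}$. Applying Cauchy's theorem to the closed triangular contour consisting of $\Phi$ from $a$ to $\Phi(\tau(\epsilon))$, the chord $[\Phi(\tau(\epsilon)),\conj\Phi(\tau(\epsilon))]$, and $\conj\Phi$ traversed in reverse (all lying in $\Disc(a,\epsilon)\subset\Disc(a,r)$, where $F$ is holomorphic), I would obtain
\begin{align*}
|\Im I'|=\tfrac{1}{2}\bigl|\int_{[\Phi(\tau(\epsilon)),\,\conj\Phi(\tau(\epsilon))]} F(z)\,\dd z\bigr|.
\end{align*}

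Next I would carry out the change of variables $u=N\xi(z-a)$ used in the proof of Proposition~\ref{flucP2}. Since $\xi^n=\sgn(c)$ and $c=\sgn(c)N^n$, the exponent in the integrand becomes $u^n+dN^{-1}\conj\xi u+E(u)=c(z-a)^n+d(z-a)+E(u)$, while $|\dd z|=|\dd u|/N$, producing the $1/N$ prefactor in the target bound. In the $u$-plane the chord lifts to a segment of length $\sim N\epsilon$ situated at distance $\sim N\epsilon|\Re w|$ from the origin. I would further deform this segment—within the disk $\Disc(0,rN)$ of holomorphy of the integrand—to a V-shaped path through a real point $u=N\xi R$ chosen to minimize the supremum of the modulus of the integrand.

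The choice of $R$ depends on which of the two terms in the minimum is active. In the first regime, where $\epsilon\le\alpha(M|\Re w|/N^n)^{1/(n-1)}$ for a suitable constant $\alpha$, I would take $R=s_0\epsilon|\Re w|$ with $s_0=\sgn(\Re w)$; then $dR=-M\epsilon|\Re w|$ (negative by the hypothesis $d\Re w<0$) dominates the $f$-Taylor contribution $\le 2N^n|z-a|^n$, and the $\tilde f$-Taylor remainder $2\tilde K_0 M|z-a|^2$ is absorbed into $M\epsilon|\Re w|/2$ via the hypothesis $\epsilon<|\Re w|/(4(1+\tilde K_0))$, yielding an exponent bound $-\epsilon M|\Re w|/4$. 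In the complementary regime I would instead take $R=s_0\beta(M|\Re w|/N^n)^{1/(n-1)}$ for an optimized constant $\beta$, placing $u=N\xi R$ at the real-axis saddle of $u^n+dN^{-1}\conj\xi u$; the balance between the $c(z-a)^n$ growth and the $d(z-a)$ decay at this saddle delivers the exponent $-(M|\Re w|/N)^{n/(n-1)}/12$, independent of $\epsilon$. On the resulting V-path the integrand is exponentially small outside a $u$-region of size $O(1)$, which, combined with the $1/N$ factor from the substitution, gives the claimed bound.

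The main obstacle is the precise bookkeeping of numerical constants and the verification that both Taylor remainders are genuinely absorbed by the main decay term in each regime. The exact form $\epsilon<|\Re w|/(4(1+\tilde K_0))$ of the hypothesis is tailored to close the $\tilde f$-error estimate; the $f$-error is controlled differently in the two regimes, and the transition between them at the threshold $\epsilon\sim(M|\Re w|/N^n)^{1/(n-1)}$ must be arranged so that the constants $1/4$ and $1/12$ in the final minimum match at the boundary.
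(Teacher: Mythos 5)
Your reduction step is exactly what the paper does: using $\conj{F(z)}=F(\conj z)$ (from (\ref{flucE116}) and $F=\exp(f+\tilde f)$) to write $2\ii\,\Im I'$ as a single contour integral over the chord from $a+\epsilon\conj\eta$ to $a+\epsilon\eta$, then pulling the chord inward by Cauchy's theorem and optimizing in two regimes. Where you diverge from the paper is the shape of the deformed contour, and this is where a real gap appears.

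The paper deforms the chord (equivalently, the wedge through $0$) to the three-piece path $[\epsilon N\conj\eta, y\conj\eta]\cup[y\conj\eta, y\eta]\cup[y\eta, \epsilon N\eta]$ with $y$ chosen as the minimizer of $3y^n-\tfrac{M|\Re w|}{2N}y$, capped at $\epsilon N$. The two outer segments lie on the rays $\pm\eta$, where Lemma \ref{flucL29} gives $\Re\{\sgn(c)v^n\}\le -\tfrac12 |v|^n$ uniformly, so the exponent decays immediately; the inner chord satisfies $|v|\le y$ by convexity, so the positive part of $\Re\{\sgn(c)v^n\}$ is at most $y^n$ and is killed by the linear decay of the $d$-term. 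Your proposed V-path $[\epsilon N\eta, N\xi R]\cup[N\xi R, \epsilon N\conj\eta]$ collapses the inner chord to a single real vertex, and its legs are \emph{straight segments}, not rays through the origin. Along such a leg, $\arg v$ sweeps from $\arg\eta$ down to $0$ (or $\pi$), so it necessarily passes through one or more angular bands where $\cos(n\arg v)$ has the wrong sign and $\Re\{\sgn(c)v^n\}>0$. (In the application, $n=3$, $\sgn(c)=-1$, $\eta\approx e^{\ii 2\pi/3}$, and the sign flips at $\arg v=5\pi/6$.) You never bound the exponent in that band. The bound can be salvaged — a short geometric computation shows that on your leg the sign change occurs at parameter $s\approx |R|/(\epsilon+|R|)$, and at that point $|v|\le 2N|R|$, so the positive contribution is $O((N|R|)^n)$, which is exactly what the $d$-term must beat — but this is a nontrivial estimate that your writeup glosses over with ``the integrand is exponentially small outside a $u$-region of size $O(1)$.'' The paper's choice of contour makes this issue disappear by design.

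A second, smaller imprecision: in your second regime you place the vertex ``at the real-axis saddle of $u^n+dN^{-1}\conj\xi u$,'' which sits at $|R|=(M/(nN^n))^{1/(n-1)}$. The paper instead minimizes the explicit upper bound $3y^n-\tfrac{M|\Re w|}{2N}y$, landing at $y=(M|\Re w|/(6nN))^{1/(n-1)}$; the $|\Re w|$ factor matters because the linear decay in the exponent comes from $\Re\{d N^{-1}v\}\approx -\tfrac{M|\Re w|}{N}y$, not $-\tfrac{M}{N}y$. The two scales differ only by a constant factor, so the argument can be carried through, but the ``saddle point'' framing is not what the constants in the stated bound reflect.
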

\begin{proof}
It follows from (\ref{flucE116}) that $f$ and $\tilde{f}$ are real-valued on $\Disc(a, r) \cap \bbR$. Hence, $c, d \in \bbR$ and, by (\ref{appE7}), $F$ is also real-valued on $\Disc(a, r) \cap \bbR$. Change variables in (\ref{flucE98}) via $u = \xi v$ and use $\xi^n = \sgn(c)$. We obtain 
\begin{align}
\label{flucE117}
I' = \frac{F(a)}{N} \int \limits_{[0, \epsilon N \eta]} \exp \bigg\{\sgn(c) v^n + dN^{-1}v + R(v)\bigg\} \ \dd v, 
\end{align}
where $R(v) = f_1(a+N^{-1}v) + \tilde{f}_1(a + N^{-1}v)$ for $v \in \Disc(0, \epsilon N)$, see (\ref{flucE102}). By (\ref{flucE109}),  
\begin{align}
\label{flucE148}
|R(v)| \le \frac{2}{N} \bigg(K_0 t^{n+1} + \tilde{K}_0 M N^{-1}t^{2}\bigg) \le 2\epsilon(K_0 t^n + \tilde{K}_0 MN^{-1} t)
\end{align}
By (\ref{flucE116}) and since $a \in \bbR$, we have $\conj{R}(v) = R(\conj{v})$. Using this, (\ref{flucE117}) and that $F(a) \in \bbR$ leads to 
\begin{align*}
\bar{I'} &= \frac{F(a)\conj{\eta}}{N} \int \limits_0^{\epsilon N} \exp \bigg\{\sgn(c) \conj{\eta}^nt^n +dN^{-1}\conj{\eta}t + R(\conj{\eta} t)\bigg\}\ \dd t \\
&= \frac{F(a)}{N} \int \limits_{[0, \epsilon N \conj{\eta}]} \exp \bigg\{\sgn(c)v^n +dN^{-1}v + R(v)\bigg\}\ \dd v, 
\end{align*}
which implies that 
\begin{align}
\label{flucE111}
\frac{2\ii N}{F(a)} \Im I' = \frac{N}{F(a)}(I'-\bar{I'}) = \bigg(\int \limits_{[0, \epsilon N \eta ]}+\int \limits_{[\epsilon N \conj{\eta}, 0]} \bigg) \exp \bigg\{\sgn(c) v^n + dN^{-1}v + R(v)\bigg\} \ \dd v. 
\end{align}

Let $y \in [0, \epsilon N]$. By Cauchy's theorem, the contour in (\ref{flucE111}) can be replaced with 
\begin{align*}
[\epsilon N \conj{\eta}, y\conj{\eta}] + [y \conj{\eta}, y \eta] + [y \eta, \epsilon N \eta]. 
\end{align*}
For $v = y \Re \eta + \ii t$ with $|t| \le y |\Im \eta|$, use bounds $|v| \le y$, $|\Re \eta - \Re\{e^{\ii \pi j/n}\conj{\xi}\}| \le 2\epsilon$ (Proposition \ref{flucL10}b) and (\ref{flucE148}). We obtain  
\begin{align}
\Re \bigg\{\sgn(c) v^n + dN^{-1}v + R(v)\bigg\} &\le y^n+dN^{-1}\Re \{\eta\} y + 2K_0 \epsilon y^n + 2\tilde{K}_0 M N^{-1} \epsilon y\nonumber\\
&\le (1+2K_0 \epsilon) y^n + (d\Re\{e^{\ii \pi j/n} \conj{\xi}\}N^{-1} + 2MN^{-1}\epsilon(1+\tilde{K}_0))y \nonumber\\
&\le 2 y^n -\frac{M |\Re\{e^{\ii \pi j/n} \conj{\xi}\}|} {2N}y. \label{flucE153}
\end{align}  
The condition on $\epsilon$ comes in for the last inequality. Since $[y\conj{\eta}, y \eta]$ has length less than $2y$ and $\log y \le y^n$, the last bound leads to 
\begin{align}
\bigg| \int \limits_{[y \conj{\eta}, y \eta]} \exp \bigg\{\sgn(c)u^n + dN^{-1}u + R(u)\bigg\} \ \dd u\bigg| \le 2\exp \bigg\{3y^n - \frac{M |\Re\{e^{\ii \pi j/n} \conj{\xi}\}|} {2N}y\bigg\}. \label{flucE121}
\end{align}
To optimize, set 
\begin{align}y = \min \bigg\{\epsilon N, \sqrt[n-1]{\frac{M|\Re\{e^{\ii \pi j/n} \conj{\xi}\}|}{6nN}}\bigg\}, \label{flucE154}\end{align}
and then the exponent of the right-hand side in (\ref{flucE121}) is bounded from above by 
\begin{align}
- \min \bigg\{\frac{\epsilon M |\Re\{e^{\ii \pi j/n} \conj{\xi}\}|}{4}, \frac{1}{12}\bigg(\frac{M |\Re\{e^{\ii \pi j/n}\conj{\xi}\}|}{Nn}\bigg)^{n/(n-1)}\bigg\}. \label{flucE155} 
\end{align}
We next estimate the contribution from the piece $[\epsilon N \conj{\eta}, y\conj{\eta}]+[y \eta, \epsilon N \eta]$. For $v = \eta t$ with $y \le t \le \epsilon N$, using $|v| \ge y$ and Lemma \ref{flucL29} yields   
\begin{align}
\Re \bigg\{\sgn(c) v^n + dN^{-1}v + R(v)\bigg\} &\le \Re \{\eta^n \xi^n\} t^n + dN^{-1}\Re \{\eta\} t + 2K_0 \epsilon t^n + 2\tilde{K}_0\epsilon MN^{-1}t \nonumber \\
&\le -\frac{1}{4}t^n - \frac{M |\Re\{e^{\ii \pi j/n} \conj{\xi}\}|} {2N}y, \label{flucE152}
\end{align}
where the coefficient of $y$ in (\ref{flucE152}) is obtained as in (\ref{flucE153}). Then, it follows from Lemma \ref{flucL28} and (\ref{flucE154}) that 
\begin{align}
&\bigg| \int \limits_{[\epsilon N \conj{\eta}, y\conj{\eta}]+[y \eta, \epsilon N \eta]} \exp \bigg\{\sgn(c)u^n + dN^{-1}u + R(u)\bigg\}\ \dd u\bigg| \\
&\le 2\exp \bigg\{-\frac{M |\Re\{e^{\ii \pi j/n} \conj{\xi}\}|} {2N}y\bigg\} \int \limits_0^\infty \exp\{-t^n/4\} \dd t \\ 
&\le 64 \exp \bigg[-\min \bigg\{\frac{\epsilon M |\Re\{e^{\ii \pi j/n \conj{\xi}}\}|}{2}, \frac{1}{12n^{1/(n-1)}}\bigg(\frac{M |\Re\{e^{\ii \pi j/n \conj{\xi}}\}|}{N}\bigg)^{n/(n-1)}\bigg\}\bigg]\label{flucE121}
\end{align}
Note that, up to constants, the last exponent matches (\ref{flucE155}). Hence, the claimed bound. 
\end{proof}

\begin{prop}
\label{flucP4}
\begin{align*}
|I''| \le T|F(a)|\exp\bigg\{-\frac{\epsilon^nN^n}{2} + \max \limits_{\tau(\epsilon) \le t \le T} \big\{\tilde{u}(\Phi(t))-\tilde{u}(a)\big\}\bigg\}
\end{align*}
\end{prop}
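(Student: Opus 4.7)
The plan is to exploit that $\Phi$ has unit speed, that $u\circ\Phi$ is strictly decreasing past $a$, and that the quantitative drop of $u$ on the circle $\Circle(a,\epsilon)$ given by Proposition~\ref{flucL10}(c) persists along the remainder of the curve. First I would parametrize the integral by $\Phi$ itself: since $\Phi$ is a steepest-descent curve of $u$ with $|\Phi'|\equiv 1$,
\begin{align*}
|I''|\le \int_{\tau(\epsilon)}^{T}\bigl|F(\Phi(t))\bigr|\,|\Phi'(t)|\,dt\le (T-\tau(\epsilon))\max_{\tau(\epsilon)\le t\le T}\bigl|F(\Phi(t))\bigr|.
\end{align*}

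Next I would use the assumed decomposition $\log|F(z)|=u(z)+\tilde u(z)$ on $\bbC\smallsetminus(P\cup Z)$ (which includes $\Phi((0,T^{+}))$ by the definition of $\Phi$) and the identity (\ref{appE7}), which gives $|F(a)|=\exp\{u(a)+\tilde u(a)\}$. Thus
\begin{align*}
|F(\Phi(t))|=|F(a)|\exp\bigl\{u(\Phi(t))-u(a)+\tilde u(\Phi(t))-\tilde u(a)\bigr\}.
\end{align*}

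The core estimate is the bound on $u(\Phi(t))-u(a)$. Since $u\circ\Phi$ is strictly decreasing on $[0,T^{+})$ and $u(\Phi(0))=u(a)$, we have $u(\Phi(t))-u(a)\le u(\Phi(\tau(\epsilon)))-u(a)<0$ for $t\ge\tau(\epsilon)$. Now Proposition~\ref{flucL10}(c), applied with the same $\epsilon<\epsilon_0/128$ (guaranteed by (\ref{flucE149})), yields
\begin{align*}
u(a)-u(\Phi(t))=|u(\Phi(t))-u(a)|\ge \frac{|f^{(n)}(a)|}{2\,n!}\,\epsilon^{n}=\frac{N^{n}}{2}\epsilon^{n}\quad\text{for }t\in[\tau(\epsilon),T],
\end{align*}
using $|f^{(n)}(a)|/n!=|c|=N^{n}$. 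Substituting into the expression for $|F(\Phi(t))|$ and bounding $T-\tau(\epsilon)\le T$ yields the claimed inequality.

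This is largely a bookkeeping argument; there is no real obstacle. The one thing to be careful about is to invoke the correct hypothesis for Proposition~\ref{flucL10}(c), namely $\epsilon<\epsilon_0/128$, which is already built into the standing choice (\ref{flucE149}), and to note that $\Phi$ cannot hit $P\cup Z$ on $(0,T^{+})$ so the identity $|F(\Phi(t))|=\exp\{u(\Phi(t))+\tilde u(\Phi(t))\}$ holds throughout the range of integration.
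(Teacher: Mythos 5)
Your argument is correct and matches the paper's proof essentially line for line: parametrize the integral along $\Phi$ using unit speed, express $|F(\Phi(t))|$ via the decomposition $\log|F|=u+\tilde u$ together with (\ref{appE7}), apply Proposition~\ref{flucL10}(c) (justifying the sign via monotonicity of $u\circ\Phi$) to get $u(\Phi(t))-u(a)\le -N^n\epsilon^n/2$, and bound $T-\tau(\epsilon)\le T$. The only cosmetic difference is that you first pull out the supremum of the integrand and then estimate the interval length, whereas the paper bounds the integral of the exponential directly; the two are the same computation.
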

\begin{proof}
By Proposition \ref{flucL10}c, $u(\Phi(t))-u(a) \le -N^n\epsilon^n/2$ for $t \in [\tau(\epsilon), T]$. Hence,  
\begin{align*}
|I''| &\le \int \limits_{\tau(\epsilon)}^T |F(\Phi(t))| \ \dd t \le |F(a)| \int \limits_{\tau(\epsilon)}^T \exp\{u(\Phi(t))-u(a) + \tilde{u}(\Phi(t))-\tilde{u}(a)\}\ \dd t \\
&\le T|F(a)|\exp\bigg\{-\frac{\epsilon^nN^n}{2} + \max \limits_{\tau(\epsilon) \le t \le T} \big\{\tilde{u}(\Phi(t))-\tilde{u}(a)\big\}\bigg\}. \qedhere 
\end{align*}
\end{proof}

%{\color{red} Motivation for the  next theorem. Preceding proposition is not sufficient for our purposes because we cannot prove the full lengths of the curves are uniformly bounded.}
Now suppose that $\lim_{t \rightarrow T^+} \Phi(t) = x$ for some $x \in Z$. There exist unique $p, \tilde{p} \in \bbR$ such that the functions $u_1, \tilde{u}_1$ defined on $\bbC \smallsetminus \{P \cup Z\}$ by 
\begin{align}
u_1(z) = u(z)-p \log |z-x| \qquad \tilde{u}_1(z) = \tilde{u}(z)-\tilde{p} \log |z-x|
\end{align}
can be extended as harmonic functions to $(\bbC \smallsetminus \{P \cup Z\}) \cup \{x\}$ by setting $u_1(x) = \lim_{z \rightarrow x} u_1(z)$ and $\tilde{u}_1(x) = \lim_{z \rightarrow x} \tilde{u}_1(z)$. Put $b = \Phi(T)$ and let $\Psi: [0, S]$ be any $\sC^1$ curve with unit speed such that $\Psi(0) = b$, $\Psi([0, S])$ is disjoint from $P \cup Z$ and $|\Psi(t)-x| \le \delta$ for $t \in [0, S]$. Define 
the contour integral 
\begin{align}
I''' = \int \limits_{\Psi(0)}^{\Psi(S)} F(z)\ \dd z. 
\end{align}
\begin{prop}
\label{flucP5}
\begin{align*}
|I'''| &\le S |F(a)| \exp \bigg\{-\frac{\epsilon^nN^n}{2} + \max \limits_{\tau(\epsilon) \le t \le T} \big\{\tilde{u}(\Phi(t))-\tilde{u}(a)\big\}\bigg\} \\
&\cdot \exp \bigg\{\max \limits_{0 \le t \le S} \big\{u_1(\Psi(t))-u_1(b)+\tilde{u}_1(\Psi(t))-\tilde{u}_1(b)\big\}\bigg\}. 
\end{align*}
\end{prop}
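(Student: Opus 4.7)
The proof will be a direct ML-type estimate on $I'''$, the only ingredients being that $\Psi$ has unit speed and length $S$, that $b = \Phi(T)$ lies far down the steepest-descent curve of $u$, and the decomposition $u+\tilde u = u_1+\tilde u_1 + (p+\tilde p)\log|\cdot - x|$ around the zero $x$. Specifically, I would start from
\[
|I'''| \;\le\; \int_0^S |F(\Psi(t))|\,|\Psi'(t)|\,dt \;\le\; S \max_{0 \le t \le S} |F(\Psi(t))|,
\]
and then factor $\max_t |F(\Psi(t))| = |F(b)| \cdot \max_t \bigl(|F(\Psi(t))|/|F(b)|\bigr)$, so that the two exponential factors appearing in the conclusion are estimated independently.

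For $|F(b)|$, the standing assumption is $T \ge \tau(\epsilon)$ (the only case where the conclusion is nontrivial, and the one used in Section \ref{flucS6}). Since $\Phi$ is a steepest-descent curve of $u$ emanating from $a$ and $|c| = |f^{(n)}(a)|/n! = N^n$, Proposition \ref{flucL10}(c) gives $u(b) - u(a) \le -\tfrac12 \epsilon^n N^n$. Combined with the identity $\log|F| = u+\tilde u$ (which is valid on $\bbC \setminus (P\cup Z)$ once we fix the harmonic conjugates used in (\ref{appE7})), this yields
\[
|F(b)| \;\le\; |F(a)|\exp\!\Bigl\{-\tfrac12\epsilon^n N^n + \tilde u(\Phi(T)) - \tilde u(a)\Bigr\} \;\le\; |F(a)|\exp\!\Bigl\{-\tfrac12\epsilon^n N^n + \max_{\tau(\epsilon)\le t\le T}\bigl[\tilde u(\Phi(t)) - \tilde u(a)\bigr]\Bigr\},
\]
which is precisely the first exponential in the claim (the maximum being a trivial upper bound for the endpoint value).

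For the ratio $|F(\Psi(t))|/|F(b)|$, I would substitute the $u_1,\tilde u_1$ decomposition to obtain
\[
\log\!\frac{|F(\Psi(t))|}{|F(b)|} = \bigl[u_1(\Psi(t)) - u_1(b)\bigr] + \bigl[\tilde u_1(\Psi(t)) - \tilde u_1(b)\bigr] + (p+\tilde p)\log\!\frac{|\Psi(t)-x|}{|b-x|}.
\]
Because $\Psi([0,S]) \subset \Disc(x,\delta)$ is disjoint from $P\cup Z$ and the extensions $u_1,\tilde u_1$ through $x$ are harmonic (hence continuous) on $\Disc(x,\delta)$, the first two brackets are finite and controlled by their maximum. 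In the applications (Section \ref{flucS6}, with $x = 0$ and $\Psi = \Gamma_k''$ parametrizing an arc of the circle $|z|=\delta$ through $b$), one has $|\Psi(t)-x| \equiv |b-x|$ so the last term vanishes identically, and the maximum of the first two brackets furnishes the second exponential in the claim. There is no substantive obstacle; the only point requiring care is confirming that the logarithmic residual from the decomposition vanishes (or is non-positive) for the chosen $\Psi$, which is why the hypothesis fixes $\Psi$ to lie in the disk where $u_1,\tilde u_1$ are regular.
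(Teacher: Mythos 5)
Your overall strategy is the same as the paper's (bound $|I'''| \le S \max_t |F(\Psi(t))|$, split $|F(\Psi(t))| = |F(b)| \cdot (|F(\Psi(t))|/|F(b)|)$, handle the two factors separately via Proposition~\ref{flucL10}(c) and the $u_1, \tilde u_1$ decomposition), but there is a gap in the treatment of the logarithmic residual. You correctly identify that
\[
\log\frac{|F(\Psi(t))|}{|F(b)|} = \bigl[u_1(\Psi(t)) - u_1(b)\bigr] + \bigl[\tilde u_1(\Psi(t)) - \tilde u_1(b)\bigr] + (p+\tilde p)\log\frac{|\Psi(t)-x|}{|b-x|},
\]
and that the last term must be non-positive, but your stated reason — that ``the hypothesis fixes $\Psi$ to lie in the disk where $u_1,\tilde u_1$ are regular'' — does not establish this. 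Regularity of $u_1, \tilde u_1$ on $\Disc(x,\delta)$ only makes the first two brackets finite; it says nothing about the sign of the log residual. The hypothesis $|\Psi(t)-x| \le \delta = |b-x|$ makes the logarithm $\log(|\Psi(t)-x|/|b-x|) \le 0$, but to conclude $(p+\tilde p)\log(\cdot) \le 0$ you must additionally know that the coefficient $p + \tilde p$ is nonnegative. This is the missing ingredient: since $x \in Z$, $F$ has a zero of some order $m \ge 1$ at $x$, and $p + \tilde p$ is precisely $m$ (it is the coefficient of $\log|z-x|$ in $\log|F(z)| = u(z) + \tilde u(z)$ near $x$, which must be a nonnegative integer for $u_1 + \tilde u_1$ to extend harmonically through $x$). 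The paper opens its proof with exactly this observation (``Since $p+\tilde p \ge 0$ (as $x\in Z$)...''). Falling back on the special structure of the application where $|\Psi(t)-x| \equiv |b-x|$ proves only a weaker statement, not the proposition as stated; the general $|\Psi(t)-x| \le \delta$ case genuinely requires $p + \tilde p \ge 0$.
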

\begin{proof}
Since $p+\tilde{p} \ge 0$ (as $x \in Z$), $\Phi(T) = \Psi(0) = b$ and $\log |b-x| = \delta$, we have  
\begin{align*}
u(\Psi(t)) + \tilde{u}(\Psi(t)) &= (p+\tilde{p})\log |\Psi(t)-x| + u_1(\Psi(t))+\tilde{u}_1(\Psi(t)) \\
&\le (p+\tilde{p})\log \delta + u_1(\Psi(t))+\tilde{u}_1(\Psi(t)) \\
&= (p+\tilde{p})\log |b-x|+ u_1(b)+\tilde{u}_1(b) \\
&+u_1(\Psi(t))-u_1(b)+\tilde{u}_1(\Psi(t))-\tilde{u}_1(b) \\
&\le u(b) + \tilde{u}(b) + \max \limits_{0 \le t \le S} \big\{u_1(\Psi(t))-u_1(b)+\tilde{u}_1(\Psi(t))-\tilde{u}_1(b)\big\} \\
&\le u(a)+\tilde{u}(a)-\frac{|c|\epsilon^n}{2} + \max \limits_{\tau(\epsilon) \le t \le T} \big\{\tilde{u}(\Phi(t))-\tilde{u}(a)\big\} \\
&+ \max \limits_{0 \le t \le S} \big\{u_1(\Psi(t))-u_1(b)+\tilde{u}_1(\Psi(t))-\tilde{u}_1(b)\big\}. 
\end{align*}
For the last inequality, we use Proposition \ref{flucL10}c. Then the result follows because  
\begin{align*}
|I'''| &\le \int \limits_{0}^S |F(\Psi(t)| \ \dd t = \int \limits_{0}^S \exp \{u(\Psi(t)) + \tilde{u}(\Psi(t))\} \ \dd t. \qedhere
\end{align*}
\end{proof}

\section{Uniform convergence lemmas}

We collect the auxiliary lemmas utilized to obtain the uniform results in Section \ref{flucS5}. 

\begin{lem}
\label{flucAL1}
Let $\{f_i: i \in \sI\}$ be a collection of $\sC^1$ functions on the interval $[a, b]$. Let $(X_j)_{j \in \bbN}$ be an ergodic sequence in $[a, b]$ with marginal distribution $\mu$. Suppose that 
\begin{align}
\label{flucE181}
\sup \limits_{i \in \sI} \sup \limits_{x \in [a, b]} |f'_i(x)| < \infty. 
\end{align}
Then  
\begin{align}
\lim \limits_{n \rightarrow \infty} \frac{1}{n} \sum \limits_{j = 1}^n f_i(X_j) = \int \limits_a^b f_i(x) \mu(\dd x) \quad \text{uniformly in } i \in \sI \text{ a.s.}
\end{align}
 
\end{lem}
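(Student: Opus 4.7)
The plan is to combine Birkhoff's pointwise ergodic theorem with an Arzel\`a--Ascoli/total-boundedness argument supplied by the uniform Lipschitz hypothesis. First I would set $L = \sup_{i \in \sI}\sup_{x \in [a,b]} |f_i'(x)|$, which is finite by (\ref{flucE181}), so that each $f_i$ is $L$-Lipschitz on $[a,b]$. Since for any constant $c$ one has $n^{-1}\sum_{j=1}^n c - \int_a^b c\,\mu(\dd x) = 0$, the quantity whose decay we want is unchanged by the substitution $f_i \mapsto f_i - f_i(a)$. After this reduction we may assume $f_i(a) = 0$ for every $i \in \sI$, so that $\|f_i\|_\infty \le L(b-a)$. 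Consequently $\{f_i: i \in \sI\}$ is uniformly bounded and equicontinuous in $C([a,b])$, hence totally bounded in the sup-norm by the Arzel\`a--Ascoli theorem.

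Next, fix $\epsilon > 0$. By total boundedness choose finitely many indices $i_1, \dotsc, i_N \in \sI$ (depending on $\epsilon$) such that for every $i \in \sI$ there exists $l = l(i) \in [N]$ with $\|f_i - f_{i_l}\|_\infty < \epsilon$. Because $(X_j)_{j \in \bbN}$ is a stationary ergodic sequence with marginal $\mu$, Birkhoff's ergodic theorem applied to each of the bounded measurable functions $f_{i_1}, \dotsc, f_{i_N}$ furnishes an event $\Omega_\epsilon$ of full probability on which
\[\lim_{n \to \infty} \frac{1}{n}\sum_{j=1}^n f_{i_l}(X_j) = \int_a^b f_{i_l}(x)\,\mu(\dd x) \qquad \text{for every } l \in [N].\]
On $\Omega_\epsilon$, for sufficiently large $n$ and arbitrary $i \in \sI$, writing $f_i = f_{i_l} + (f_i - f_{i_l})$ for the corresponding $l$ splits the error into three pieces: two approximation errors each bounded by $\|f_i - f_{i_l}\|_\infty < \epsilon$ (one for the empirical average, one for the integral against $\mu$), plus the Birkhoff error $<\epsilon$. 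This yields the uniform estimate $\sup_{i \in \sI}\bigl|n^{-1}\sum_j f_i(X_j) - \int f_i\,\mu(\dd x)\bigr| \le 3\epsilon$ on $\Omega_\epsilon$ for all sufficiently large $n$.

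Finally, intersecting the full-probability events $\Omega_{1/k}$ over $k \in \bbN$ produces a single event of full probability on which the supremum over $i \in \sI$ tends to zero, which is the desired uniform a.s.\ convergence. I do not foresee a serious obstacle: the uniform Lipschitz hypothesis is exactly what is needed to apply Arzel\`a--Ascoli, and Birkhoff's theorem handles each finite $\epsilon$-net. The only point requiring mild care is the preliminary shift $f_i \mapsto f_i - f_i(a)$, necessary because (\ref{flucE181}) controls only derivatives and the constants $f_i(a)$ may be unbounded in $i$; this shift is harmless since constants disappear from the difference we are estimating.
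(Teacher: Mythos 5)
Your proposal is correct, but it takes a genuinely different route from the paper. The paper's proof avoids compactness entirely: it writes
\[
\frac{1}{n}\sum_{j=1}^n f_i(X_j) - \int_a^b f_i\,\dd\mu
= \int_a^b f_i'(t)\Bigl(\frac{1}{n}\sum_{j=1}^n \one_{\{t \le X_j\}} - \mu([t,b])\Bigr)\,\dd t
\]
via the fundamental theorem of calculus and Fubini, so that the index $i$ enters only through the factor $f_i'(t)$, which is uniformly bounded by (\ref{flucE181}). The parenthesized term is independent of $i$, tends to $0$ a.s.\ for each $t$ (one first runs the ergodic theorem for $t$ in a countable dense set containing the discontinuities of $\mu([\cdot,b])$ and then extends by monotonicity), and is bounded; dominated convergence finishes the job. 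Your proof instead normalizes $f_i(a) = 0$, invokes Arzel\`a--Ascoli to get a finite $\epsilon$-net in sup-norm, applies Birkhoff to the net, and patches the rest by the triangle inequality. Both arguments are valid; what each buys is different. The paper's FTC identity isolates the dependence on $i$ in a single bounded multiplicative factor, making the uniformity structural rather than obtained by a covering argument, and it sidesteps the need to normalize away the possibly unbounded constants $f_i(a)$. Your route is more modular and would extend verbatim to any family of continuous functions that is precompact in $C([a,b])$, not just those with a uniform derivative bound, and the normalization step you flagged (needed because (\ref{flucE181}) says nothing about $\sup_i |f_i(a)|$) is exactly the right fix. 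One small presentational point: when you apply Birkhoff it is worth saying explicitly that the $f_{i_l}$ are continuous hence bounded measurable on $[a,b]$, so the ergodic theorem applies; you say this implicitly but the paper's more hands-on treatment of the indicator averages makes the measurability issue entirely visible.
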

\begin{proof}
Since $(X_j)_{j \in \bbN}$ is ergodic,  
\begin{equation}
\label{flucAEQ1}
\lim \limits_{n \rightarrow \infty} \frac{1}{n} \sum \limits_{j=1}^n \one_{\{t \le X_j\}} = \mu([t, b]) \quad \text{ for } t \in [a, b] \text{ a.s.}
\end{equation}
Pick an a.s.\ event $E$ on which (\ref{flucAEQ1}) holds for all $t$ in a countable dense subset $S$ of $[a, b]$ that contains the points of discontinuities of $\mu([\cdot, b])$. Then (\ref{flucAEQ1}) also holds on $E$ for $t \not \in S$ by the continuity of the function $\mu([\cdot, b])$ at $t$ and the monotonicity of $\one_{\{\cdot \le X_j\}}$. We have 
\begin{align}
\bigg|\frac{1}{n} \sum \limits_{j = 1}^n f_i(X_j)-\int \limits_a^b f_i(x) \mu(\dd x)\bigg| &= \bigg|\frac{1}{n} \sum \limits_{j = 1}^n \int_{a}^{X_j} f_i'(t)\dd t -\int \limits_a^b \int \limits_{a}^x f_i'(t)\dd t\mu(\dd x)\bigg| \nonumber\\
&= \bigg|\int \limits_a^b f_i'(t) \left(\frac{1}{n}\sum \limits_{j = 1}^n \one_{\{t \le X_j\}}- \mu([t, b])\right) \dd t \bigg|\nonumber\\
&\le C \int \limits_{a}^b \left|\frac{1}{n} \sum \limits_{j=1}^n \one_{\{t \le X_j\}} - \mu([t, b])\right| \dd t \label{flucE15}
\end{align}
for some constant $C > 0$ due to (\ref{flucE181}). Note that the integrand in (\ref{flucE15}) does not depend on $i$, is bounded by $2$ and converges to $0$ for $t \in [a, b]$ on $E$. Therefore, the result follows from the dominated convergence. 
\end{proof}

%\begin{lem}
%\label{flucL36}
%Let $\sI$ be a nonempty set and $f_{n, i} \in \Hol(\Disc(a, r))$ for $n \in \bbN$ and $i \in \sI$. Suppose that, for any $0 < \rho < r$,  
%\begin{align}
%f_i(z) = \lim_{n \rightarrow \infty} f_{n, i}(z) \quad \text{ uniformly in } z \in \conj{\Disc}(a, \rho) \text{ and } i \in \sI
%\end{align}
%for some functions $\{f_i: i \in \sI\}$ on $\Disc(a, r)$. Then $f_i \in \Hol(\Disc(a, r))$ and, for any $0 < \rho < r$ and $k \in \bbZ_+$,  
%\begin{align}
% f_i^{(k)}(z) = \lim_{n \rightarrow \infty} f_{n, i}^{(k)}(z) \quad \text{ uniformly in } z \in \conj{\Disc}(a, \rho) \text{ and } i \in \sI. 
%\end{align}
%\end{lem}
%\begin{proof}
%{\color{red}Cite the standard proof.}
%\end{proof}

%
% The choice of bibliography style is a major decision, jointly made
% by you, your thesis advisor and the thesis editor. Common choices are
% "siam", "acm", "amsplain", "plain", "chicago".
%
\bibliographystyle{habbrv}
\bibliography{thesis}

\end{document}